\numberwithin{equation}{section}
\newtheorem{lemma}[equation]{Lemma}
 \newtheorem{proposition}[equation]{Proposition}
\newtheorem{theorem}[equation]{Theorem}
\newtheorem{conjecture}[equation]{Conjecture}
\newtheorem{corollary}[equation]{Corollary}
\newtheorem{question}[equation]{Question}
\theoremstyle{definition}
\newtheorem{definition}[equation]{Definition}
\newtheorem{example}[equation]{Example}
\newenvironment{remark}[1]{\refstepcounter{equation}%
\vskip 5pt \par\noindent {\bf #1\ \thelemma .}}{\vskip 5pt \par}
\newenvironment{remark*}[1]{\par \vskip 5pt \noindent 
{\bf #1.}}{\vskip 5pt \par}
\newcommand{\FLEX}{\relax}
\newcommand{\flex}[1]{\renewcommand{\FLEX}{#1}}
\newtheorem{flexthm}[equation]{\FLEX}
\newcommand{\cstar}{\hbox{$C^*$}}
\newcommand{\cstaralg}{$C^*$-algebra}
\providecommand{\dual}[1]{\ensuremath{#1^{\#}}}
\newcommand{\dom}{\operatorname{dom}}
\newcommand{\dperp}{{\perp\perp}}
\newcommand{\ds}{\displaystyle}
\newcommand{\dstext}[1]{\quad\text{#1}\quad}
\newcommand{\eps}{\ensuremath{\varepsilon}}
\newcommand{\id}{\operatorname{id}}
\newcommand{\innerprod}[1]{\left\langle #1\right\rangle}
\newcommand{\inv}[1]{{#1}^{-1}}
\newcommand{\norm}[1]{\left\|{#1}\right\|}
\providecommand{\proof}{\noindent {\em Proof.}\,\,}
\DeclareMathOperator{\PsExp}{\operatorname{PsExp}}
\providecommand{\qed}%
{\hfill \vrule height5pt width4pt depth1pt \vspace{+2.00ex}}
\newcommand{\rad}{\operatorname{Rad}}
\newcommand{\ran}{\operatorname{range}}
\newcommand{\cover}{package}
\newcommand{\Ccover}{Cartan \cover}
\newcommand{\envelope}{envelope}
\newcommand{\Inv}{\operatorname{\textsc{Inv}}}
\newcommand{\paut}{\operatorname{\textsc{pAut}}}
\newcommand{\ropen}{\operatorname{\textsc{Ropen}}}
\newcommand{\rideal}{\operatorname{\textsc{Rideal}}}
\newcommand{\ideal}{\operatorname{\text{ideal}}}
\newcommand{\spn}{\operatorname{span}}
\newcommand{\supp}{\operatorname{supp}}
\newcommand{\unit}[1]{#1^{(0)}}
\newcommand{\bbC}{{\mathbb{C}}}
\newcommand{\bbE}{{\mathbb{E}}}
\newcommand{\bbN}{{\mathbb{N}}}
\newcommand{\bbT}{{\mathbb{T}}}
\newcommand{\bbX}{{\mathbb{X}}}
\newcommand{\bbZ}{{\mathbb{Z}}}
  \newcommand{\A}{{\mathcal{A}}}
  \newcommand{\B}{{\mathcal{B}}}
  \newcommand{\C}{{\mathcal{C}}}
  \newcommand{\D}{{\mathcal{D}}}
  \newcommand{\E}{{\mathcal{E}}}
\renewcommand{\H}{{\mathcal{H}}}
  \newcommand{\K}{{\mathcal{K}}} 
\renewcommand{\L}{{\mathcal{L}}}
  \newcommand{\M}{{\mathcal{M}}}
  \newcommand{\N}{{\mathcal{N}}}
\renewcommand{\O}{{\mathcal{O}}}
\renewcommand{\S}{{\mathcal{S}}}
  \newcommand{\U}{{\mathcal{U}}}
  \newcommand{\W}{{\mathcal{W}}}
  \newcommand{\X}{{\mathcal{X}}}
\newcommand{\fA}{{\mathfrak{A}}}
\newcommand{\fc}{\mathfrak{c}}
\newcommand{\fG}{{\mathfrak{G}}}
\newcommand{\fJ}{{\mathfrak{J}}}
\newcommand{\fM}{{\mathfrak{M}}}
\newcommand{\fq}{{\mathfrak{q}}}
\newcommand{\fR}{{\mathfrak{R}}}
\newcommand{\fr}{{\mathfrak{r}}}
\newcommand{\fS}{{\mathfrak{S}}}
\newcommand{\fs}{{\mathfrak{s}}}
\newcommand{\fX}{{\mathfrak{X}}}
\providecommand{\Eigone}{\E^1}
\providecommand{\cstardiag}{\text{$C^*$-diagonal}}
\newcommand{\fg}{\mathfrak{g}}
\newcommand{\fix}[1]{\operatorname{fix} #1}
\newcommand{\Mod}{\text{Mod}}
\newcommand{\ce}{\E_c}
\newcommand{\ceo}{\E^1_c}
\newcommand{\ceoF}{\E_F^1}
\newcommand{\fRF}{\fR_F}
\begin{document}


\title{Structure for Regular Inclusions. II\\ \tiny Cartan Envelopes,
Pseudo-Expectations and Twists \normalsize}
\author[D.R. Pitts]{David R. Pitts}

\thanks{The author is grateful for the support of the University of
  Nebraska's NSF ADVANCE grant \#0811250 in the completion of this
  paper.  This work was also partially supported by a grant from the Simons
  Foundation (\#316952 to David Pitts).}

\address{Dept. of Mathematics\\
University of Nebraska-Lincoln\\ Lincoln, NE\\ 68588-0130}
\email{dpitts2@unl.edu}

\keywords{Inclusions of \cstaralg s, pseudo-expectation, groupoid}
\subjclass[2010]{46L05, 46L07, 22A22}

\begin{abstract}
We introduce
the notion of a Cartan envelope for a regular inclusion $(\C,\D)$.
When a Cartan envelope exists, it is the unique, minimal Cartan pair
into which $(\C,\D)$ regularly embeds.  We prove a Cartan envelope
exists if and only if $(\C,\D)$ has the unique faithful
pseudo-expectation property and also give a characterization of the
Cartan envelope using the ideal intersection property.

For any covering inclusion, we construct a Hausdorff
twisted groupoid using appropriate linear functionals and we give a
description of the Cartan envelope for $(\C,\D)$ in terms of a twist
whose unit space is a set of states on $\C$ constructed using the
unique pseudo-expectation.   For a regular MASA inclusion,
this twist  differs from the
Weyl twist; in this setting, we show that the Weyl
twist is Hausdorff precisely when there exists a conditional
expectation of $\C$ onto  $\D$.

We show that a regular inclusion with the unique pseudo-expectation
property is a covering inclusion and give other consequences of the
unique pseudo-expectation property.

\end{abstract}

\maketitle

\setcounter{tocdepth}{1}
\tableofcontents

\section{Introduction}\label{intropre}

In their influential 1977 paper~\cite{FeldmanMooreErEqReII}, Feldman
and Moore showed that the collection of pairs $(\M,\D)$ consisting of
a Cartan MASA $\D$ in the separably acting von Neumann algebra $\M$
is, up to suitable notions of equivalence, equivalent to the family
$(R,\sigma)$ consisting of measured equivalence relations and
$2$-cocycles $\sigma$ on $R$.  The success of the Feldman-Moore
program naturally led to attempts to find appropriate \cstar-algebraic
analogs.  An early attempt was by Kumjian in
1986~\cite{KumjianOnC*Di}, who introduced the notion of
\cstar-diagonals and proved a Feldman-Moore type result for them 
using suitable twists.  However, Kumjian's setting was somewhat 
restrictive, and excluded several classes of desirable examples.  In a
2008 paper, Renault~\cite{RenaultCaSuC*Al} extended Kumjian's work.
Renault gave a definition of a Cartan MASA $\D$ in a \cstaralg\ $\C$
and gave a method for associating a twist $(\Sigma, G)$
to each such pair $(\C,\D)$.  The
philosophy is to loosely regard the passage from $(\C,\D)$ to
$(\Sigma,G)$ as somewhat akin to ``analysis'' in harmonic analysis.
It is of course an interesting problem to determine when the original
regular inclusion can be reconstructed (``synthesized'') from
$(\Sigma, G)$.  In~\cite{RenaultCaSuC*Al}, Renault shows the class of
Cartan inclusions is, to use Leibnitz's immortal phrase, `the best of
all possible worlds.'
Indeed, for any Cartan inclusion $(\C,\D)$, the topologies on $\Sigma$ and $G$
are Hausdorff, and the associated twist
$(\Sigma, G)$
 contains enough of the information about
$(\C,\D)$ to completely recover  $(\C,\D)$.  More precisely, 
Renault shows that if
$\unit{G}$ is the unit space of $G$ and $C^*_r(\Sigma,G)$
denotes the reduced \cstaralg\ of $(\Sigma,G)$, then
$(C^*_r(\Sigma,G), C(\unit{G}))$ is a Cartan inclusion isomorphic
to the original Cartan inclusion $(\C,\D)$.   Thus, for Cartan
inclusions, both analysis and synthesis are possible.  
With his results,
Renault makes a very convincing case that his definition of Cartan
MASA for \cstaralg s is the appropriate analog of the Feldman-Moore
notion of a Cartan MASA in a von Neumann algebra.

While Renault's notion of Cartan MASA appears in a wide variety of
examples, there are also quite natural examples of regular MASA
inclusions $(\C,\D)$ which are not Cartan because they lack a
conditional expectation of $\C$ onto $\D$.  A large class of examples
of regular MASA inclusions with no conditional expectation which arise
from crossed products of abelian \cstaralg s by discrete groups is
constructed in~\cite[Section~6.1]{PittsStReInI}. The lack of a
conditional expectation leads to serious problems when one attempts to
apply the Kumjian-Renault methods to coordinatize $(\C,\D)$ using a
twist.  Indeed, Theorem~\ref{CET2} below shows that for a regular MASA
inclusion $(\C,\D)$, the associated Weyl groupoid $G$ is Hausdorff if
and only if there is a conditional expectation of $\C$ onto $\D$.
Thus we are confronted with the problem of whether a suitable
coordinatization of such pairs $(\C,\D)$ exists, and what that would
mean.  If one is willing to utilize non-Hausdorff twists, it is
possible to obtain a Kumjian-Renault type characterization of a class
of non-Cartan inclusions, and this was recently done
in~\cite{ExelPittsChGrC*AlNoHaEtGr}.  However, here we shall primarily
be interested in Hausdorff twists.

One approach to analyzing a non-Cartan inclusion is to attempt to
embed it into a Cartan inclusion.  In~\cite[Theorem~5.7]{PittsStReInI}
we characterized when a regular inclusion $(\C,\D)$ regularly embeds
into a \cstar-diagonal, or equivalently, when it embeds into a Cartan
inclusion.  Applying this result produces a Cartan pair $(\C_1,\D_1)$
into which $(\C,\D)$ embeds, but $(\C_1,\D_1)$ is in general not
closely related to the original pair $(\C,\D)$.

To address this issue, we introduce the notion of a \textit{Cartan
  envelope} for a regular inclusion $(\C,\D)$, see
Definition~\ref{Cenvdef}. This is the ``smallest'' Cartan pair
$(\C_1,\D_1)$ into which the original pair $(\C,\D)$ can be regularly
embedded.  We show the Cartan envelope is unique when it exists, and
that the image of $\C$ in $\C_1$ is dense in a suitable pointwise
topology.

In~\cite{PittsStReInI}, we introduced the notion of a
pseudo-expectation for an inclusion $(\C,\D)$.  For some purposes,
pseudo-expectations can be used as a replacement for a conditional
expectation.  The advantage of pseudo-expectations is that they always
exist, and for regular MASA inclusions, are unique
\cite[Theorem~3.5]{PittsStReInI}.  Furthermore, a regular inclusion is
a Cartan inclusion if and only if it has a unique pseudo-expectation
which is actually a faithful conditional expectation (see
Proposition~\ref{!fps->vc}(b) below).  Thus, regular inclusions with a
unique and faithful pseudo-expectation are a natural class of regular
inclusions containing the Cartan inclusions.  We do not know a
characterization of those regular inclusions $(\C,\D)$ for which the
pseudo-expectation is unique.

The issue of existence of a Cartan envelope for $(\C,\D)$ is addressed
in Theorem~\ref{ccequiv}:   we characterize the
regular inclusions $(\C,\D)$ which have Cartan envelope as those
which have a unique pseudo-expectation
which is also \textit{faithful}.  We also characterize the existence
of the Cartan envelope in terms of the ideal intersection property.  

Suppose $(\C,\D)$ is a regular inclusion having  Cartan envelope
$(\C_1,\D_1)$.  If $(\Sigma_1, G_1)$ is the twist associated to
$(\C_1,\D_1)$, elements of $\Sigma_1$ and $G_1$ can be viewed as
functions (non-linear in the case of $G_1$) on $\C_1$, and by
restricting these functions to the image of $\C$ under the embedding
of $(\C,\D)$ into $(\C_1,\D_1)$, we obtain families of functions on
$\C$.  These restriction mappings are both one-to-one.  In this way,
$(\Sigma_1, G_1)$ may be thought of as a ``weak-coordinitization'' of
$(\C,\D)$, or as a weak form of ``spectral analysis'' for $(\C,\D)$.
Unsurprisingly, it is possible for two distinct regular
inclusions to have the same Cartan envelope, so in general it is not
possible to synthesize the original inclusion from a
weak-coordinitization without further data.  We give examples of this
phenomena in Example~\ref{noce1}.

For a Cartan MASA $\D$ in a von Neumann algebra $\M$, Aoi's theorem
shows that $\D$ is also a Cartan MASA in any intermediate von Neumann
subalgebra $\D\subseteq \N\subseteq \M$, see
\cite[Theorem~1.1]{AoiCoEqSuInSu}
or~\cite[Theorem~2.5.9]{CameronPittsZarikianBiCaMASAvNAlNoAlMeTh} for
an alternate approach which does not require separability of the
predual.   While Aoi's theorem is not true in full generality in the
\cstaralg\ setting, partial results are obtained
in~\cite{BrownExelFullerPittsReznikoffInC*AlCaEm}.   In a 
sense, Proposition~\ref{subsCart}
of the present paper complements these
results:  it  gives a description of 
those regular subinclusions $(\C_0,\D_0)$ of a given
Cartan pair $(\C,\D)$ which are ``nearly intermediate''  in the sense
that  $\D_0$ is an essential subalgebra of $\D$.

In~\cite[Section~4]{PittsStReInI}, we introduced the notion of a
\textit{compatible state} for a regular inclusion $(\C,\D)$.  The
restriction of any compatible state on $\C$ to $\D$ is a pure state on
$\D$, and when the regular inclusion $(\C,\D)$ has enough compatible
states to cover $\hat\D$, it is a \textit{covering inclusion}.  We
define the notion of a compatible cover for $\hat\D$ (see
Definition~\ref{covering}) and Theorem~\ref{inctoid} shows that
associated to each compatible cover, there is a Hausdorff twist.  When
$(\C,\D)$ has a Cartan envelope, Theorem~\ref{upse=>cov} shows it has
a minimal (necessarily compatible) cover, and by
Corollary~\ref{inctoidCor3}, the twist associated to the minimal cover
is the twist for the Cartan envelope.

We now give an outline of the sections of the paper.  Section~\ref{GenPrelim} gives
provides a reference for some notation and preliminary results. 
Section~\ref{twc*al} is also a preliminary section, but deals
with twists and reduced \cstaralg s associated to Hausdorff twists.
Section~\ref{CEHWG} establishes our motivational result that 
 the Weyl groupoid of a regular MASA inclusion is Hausdorff if and
 only if there is a conditional expectation.

 Our main results are in Sections~\ref{ccce}, \ref{UPSEC}, and \ref{GpReIn}.  In
 Section~\ref{ccce} we introduce and describe the Cartan envelope.
 Theorem~\ref{ccequiv} shows uniqueness and minimality of the Cartan
 envelope and characterizes its existence in terms of essential
 inclusions and also the unique faithful pseudo-expectation property.
 Section~\ref{UPSEC} provides some interesting structural consequences
 of the unique pseudo-expectation property and we propose
 Conjecture~\ref{upseconj} as possible characterizations for regular
 inclusions with the unique pseudo-expectation property.
 Example~\ref{narelco} provides a negative answer to
 \cite[Question~5]{PittsZarikianUnPsExC*In}.  Finally,
 Section~\ref{GpReIn} contains a main result, Theorem~\ref{inctoid},
 which associates a twist to each compatible cover for an inclusion
 $(\C,\D)$. A consequence of this result is description of the twist
 associated to the Cartan envelope of a regular inclusion.  The
 results of Section~\ref{GpReIn} are refinements and improvements of
 the results contained in Section~8 of our
 preprint~\cite{PittsStReIn}.

\textit{Acknowledgments:}  We  thank  Jon Brown, Allan Donsig, Ruy Exel,
Adam Fuller, and Vrej Zarikian for numerous helpful conversations.

\section{General Preliminaries} \label{GenPrelim} Throughout the
paper, unless otherwise stated, all \cstaralg s will be assumed
unital, and a \cstar-subalgebra $\A$ of the \cstaralg\ $\B$ will
usually be assumed to contain the identity of $\B$.  We will often use the
notation $(\B,\A)$ to indicate that $\A$ is a unital \cstar-subalgebra
of the \cstaralg\ $\B$.  For any \cstaralg\ $\A$, we let $\U(\A)$ be
the unitary group of $\A$.

We recall some terminology and notation.
For a Banach space $A$, we use $\dual{A}$ for the Banach space dual;
likewise when $u: A\rightarrow B$ is a bounded linear mapping between
Banach spaces $A$ and $B$, $\dual u: \dual B\rightarrow \dual A$ is
the usual Banach space adjoint of $u$.

Let $\A$ be a \cstaralg.  For a state $f$ on $\A$, $L_f$ denotes its
left kernel, \[L_f:=\{x\in \A: f(x^*x)=0\}.\] A pair $(\B,\alpha)$
consisting of a \cstaralg\ $\B$ and a $*$-monomorphism
$\alpha: \A\rightarrow \B$ is an \textit{extension} of $\A$.  The
extension $(\B,\alpha)$ is a \textit{\cstar-essential extension}, or
more simply an \textit{essential extension}, of $\A$ when the
following condition holds: if $J\subseteq \B$ is a (closed, two-sided)
ideal with $J\cap \alpha(\A)=(0)$, then $J=(0)$.   When
$\A\subseteq \B$ and $\alpha$ is the inclusion mapping, we will
sometimes say that the pair $(\B,\A)$ is essential or that the
extension $(\B,\subseteq)$ is essential. If  
$\A\subseteq \B$,  some
authors   say  $(\B,\A)$ has the \textit{ideal intersection
  property} when $(\B,\A)$ is an essential inclusion.

The commutative setting will play a role in the sequel.  When $X$ and $Y$  are compact 
Hausdorff spaces and $h: Y\twoheadrightarrow X$ is a continuous
surjection, the pair $(Y,h)$ is called a \textit{cover} for $X$.  The
cover $(Y,h)$ is an \textit{essential cover} if $Y$ is the only closed
subset $F$ of $Y$ such that $h(F)=X$.   The following fact is left to
the reader.
\begin{lemma}\label{ess<->ess}  Suppose $X$ and $Y$ are compact
  Hausdorff spaces and $(C(Y),\alpha)$ is an extension of $C(X)$.
  Then $(C(Y),\alpha)$ is an essential extension of $C(X)$ if and only
  if $(Y,\dual\alpha|_Y)$ is an essential cover for $X$.
\end{lemma}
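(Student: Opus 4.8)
The plan is to translate both ``essential'' conditions into one and the same statement about closed subsets of $Y$, via the Gelfand correspondence between closed ideals of $C(Y)$ and closed subsets of $Y$.

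First I would identify the map $h := \dual{\alpha}|_Y$ concretely. Regarding $Y$ (resp.\ $X$) as the space of evaluation functionals $\eval_y(g)=g(y)$ on $C(Y)$ (resp.\ on $C(X)$) with the weak-$*$ topology, for $y \in Y$ the functional $\dual{\alpha}(\eval_y) = \eval_y \circ \alpha$ is a nonzero $*$-homomorphism from $C(X)$ to $\bbC$ (here using that $\alpha$ is a unital $*$-monomorphism), hence is evaluation at a unique point $h(y) \in X$. Thus $h: Y \to X$ is characterized by $\alpha(f) = f \circ h$ for all $f \in C(X)$, and it is continuous because $\dual{\alpha}$ is weak-$*$ continuous. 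Moreover $h$ is surjective: were $h(Y) \subsetneq X$, then $h(Y)$ would be a proper closed set and Urysohn's lemma would furnish a nonzero $f \in C(X)$ vanishing on $h(Y)$, whence $\alpha(f) = f \circ h = 0$, contradicting injectivity of $\alpha$. So $(Y,h)$ is a cover for $X$, and it remains to match the two essential conditions.

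Next I would invoke the standard inclusion-reversing bijection between closed ideals $J$ of $C(Y)$ and closed subsets $K \subseteq Y$, given by $J \mapsto K := \{y \in Y : g(y) = 0 \text{ for all } g \in J\}$ with inverse $K \mapsto I(K) := \{g \in C(Y): g|_K \equiv 0\}$; note that $J = (0)$ corresponds to $K = Y$. The crux is then the computation identifying when $J \cap \alpha(C(X)) = (0)$. For $f \in C(X)$ one has $\alpha(f) = f \circ h \in J = I(K)$ if and only if $(f \circ h)|_K \equiv 0$, i.e.\ $f$ vanishes on $h(K)$. Since $K$ is compact, $h(K)$ is a closed subset of $X$, so the only $f$ vanishing on $h(K)$ is $f = 0$ precisely when $h(K) = X$; when $h(K) \subsetneq X$, Urysohn's lemma again produces a nonzero such $f$, and $\alpha(f) \neq 0$ as $\alpha$ is injective. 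Hence $J \cap \alpha(C(X)) = (0)$ if and only if $h(K) = X$.

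Finally I would assemble these. Unwinding the definition, $(C(Y),\alpha)$ is an essential extension exactly when every closed ideal $J$ with $J \cap \alpha(C(X)) = (0)$ is zero, which by the previous two steps says: every closed $K \subseteq Y$ with $h(K) = X$ satisfies $K = Y$. This is verbatim the assertion that $Y$ is the only closed $F \subseteq Y$ with $h(F) = X$, i.e.\ that $(Y,h)$ is an essential cover. I do not expect a genuine obstacle here; the only point needing care is the bookkeeping in the key computation --- the equivalence $(f \circ h)|_K \equiv 0 \iff f|_{h(K)} \equiv 0$ together with the reduction of ``$h(K)$ dense in $X$'' to ``$h(K) = X$'' via compactness --- which is exactly what makes the two conditions literally coincide.
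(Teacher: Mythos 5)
Your proof is correct and complete; the paper explicitly leaves this lemma to the reader, and your argument — identifying $\dual{\alpha}|_Y$ as the continuous surjection $h$ dual to $\alpha$, passing through the Gelfand correspondence between closed ideals of $C(Y)$ and closed subsets $K\subseteq Y$, and checking via Urysohn that $J\cap\alpha(C(X))=(0)$ exactly when $h(K)=X$ — is precisely the standard argument the author intends. The only implicit hypothesis worth noting is that $\alpha$ is unital (which is the paper's standing convention, and is in any case forced by the requirement that $\dual{\alpha}|_Y$ land in $X$), so that $\eval_y\circ\alpha$ is never the zero functional.
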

\begin{remark}{Remark}  In the setting of Lemma~\ref{ess<->ess},  we
  will usually  abuse notation and write $\dual\alpha$ instead of
  $\dual\alpha|_Y$.
\end{remark}

For a topological space $X$ and a continuous function $f:X\rightarrow
\bbC$, we shall break with convention and write
\[\supp(f):=\{x\in X: f(x)\neq 0\}\] for the \textit{open} support of
$f$.    When $\D$ is an abelian \cstaralg\ and $d\in \D$,
we will sometimes write $\supp(d)$ instead of $\supp(\hat d)$.

A \textit{partial homeomorphism} of a topological space
  $X$ is a homeomorphism between two open subsets of $X$. If $s_1$ and
  $s_2$ are partial homeomorphisms, their product $s_1s_2$ has domain
  $s_2^{-1}(\dom(s_1)\cap \ran(s_2))$ and for $x\in X$,
  $(s_1s_2)(x)=s_1(s_2(x))$.   We shall use the symbol $\Inv(X)$ for
  the inverse semigroup of all partial homeomorphisms of $X$.  

Dually, if $\D$ is an abelian \cstaralg, a \text{partial automorphism} is a
$*$-isomorphism between two closed ideals of $\D$.  If $\alpha_1$ and
$\alpha_2$ are partial automorphisms, their product 
$\alpha_1\alpha_2$ has domain
\[\alpha_2^{-1}(\dom(\alpha_1)\cap\ran(\alpha_2))\] and for
$d\in\dom(\alpha_1\alpha_2)$,
$(\alpha_1\alpha_2)(d)=\alpha_1(\alpha_2(d))$.  We use the symbol
$\paut(\D)$ for the inverse semigroup of all partial automorphisms of
$\D$.   The semigroups $\paut(\D)$ and $\Inv(\hat\D)$ are
isomorphic via the map $\paut(\D)\ni \tau\mapsto \dual{\tau^{-1}{}}|_{\hat\D}$.
  
An 
 \textit{inclusion} is a pair $(\C,\D)$ of unital \cstaralg s
 (with the same unit) with $\D$ abelian, and $\D\subseteq \C$.  Let
 \[\D^c:=\{x\in \C: xd=dx \, \, \forall\,\, d\in\D\}\]
be the relative commutant of $\D$ in $\C$.  
When $\D$ is a MASA in $\C$, we refer to $(\C,\D)$ as a
\textit{ MASA inclusion}.  For any inclusion, an element of the set
\[\N(\C,\D):=\{v\in \C: v^*\D v\cup v\D v^*\subseteq \D\}\] is called
a \textit{normalizer}.  The inclusion $(\C,\D)$ is a \textit{regular
  inclusion} if 
$\N(\C,\D)$ has dense
span in $\C$.  
Observe that when $(\C,\D)$ is an inclusion, $\U(\D^c)\subseteq
\N(\C,\D)$.  
Clearly $\N(\D^c,\D)\subseteq \N(\C,\D)$, and a
routine argument shows
\begin{equation}\label{NDcdes}
\N(\D^c,\D)=\{v\in \D^c: v^*v= vv^*\in \D\}.
\end{equation} 

Given two inclusions $(\C_1,\D_1)$ and $(\C_2,\D_2)$, a
$*$-homomorphism $\alpha: \C_1\rightarrow \C_2$ is a \textit{regular
  homomorphism} if
\[\alpha(\N(\C_1,\D_1))\subseteq \N(\C_2,\D_2).\]  We will sometimes
write $\alpha: (\C_1,\D_1)\rightarrow (\C_2,\D_2)$ to indicate $\alpha$ is a
regular $*$-homomorphism.

The following will be used so frequently that we state it as a formal definition.
\begin{definition}\label{CartanDef}
A regular MASA inclusion $(\C,\D)$
is a \textit{Cartan inclusion} if there is a faithful conditional expectation
$\bbE:\C\rightarrow \D$. 
\end{definition}

The following notions will play an important role in the sequel.

\begin{definition}
Let $(\C,\D)$ be an inclusion and let $(I(\D),\iota)$ be an
injective envelope for $\D$.
\begin{enumerate}
  \item A \textit{pseudo-expectation relative
to $(I(\D),\iota)$} is a unital completely positive map
$E:\C\rightarrow I(\D)$ such that $E|_\D=\iota$.  When the choice of injective
envelope is clear from the context, we will use the simpler term,
pseudo-expectation.
\end{enumerate}
The 
injectivity of $I(\D)$ ensures pseudo-expectations always exist, but in general, there
are many pseudo-expectations.
\begin{enumerate}
  \setcounter{enumi}{1}
\item When there is a unique pseudo
expectation, we will say $(\C,\D)$ has the \textit{unique pseudo-expectation
  property}.
\item If $(\C,\D)$ has the unique pseudo-expectation
property and the pseudo-expectation is faithful, $(\C,\D)$ is said to
have the \textit{faithful unique pseudo-expectation property}.
\end{enumerate}
\end{definition}

Every regular MASA inclusion has the unique pseudo-expectation
property~\cite[Theorem~3.5]{PittsStReInI}.  When $(\C,\D)$ is a Cartan
inclusion, and $\bbE:\C\rightarrow \D$ is the conditional expectation,
then $\iota\circ\bbE$ is the unique pseudo-expectation for $(\C,\D)$,
so $(\C,\D)$ has the faithful unique pseudo-expectation property.
Interestingly, the 
faithful unique pseudo-expectation property can be used to
characterize Cartan inclusions as we shall see in
Proposition~\ref{!fps->vc} below.

For
Cartan inclusions, we
will sometimes abuse terminology and say that $\bbE$ is the
pseudo-expectation.

Every $v\in\N(\C,\D)$ determines a partial homeomorphism $\beta_v$ of the
Gelfand space $\hat\D$ of $\D$ in the following way:
$\dom\beta_v=\{\sigma\in\hat\D: \sigma(v^*v)>0\}$,
$\ran(\beta_v)=\{\sigma\in\hat\D: \sigma(vv^*)>0\}$, and for
$\sigma\in\dom(\beta_v)$ and $d\in\D$, 
\[\beta_v(\sigma)(d)=\frac{\sigma(v^*dv)}{\sigma(v^*v)}.\]
Then $\beta_v^{-1}=\beta_{v^*}$.   The map $\N(\C,\D)\ni v\mapsto \beta_v$ is
multiplicative on $\N(\C,\D)$ and the collection
\[\W(\C,\D):=\{\beta_v: v\in \N(\C,\D)\}\] is an inverse semigroup of
partial homeomorphisms of $\hat\D$.  The semi-group $\W(\C,\D)$ is
sometimes called the \textit{Weyl pseudo-group} or \textit{Weyl
  semi-group} for the inclusion (see~\cite{RenaultCaSuC*Al}).

Given $v\in\N(\C,\D)$, we now describe a partial isomorphism
$\theta_v$ of $\D$ which is dual to the homeomorphism $\beta_v$ of
$\hat\D$ discussed above.  The map $\D vv^*\ni vv^*h\mapsto v^*hv$
extends uniquely to a $*$-isomorphism $\theta_v$ of
$\overline{\D vv^*}$ onto $\overline{\D v^*v}$,
see~\cite[Lemma~2.1]{PittsStReInI}.  Note that for every
$\sigma\in\dom\beta_v$,
\[\sigma\circ\theta_v=\beta_v(\sigma).\]

For any inclusion $(\C,\D)$ and $\sigma\in\hat\D$,
let  \[\Mod(\C,\D)=\{\rho\in
\text{States}(\C):
\rho|_\D\in\hat\D\}\] be the set of extensions of pure states on $\D$
to $\C$.    Equipped with the relative weak-$*$
topology, $\Mod(\C,\D)$ is a compact set.  Elements of $\Mod(\C,\D)$
 have a property reminiscent of module homomorphisms:
for $\rho\in\Mod(\C,\D)$, the Cauchy-Schwartz inequality shows
that for $h,k\in \D$ and $x\in \C$,
\begin{equation}\label{CSMod}
\rho(hxk)=\rho(h)\rho(x)\rho(k).
\end{equation}  

For any $v\in \N(\C,\D)$ the
map $\beta_v$ extends to a partial homeomorphism $\tilde\beta_v$ of
$\Mod(\C,\D)$:  if $\rho\in \Mod(\C,\D)$ satisfies $\rho(v^*v)>0$, put
$\displaystyle\tilde\beta_v(\rho)(x)= \frac{\rho(v^*xv)}{\rho(v^*v)}$. If $r: \Mod(\C,\D)\rightarrow \hat\D$ is the restriction
map, $r(\rho)=\rho|_\D$, then  
\[\beta_v\circ r=r\circ\tilde\beta_v.\]   
\begin{definition}
A subset $X\subseteq
\Mod(\C,\D)$ \textit{covers $\hat\D$} if $r(X)=\hat\D$ and is \textit{$\N(\C,\D)$-invariant} if for every
$v\in\N(\C,\D)$, $\tilde\beta_v(X\cap \dom\tilde\beta_v)\subseteq X$.
\end{definition}

\begin{proposition}[{\cite[Proposition~2.7]{PittsStReInI}}] \label{invideal}  Let $(\C,\D)$ be a regular
  inclusion and suppose that $F\subseteq \Mod(\C,\D)$ is 
  $\N(\C,\D)$-invariant.  Then the set 
\[\K_F:=\{x\in\C: \rho(x^*x)=0\text{ for all } \rho\in F\}\] is a
  closed, two-sided ideal in $\C$.  Moreover, if 
  $\{\rho|_\D: \rho\in F\}$ is weak-$*$ dense in $\hat{\D}$, 
then $\K_F\cap \D=(0).$   
\end{proposition}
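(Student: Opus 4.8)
The plan is to first observe that $\K_F$ is automatically a closed left ideal, and then to use the $\N(\C,\D)$-invariance of $F$ to upgrade it to a two-sided ideal. Writing $L_\rho=\{x\in\C:\rho(x^*x)=0\}$ for the left kernel of $\rho$, we have $\K_F=\bigcap_{\rho\in F}L_\rho$. Each $L_\rho$ is a closed left ideal (this is standard: if $\rho(x^*x)=0$ then $x$ maps to $0$ in the GNS space of $\rho$, so $\rho((cx)^*(cx))=0$ for every $c\in\C$), hence $\K_F$, being an intersection of closed left ideals, is itself a closed left ideal. It therefore remains only to show $\K_F$ is closed under right multiplication; together with the left-ideal property this yields that $\K_F$ is a two-sided ideal.

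For the right-ideal property, fix $x\in\K_F$. Since right multiplication by a fixed element is norm-continuous, $\K_F$ is closed, and $\spn\N(\C,\D)$ is dense in $\C$ by regularity, it suffices to prove $xv\in\K_F$ for every $v\in\N(\C,\D)$; that is, $\rho(v^*x^*xv)=0$ for all $\rho\in F$. Fix such a $\rho$. If $\rho(v^*v)=0$, then conjugating the inequality $x^*x\le\norm{x^*x}I$ by $v$ and applying $\rho$ gives $0\le\rho(v^*x^*xv)\le\norm{x^*x}\,\rho(v^*v)=0$. If instead $\rho(v^*v)>0$, then $\rho\in\dom\tilde\beta_v$, so by invariance $\tilde\beta_v(\rho)\in F$; since $x\in\K_F$ this yields $\tilde\beta_v(\rho)(x^*x)=0$, and unwinding the definition $\tilde\beta_v(\rho)(x^*x)=\rho(v^*x^*xv)/\rho(v^*v)$ gives $\rho(v^*x^*xv)=0$. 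In either case $\rho(v^*x^*xv)=0$, so $xv\in\K_F$, and $\K_F$ is a right ideal.

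For the final assertion, suppose $\{\rho|_\D:\rho\in F\}$ is weak-$*$ dense in $\hat\D$ and let $d\in\K_F\cap\D$. For each $\rho\in F$, the restriction $\rho|_\D$ is a character of the abelian algebra $\D$, hence multiplicative and $*$-preserving, so $\rho(d^*d)=(\rho|_\D)(d^*d)=|(\rho|_\D)(d)|^2$. Since $d\in\K_F$ forces $\rho(d^*d)=0$, we conclude $\hat d(\rho|_\D)=0$ for every $\rho\in F$. Thus the continuous function $\hat d$ vanishes on a dense subset of $\hat\D$, so $\hat d\equiv 0$ and hence $d=0$ by Gelfand theory; that is, $\K_F\cap\D=(0)$.

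The only real subtlety, and the step I expect to be the crux, is the right-ideal argument: the degenerate case $\rho(v^*v)=0$ must be dispatched by the operator inequality rather than by the dynamics, while the nondegenerate case is precisely where the invariance hypothesis is indispensable---without it $\tilde\beta_v(\rho)$ need not lie in $F$, and the conclusion $\tilde\beta_v(\rho)(x^*x)=0$ would be unavailable.
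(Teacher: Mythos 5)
Your proof is correct: writing $\K_F$ as the intersection of the left kernels $L_\rho$ gives the closed left-ideal property, the case split on $\rho(v^*v)$ (operator inequality $v^*x^*xv\le\norm{x^*x}\,v^*v$ when $\rho(v^*v)=0$, versus $\tilde\beta_v(\rho)\in F$ by invariance when $\rho(v^*v)>0$) correctly yields the right-ideal property after reducing to normalizers via regularity and norm-closedness, and the Gelfand/density argument for $\K_F\cap\D=(0)$ is sound. Note that the present paper does not reprove this statement but cites it from Part~I (\cite[Proposition~2.7]{PittsStReInI}); your argument is the standard one that the cited proof follows.
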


In~\cite{PittsStReInI}, we
introduced the
\textit{compatible states}, defined by
\[\fS(\C,\D):=\{\rho\in\text{States}(\C): \text{ for every } v\in \N(\C,\D),
  |\rho(v)|^2\in \{0,\rho(v^*v)\}\}.\]   Then $\fS(\C,\D)\subseteq
\Mod(\C,\D)$ and $\fS(\C,\D)$ is a weak-*
closed, $\N(\C,\D)$-invariant set of states.  However, it is not in
general the case that $\fS(\C,\D)$ covers $\hat\D$; in fact, there
exist regular inclusions for which 
$\fS(\C,\D)$ is empty, see \cite[Theorem~4.13]{PittsStReInI}.

\begin{definition}\label{covering}  A regular inclusion $(\C,\D)$ is
  \textit{a covering inclusion} if 
$\fS(\C,\D)$ covers $\hat\D$.  A subset $F\subseteq \fS(\C,\D)$ is a \textit{compatible cover} for $\hat\D$
if $F$ is weak-$*$ closed, $\N(\C,\D)$-invariant, and covers $\hat\D$.
\end{definition}

\begin{remark}{Examples}\label{sufficientcov}
\begin{enumerate}
\item Theorem~\ref{upse=>cov} below shows that any regular
  inclusion with the unique pseudo-expectation property is a covering
  inclusion.
  In particular, it follows from~\cite[Lemma~2.10 and Proposition~4.6]{PittsStReInI}
  that every regular inclusion $(\C,\D)$ for which $\D^c$ is abelian
  is a covering inclusion.

  Example~\ref{narelco} gives a construction of a regular inclusion
  $(\C,\D)$ with the unique pseudo-expectation property such that
  $\D^c$ is non-abelian.

\item Here is an elementary example of a covering inclusion $(\C,\D)$ such
  that $\D^c$ is non-abelian and which does not have the unique
  pseudo-expectation property.  Let  $\C:=M_2(\bbC)\oplus\bbC$ and
  $\D:=\bbC I_\C$.  Then $(\C,\D)$ is a regular inclusion and
  $M_2(\bbC)\oplus \bbC\ni x\oplus \lambda\mapsto \lambda$ is a
  compatible state.  Thus $(\C,\D)$ is a covering inclusion, yet
  $\D^c=\C$, is not abelian.  Any state on $\C$
  is a pseudo-expectation.

\item By definition, no regular inclusion $(\C,\D)$ with
  $\fS(\C,\D)=\emptyset$ is a covering
  inclusion.   When $\C$ is
simple \cstaralg, then $(\C,\bbC I)$ is a regular inclusion with 
  $\fS(\C,\bbC I)=\emptyset$ (see \cite[Theorem~4.13]{PittsStReInI}).  
  \end{enumerate}
\end{remark}

Associated to an ideal $J$ of a unital abelian \cstaralg\ $\D$ are the
two ideals,
\[J^\perp:=\{d\in \D: dJ=0\}\dstext{and} J^\dperp:= (J^\perp)^\perp.\]
When $J=J^\dperp$, 
$J$ called a \textit{regular ideal}.  Also for $d\in\D$, we will
write $d^\perp$ for the ideal $\{h\in\D: dh=0\}$; $d^\dperp$ is defined
similarly.   
Let \[\supp(J):=\{\tau\in\hat\D:
\tau|_J\neq0\}.\]  Then $J$ is a regular ideal if and only if
$\supp(J)$ is a regular open set.  Furthermore, for any ideal
$J\subseteq \D$,
\[\supp(J^\dperp)=\left(\overline{\supp J}\right)^\circ.\]
On the other hand, for an open set $G\subseteq \hat\D$, let
\[\ideal(G):=\{d\in\D: \supp(\hat d)\subseteq G\}.\]  Then $G$ is a
regular open set if and only if $\ideal(G)$ is a regular ideal.

Let $\rideal(\D)$ and $\ropen(\hat\D)$ be the Boolean
algebras of regular ideals of $\D$ and regular open sets in $\hat\D$
respectively.  These are isomorphic Boolean algebras under the map
$\rideal(\D)\ni J\mapsto \supp(J)$.

Essential extensions of abelian \cstaralg s have  isomorphic
Boolean algebras of regular ideals (and hence isomorphic Boolean
algebras of regular open sets).
The proof of the following fact is not trivial, but due to length
considerations, we leave the proof to the reader.
\begin{lemma}\label{isolattice}  Suppose $\A$ and $\B$ are abelian
  \cstaralg s, $(\B,\alpha)$ is an essential
  extension of $\A$, and  $r:\hat\B\rightarrow \hat\A$ is the
  continuous surjection,
  $\rho\in\hat\B\mapsto \rho\circ \alpha$.  Then the maps
  \[\rideal(\B)\ni J\mapsto \alpha^{-1}(J) \dstext{and}
    \ropen(\hat\A) \ni G\mapsto r^{-1}(G)\] are Boolean algebra
  isomorphisms of $\rideal(\B)$ onto $\rideal(\A)$ and
  $\ropen(\hat\A)$ onto $\ropen(\hat\B)$ respectively.  The inverses
  of these maps are
  \[\rideal(\A)\ni K\mapsto \alpha(K)^\dperp\dstext{and}
    \ropen(\hat\B)\ni H\mapsto (r(\overline{H}))^\circ\]
  respectively.  Furthermore, for $J\in\rideal(\B)$ and $G\in \ropen(\hat\A)$, 
  \[\supp(\alpha^{-1}(J))=
    (r(\overline{\supp(J)})^\circ\dstext{and}\ideal(r^{-1}(G))=\alpha(\ideal(G))^\dperp.\]
\end{lemma}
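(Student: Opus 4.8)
The plan is to prove the statement on the topological (Gelfand dual) side and then transport it to the algebraic side via the Boolean algebra isomorphisms $\rideal(\D)\cong\ropen(\hat\D)$, $J\mapsto\supp(J)$, recorded above. First I would extract the topological content of essentiality. By Lemma~\ref{ess<->ess}, $(\B,\alpha)$ being essential is equivalent to $(\hat\B,r)$ being an essential cover of $\hat\A$; since $\hat\B$ and $\hat\A$ are compact Hausdorff, $r$ is then a closed continuous surjection no proper closed subset of which maps onto $\hat\A$, i.e. $r$ is \emph{irreducible}. The single fact I will use repeatedly is: for every nonempty open $U\subseteq\hat\B$, the saturation $r_{\#}(U):=\{\tau\in\hat\A:r^{-1}(\tau)\subseteq U\}=\hat\A\setminus r(\hat\B\setminus U)$ is a nonempty open subset of $\hat\A$ — open because $r$ is closed, nonempty because $r$ is irreducible. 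A short consequence I would establish at once is that the open saturated set $r^{-1}(r_{\#}(H))$ is dense in $H$ for every open $H$ (any nonempty open $U\subseteq H$ contains the nonempty saturated set $r^{-1}(r_{\#}(U))$), so $\overline{r^{-1}(r_{\#}(H))}=\overline H$.

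Next I would set up the two maps on regular open sets. Because $r^{-1}(G)$ need not itself be regular open, the forward map must be read as its regularization $\Phi\colon G\mapsto(\overline{r^{-1}(G)})^{\circ}$, which is the dual of $K\mapsto\alpha(K)^{\dperp}$; its candidate inverse is $\Psi\colon H\mapsto(r(\overline H))^{\circ}$, the dual of $J\mapsto\alpha^{-1}(J)$. Using surjectivity of $r$ (so that $r(r^{-1}(S))=S$) together with $\overline{r^{-1}(G)}\subseteq r^{-1}(\overline G)$, one checks $\Psi(\Phi(G))=(r(\overline{r^{-1}(G)}))^{\circ}=G$ for any open $G$; this direction needs no essentiality. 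The substantive direction is $\Phi(\Psi(H))=H$ for $H$ regular open, and here irreducibility is indispensable. For the inclusion $\Phi(\Psi(H))\subseteq H$ I would prove the fiber estimate $r^{-1}((r(\overline H))^{\circ})\subseteq\overline H$: if $r(y)\in(r(\overline H))^{\circ}$ but $y\notin\overline H$, then $V:=\hat\B\setminus\overline H$ is an open neighborhood of $y$, the open set $r_{\#}(V)$ meets $r(H)$ (since $r(y)$ lies in the closure of $r(H)$), and any point of $r_{\#}(V)\cap r(H)$ has its whole fiber inside $V$ yet meeting $H\subseteq\overline H$, a contradiction; being open, $r^{-1}((r(\overline H))^{\circ})$ then lies in $(\overline H)^{\circ}=H$. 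For the reverse inclusion I would note $r_{\#}(H)\subseteq r(H)\subseteq r(\overline H)$ gives $r_{\#}(H)\subseteq(r(\overline H))^{\circ}$, whence $r^{-1}(r_{\#}(H))\subseteq r^{-1}((r(\overline H))^{\circ})$; taking closures and using the density fact yields $\overline{r^{-1}((r(\overline H))^{\circ})}\supseteq\overline H$, so the regularization recovers all of $H$.

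Finally I would verify that $\Phi$ is a Boolean homomorphism — meets are handled by $\Phi(G_1\cap G_2)=\Phi(G_1)\wedge\Phi(G_2)$ directly, and joins and complements follow from the regular-open formulas ($\neg E=\hat\A\setminus\overline E$, joins via regularized unions) together with the density of saturations — so that $\Phi$ is a Boolean isomorphism $\ropen(\hat\A)\to\ropen(\hat\B)$ with inverse $\Psi$. The transfer to the algebraic statement is then exactly the content of the two displayed identities $\supp(\alpha^{-1}(J))=(r(\overline{\supp J}))^{\circ}$ and $\ideal(r^{-1}(G))=\alpha(\ideal(G))^{\dperp}$ (the latter read after passing to the generated regular ideal), which I would prove straight from the definitions of $\alpha^{-1}$, $\supp$, and $\ideal$; composing with the isomorphisms $\supp\colon\rideal\to\ropen$ converts the topological isomorphism into the asserted $J\mapsto\alpha^{-1}(J)$ with inverse $K\mapsto\alpha(K)^{\dperp}$. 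I expect the main obstacle to be the push–pull identity $\Phi\circ\Psi=\mathrm{id}$: controlling the regularization $(\overline{r^{-1}(\cdots)})^{\circ}$ rests entirely on the density of saturations, and it is precisely here — and nowhere else — that essentiality enters, the remainder being formal manipulation of preimages, closures, and interiors.
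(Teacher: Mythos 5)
The paper gives no proof of this lemma (it is explicitly ``left to the reader''), so there is nothing to compare against; I am judging your argument on its own merits. Your overall strategy --- pass to the dual irreducible surjection $r$, work with the saturation operator $r_{\#}(U)=\hat\A\setminus r(\hat\B\setminus U)$, and read the forward map $G\mapsto r^{-1}(G)$ as its regularization --- is sound, and you are right that the regularization is genuinely needed: for $\A=C[0,1]$, $\B=C[0,\tfrac12]\oplus C[\tfrac12,1]$ and $G=[0,\tfrac12)$, the set $r^{-1}(G)$ is not regular open. Your density-of-saturations fact, the verification of $\Psi\circ\Phi=\mathrm{id}$ on regular opens, the inclusion $H\subseteq\Phi(\Psi(H))$, and the transfer back to regular ideals are all correct.

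There is, however, a genuine gap in the one place where essentiality must do real work: the fiber estimate $r^{-1}\bigl((r(\overline H))^{\circ}\bigr)\subseteq\overline H$. You take $y\notin\overline H$ with $r(y)\in(r(\overline H))^{\circ}$, set $V=\hat\B\setminus\overline H$, and assert that $r_{\#}(V)$ meets $r(H)$ ``since $r(y)$ lies in the closure of $r(H)$.'' That inference would require $r_{\#}(V)$ to be a neighborhood of $r(y)$, and it is not: $r(y)\in r(\overline H)$ means the fiber over $r(y)$ meets $\overline H$, so that fiber is not contained in $V$ and hence $r(y)\notin r_{\#}(V)$; nothing in your setup forces the saturated set $r_{\#}(V)$ to come anywhere near $r(H)$. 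The step is repairable within your own framework: saturate the smaller open set $U:=V\cap r^{-1}(W)$ where $W:=(r(\overline H))^{\circ}$. Since $y\in U$, your Step-2 fact gives that $r_{\#}(U)=r_{\#}(V)\cap W$ is a nonempty open subset of $W\subseteq r(\overline H)=\overline{r(H)}$, hence meets $r(H)$; any $\tau$ in that intersection has its whole fiber inside $V$ (so disjoint from $\overline H$) yet meeting $H$, the desired contradiction. With this repair --- and the small correction that $\Psi(\Phi(G))=G$ holds for \emph{regular} open $G$ rather than arbitrary open $G$ as you wrote, which is all you use --- your proof goes through.
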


Let $(\C,\D)$ be an inclusion and $v\in\N(\C,\D)$.
In~\cite[Definition~2.13]{PittsStReInI}, we introduced the notion of a
\textit{Frol\' ik family of ideals for $v$}.  This is a set of five
regular ideals $\{K_i\}_{k=0}^4$ of $\D$, with the property that for
$i=1,2,3$, $K_i\theta_v(K_i)=0$, $K_4=(vv^*)^\dperp$, and $K_0$ is the
\textit{fixed point ideal for $v$}.  The following describes the
fixed point ideal.
\begin{lemma}[{\cite[Lemma~2.15]{PittsStReInI}}]\label{fixedideal} Let $(\C,\D)$ be an 
  inclusion and let $v\in \N(\C,\D)$.  Then
  \[K_0=\{d\in (vv^*\D)^\dperp: vd=dv\in\D^c\}=\{d\in (v^*v\D)^\dperp:
  vd=dv\in\D^c\}. \]
\end{lemma}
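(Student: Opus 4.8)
The plan is to prove both displayed equalities simultaneously, first reducing ``$vd=dv\in\D^c$'' to a usable algebraic identity and then matching it against the fixed-point defining property of $K_0$. Throughout put $U:=\supp(vv^*)=\ran(\beta_v)$, $V:=\supp(v^*v)=\dom\beta_v$, and for $d\in\D$ write $w:=vd$. A first, cheap reduction disposes of the principal ideals: since $\D$ is unital, $(vv^*\D)^\perp=\{e\in\D:evv^*=0\}=(vv^*)^\perp$, and similarly for $v^*v$, so taking perps again gives $(vv^*\D)^\dperp=(vv^*)^\dperp$ and $(v^*v\D)^\dperp=(v^*v)^\dperp$. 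Thus the two sets in the statement are
\[A=\{d\in(vv^*)^\dperp: vd=dv\in\D^c\}\qquad\text{and}\qquad B=\{d\in(v^*v)^\dperp: vd=dv\in\D^c\}.\]

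The algebraic engine is the following. If $vd=dv$, then expanding $w^*w$ and $ww^*$ through both factorizations $w=vd=dv$, and using that $d,|d|^2,v^*v,vv^*$ all lie in the abelian algebra $\D$ (and that $v\D v^*\cup v^*\D v\subseteq\D$), one obtains
\[w^*w=|d|^2\,v^*v=v^*|d|^2v\qquad\text{and}\qquad ww^*=v|d|^2v^*=|d|^2\,vv^*;\]
the outer equalities simply record that $\theta_v$ fixes $|d|^2$ where defined. If in addition $w=vd\in\D^c$, then $w\in\N(\D^c,\D)$, so by \eqref{NDcdes} $w$ is normal with $w^*w=ww^*\in\D$. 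Comparing the two displays gives $|d|^2\,v^*v=|d|^2\,vv^*$, i.e. $\widehat{v^*v}$ and $\widehat{vv^*}$ agree on $\supp(d)$, so
\[\supp(d)\cap V=\supp(d)\cap U.\]

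I would deduce $A=B$ from this last identity by a purely topological passage. Let $d\in A$, so $\supp(d)\subseteq(\overline U)^\circ\subseteq\overline U$; since $\supp(d)$ is open, $\supp(d)\cap U$ is dense in $\supp(d)$. But $\supp(d)\cap U=\supp(d)\cap V\subseteq\overline V$, whence $\supp(d)\subseteq\overline{\supp(d)\cap V}\subseteq\overline V$, and, $\supp(d)$ being open, $\supp(d)\subseteq(\overline V)^\circ$, i.e. $d\in(v^*v)^\dperp$. Thus $A\subseteq B$; the identity $\supp(d)\cap V=\supp(d)\cap U$ holds equally for $d\in B$, so the mirror-image argument (swapping $U\leftrightarrow V$) gives $B\subseteq A$.

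It remains to identify this common set with $K_0$. One checks directly that $A$ is a closed ideal of $\D$; the one nonobvious point is that for $d\in A$ and $e\in\D$ one has $v(ed)=v(de)=(vd)e=e(vd)=(ed)v$, where $(vd)e=e(vd)$ uses $vd\in\D^c$. To see $A$ is the fixed-point ideal I would compute the partial homeomorphism of $w=vd$: since $d\in\D\subseteq\D^c$ gives $\beta_d=\id$, a short computation shows $\beta_{vd}=\beta_v$ on its domain $\supp(d)\cap V$, while $vd\in\D^c$ forces $\beta_{vd}=\id$; hence $\beta_v$ fixes $\supp(d)\cap V$ pointwise, placing $\supp(d)$ (which already lies in $(\overline U)^\circ$) inside the regularized fixed-point set of $\beta_v$, and matching the definition of $K_0$. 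The step I expect to be the main obstacle is the converse half of this identification—recovering $vd\in\D^c$ from membership in $K_0$, where one only has $\beta_{vd}=\id$ a priori—together with the reconciliation of the pointwise fixed-point condition (which lives on the open sets $U,V$) with the regular-ideal, double-perp bookkeeping. For the converse I would use the normality of $w$ on the fixed region and a polar-decomposition argument, writing $w=p^{1/2}\omega$ with $p=w^*w\in\D$ and a phase $\omega$ commuting with $\D p$, to conclude $w\in\D^c$; the density/interior passages of the third paragraph are exactly where the open-versus-regular-open distinction must be handled with care.
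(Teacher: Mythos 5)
First, a point of comparison: the paper does not prove this lemma at all --- it is imported verbatim from \cite[Lemma~2.15]{PittsStReInI}, where $K_0$ is defined as part of the Frol\'ik family of ideals of Definition~2.13 of that paper. So your proposal has to stand on its own rather than be matched against an in-paper argument.

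The half you actually carry out is correct. The reduction $(vv^*\D)^\dperp=(vv^*)^\dperp$ is fine, and the chain ``$vd=dv\in\D^c$ implies $vd\in\N(\D^c,\D)$, hence $(vd)^*(vd)=(vd)(vd)^*\in\D$ by \eqref{NDcdes}, hence $|d|^2v^*v=|d|^2vv^*$, hence $\supp(\hat d)\cap\dom\beta_v=\supp(\hat d)\cap\ran\beta_v$'' combined with your density/interior passage is a clean, complete proof that the two displayed sets coincide.

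The genuine gap is the identification of this common set with $K_0$, and you flag it yourself. Two issues. First, you never actually invoke the definition of $K_0$: it is defined in the cited paper via the Frol\'ik decomposition attached to $\beta_v$, and the phrase ``matching the definition of $K_0$'' does no work until that definition is on the table. Second, the inclusion of $K_0$ into $\{d: vd=dv\in\D^c\}$ --- producing the operator identity $vd=dv\in\D^c$ from a purely spectral fixed-point condition --- is the entire content of the lemma, and the polar-decomposition sketch you propose will not deliver it: the partial isometry in the polar decomposition of $vd$ need not lie in $\C$, and even if it did you would still have to show it commutes with all of $\D$, which is precisely what is being claimed. The argument that does work is the one on display in the proof of Lemma~\ref{fixptchar} of this paper: for $d$ supported in $(\fix\beta_v)^\circ$ one first verifies $v^*dv=v^*vd$ by evaluating at every $\rho\in\hat\D$ (three cases: $\rho(v^*v)=0$; $\rho(v^*v)>0$ with $\beta_v(\rho)(d)\neq0$, which forces $\rho\in\fix\beta_v$; and $\beta_v(\rho)(d)=0$, which forces $\rho(d)=0$), and then upgrades $v^*(dv-vd)=0$ to $dv=vd$ by observing that $(vv^*)^n(dv-vd)=0$ for all $n\in\bbN$, hence $(vv^*)^{1/n}(dv-vd)=0$, and letting $(vv^*)^{1/n}v\to v$. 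Until an argument of that kind is supplied, only the second equality of the lemma has been proved.
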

For $v\in \N(\C,\D)$, $\dom \beta_v=\supp(\overline{v^*v\D})$.  In
general, $\supp(K_0)$ need not be contained in $\dom\beta_v$, but they
are intimately related.  Indeed, since
$\dom\beta_v\cap \supp(K_0)=\supp(K_0\cap \overline{v^*v\D})$, the
next lemma shows that
$\dom\beta_v\cap \supp(K_0)=(\fix\beta_v)^\circ$, and furthermore,
$\supp(K_0)$ is the interior of the closure of $(\fix\beta_v)^\circ$.
This technical fact  will play a useful role in Section~\ref{CEHWG},
but it seems convenient to place it here.
\begin{lemma}\label{fixptchar}  Let $(\C,\D)$ be an
  inclusion and $v\in\N(\C,\D)$.
  Then
  \begin{equation}\label{fixedideal0}
    \supp(K_0\cap \overline{v^*v\D})=(\fix\beta_v)^\circ\dstext{and}
    K_0=(K_0\cap \overline{v^*v\D})^\dperp.
  \end{equation}
\end{lemma}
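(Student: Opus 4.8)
The plan is to convert both claims into statements about supports in the Boolean algebra $\ropen(\hat\D)$ and then to identify the relevant open sets. The key structural facts I would use are: $K_0$ is a regular ideal (it belongs to a Frol\'ik family), $(K_0\cap\overline{v^*v\D})^\dperp$ is automatically regular, two regular ideals with the same support coincide, and $\supp(J^\dperp)=(\overline{\supp J})^\circ$. Granting the identity $\dom\beta_v\cap\supp(K_0)=\supp(K_0\cap\overline{v^*v\D})$ recorded just before the statement, the first equation is equivalent to $\supp(K_0)\cap\dom\beta_v=(\fix\beta_v)^\circ$, and, once that is known, the second equation becomes $\supp(K_0)=\bigl(\overline{\supp(K_0)\cap\dom\beta_v}\bigr)^\circ$.

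For the inclusion $\supp(K_0)\cap\dom\beta_v\subseteq(\fix\beta_v)^\circ$ I would argue directly. If $\sigma\in\dom\beta_v$ and $\sigma(d)\neq0$ for some $d\in K_0$, then by Lemma~\ref{fixedideal} we have $vd=dv\in\D^c$, so $vdh=hvd$ for every $h\in\D$; multiplying on the left by $v^*$ and evaluating $\sigma$ (using $v^*hv,\,v^*v\in\D$) gives $\sigma(v^*hv)\,\sigma(d)=\sigma(v^*v)\,\sigma(h)\,\sigma(d)$, and cancelling $\sigma(d)$ shows $\beta_v(\sigma)=\sigma$. Thus $\supp(K_0)\cap\dom\beta_v\subseteq\fix\beta_v$, and since the left side is open it is contained in $(\fix\beta_v)^\circ$.

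The reverse inclusion $(\fix\beta_v)^\circ\subseteq\supp(K_0)$ is the heart of the proof and the step I expect to be the main obstacle, since it requires manufacturing elements of $K_0$. Given $\sigma_0\in(\fix\beta_v)^\circ$, I would pick $e\in\D$ with $0\le e\le1$, $\hat e(\sigma_0)=1$, and $\supp(e)\subseteq(\fix\beta_v)^\circ$, and then show $e\in K_0$, which forces $\sigma_0\in\supp(K_0)$. Since $\supp(e)\subseteq\dom\beta_v=\supp(v^*v)$, membership $e\in(v^*v\D)^\dperp$ is immediate. The substance is verifying $ve=ev$ and $ve\in\D^c$, and both reduce to the single fixed-point identity $\sigma(v^*dv)=\sigma(v^*v)\,\sigma(d)$, valid for $\sigma\in\fix\beta_v$ and $d\in\D$. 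Concretely, $(ve-ev)^*(ve-ev)$ lands in $\D$ (it equals $e^2v^*v-2(v^*ev)e+v^*e^2v$), and comparing Gelfand transforms shows this element is $\sigma\mapsto\sigma(v^*v)\bigl(\hat e(\sigma)-\hat e(\beta_v(\sigma))\bigr)^2$ on $\dom\beta_v$ and $0$ elsewhere, which vanishes identically because $\supp(e)\subseteq\fix\beta_v$; hence $ve=ev$. For centrality, the same fixed-point identity gives $(ve)^*d(ve)=(ve)^*(ve)\,d$ for all $d\in\D$, and I would then invoke the general principle that a normalizer $w$ satisfying $w^*dw=w^*w\,d$ for every $d\in\D$ must lie in $\D^c$; this principle is proved by expanding $(dw-wd)^*(dw-wd)$ and substituting the relation (applied to $d$ and to $d^2$) to see that it equals $0$. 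Applying it to $w=ve$ yields $ve\in\D^c$, so $e\in K_0$.

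Finally, for the second equation I would observe that $K_0\subseteq(v^*v\D)^\dperp$ gives $\supp(K_0)\subseteq(\overline{\dom\beta_v})^\circ\subseteq\overline{\dom\beta_v}$, so the open set $\supp(K_0)\cap\dom\beta_v$ is dense in $\supp(K_0)$; taking closures and interiors, and using that $\supp(K_0)$ is regular open, yields $\supp(K_0)=\bigl(\overline{\supp(K_0)\cap\dom\beta_v}\bigr)^\circ$. Since $K_0$ and $(K_0\cap\overline{v^*v\D})^\dperp$ are both regular ideals with this common support, they are equal. The only genuinely delicate points are the three normalizer computations in the previous paragraph, each of which reduces to comparing Gelfand transforms on $\supp(e)$.
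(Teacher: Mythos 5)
Your proof is correct. The overall architecture --- establish $\supp(K_0\cap\overline{v^*v\D})=(\fix\beta_v)^\circ$ by two inclusions, with the hard direction being the manufacture of elements of $K_0$ supported inside $(\fix\beta_v)^\circ$, and then deduce the second identity from the first --- is the same as the paper's, and your easy inclusion is essentially the paper's argument. The two hard steps, however, are executed differently. For $(\fix\beta_v)^\circ\subseteq\supp(K_0\cap\overline{v^*v\D})$, the paper takes an arbitrary $d$ with $\supp\hat d\subseteq(\fix\beta_v)^\circ$, verifies $\rho(v^*dv)=\rho(v^*vd)$ for all $\rho\in\hat\D$ by a three-case analysis, and then upgrades $v^*(dv-vd)=0$ to $dv=vd$ via the limit $(vv^*)^{1/n}v\to v$; your device of expanding $(ve-ev)^*(ve-ev)$ as an element of $\D$ whose Gelfand transform visibly vanishes (using that $\beta_v(\sigma)\in\fix\beta_v$ forces $\sigma\in\fix\beta_v$) reaches $ve=ev$ in one stroke and avoids the functional-calculus step, at the cost of your auxiliary principle that a normalizer $w$ with $w^*dw=w^*wd$ for all $d\in\D$ lies in $\D^c$ --- a correct and reusable C*-identity, whereas the paper gets centrality for free by noting that $dh$ is again supported in the fixed-point set. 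For the identity $K_0=(K_0\cap\overline{v^*v\D})^\dperp$, the paper shows $K_0\cap J^\perp=(0)$ directly by trapping $\supp\hat d$ in the nowhere dense set $\overline{\supp(\widehat{v^*v})}\setminus\supp(\widehat{v^*v})$, whereas you route the claim through the Boolean-algebra isomorphism between regular ideals and regular open sets together with the density of $\supp(K_0)\cap\dom\beta_v$ in $\supp(K_0)$; these are dual formulations of the same topological fact, but your version isolates the bookkeeping (regularity of $K_0$ and of any biannihilator, injectivity of $J\mapsto\supp(J)$ on $\rideal(\D)$) more cleanly, while the paper's is self-contained at the level of individual elements.
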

\begin{proof}    Let $J:=K_0\cap \overline{v^*v\D}$. Suppose $\sigma\in\hat\D$ and $\sigma|_J\neq 0$.  Let
  $d\in J$ be such that $\sigma(d)\neq 0$.  Suppose $\rho\in\hat\D$
  and $\rho(d)\neq 0$.  Then for any $h\in\D$, $dh\in K_0$, so
  $dhv=vdh\in\D^c$.  Thus, 
  \[\beta_v(\rho)(h)=\frac{\beta_v(\rho)(hd)}{\rho(d)}=\frac{\rho(hd)}{\rho(d)}=\rho(h),\]
  whence $\rho\in\fix\beta_v$.  As this holds for every such $\rho$,
  $\sigma\in (\fix\beta_v)^\circ$.  Therefore,
  $\supp(J)\subseteq (\fix\beta_v)^\circ$.

For the converse, suppose $d\in\D$ satisfies $\supp\hat d\subseteq
(\fix\beta_v)^\circ$.   We first show that
for every $\rho\in\hat{\D}$, 
\begin{equation}\label{acom}
\rho(v^*dv)=\rho(v^*vd).
\end{equation}
Let $\rho\in\hat{\D}$.  There are three cases.  First suppose
$\rho(v^*v)=0$. The Cauchy-Schwartz inequality gives,
\[|\rho(v^*dv)|\leq
\rho(v^*d^*dv)^{1/2}\rho(v^*v)^{1/2}=0,\] so \eqref{acom} holds when
$\rho(v^*v)=0$.  

Suppose next that $\rho(v^*v)>0$ and $\beta_v(\rho)(d)\neq 0$.
Then $\beta_v(\rho)\in \supp(\hat{d})$, so $\beta_v(\rho)\in
\fix{\beta_v}=\fix{\beta_{v^*}}$.  Thus, we get $\beta_v(\rho)=
\beta_{v^*}(\beta_v(\rho))=\rho$,
and hence $\rho(v^*dv)=\rho(v^*v)\rho(d)=\rho(v^*vd)$.

Finally suppose that $\rho(v^*v)>0$ and $\beta_v(\rho)(d)=0$.  Then
$\rho(v^*dv)=0$.  We shall show that $\rho(d)=0$.  If not,  the hypothesis
on $d$ shows that $\rho\in\fix{\beta_v}$.  Hence, 
$0\neq\rho(d)=\beta_v(\rho)(d)=\frac{\rho(v^*dv)}{\rho(v^*v)}=0$,
which is absurd.  So $\rho(d)=0$, and~\eqref{acom} holds in
this case also.    Thus we have established \eqref{acom} in all cases.

Thus $v^*dv=v^*vd$.   So for every $n\in \bbN$, 
\[0=v^*dv-v^*vd=v^*(dv-vd)=vv^*(dv-vd)=(vv^*)^n(dv-vd).\]  It follows
that for every polynomial $p$ with $p(0)=0$, 
$p(vv^*)(dv-vd)=0$.  Therefore, for every $n\in\bbN$,
$$0=(vv^*)^{1/n}(dv-vd)=d(vv^*)^{1/n}v-(vv^*)^{1/n}vd.$$  Since
$\lim_{n\rightarrow \infty} (vv^*)^{1/n}v=v$, we have $vd=dv$.
Clearly if $h\in\D$, then
$\supp(\widehat{dh})\subseteq (\fix\beta_v)^\circ$, so that
$vdh=hdv=hvd$, whence $vd\in\D^c$.  The first equality in~\eqref{fixedideal0} now
follows.

To prove the second equality in~\eqref{fixedideal0}, we will show that
\begin{equation}\label{fixptchar00}
  K_0\cap J^\perp=(0).
\end{equation}
So suppose  that $d\in K_0$ and $d\in J^\perp$.  As $d\in K_0$, $d\in
\{v^*v\}^\dperp$, so \begin{equation}\label{fixptchar1}
  \supp
\hat d\subseteq 
\overline{\supp(\widehat{v^*v})}.
\end{equation}
We claim that
\begin{equation}\label{fixptchar2}
  \supp\hat
  d\cap \supp(\widehat{v^*v})\subseteq (\fix\beta_v)^\circ.
\end{equation}
Notice that for  $h\in \D$,
\begin{equation}\label{fixptchar3}
  (v^*hv-v^*vh)d=v^*hvd-v^*vhd=0
\end{equation}
because $vd\in\D^c$.  Let
$\rho\in \supp\hat d\cap \supp \widehat{v^*v}$.  Since
$\rho(d)\rho(v^*v)\neq 0$, applying $\rho$ to~\eqref{fixptchar3} gives
$\beta_v(\rho)(h)=\rho(h)$ for every $h\in\D$.  The inclusion
\eqref{fixptchar2} follows.

If $\rho\in( \fix\beta_v)^\circ$, we may choose $h\in J$ such
that $\rho(h)\neq 0$.  Since $d\in J^\perp$, $dh=0$, so $\rho(d)=0$.
Thus $\hat d$ vanishes on $(\fix\beta_v)^\circ$.   
Using~\eqref{fixptchar1} and~\eqref{fixptchar2},
we conclude that \[\supp\hat d\subseteq
\overline{\supp(\widehat{v^*v})}\setminus \supp(\widehat{v^*v}),\]
which is a set with empty interior.  Thus $d=0$.  
\end{proof}

\section{Additional Preliminaries: Twists and their \cstaralg s}\label{twc*al}
In this section, we collect  some generalities on twists and the (reduced)
\cstaralg s associated to them for use in Sections~\ref{CEHWG} and~\ref{GpReIn}.  Much of this material
can be found in~\cite{RenaultCaSuC*Al} or~\cite{SimsHaEtGrThC*Al}.
The description of groupoids is standard, and we include it for
notational purposes.  Associated to a twist are a line bundle and its
conjugate, and a well-known construction associates a reduced
\cstaralg\ to each.  These \cstaralg s are anti-isomorphic.  This
unsurprising fact is doubtless known, but because we have not found a
proof of this fact in the literature, we provide a sketch of one in
Proposition~\ref{kpm} below.

There is inconsistency in the literature regarding which line bundle
to choose when constructing the \cstaralg\ associated to a twist---for
example, the line bundle described in~\cite[
Definition~$4^\circ$]{KumjianOnC*Di} is the conjugate of the line
bundle used in~\cite[Section~4, p.\ 39]{RenaultCaSuC*Al}.  However,
Kumjian uses homogeneous functions to define the convolution algebra
while Renault uses conjugate-homogeneous functions, so (as also noted
in Proposition~\ref{kpm}) the reduced \cstaralg s discussed by Kumjian
and Renault are the same.  For the Weyl twist (described in
Example~\ref{Wtw}), it seems more natural to use Renault's choice of
line bundle because the action of $\bbT$ arises from scalar
multiplication on $\N(\C,\D)$.  On the other hand, in
Section~\ref{GpReIn} we construct twists from certain families of
linear functionals on $\C$.  For these twists, it seems natural to use
Kumjian's choice of line bundle because the action of $\bbT$ arises
from scalar multiplication on linear functionals.

Let $G$ be a groupoid.  We use $\unit{G}:=\{g\in G: g=g^{-1}g\}$ for the
unit space of $G$.  We often use $r$ and $s$ for the range and
source maps:  $r(g)=gg^{-1}$ and $s(g)=g^{-1}g$.  Also if $x\in
\unit{G}$,
\[G^x:=\{g\in G: r(g)=x\}\dstext{and} G_x:=\{g\in G: s(g)=x\}.\]   We
will frequently write $xG$ (resp. $Gx$) instead of $G^x$ (resp $G_x$). 

\begin{definition} A locally compact topological groupoid $G$ is an
  \textit{\' etale groupoid} if the range map (or equivalently the source
  map) is a local
  homeomorphism.  A subset $S\subseteq G$ is a \textit{bisection} if
  there is an open set $V\subseteq G$ with $S\subseteq V$ such that
  $r|_V$ and $s|_V$ are both homeomorphisms onto open subsets of
  $\unit{G}$.  An open bisection is sometimes called a \textit{slice}
  (see~\cite[Section~3]{ExelInSeCoC*Al}).
\end{definition}
\'Etale groupoids may be regarded as the analog of discrete
groups.  They have a number of pleasant properties, including:
\begin{itemize}
\item the unit space of an \'etale groupoid is open;
\item for an \'etale groupoid $G$, if $x\in\unit{G}$, then $G_x$ and
  $G^x$ are discrete sets.
\end{itemize}

A twist is the analog of a central extension of a discrete group by
the circle $\bbT$.  Here is the formal definition.
\begin{definition}[{\cite[Definition~5.1.1]{SimsHaEtGrThC*Al}}] \label{twistdef}
  Let
  $\Sigma$ and $G$ be (not necessarily
  Hausdorff) locally compact groupoids with $G$ \'etale, and let
  $\bbT\times \unit{G}$ be the product groupoid.  (That is,
  $(z_1,e_1)(z_2,e_2)$ is defined if and only if $e_1=e_2$, in which
  case the product is $(z_1z_2,e_1)$, inversion is given by $(z,e)^{-1}=(z^{-1},e)$, and the
  topology is the product topology.)  Note that the unit space of
  $\bbT\times\unit{G}$ is $\{1\}\times \unit{G}$.  A \textit{twist}  is a sequence
  \begin{equation}\label{twistD}
    \bbT\times\unit{G}\overset{\iota}\hookrightarrow
    \Sigma\overset{\fq}\twoheadrightarrow G
  \end{equation}
  where
  \begin{enumerate}
    \item\label{twdef1} $\iota$ and $\fq$ are continuous groupoid homomorphisms with
      $\iota$ one-to-one and $\fq$ onto;
    \item \label{twdef3} $\fq^{-1}(\unit{G})=\iota(\bbT\times
        \unit{G})$;
    \item\label{twdef2} $\iota|_{\{1\}\times \unit{G}}$ and $\fq|_{\unit{\Sigma}}$
        are homeomorphisms onto $\unit{\Sigma}$ and $\unit{G}$
        respectively (thus we may, and do, identify $\unit{\Sigma}$ and $\unit{G}$ using $\fq$);
      \item \label{twdef4}  for every $\gamma\in \Sigma$ and $z\in\bbT$,
         $\iota(z,r(\gamma)) \, \gamma =\gamma\, \iota(z,s(\gamma))$; and
       \item\label{twdef5} for every $g\in G$ there is an open
         bisection $U\subseteq G$ with
         $g\in U$ and a continuous function $j_U: U\rightarrow \Sigma$
         such that $\fq\circ j_U=\id|_U$ and the map
         $\bbT\times U\ni (z\times h)\mapsto \iota(z,r(h))\, j_U(h)$
         is a homeomorphism of $\bbT\times U$ onto $\fq^{-1}(U)$.
       \end{enumerate}
For $\gamma\in \Sigma$, we will often denote $q(\gamma)$ by
$\dot\gamma$; indeed, we will usually use  the name $\dot\gamma$
for an arbitrary element of $G$.

  When $G$ is Hausdorff, we shall say  the twist $
    \bbT\times\unit{G}\overset{\iota}\hookrightarrow
    \Sigma\overset{\fq}\twoheadrightarrow G$ is a \textit{Hausdorff twist}.
         For $z\in\bbT$ and $\gamma\in\Sigma$ we will write
       \[z\cdot\gamma:=\iota(z,r(\gamma))\gamma\dstext{and} \gamma\cdot
         z:=\gamma\iota(z,s(\gamma)).\]   This action of $\bbT$ on
       $\Sigma$ is free.

       The map
  $q:\gamma\rightarrow G$ is necessarily a quotient map (see~\cite[Exercise~9K(3)]{WillardGeTo}).
     \end{definition}

\begin{definition}  The twist $\bbT\times
  \unit{G_2}\overset{\iota_2}\hookrightarrow
  \Sigma_2\overset{\fq_2}\twoheadrightarrow G_2$ is an
  \textit{extension} of the twist  $\bbT\times
  \unit{G_1}\overset{\iota_1}\hookrightarrow
  \Sigma_1\overset{\fq_1}\twoheadrightarrow G_1$ if there   are
  are continuous groupoid
  monomorphisms $\theta: G_1\hookrightarrow G_2$ and $\alpha:
  \Sigma_1\hookrightarrow \Sigma_2$ such that $\theta(G_1)$ is closed
  in $G_2$ and 
\[\fq_2\circ \alpha= \theta\circ \fq_1\dstext{and}
  \iota_2\circ (\text{id}|_\bbT\times \theta|_{\unit{G_1}})=\alpha\circ\iota_1.\]
When $\theta$ and $\alpha$ are homeomorphisms and
groupoid isomorphisms,  these 
  twists are \textit{equivalent} or \textit{isomorphic}.   Finally, if
  $\theta$ and $\alpha$ are inclusion maps,  $\bbT\times
  \unit{G_1}\overset{\iota_1}\hookrightarrow
  \Sigma_1\overset{\fq_1}\twoheadrightarrow G_1$ is a
  \textit{subtwist} of $\bbT\times
  \unit{G_2}\overset{\iota_2}\hookrightarrow
  \Sigma_2\overset{\fq_2}\twoheadrightarrow G_2$.
\end{definition}

\begin{remark}{Notation}
       We will denote a twist in several ways: i) by explicitly writing
       a sequence such as~\eqref{twistD}; ii) writing $(\Sigma, G,\iota,
       \fq)$, or iii) when the maps $\iota$ and $\fq$ are understood,
       simply by $(\Sigma,G)$. 
     \end{remark}

\begin{definition}  Given a twist $(\Sigma, G, \iota,\fq)$,  define $\overline\iota:
  \bbT\times \unit{G}\hookrightarrow \Sigma$ by
  $\overline{\iota}(z,e)=\iota(\overline z, e)$.  The  \textit{conjugate
    twist} to $(\Sigma,G,\iota,\fq)$ is the twist $(\Sigma,
  G,\overline\iota, \fq)$.   We will  sometimes denote it by
  $(\overline\Sigma, G,\overline\iota,\fq)$.
\end{definition}

\begin{remark}{Example}
  \label{Wtw} 
  An important example of a twist associated to a regular inclusion
  $(\C,\D)$ is the \textit{Weyl twist}, which we briefly describe
  here.  The construction is due to Kumjian and Renault
  (see~\cite{KumjianOnC*Di} and \cite{RenaultCaSuC*Al}).  Kumjian and
  Renault use different descriptions of the equivalence relation
  determining $G$, but it is known that their descriptions agree for
  regular MASA inclusions, a fact which also follows from
  Lemma~\ref{chareqrel} below.  We use Kumjian's description because
  it yields a twist for every regular inclusion, whereas Renault's
  description need not.

Given the regular   inclusion $(\C,\D)$, set 
\[\fX:=\{(\sigma_2, v,\sigma_1)\in \hat\D\times
  \N(\C,\D)\times\hat\D: \sigma_2\in \ran\beta_v, \sigma_1\in
  \dom\beta_v\text{ and }\sigma_2=\beta_v(\sigma_1)\}.\]  Define
\[\fR_1:=\{((\sigma_2,v',\sigma_1),(\sigma_2,v,\sigma_1))\in
  \fX\times\fX: \exists \, d, d'\in \D \text{ with } \sigma_1(d)>0,
  \sigma_1(d')>0 \text{ and } v'd'=vd \}\] and 
\[\fR_\bbT:=\{((\sigma_2,v',\sigma_1),(\sigma_2,v,\sigma_1))\in
  \fX\times\fX: \exists \, d, d'\in \D \text{ with } \sigma_1(d)\neq
  0, \sigma_1(d')\neq 0 \text{ and } v'd'=vd \}.\] (The notation
$\fR_1$ and $\fR_\bbT$ reflect that fact that the polar parts of
$\sigma(d)$ and $\sigma(d')$ belong to the group $\{1\}$ or $\bbT$.)
Then $\fR_1$ and $\fR_\bbT$ are equivalence relations on $\fX$ with
$\fR_1\subseteq \fR_\bbT$.  Denote the equivalence class of
$(\sigma_2,v,\sigma_1)\in \fX$ relative to $\fR_1$ and $\fR_\bbT$ by
$[\sigma_2,v,\sigma_1]_1$ and $[\sigma_2, v,\sigma_1]_\bbT$
respectively.  Use
\[\Sigma:=\fX/\fR_1\dstext{and} G:=\fX/\fR_\bbT\] to denote the
sets of equivalence classes for $\fR_1$ and $\fR_\bbT$.  Then $\Sigma$ and
$G$ become groupoids with operations defined as follows.  For $\varkappa\in
\{1,\bbT\}$, the pair 
$([\sigma_2,v,\sigma_1]_\varkappa,[\sigma_2',v',\sigma_1']_\varkappa)\in (\fX/\fR_\varkappa)^2$ is composable
 iff $\sigma_1=\sigma_2'$ in which case, the product is $[\sigma_2,
vv',\sigma_1']_\varkappa$; inverses are defined by $[\sigma_2, v, \sigma_1]^{-1}_\varkappa:=[\sigma_1, v^*,
\sigma_2]_\varkappa$.  The unit spaces are:
\[\unit{G}=\{[\sigma, d, \sigma]_\bbT: d\in \D, \sigma(d)\neq 0\}
  \dstext{and}\unit{\Sigma}=\{[\sigma, d, \sigma]_1: d\in \D,
  \sigma(d)>0\}.\] Define
$\iota: \bbT\times \unit{G}\rightarrow \Sigma$ by
$\iota(z, [\sigma, d, \sigma]_\bbT)= [\sigma, z d^*d,
\sigma]_1 $.  Note that $\iota$ maps ${\{1\}\times \unit{G}}$
bijectively onto $\unit{\Sigma}$.  The projection map
$\fq:\Sigma\rightarrow G$ given by
$[\sigma_2,v,\sigma_1]_1\mapsto [\sigma_2, v,\sigma_1]_\bbT$ is a
groupoid homomorphism with $\fq|_{\unit{\Sigma}}$ a bijection of
$\unit{\Sigma}$ onto $\unit{G}$.

Suppose
$[\sigma_2, v, \sigma_1]_1$ and $[\sigma_2', v',
\sigma_1']_1$ satisfy $\fq([\sigma_2, v, \sigma_1]_1)=\fq([\sigma_2', v', \sigma_1']_1 ')$.   Then for $i=1,2$,
$\sigma_i=\sigma_i'$ and there are $d, d'\in\D$ with $\sigma_1(d)$,
$\sigma_1(d')$ both non-zero such that $vd=v'd'$.  Choosing $z,
z'\in\bbT$ so that $z\sigma_1(d)>0$ and $z'\sigma_1(d') > 0$ we obtain
$[\sigma_2,  v', \sigma_1]_1=[\sigma_2, \overline{z} z' v,
\sigma_1]_1=\iota(\overline{z}z', [\sigma_2, I, \sigma_2]_\bbT)\, [\sigma_2,
v, \sigma_1]_1$.    In particular, if $[\sigma_2, v,\sigma_1]_\bbT=
[\sigma_2, v',\sigma_1]_\bbT$, then there exists a unique
$\lambda\in\bbT$ such that $[\sigma_2, v',\sigma_1]_1=[\sigma_2,
\lambda v, \sigma_1]_1$. 
Thus, we obtain the sequence,
\begin{equation}\label{WeylTw}\bbT\times \unit{G}\overset{\iota}\hookrightarrow
  \Sigma\overset{\fq}\twoheadrightarrow G
\end{equation}
satisfying conditions~\eqref{twdef3}
and~\eqref{twdef4} of Definition~\ref{twistdef}.

The next task is to describe the topologies on $G$ and $\Sigma$. 
Let $v\in \N(\C,\D)$ and set 
\[N_\bbT(v):=\{g\in G: \exists \, \sigma_1, \sigma_2\in\hat\D \text{
    such that } g=[\sigma_2, v, \sigma_1]_\bbT\}.\]
The collection of such sets forms a base
for a topology on $G$; also for $g=[\sigma_2,v,\sigma_1]_\bbT\in G$,
\[\{N_\bbT(vh): h\in\D \text{ and }\sigma_1(h)\neq 0\}\] is a local
base at $g$.  With this topology, $G$ becomes an \'etale topological
groupoid, but in general, it need not be Hausdorff.  In fact, we show
in Theorem~\ref{CET2} below that when $(\C,\D)$ is a regular MASA
inclusion,  $G$ is Hausdorff if and only if there
exists a conditional expectation $E:\C\rightarrow \D$.

To describe the topology on $\Sigma$, observe that for
$v\in \N(\C,\D)$, $\bbT\times N_\bbT(v)$ is homeomorphic to
$\fq^{-1}(N_\bbT(v))$ via the map
$\bbT\times N_\bbT(v)\ni (z, [\sigma_2, w, \sigma_1]_\bbT) \mapsto
[\sigma_2, z\, vw^*w, \sigma_1]_1$; use this map to identify
$\bbT\times N_\bbT(v)$ with $\fq^{-1}(N_\bbT(v))$.  A base for a
topology on $\Sigma$ is the collection of subsets of $\Sigma$ of the
form $\O\times N_\bbT(v)$ where $\O\subseteq \bbT$ is open.
With these topologies, ~\eqref{WeylTw} becomes a twist, which is
the \textit{Weyl twist} for the regular  inclusion $(\C,\D)$.
\end{remark}

\subsection*{Line Bundles Over Twists}
Associated to a twist $(\Sigma,G,\iota,\fq)$ and an integer $k\in \{-1,1\}$ is a
line bundle $L_k$ over $G$, which can then be used to construct
convolution algebras, which in turn produce \cstaralg s of interest.
This process has been previously studied by various authors
(e.g.~\cite{RenaultCaSuC*Al,SimsHaEtGrThC*Al})  and a
description of $L_{-1}$ has also been described
in~\cite{BrownFullerPittsReznikoffGrC*AlTwGpC*Al}.   For convenience,
we give a brief description here.

Fix a twist $(\Sigma,G,\iota,\fg)$ and $k\in\{-1,1\}$.  The group
$\bbT$  acts  freely on $\bbC\times \Sigma$:   for $z\in\bbT$, send
$(\lambda,\gamma)$ to
$(\lambda z^k, z\cdot \gamma)$.  Let $L_k$ be  the
quotient of $\bbC\times \Sigma$ by the equivalence relation determined
by this action.  (To be explicit, this equivalence relation is given
by  $(\lambda_1,\gamma_1)\sim_k
(\lambda_2,\gamma_2)$ if and only if there exists $z\in\bbT$ such that
$(\lambda_2,\gamma_2)= (\lambda_1z^k, z\cdot\gamma_1)$.)

Let
$[\lambda,\gamma]_k$ denote the equivalence class of
$(\lambda,\gamma)$ and equip $L_k$ with the quotient topology.  When
$k$ is clear from the context, we shall simplify notation and write
$[\lambda, \gamma]$ instead of $[\lambda,\gamma]_k$.

Notice that for $z\in \bbT$ and $[\lambda, \gamma]\in L_k$,
\begin{equation}\label{Lkact}
  [\lambda, z\cdot\gamma]=[\lambda \overline{z}^k,\gamma].
\end{equation}
The
map $P:L_k\rightarrow G$ given by $P([\lambda,\gamma])=\dot\gamma$ is
a continuous surjection whose fibers are homeomorphic to $\bbC$.
For $\dot\gamma\in G$, there is no canonical choice of $\gamma\in
P^{-1}(\dot\gamma)$, 
however, when  $x\in \unit{G}$,  $P^{-1}(x)\cap \unit{\Sigma}$ is a singleton
set, and, recalling that we have previously identified $\unit{G}$ and
$\unit{\Sigma}$,  we will usually identify $P^{-1}(x)$ with $\bbC$ using the
map $\lambda\mapsto [\lambda, x]$.
When given the following operations, $L_k$ becomes a Fell line bundle
over $G$ (\cite[Definition~2.1]{KumjianFeBuOvGr}):
\begin{description}
  \item[product] the product 
    $[\lambda_1,\gamma_1][\lambda_2,\gamma_2]$ is defined when the
    product $\gamma_1\gamma_2$ is defined in $\gamma$, in which case,
    $[\lambda_1,\gamma_1][\lambda_2,\gamma_2]:=[\lambda_1\lambda_2,\gamma_1\gamma_2]$;
    \item[conjugation]
      $\overline{[\lambda,\gamma]}:=[\overline\lambda, \gamma^{-1}]$;
      \item[scalar multiplication] for $\mu\in\bbC$,
        $\mu[\lambda,\gamma]:=[\mu,r(\gamma)][\lambda,\gamma]=[\mu\lambda,\gamma]$;
        and 
        \item[addition] addition of $[\lambda_1,\gamma_1]$ and
          $[\lambda,\gamma_2]$ is defined when
          $\dot\gamma_1=\dot\gamma_2$, in which case,
          $[\lambda_1,\gamma_1]+[\lambda_2,\gamma_2]=[\lambda_1+\overline{z}^k\lambda_2,
          \gamma_1]$, where $z$ is the (necessarily unique) element of
          $\bbT$  such that $\gamma_2=z\cdot
          \gamma_1$.
        \end{description}
Note that since $G$ is locally trivial, so is $L_k$
(see~\cite[Section~2.2]{BrownFullerPittsReznikoffGrC*AlTwGpC*Al}).   
        
Finally, there is
a  continuous map $\varpi: L\rightarrow [0,\infty)$ given by \[\varpi([\lambda,
\gamma]):=|\lambda|.\] 
When $f: G\rightarrow L$ is a section and $\dot\gamma\in G$, we will often write
$|f(\dot\gamma)|$ instead of $\varpi(f(\dot\gamma))$.

\subsection*{\cstaralg s associated to Hausdorff twists}
While it is possible to define \cstaralg s associated to non-Hausdorff
groupoids or twists (a discussion of this process may be
found in~\cite{ExelPittsChGrC*AlNoHaEtGr}), we will not need \cstaralg s
arising from non-Hausdorff groupoids in the sequel.  Thus, for the
remainder of this section, \textit{we will assume $G$ is Hausdorff.}

Fix a twist $(\Sigma, G,\iota,\fq)$ and $k\in\{-1,1\}$.  
        The \textit{open support} of a  continuous section $f:
        G\rightarrow L_k$ is
        \[\supp(f):=\{\dot\gamma\in G: \varpi(f(\dot\gamma))\neq
          0\},\] and $f$ is 
        \textit{compactly supported} if the closure of $\supp(f)$ is a
        compact subset of $G$.   Let $C_c(\Sigma,G,k)$ denote the linear space of all
        compactly supported continuous sections of $L_k$.  For $f,g\in
        C_c(\Sigma,G,k)$, and $\dot\gamma\in G$, define 
        \begin{equation}\label{Lkops}
          (f\star g)(\dot\gamma):=\sum_{\substack{\dot\gamma_1,\dot\gamma_2\in G,\\
              \dot\gamma_1\dot\gamma_2=\dot\gamma}}
          f(\dot\gamma_1)g(\dot\gamma_2) \dstext{and}
          f^*(\dot\gamma):=\overline{f(\dot\gamma^{-1})}. 
        \end{equation}
        These operations make $C_c(\Sigma,G,k)$ into a $*$-algebra.
        In addition, given $f\in C_c(\Sigma, G, k)$ we also write
        $\overline f$ for the function
        $\dot\gamma\mapsto \overline{f(\dot\gamma)}$.

        As described in \cite[Section~2]{BrownFullerPittsReznikoffGrC*AlTwGpC*Al}, we shall sometimes find it convenient to view elements of
        $C_c(\Sigma, G,k)$ as compactly supported scalar-valued
        functions on $\Sigma$.    Here is a brief outline of how this
        is done.  If
        $f\in C_c(\Sigma,G,k)$, then given $\dot\gamma\in G$, we may
        choose $\gamma\in P^{-1}(\dot\gamma)$ and a scalar
        $\tilde f(\gamma)$ so that
        $f(\dot\gamma)=[\tilde{f}(\gamma), \gamma]$.  For $z\in\bbT$,
        we may replace $\gamma$ with $z\cdot \gamma$, so we also find
        $f(\dot\gamma)=[\tilde{f}(z\cdot \gamma),z\cdot \gamma]$.
        Using~\eqref{Lkact}, we see that $\tilde f$ is $k$-equivariant
        in the sense that for every $\gamma\in \gamma$ and $z\in\bbT$,
\[\tilde f(z\cdot \gamma)=z^k \tilde f(\gamma).\]  On the other hand,
if $\tilde f$ is a compactly supported continuous $k$-equivariant
scalar-valued function on $\Sigma$, then defining
$f(\dot\gamma):=[\tilde f(\gamma), \gamma]_k$ gives an element of
$C_c(\Sigma,G,k)$.  The map $f\mapsto \tilde f$ is a linear bijection
between $C_c(\Sigma,G,k)$ and the space of compactly supported
continuous $k$-equivariant functions on $\Sigma$.

When viewed as
functions on $\Sigma$, the operations of
addition and scalar multiplication are pointwise, involution
becomes $f^*(\gamma)=\overline{f(\gamma^{-1})}$ and the 
convolution multiplication is
\begin{equation}\label{covmul}
  (f\star g)(\gamma)=\sum_{\substack{\dot\gamma_1 \in G \\
r(\dot\gamma_1)=r(\dot\gamma)}} f(\gamma_1)g(\gamma_1\inv \gamma),
\end{equation} where for each $\dot\gamma_1$ with
$r(\dot\gamma_1)=r(\dot\gamma)$, only one representative $\gamma_1$ of
$\dot\gamma_1$ is chosen.  Note that~\eqref{covmul}
gives a well-defined product.

For $x\in \unit{G}$ and $f\in C_c(\Sigma,G,k)$, let $\eps_{x,k}(f):=\tilde
f(x)$.  Then $\eps_{x,k}$ is a positive linear functional in the sense
that $\eps_{x,k}(f^*\star f)\geq 0$ for every $f\in C_c(\Sigma,G,k)$.
The GNS construction produces a $*$-representation $(\pi_{x,k},\H_{x,k})$ of
$C_c(\Sigma,G,k)$, and $C^*_r(\Sigma,G,k)$ is defined to be the
completion of $C_c(\Sigma,G,k)$ with respect to the norm,
\[\norm{f}:=\sup_{x\in \unit{G}} \norm{\pi_{x,k}(f)},\qquad f\in C_c(\Sigma,G,k).\]  We will generally
write the product in $C^*_r(\Sigma,G,k)$ using concatenation rather
than using the symbol $\star$, as in the definition of $C_c(\Sigma,G,k)$. 

We now show  the \cstaralg s $C^*_r(\Sigma,G,k)$ and
$C^*_r(\Sigma,G,-k)$ are anti-isomorphic.  To begin, 
let $\fc: \bbC\times \Sigma\rightarrow \bbC\times \Sigma$ be defined
by  \[\fc(\lambda, \gamma)=(\overline\lambda, \gamma).\]  Note that
$(\lambda_1, \gamma_1)\sim_k
(\lambda_2, \gamma_2)$ if and only if
$\fc(\lambda_1,\gamma_1)\sim_{-k} \fc(\lambda_2, \gamma_2)$.  In
particular, $\fc$ induces a homeomorphism, again called $\fc$,  from
$L_k$ to $L_{-k}$ satisfying $\fc^2=\text{id}|_{L_k}$.  
For $k\in\{-1,1\}$, $i\in \{1,2\}$, $\gamma_i\in \Sigma$,
$\lambda_i\in\bbC$  and $\mu\in \bbC$, calculations yield,
\begin{itemize}
\item $\fc (\mu [\lambda_1, \gamma_1]_{k})=\overline\mu \fc
  ([\lambda_1,\gamma_1]_k)$;
\item when $\dot\gamma_1=\dot\gamma_2$, $\fc
  ([\lambda_1,\gamma_1]_k+[\lambda_2, \gamma_1]_k)=
 \fc
  ([\lambda_1,\gamma_1]_k)+\fc([\lambda_2, \gamma_1]_k)$;
\item when $\gamma_1\gamma_2$ is defined,
$\fc([\lambda_1,
\gamma_1]_k[\lambda_2,\gamma_2]_k)=\fc([\lambda_1,\gamma_1]_k)
\fc([\lambda_2, \gamma_2]_k)$; and
\item $\fc\left(\overline{[\lambda_1, \gamma_1]}_k\right)=\overline{\fc([\lambda_1,\gamma_1]_k)}$.
\end{itemize}
In this sense, the bundles $L_k$ and $L_{-k}$ are conjugate.

Next define  $V: C_c(\Sigma, G,k)\rightarrow C_c(\Sigma, G, -k)$
by
\[(Vf)(\dot \gamma)= \fc(f(\dot\gamma)).\]   Clearly $V^2$ is the
identity mapping on $C_c(\Sigma, G, k)$.

Define the \textit{transpose map}, $\tau: C_c(\Sigma, G, k)\rightarrow
C_c(\Sigma, G, -k)$ by
\[\tau(f):=V(f^*).\]  Then $\tau$ is a linear, 
anti-isomorphism of $C_c(\Sigma, G, k)$ onto $C_c(\Sigma, G, -k)$.
For $f\in C_c(\Sigma,G,k)$ and $\dot\gamma\in G$, a calculation shows 
\[\tau(f)^*(\dot\gamma)=\fc(f(\dot\gamma))=\tau(f^*)(\dot\gamma).\]
Thus $\tau$ is also adjoint-preserving.

For $x\in \unit{G}$, let $\eta_{x,k}: C_c(\Sigma, G, k)\rightarrow \H_{x,k}$
be defined by $\eta_{x,k}(f)=f+\N_{x,k}$, where $\N_{x,k}$ is the left kernel of
$\eps_{x,k}$.
Calculations show that for $f, g\in C_c(\Sigma, G, k)$,
$\eps_{x,k}(g^*\star f)=\eps_{x,-k}((Vf)^*\star (Vg))$, that is,
\[\innerprod{\eta_{x,k}(f),\eta_{x,k}(g)}_{\H_{x,k}}
  =\innerprod{\eta_{-k}(Vg),\eta_{-k}(Vf)}_{\H_{x,-k}}.\] Thus $V$
induces a surjective conjugate-linear isometry
$W_x:\H_{x,k}\rightarrow\H_{x,-k}$ given by
$\eta_{x,k}(f)\mapsto \eta_{x,-k}(Vf)$.  Therefore, $\H_{x,-k}$ is the
conjugate Hilbert space of $\H_{x,k}$.

Now we observe that the transpose map is isometric.  For
$x\in \unit{G}$ and $f, g\in C_c(\Sigma, G, k)$, a calculation yields
\[W_x\pi_{x,k}(f^*)\eta_{x,k}(g)=\pi_{x,-k}(\tau(f)) W_x \eta_{x,k}(g).\]  Therefore, for any
$f\in C_c(\Sigma, G,k)$,
\begin{equation}\label{tflip}
  W_x\pi_{x,k}(f^*)W_x^{-1}=\pi_{x,-k}(\tau(f)).
\end{equation}
Write $\norm{\cdot}_k$ for the norm in $C_r^*(\Sigma, G, k)$.  For
$f\in C_c(\Sigma, G, k)$, \eqref{tflip} implies
\[\norm{f}_k=\norm{f^*}_k=\norm{\tau(f)}_{-k}.\]   

These considerations yield the first part of the following  observation relating the \cstaralg s of a
twist and its conjugate.
\begin{proposition}\label{kpm}  Let $(\Sigma, G, \iota, \fq)$ be a
 (Hausdorff) twist  with conjugate twist $(\overline\Sigma, G, \overline\iota,\fq)$
  and let $k\in \{-1,1\}$.  The following statements hold.
  \begin{enumerate}
    \item The transpose map
  $\tau: C_c(\Sigma, G,k)\rightarrow C_c(\Sigma, G, -k)$
  extends
  to a $*$-preserving anti-isomorphism of $C_r^*(\Sigma,G, k)$ onto
  $C^*_r(\Sigma, G, -k)$.
\item Let $L_k(\Sigma)$ and $L_k(\overline\Sigma)$ denote the Fell
  bundles over $G$ associated to $(\Sigma, G, \iota, \fq)$ and
  $(\overline\Sigma, G, \overline\iota, \fq)$ respectively.  Then
  $L_k(\Sigma)$ and $L_{-k}(\overline\Sigma)$ are the same; thus
  $C_r^*(\Sigma,G, k)=C_r^*(\overline\Sigma, G,-k)$.
\end{enumerate}

\end{proposition}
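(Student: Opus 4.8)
The plan is to handle the two parts separately, since the discussion preceding the statement has already assembled all the analytic input for part~(a)—reducing it to a routine density argument—while part~(b) is a direct comparison of two quotient constructions that ultimately rests on the identity $\overline z^{\,k}=z^{-k}$ for $z\in\bbT$.

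For part~(a), I would observe that the computations culminating in~\eqref{tflip} exhibit the transpose map $\tau$ as an isometric, adjoint-preserving, linear anti-isomorphism of the dense $*$-subalgebra $C_c(\Sigma,G,k)$ of $C^*_r(\Sigma,G,k)$ onto the dense $*$-subalgebra $C_c(\Sigma,G,-k)$ of $C^*_r(\Sigma,G,-k)$. Because $\tau$ is isometric, it extends uniquely by continuity to an isometric linear map $\bar\tau\colon C^*_r(\Sigma,G,k)\rightarrow C^*_r(\Sigma,G,-k)$. Its range is closed, being the isometric image of a complete space, and contains the dense subspace $C_c(\Sigma,G,-k)$, so $\bar\tau$ is onto. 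The identities $\tau(fg)=\tau(g)\tau(f)$ and $\tau(f^*)=\tau(f)^*$ hold on $C_c(\Sigma,G,k)$, and since multiplication and involution are continuous they persist for $\bar\tau$; hence $\bar\tau$ is the asserted $*$-preserving anti-isomorphism. I expect no difficulty here.

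For part~(b), the key step is to identify the $\bbT$-action used in building the conjugate bundle with the one used for the original. Passing from $(\Sigma,G,\iota,\fq)$ to $(\overline\Sigma,G,\overline\iota,\fq)$ replaces $\iota$ by $\overline\iota(z,e)=\iota(\overline z,e)$, so the action of $\bbT$ on $\Sigma$ becomes $\gamma\mapsto\overline\iota(z,r(\gamma))\gamma=\overline z\cdot\gamma$, where $z\cdot\gamma$ denotes the original action. Consequently, in forming $L_{-k}(\overline\Sigma)$ the group $\bbT$ acts on $\bbC\times\Sigma$ by $(\lambda,\gamma)\mapsto(\lambda z^{-k},\overline z\cdot\gamma)$. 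Substituting $w=\overline z$ and using $\overline w^{\,-k}=w^{k}$ for $w\in\bbT$ rewrites this action as $(\lambda,\gamma)\mapsto(\lambda w^{k},w\cdot\gamma)$, which is precisely the action defining $L_k(\Sigma)$. Since the two actions coincide and $z$ and $w$ range over all of $\bbT$, the equivalence relations on $\bbC\times\Sigma$ agree, so the quotient spaces $L_{-k}(\overline\Sigma)$ and $L_k(\Sigma)$ coincide together with their quotient topologies.

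It then remains to check that the four Fell-bundle operations agree. Product, conjugation, and scalar multiplication are given by formulas involving neither the index $k$ nor the distinguished copy of $\bbT$, so they are literally identical for the two bundles. For addition one compares the coefficient $\overline w^{\,-k}$ in the $L_{-k}(\overline\Sigma)$ formula with the coefficient $\overline z^{\,k}$ in the $L_k(\Sigma)$ formula: writing $\gamma_2=z\cdot\gamma_1$ with respect to the original action, the $\bbT$-element realizing $\gamma_2$ in the conjugate action is $\overline z$, and $\overline{(\overline z)}^{\,-k}=z^{-k}=\overline z^{\,k}$, so the two additions agree. Finally, because the bundles and the identifications of the fibres $P^{-1}(x)$ over units $x\in\unit{G}$ coincide, the positive functionals $\eps_{x,k}$ used in the GNS construction are identical in the two settings, so the reduced $C^*$-algebras coincide, yielding $C^*_r(\Sigma,G,k)=C^*_r(\overline\Sigma,G,-k)$. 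The only real care needed is the bookkeeping of the conjugation and sign conventions in the $\bbT$-action—especially in the addition formula—where the identity $\overline z^{\,k}=z^{-k}$ for $z\in\bbT$ is what makes everything match.
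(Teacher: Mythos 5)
Your proposal is correct and follows essentially the same route as the paper: part (a) is the continuity/density extension of the isometric, adjoint-preserving anti-isomorphism $\tau$ already set up before the statement, and part (b) is the same substitution $w=\overline z$ showing the $\bbT$-orbits (hence the bundles and their operations) for $L_k(\Sigma)$ and $L_{-k}(\overline\Sigma)$ coincide. Your verification of the addition formula and the agreement of the functionals $\eps_{x,k}$ is slightly more explicit than the paper's sketch, but it is the same argument.
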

\begin{proof}  We have already outlined the proof of (a) above.
  
  (b) For $\lambda\in \bbC$, $z\in\bbT$, $\gamma\in \Sigma$ and $e=r(\gamma)$,
\[(\overline{z}\lambda,
  \iota(\overline{z},e)\gamma)=(\overline{z}\lambda,
  \overline{\iota}(z,e)\gamma),\dstext{so}
\{(z\lambda, \iota(z,e)\gamma): z\in \bbT\}=\{(\overline z,
  \overline\iota(z,e) \gamma): z\in \bbT\}.\] In other words, the
$L_k(\Sigma)$ equivalence class of $(\lambda, \gamma)$ coincides with
the $L_{-k}(\overline\Sigma)$ equivalence class of
$(\lambda, \gamma)$.  As the identity map
$\text{id}: L_k(\Sigma)\rightarrow L_{-k}(\overline\Sigma)$ preserves
the Fell bundle operations,
\[L_k(\Sigma)=L_{-k}(\overline\Sigma).\] 
Thus $C_c(\Sigma, G,k)=C_c(\overline\Sigma, G, -k)$, so that
$C^*_r(\Sigma, G, k)=C^*_r(\overline\Sigma, G, -k)$.
\end{proof}

\begin{remark}{Remark} \label{2C*algs} While anti-isomorphic, the
  \cstaralg s $C^*_r(\Sigma, G,1)$ and $C^*_r(\Sigma, G, -1)$ need not
  be isomorphic.
\end{remark}
\begin{remark}{Notation}\label{nok}
In the sequel, when we write
  $C^*_r(\Sigma, G)$, the reader is to assume that $k\in
  \{-1,1\}$ has
  been fixed, and that the validity of the result or discussion does
  not depend upon the choice of $k$.   However, when it is necessary to specify
  $k$, we will always write $C^*_r(\Sigma, G,k)$.
\end{remark}

As observed in the remarks
following~\cite[Proposition~4.1]{RenaultCaSuC*Al} (and with more
detail in
\cite[Propostion~2.21]{BrownFullerPittsReznikoffGrC*AlTwGpC*Al}),
elements of $C^*_r(\Sigma,G, k)$ may be regarded as $k$-equivariant
continuous functions on $\Sigma$, and the formulas defining the
product and involution on $C_c(\Sigma,G,k)$ remain valid for elements
of $C^*_r(\Sigma,G,k)$.  Also, as
in~\cite[Proposition~4.1]{RenaultCaSuC*Al} and \cite[Propostion~2.21]{BrownFullerPittsReznikoffGrC*AlTwGpC*Al}, for $\gamma\in\Sigma$ and
$f\in C^*_r(\Sigma,G,k)$,
\[|f(\gamma)|\leq \norm{f}.\] Thus, point evaluations are continuous
linear functionals on $C^*_r(\Sigma,G,k)$.   
\begin{definition}
We shall call the smallest topology on $C^*_r(\Sigma,G)$ such that for
every $\gamma\in \Sigma$, the point evaluation functional,
$C^*_r(\Sigma,G)\ni f\mapsto f(\gamma)$ is 
continuous, the \textit{$\Sigma$-pointwise topology} on $C^*_r(\Sigma,G)$.
Clearly this topology is Hausdorff.
\end{definition}

We will say that an element $f\in C^*_r(\Sigma,G)$ is
\textit{supported in the slice $S$} if $\supp(f)\subseteq S$.
Notice that 
$C_0(\unit{G})$ may be identified with \[\{f\in C^*_r(\Sigma,G):
\supp(f)\subseteq \unit{G}\}.\]  We will often tacitly make this identification.

In order to remain within the unital context, we now assume that the
unit space of $G$ is compact.  In this case $C^*_r(\Sigma,G)$ is
unital, and $C(\unit{G})\subseteq C^*_r(\Sigma,G)$, so that
$(C^*_r(\Sigma,G), C(\unit{G}))$ is an inclusion.  We next observe
it is a regular inclusion.
If $f\in C^*_r(\Sigma,G)$ is supported in a slice $U$, then a
computation (see~\cite[Proposition~4.8]{RenaultCaSuC*Al}) shows that
$f\in \N(C^*_r(\Sigma,G),C(\unit{G}))$, and, because the collection
of slices forms a basis for the topology of $G$
(\cite[Proposition~3.5]{ExelInSeCoC*Al}), it follows (as
in~\cite[Corollary~4.9]{RenaultCaSuC*Al}) that $(C^*_r(\Sigma,G),
C(\unit{G}))$ is a regular inclusion.

For $f\in C_c(\Sigma,G)$, define
\[E(f)(\dot\gamma):=\begin{cases} 0&\text{if
      $\dot\gamma\notin\unit{G}$;}\\
    f(\dot\gamma)& \text{if $\dot\gamma\in \unit{G}$.}
  \end{cases}
\]
As 
in~\cite[Proposition~4.3]{RenaultCaSuC*Al} or
\cite[Proposition~II.4.8]{RenaultGrApC*Al}, $E$ extends to a
faithful conditional expectation of $C^*_r(\Sigma,G)$ onto $C(\unit{G})$.  

\begin{proposition}\label{trivrad4twists} Let $(\Sigma,G,\iota,\fq)$
  be a Hausdorff twist and assume $\unit{G}$ is compact.  Then there
  is a faithful conditional expectation
  $E:C^*_r(\Sigma,G)\rightarrow C(\unit{G})$, and the inclusion
  $(C^*_r(\Sigma,G),C(\unit{G}))$ is regular.  Thus, when
  $C(\unit{G})$ is a MASA in $C^*_r(\Sigma,G)$,
  $(C^*_r(\Sigma,G),C(\unit{G}))$ is a Cartan inclusion.
\end{proposition}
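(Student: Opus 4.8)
The plan is to assemble the facts already developed in the discussion preceding the statement and then to read off the Cartan conclusion from Definition~\ref{CartanDef}. Concretely, there are three things to supply: the existence of a faithful conditional expectation $E\colon C^*_r(\Sigma,G)\to C(\unit{G})$, the regularity of the inclusion $(C^*_r(\Sigma,G),C(\unit{G}))$, and finally the observation that, under the additional MASA hypothesis, these combine to give a Cartan inclusion. The last of these is immediate once the first two are in hand, so the substance of the proof lies in the first two, both of which follow the treatment of Renault.

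For the conditional expectation I would take $E$ to be the restriction-to-$\unit{G}$ map defined on $C_c(\Sigma,G)$ by the displayed formula just above the statement. One checks on $C_c(\Sigma,G)$ that $E$ is unital and completely positive, that $E|_{C(\unit{G})}=\id$, and that $E$ satisfies the $C(\unit{G})$-bimodule identity; since $E$ is norm-decreasing it extends to all of $C^*_r(\Sigma,G)$, and the bimodule identity persists by density. This is the content of~\cite[Proposition~4.3]{RenaultCaSuC*Al} (equivalently~\cite[Proposition~II.4.8]{RenaultGrApC*Al}). The main obstacle is \emph{faithfulness}, and I expect it to rest on the point-evaluation description of the GNS data recorded earlier: for $x\in\unit{G}$ and $f\in C_c(\Sigma,G)$ one has
\[
E(f^*\star f)(x)=\eps_{x,k}(f^*\star f)=\norm{\eta_{x,k}(f)}^2_{\H_{x,k}}
   =\sum_{\dot\gamma\in G_x}|f(\dot\gamma)|^2 .
\]
Thus $E(f^*f)=0$ forces $f(\dot\gamma)=0$ for every $\dot\gamma$, i.e.\ $f=0$ as a function on $G$; combined with the injectivity of the map sending $a\in C^*_r(\Sigma,G)$ to its associated $k$-equivariant function on $\Sigma$ (which holds because point evaluations are continuous and $|a(\gamma)|\le\norm a$), this yields $f=0$ in $C^*_r(\Sigma,G)$. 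The delicate point is passing from $C_c(\Sigma,G)$ to all of $C^*_r(\Sigma,G)$: here I would invoke the reduced-norm definition $\norm a=\sup_{x}\norm{\pi_{x,k}(a)}$ together with the relation $\norm{\pi_{x,k}(a)\eta_{x,k}(1)}^2=E(a^*a)(x)$, exactly as in the two cited propositions.

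For regularity I would recall the argument already indicated before the statement: if $f\in C^*_r(\Sigma,G)$ is supported in a slice $S$ and $g\in C(\unit{G})$, then $f^*gf$ and $fgf^*$ are supported in $S^{-1}S$ and $SS^{-1}$ respectively, and since $S$ is a bisection both of these sets are contained in $\unit{G}$; hence $f^*gf,\,fgf^*\in C(\unit{G})$, so $f\in\N(C^*_r(\Sigma,G),C(\unit{G}))$ (see~\cite[Proposition~4.8]{RenaultCaSuC*Al}). Because the slices form a base for the topology of $G$ by~\cite[Proposition~3.5]{ExelInSeCoC*Al} and $C_c(\Sigma,G)$ is dense in $C^*_r(\Sigma,G)$, the normalizers have dense span, so the inclusion is regular. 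Finally, assume $C(\unit{G})$ is a MASA in $C^*_r(\Sigma,G)$. Then $(C^*_r(\Sigma,G),C(\unit{G}))$ is a regular MASA inclusion carrying the faithful conditional expectation $E$, so by Definition~\ref{CartanDef} it is a Cartan inclusion, completing the proof.
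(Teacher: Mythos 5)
Your proposal is correct and follows essentially the same route as the paper, which simply points back to the immediately preceding discussion (restriction to $\unit{G}$ extends to a faithful conditional expectation as in Renault, and elements supported in slices are normalizers whose span is dense since slices form a base). You have merely written out the details that the paper delegates to \cite[Proposition~4.3]{RenaultCaSuC*Al} and \cite[Proposition~4.8]{RenaultCaSuC*Al}, and your computations (the formula $E(f^*\star f)(x)=\sum_{\dot\gamma\in G_x}|f(\dot\gamma)|^2$ and the bisection argument for $f^*gf$) are accurate.
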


\begin{proof}
We have already
observed that the inclusion is regular and there is a faithful
conditional expectation.  
When $C(\unit{G})$ is a MASA in $C^*_r(\Sigma,G)$,
$(C^*_r(\Sigma,G),C(\unit{G}))$ is a Cartan inclusion by definition.
\end{proof}

We conclude this section with a pair of technical results which we apply in
Section~\ref{GpReIn}.  The first of these will be used in the proof of
Theorem~\ref{inctoid}.  As we are regarding
$C_c(\Sigma, G)\subseteq C^*_r(\Sigma, G)$, we will write products via
concatenation.
\begin{lemma}\label{ceno}   Let $(\Sigma, G)$ be a twist.  For $i=1,2$ suppose $U_i$ are open
  bisections and $f_i\in C_c(\Sigma, G)$ satisfy
  $\overline{\supp}(f_i)\subseteq U_i$.  Then for $\dot\gamma\in G$,
  \[(E(f_1f_2^*)f_2)(\dot\gamma)=\begin{cases} 0 & \dot\gamma\not\in
      \supp(f_1)\cap \supp(f_2),\\  (f_1 f_2^*f_2)(\dot\gamma) &
      \dot\gamma\in \supp(f_1)\cap \supp(f_2).
    \end{cases}
  \]
\end{lemma}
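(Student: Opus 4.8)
The plan is to pass to the scalar, $k$-equivariant function picture on $\Sigma$, where the convolution is computed by \eqref{covmul} and $f_2^*(\gamma)=\overline{f_2(\gamma^{-1})}$, and to exploit the bisection hypotheses to collapse every convolution sum to a single surviving term. Throughout I identify $\unit{\Sigma}$ with $\unit{G}$ and the fiber over a unit with $\bbC$, and I recall that $E(g)$ is the restriction of $g$ to $\unit{G}$.

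First I would record the effect of multiplying by a function supported on the unit space. Since $h:=E(f_1f_2^*)$ is supported in $\unit{G}$, for each $\dot\gamma$ the sum \eqref{covmul} defining $h\,f_2$ has exactly one possibly nonzero term, indexed by $\dot\gamma_1=r(\dot\gamma)$, whence
\[(E(f_1f_2^*)\,f_2)(\dot\gamma)=(f_1f_2^*)(r(\dot\gamma))\,f_2(\gamma),\]
where $(f_1f_2^*)(r(\dot\gamma))$ is the scalar value at the unit $r(\dot\gamma)$. Next I would evaluate $(f_1f_2^*)$ at a unit $x$: by \eqref{covmul} and $f_2^*(\gamma_1^{-1})=\overline{f_2(\gamma_1)}$ this is a sum of terms $f_1(\gamma_1)\overline{f_2(\gamma_1)}$ over $\dot\gamma_1$ with $r(\dot\gamma_1)=x$. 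Because $f_2$ is supported in the bisection $U_2$, at most one such $\dot\gamma_1$ lies in $\supp(f_2)$, so the sum collapses; and representative-independence of each product follows from $k$-equivariance via \eqref{Lkact}. When $x=r(\dot\gamma)$ with $\dot\gamma\in\supp(f_2)\subseteq U_2$, the surviving index is $\dot\gamma_1=\dot\gamma$ itself, giving $(f_1f_2^*)(r(\dot\gamma))=f_1(\gamma)\overline{f_2(\gamma)}$. Substituting yields the key identity
\[(E(f_1f_2^*)\,f_2)(\dot\gamma)=f_1(\gamma)\,\overline{f_2(\gamma)}\,f_2(\gamma)=f_1(\gamma)\,|f_2(\gamma)|^2,\]
valid for \emph{every} $\dot\gamma\in G$ (when $\dot\gamma\notin\supp(f_2)$ both sides vanish since $f_2(\gamma)=0$).

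This identity already settles the first case: if $\dot\gamma\notin\supp(f_1)\cap\supp(f_2)$ then $f_1(\gamma)=0$ or $f_2(\gamma)=0$, so the expression is $0$. For the second case I would treat $(f_1f_2^*f_2)(\dot\gamma)$ for $\dot\gamma\in\supp(f_1)\cap\supp(f_2)$ by associativity: write $f_1f_2^*f_2=f_1(f_2^*f_2)$ and note $f_2^*f_2$ is supported in $U_2^{-1}U_2\subseteq\unit{G}$. The same unit-multiplication computation (now indexed by $s(\dot\gamma)$) gives $(f_1(f_2^*f_2))(\dot\gamma)=f_1(\gamma)\,(f_2^*f_2)(s(\dot\gamma))$, and a source-side collapse of the convolution for $f_2^*f_2$—again because $U_2$ is a bisection and $\dot\gamma\in\supp(f_2)\subseteq U_2$ forces the unique contributing element to be $\dot\gamma$—yields $(f_2^*f_2)(s(\dot\gamma))=|f_2(\gamma)|^2$. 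Hence $(f_1f_2^*f_2)(\dot\gamma)=f_1(\gamma)|f_2(\gamma)|^2$, matching the key identity and completing the proof.

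I expect the main difficulty to be bookkeeping rather than conceptual depth: correctly tracking the identifications ($\unit{\Sigma}\cong\unit{G}$ and fibers over units $\cong\bbC$), choosing representatives $\gamma_1$ in \eqref{covmul} and verifying through \eqref{Lkact} that the relevant products of equivariant functions are representative-independent, and justifying at each step that the convolution sum reduces to a single term because $U_1,U_2$ are bisections and $\dot\gamma$ lies in the appropriate support. None of these is deep, but the single-term collapses must be argued carefully to keep the computation rigorous.
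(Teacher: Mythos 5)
Your proof is correct and follows essentially the same route as the paper's: first reduce via the pointwise action of unit-space-supported elements to the identities $(E(f_1f_2^*)f_2)(\dot\gamma)=(f_1f_2^*)(r(\dot\gamma))f_2(\dot\gamma)$ and $(f_1f_2^*f_2)(\dot\gamma)=f_1(\dot\gamma)(f_2^*f_2)(s(\dot\gamma))$, then collapse each convolution sum to the single term at $\dot\gamma$ using the bisection hypotheses. Your explicit remark that the case $\dot\gamma\notin\supp(f_2)$ is handled by the factor $f_2(\dot\gamma)=0$ (rather than by the value of $E(f_1f_2^*)$ at $r(\dot\gamma)$) is a slightly more careful treatment of the edge case than the paper's, but the argument is otherwise identical.
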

\begin{proof}
A computation shows that when $h\in C_c(\Sigma,G)$ has support
in $\unit{G}$, then
\[(f_ih)(\dot\gamma)=f_i(\dot\gamma)h(s(\dot\gamma))\dstext{and}
  (hf_i)(\dot\gamma)=h(r(\dot\gamma))f_i(\dot\gamma).\] Thus, for
$\dot\gamma\in G$,
\begin{equation}\label{ceno1}
  (E(f_1f_2^*)f_2)(\dot\gamma)=E(f_1f_2^*)(r(\dot\gamma))\,\,
  f_2(\dot\gamma)\dstext{and}
  (f_1f_2^*f_2)(\dot\gamma)=f_1(\dot\gamma)\,\,(f_2^*f_2)(s(\dot\gamma)).
\end{equation}
Now
\begin{align*}
  E(f_1f_2^*)(r(\dot\gamma))&=(f_1f_2^*)(r(\dot\gamma))=
                             \sum_{\dot\gamma_1\dot\gamma_2=r(\dot\gamma)}
                              f_1(\dot\gamma_1)\overline{f_2(\dot\gamma_2^{-1})}\\
  &=\sum_{\dot\gamma_1\in r(\dot\gamma)G}
    f_1(\dot\gamma_1)\overline{f_2(\dot\gamma_1)}=f_1(\dot\gamma)\overline{f_2(\dot\gamma)};
\end{align*}
the last equality holding because $f_i$ are supported in bisections,
so that
$(r(\dot\gamma)G) \cap \supp(f_i)\subseteq \{\dot\gamma\}$.  Also, 
$(f_2^*f_2)(s(\dot\gamma))=\overline{f_2(\dot\gamma)}f_2(\dot\gamma)$
and thus,
\begin{align*}E(f_1f_2^*)(r(\dot\gamma))\,\,
  f_2(\dot\gamma)&=f_1(\dot\gamma)\overline{f_2(\dot\gamma)}f_2(\dot\gamma)\\
                 &=f_1(\dot\gamma)\,\, (f_2^*f_2)(s(\dot\gamma)).
\end{align*}
Combining this equality with~\eqref{ceno1} gives the result.
\end{proof}

The following lemma will be used when proving
Proposition~\ref{twistprop}.
\begin{lemma}\label{twistquot}  Suppose $(\Sigma, G)$ is a Hausdorff
  twist and $H\subseteq G$ is a closed subgroupoid satisfying the
  following factorization property:  if $\dot\gamma\in H$
factors as $\dot\gamma=\dot\gamma_1\dot\gamma_2$ where
$\dot\gamma_1, \dot\gamma_2 \in G$, then $\dot\gamma_1$ and
$\dot\gamma_2$ both belong to $H$.
Set $\Sigma_H:=\fq^{-1}(H)$.

Then $(\Sigma_H,H)$ is a subtwist of $(\Sigma, G)$ and the restriction map $P:
C_c(\Sigma,G)\rightarrow C_c(\Sigma_H,H)$ given by
$f\mapsto f|_H$  extends to a $*$-epimorphism of $C^*_r(\Sigma, G)$
onto $C^*_r(\Sigma_H, H)$.    
\end{lemma}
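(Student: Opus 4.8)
The plan is to prove the two assertions separately, with the factorization property doing the essential work in each.

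For the subtwist claim I would verify that $(\Sigma_H,H)$ satisfies Definition~\ref{twistdef} and then observe that the inclusion maps exhibit it as a subtwist. Since $H$ is closed in $G$ and $\fq$ is continuous, $\Sigma_H=\fq^{-1}(H)$ is closed in $\Sigma$, hence locally compact Hausdorff; and because $\fq$ is a groupoid homomorphism and $H$ a subgroupoid, $\Sigma_H$ is a subgroupoid of $\Sigma$ with $\unit{\Sigma_H}=\fq^{-1}(\unit{H})=\iota(\bbT\times\unit{H})$. The one nonformal point is that $H$ is \'etale in the relative topology. Given $\dot\gamma\in H$, choose an open bisection $U'\subseteq G$ of the original twist containing $\dot\gamma$; then $U=U'\cap H$ is relatively open and I claim $r(U)=r(U')\cap\unit{H}$. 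The inclusion $\subseteq$ is clear, and for $\supseteq$, if $x\in r(U')\cap\unit{H}$ write $x=r(u)=uu^{-1}$ for the unique $u\in U'$ with $r(u)=x$; applying the factorization property to the factorization $x=u\,u^{-1}$ of the element $x\in H$ forces $u\in H$, so $u\in U$. Thus $r|_U$ and symmetrically $s|_U$ are homeomorphisms onto relatively open subsets of $\unit{H}$, so $U$ is an open bisection of $H$ and $H$ is \'etale. Restricting the trivializing sections $j_{U'}$ of axiom~(e) to $U$ lands in $\Sigma_H$ and yields axiom~(e) for $(\Sigma_H,H)$; the remaining axioms are inherited verbatim. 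Taking $\theta,\alpha$ to be the inclusions, the compatibility identities hold trivially and $\theta(H)=H$ is closed, so $(\Sigma_H,H)$ is a subtwist.

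For the restriction map I would proceed in the order: well-definedness, $*$-homomorphism, surjectivity onto $C_c$, boundedness for the reduced norms, then extension and surjectivity of the extension. Well-definedness is immediate, since restricting a compactly supported continuous section of $L_k$ to the closed set $H$ again has compact support; linearity and the $*$-preserving property are formal because $H$ is inverse-closed. Multiplicativity is where the hypothesis enters: for $\dot\gamma\in H$ the convolution~\eqref{covmul} sums over all factorizations $\dot\gamma=\dot\gamma_1\dot\gamma_2$ in $G$, and the factorization property guarantees every such $\dot\gamma_1,\dot\gamma_2$ already lies in $H$, so the $G$-convolution of $f_1,f_2$ restricted to $H$ equals the $H$-convolution of $Pf_1,Pf_2$. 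For surjectivity onto $C_c(\Sigma_H,H)$, given $g$ I would cover $\overline{\supp}(g)$ by finitely many bisections $U_i=U_i'\cap H$, take a partition of unity $\{\rho_i\}$ on $H$ subordinate to them, trivialize each $\rho_i g$ to a scalar function in $C_c(U_i)$, extend it (by Tietze, since $U_i$ is closed in $U_i'$, followed by multiplication by a bump function) to $C_c(U_i')$, convert back to a section $f_i$ supported in $U_i'$, and set $f=\sum_i f_i$; then $f|_H=\sum_i\rho_i g=g$.

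The analytic heart, and the step I expect to be the main obstacle, is the norm estimate $\norm{Pf}_{C^*_r(\Sigma_H,H)}\le\norm{f}_{C^*_r(\Sigma,G)}$ needed to extend $P$ continuously. Here I would use the standard identification, for each $x\in\unit{G}$, of the GNS space $\H_{x,k}$ with the $\ell^2$-space of sections of $L_k$ over the source-fibre $G_x$, under which $\pi_{x,k}(f)$ acts by left convolution (as described in~\cite{RenaultCaSuC*Al} and~\cite{BrownFullerPittsReznikoffGrC*AlTwGpC*Al}). For $x\in\unit{H}\subseteq\unit{G}$ one has $H_x\subseteq G_x$, and I claim $\ell^2(H_x)\subseteq\ell^2(G_x)$ is reducing for $\pi_{x,k}(C_c(\Sigma,G))$: if $\xi$ is supported off $H_x$ and $\dot\delta\in H_x$, then any factorization $\dot\delta=\dot\gamma_1\dot\gamma_2$ forces $\dot\gamma_2\in H_x$ by the factorization property, so the convolution at $\dot\delta$ vanishes, showing $\ell^2(H_x)^\perp$ is invariant; since $\pi_{x,k}(C_c(\Sigma,G))$ is $*$-closed, $\ell^2(H_x)$ is invariant as well. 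The same factorization argument identifies the compression of $\pi_{x,k}(f)$ to $\ell^2(H_x)$ with $\pi_{x,k}^H(Pf)$, whence $\norm{\pi_{x,k}^H(Pf)}\le\norm{\pi_{x,k}(f)}\le\norm{f}_{C^*_r(\Sigma,G)}$; taking the supremum over $x\in\unit{H}$ gives the bound. Consequently $P$ extends to a $*$-homomorphism $C^*_r(\Sigma,G)\to C^*_r(\Sigma_H,H)$, whose range is closed (a $*$-homomorphism of $C^*$-algebras has closed range) and contains the dense subalgebra $C_c(\Sigma_H,H)$, hence is all of $C^*_r(\Sigma_H,H)$. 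The delicate points to get right are the precise $\ell^2$-fibre model for the twisted reduced algebra and the compact-support bookkeeping in the surjectivity step.
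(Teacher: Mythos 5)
Your proposal is correct and follows essentially the same route as the paper's (sketched) proof: the factorization property yields multiplicativity of $P$ and the subtwist/\'etale structure, slice-supported sections are lifted from $H$ to $G$ for surjectivity, and the reduced norms are compared fibrewise via the GNS representations at points of $\unit{H}$. Your compression-of-$\ell^2(H_x)$ argument and the paper's unitary $\H_{x,G}\cong\H_{x,H}$ amount to the same computation, since applying the factorization property to $x=\dot\gamma^{-1}\dot\gamma$ shows $G_x=H_x$ for every $x\in\unit{H}$, so the "reducing subspace" is in fact everything.
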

\begin{proof}
  We shall only sketch the proof.
  That $(\Sigma_H, H)$ is a subtwist follows from the definitions of
  twist and subtwist.

  The linearity of $P$ is clear and the factorization property for $H$ implies $P$
  is a $*$-homomorphism.   If $f\in C_c(\Sigma_H,H)$ is supported in a
  slice $U\subseteq H$, then we may find an open set $V\subseteq G$
  such that $U=V\cap H$, so extending $f$ by $0$ to $V$ produces an
  element $f_G\in C_c(\Sigma, G)$ such that $Pf_G=f$.  As $H$ is
  Hausdorff, the linear span of functions supported in slices of $H$ is all
  of $C_c(\Sigma_H, H)$; it follows that $P$ is onto.

  Let $Y=\unit{H}$, so $Y\subseteq \unit{G}$.  For $y\in Y$, we may
  consider the evaluation mappings $\eps_{y,H}$ (respectively $\eps_{y,G}$) given
  by $f\mapsto f(y)$, where $f$ is a continuous section of the line
  bundle for $H$ (respectively for $G$).  Let $(\pi_{y,H}, \H_{y,H})$ be the GNS
  representation of $C_c(\Sigma_H, H)$ arising from $\eps_{y,H}$ and
  let $\eta_{y,H}:C_c(\Sigma_H,H)\rightarrow \H_{y,H}$ be the map
  $f\mapsto f+N_{\eps_{y,H}}$; use similar notation for $\eps_{y,G}$.
  A computation (again using the factorization property) shows that
  for $f\in C_c(\Sigma,G)$, the map
  $\eta_{y,G}(f)\mapsto \eta_{y,H}(Pf)$ is a well-defined isometry
  which extends to
  a unitary operator $U_y:\H_{y,G}\rightarrow \H_{y,H}$.  Further, for
  $f\in C_c(\Sigma,G)$, 
  \begin{equation}\label{twistquot1}  U_y\pi_{y,G}(f)=\pi_{y,H}(Pf)
    U_y.
  \end{equation}
  Therefore,
  \[\norm{Pf}_{C^*_r(\Sigma_H,H)}=\sup_{y\in
      Y}\norm{\pi_{y,H}(Pf)}= \sup_{y\in Y}\norm{\pi_{y,G}(f)}\leq
    \sup_{x\in\unit{G}} \norm{\pi_x(f)} = \norm{f}_{C^*_r(\Sigma,G)}.\]
  The lemma follows.
\end{proof}

\section{Conditional Expectations and Hausdorff Weyl Groupoids}
\label{CEHWG}  The purpose of this section is to  
establish Theorem~\ref{CET2}, which shows that the groupoid of germs
$G$ 
for the Weyl semigroup associated to the regular MASA
inclusion  $(\C,\D)$ is Hausdorff if and
only if there exists a (necessarily unique) conditional expectation
$E:\C\rightarrow \D$.   The fact that $G$ is Hausdorff in the presence
of a conditional expectation was shown by Renault
in~\cite{RenaultCaSuC*Al}.  We include a sketch of
an alternate argument 
establishing this fact  in the proof of
Theorem~\ref{CET2}.  To our knowledge, the converse is new and is an
interesting application of the pseudo-expectation on a regular MASA inclusion.

We begin with two lemmas, the first of which will be used in the proof
of the second.
\begin{lemma}\label{v*E(v)}  Suppose $(\C,\D)$ is an inclusion and
  there exists a conditional expectation $E:\C\rightarrow\D$.  If
  $v\in \N(\C,\D)$, then $v^*E(v)\in\D^c$.
\end{lemma}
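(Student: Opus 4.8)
The plan is to prove $v^*E(v)\in\D^c$ by first showing that, as a function on $\hat\D$, $E(v)$ is supported on the fixed-point set of $\beta_v$, and then recognizing $E(v)$ as a member of the fixed-point ideal \emph{for $v^*$}, whose defining property (Lemma~\ref{fixedideal}) delivers exactly $v^*E(v)=E(v)v^*\in\D^c$. Throughout I would use that $E$ is a unital completely positive $\D$-bimodule map and that, since $I\in\D$ and $v^*\D v,\,v\D v^*\subseteq\D$, both $v^*v$ and $vv^*$ lie in $\D$.

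The first computational step is the bimodule identity: for $d\in\D$, commutativity of $\D$ gives
\[
v\,(v^*dv)=(vv^*)\,d\,v=d\,(vv^*)\,v=d\,v\,(v^*v).
\]
Since $v^*dv,\,v^*v\in\D$, applying $E$ and using bimodularity (together with $dE(v)=E(v)d$ in the abelian algebra $\D$) yields
\[
E(v)\,\bigl[\,v^*dv-d\,(v^*v)\,\bigr]=0,\qquad d\in\D.
\]
Evaluating at $\sigma\in\hat\D$, if $\sigma(E(v))\neq 0$ then $\sigma(v^*dv)=\sigma(d)\,\sigma(v^*v)$ for every $d$. The Kadison--Schwarz inequality $E(v)^*E(v)\le E(v^*v)=v^*v$ gives $|\sigma(E(v))|^2\le\sigma(v^*v)$, so $\sigma(E(v))\neq 0$ forces $\sigma\in\dom\beta_v$; the displayed relation then reads $\beta_v(\sigma)(d)=\sigma(d)$ for all $d$, i.e. $\beta_v(\sigma)=\sigma$. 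Hence $\supp(E(v))\subseteq\fix\beta_v$, and since this open support lies in $\fix\beta_v$ it lies in $(\fix\beta_v)^\circ$.

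To finish, I would invoke the fixed-point machinery for $v^*$. Writing $K_0^{(v^*)}$ for the fixed point ideal of $v^*$ and using $\fix\beta_{v^*}=\fix\beta_v$, Lemma~\ref{fixptchar} applied to $v^*$ gives
\[
\supp\bigl(K_0^{(v^*)}\cap\overline{vv^*\D}\bigr)=(\fix\beta_{v^*})^\circ=(\fix\beta_v)^\circ .
\]
Since $K_0^{(v^*)}\cap\overline{vv^*\D}$ is a closed ideal of $\D$, it coincides with $\ideal$ of its open support; as $\supp(E(v))\subseteq(\fix\beta_v)^\circ$, it follows that $E(v)\in K_0^{(v^*)}$. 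By Lemma~\ref{fixedideal} with $v$ replaced by $v^*$, every $d\in K_0^{(v^*)}$ satisfies $v^*d=dv^*\in\D^c$; taking $d=E(v)$ gives $v^*E(v)=E(v)v^*\in\D^c$, as required.

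The main obstacle is a subtle one of \emph{which} one-sided conclusion the machinery produces: applying Lemma~\ref{fixedideal} directly to $v$ only yields $vE(v)=E(v)v\in\D^c$, whereas the statement asks for the adjoint version $v^*E(v)$. The key maneuver is to observe that $\supp(E(v))$ also lands in $(\fix\beta_{v^*})^\circ$ (via $\fix\beta_v=\fix\beta_{v^*}$), so that the fixed-point ideal for $v^*$ is the correct object. The other point needing care is the support containment $\supp(E(v))\subseteq\fix\beta_v$, which depends on combining the Kadison--Schwarz estimate with the bimodule identity above.
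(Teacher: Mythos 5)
Your proof is correct, but it takes a genuinely different route from the paper's. The paper argues directly and algebraically: setting $J=\overline{E(v)\D}$, it combines the intertwining identity $vk=\theta_{v^*}(k)v$ (for $k\in\overline{v^*v\D}$) with the $\D$-bimodule property of $E$ to get $v^*E(vh)d=dv^*E(vh)$ for all $h\in J$, $d\in\D$, and then lets $h$ run through an approximate unit for $J$ to conclude $v^*E(v)\in\D^c$. You instead establish the support inclusion $\supp(\widehat{E(v)})\subseteq(\fix\beta_v)^\circ=(\fix{\beta_{v^*}})^\circ$ (via the identity $E(v)\bigl[v^*dv-d(v^*v)\bigr]=0$ and the Schwarz inequality), and then hand the problem to the fixed-point-ideal machinery of Lemmas~\ref{fixedideal} and~\ref{fixptchar}, applied to $v^*$ so as to land on the correct one-sided conclusion; every step checks out, including the identification of a closed ideal with $\ideal$ of its support, the one-line verification that $\fix\beta_v=\fix{\beta_{v^*}}$, and your observation that applying the machinery to $v$ itself would only give $vE(v)=E(v)v\in\D^c$. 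The trade-off: the paper's computation is short and essentially self-contained, while your argument leans on the nontrivial converse half of Lemma~\ref{fixptchar}, which internally contains the same sort of $(vv^*)^{1/n}$ manipulation the direct proof gets to skip. What your route buys is a sharper structural statement --- $E(v)$ lies in $K_0^{(v)}\cap K_0^{(v^*)}$, so both $vE(v)=E(v)v$ and $v^*E(v)=E(v)v^*$ lie in $\D^c$ --- and it isolates the support fact about images of normalizers under expectations that reappears later (compare the claims about $\rho(E(v))\neq 0$ forcing membership in $(\fix\beta_v)^\circ$ in the proofs of Theorems~\ref{CET2} and~\ref{upse=>cov}).
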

\begin{proof}
  Let $J=\overline{E(v)\D}$.  If
  $\tau\in\hat\D$ and $\tau|_J\neq 0$, we have $0< |\tau(E(v))|^2=
  \tau(E(v)^*E(v))\leq \tau(v^*v)$, so $\tau\in \dom\beta_v$.  For any
  $h\in J$ and $d\in\D$,
  \[v^*E(vh)d=v^*E(\theta_{v^*}(hd)v)=v^*\theta_{v^*}(hd)E(v)=hdv^*E(v)=dhv^*E(v)=dv^*E(vh),\]
  so $v^*E(vh)=v^*E(v)h\in\D$.  Taking $h$ to be from an approximate unit for $J$
  we obtain $v^*E(v)\in\D^c$.  
\end{proof}

The following lemma yields useful characterizations of the
equivalence relation used in the definition of Weyl groupoid for a
skeletal MASA inclusion.  However, we state the lemma for general
inclusions to make clear which hypothesis are needed for the
equivalences.  
\begin{lemma}\label{chareqrel}  Let $(\C,\D)$ be an inclusion and
  suppose $\M$ is a skeleton for $(\C,\D)$.   For
  $i=1, 2$, let 
  $v_i\in\M$ and suppose $\sigma\in \dom(\beta_{v_1})\cap
  \dom(\beta_{v_2})$.   Consider the following statements.
  \begin{enumerate}
\item There exist $h, k\in\D$ with $\sigma(h)\sigma(k)\neq 0$
        such that $v_1h=v_2k$.
  \item $\beta_{v_1}$ and $\beta_{v_2}$ have the same germ at
      $\sigma$.
            \end{enumerate}
Then (a)$\Rightarrow$(b).   If $\M$ is a MASA skeleton for $(\C,\D)$, then
(b)$\Rightarrow$(a).

      Furthermore, if $\M$ is a MASA skeleton for $(\C,\D)$ and there exists a
      conditional expectation $E:\C\rightarrow \D$, then (a) and (b)
      are  equivalent to:
      \begin{enumerate}\setcounter{enumi}{2}
      \item $\sigma(E(v_2^*v_1))\neq 0$.
      \end{enumerate}
      
    \end{lemma}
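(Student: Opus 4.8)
The plan is to prove the two implications between (a) and (b) first, and then fold (c) into the resulting cycle. The computational backbone is the elementary observation that for $v\in\N(\C,\D)$ and $h\in\D$ one has $vh\in\N(\C,\D)$ with $\dom\beta_{vh}=\dom\beta_v\cap\supp(\hat h)$ and $\beta_{vh}=\beta_v|_{\dom\beta_v\cap\supp(\hat h)}$; this is immediate from $\beta_{vh}(\tau)(d)=\tau(h^*v^*dvh)/\tau(h^*v^*vh)$ together with $v^*dv,v^*v\in\D$. Granting this, (a)$\Rightarrow$(b) follows at once: if $v_1h=v_2k$ with $\sigma(h)\sigma(k)\neq0$, then $\beta_{v_1h}=\beta_{v_2k}$ as partial homeomorphisms, so $\beta_{v_1}$ and $\beta_{v_2}$ agree on the open set $\dom\beta_{v_1}\cap\supp(\hat h)=\dom\beta_{v_2}\cap\supp(\hat k)$, which contains $\sigma$; hence they have the same germ at $\sigma$. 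This direction uses only $v_i\in\N(\C,\D)$.

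For (b)$\Rightarrow$(a), assume $\D$ is a MASA and set $w:=v_2^*v_1\in\N(\C,\D)$. Multiplicativity of $v\mapsto\beta_v$ gives $\beta_w=\beta_{v_2}^{-1}\circ\beta_{v_1}$, so if $\beta_{v_1}=\beta_{v_2}$ on a neighborhood $U$ of $\sigma$ then $\beta_w=\id$ on $U$, whence $\sigma\in(\fix\beta_w)^\circ$. By Lemma~\ref{fixptchar}, $(\fix\beta_w)^\circ=\supp(K_0\cap\overline{w^*w\D})$ with $K_0$ the fixed-point ideal of $w$, so I may pick $d\in K_0\cap\overline{w^*w\D}$ with $\sigma(d)\neq0$; by Lemma~\ref{fixedideal} and $\D^c=\D$, the element $e:=wd=v_2^*v_1d$ lies in $\D$. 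Writing $\rho:=\beta_{v_1}(\sigma)=\beta_{v_2}(\sigma)\in\ran\beta_{v_1}\cap\ran\beta_{v_2}$, the plan is to invert the positive element $v_2v_2^*$ locally near $\rho$: choose $p,b\in\D$ supported in a common neighborhood $O\ni\rho$ inside $\ran\beta_{v_1}\cap\ran\beta_{v_2}$, with $p\equiv1$ near $\rho$ and $b\,(v_2v_2^*)=p$. Multiplying $v_2e=(v_2v_2^*)v_1d$ on the left by $b$ and invoking the commutation rule $cv=v\theta_v(c)$ for $c\in\overline{\D vv^*}$ (applicable since $p\in\overline{\D v_1v_1^*}$ and $b\in\overline{\D v_2v_2^*}$) produces $v_1(\theta_{v_1}(p)d)=v_2(\theta_{v_2}(b)e)$, i.e. the desired relation $v_1h=v_2k$ with $h=\theta_{v_1}(p)d$ and $k=\theta_{v_2}(b)e$. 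Finally $\sigma\circ\theta_{v_i}=\beta_{v_i}(\sigma)=\rho$ gives $\sigma(h)=\rho(p)\sigma(d)$ and $\sigma(k)=\rho(b)\sigma(e)$, and a short check that $\sigma(e)\neq0$ (because $\sigma\in\dom\beta_w$ forces $\sigma(w^*w)>0$) yields $\sigma(h)\sigma(k)\neq0$. I expect this local-inversion bookkeeping—extracting an honest factorization from the fixed-point ideal while $v_2v_2^*$ is merely positive rather than a projection—to be the main obstacle.

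For the part involving the conditional expectation $E$, I would close the cycle via (a)$\Rightarrow$(c)$\Rightarrow$(b). For (a)$\Rightarrow$(c): from $v_1h=v_2k$ we get $v_2^*v_1h=v_2^*v_2k$, and applying $E$ with its $\D$-bimodularity gives $E(v_2^*v_1)h=v_2^*v_2k$; evaluating at $\sigma$, where $\sigma(v_2^*v_2)>0$ and $\sigma(h)\sigma(k)\neq0$, forces $\sigma(E(v_2^*v_1))\neq0$. For (c)$\Rightarrow$(b), put $w=v_2^*v_1$ and show $\supp(\widehat{E(w)})\subseteq(\fix\beta_w)^\circ$, so that $\sigma\in(\fix\beta_w)^\circ$ and (b) follows exactly as before. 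To establish the containment, fix any $\sigma'$ with $\sigma'(E(w))\neq0$; the Kadison--Schwarz inequalities $E(w)^*E(w)\le E(w^*w)=w^*w$ and $E(w)E(w)^*\le ww^*$ put $\sigma'\in\dom\beta_w\cap\ran\beta_w$, while the identity $E(w)d=E(wd)=E(\theta_{w^*}(d)w)=\theta_{w^*}(d)E(w)$ for $d\in\overline{\D w^*w}$ (here $wd=\theta_{w^*}(d)w$) gives $\sigma'(d)=\sigma'(\theta_{w^*}(d))=\beta_{w^*}(\sigma')(d)$. Since $\sigma'$ and $\beta_{w^*}(\sigma')$ then agree on the ideal $\overline{\D w^*w}$ on which both are nonzero, they coincide as characters, i.e. $\beta_w(\sigma')=\sigma'$; as $\supp(\widehat{E(w)})$ is open, it lies in $(\fix\beta_w)^\circ$. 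Combined with (a)$\Leftrightarrow$(b) this gives the full equivalence, and Lemma~\ref{v*E(v)} (which yields $w^*E(w)\in\D^c=\D$) provides an alternative way to package the same commutation relations.
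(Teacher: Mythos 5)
Your argument is correct and follows the same overall architecture as the paper's proof (establish (a)$\Leftrightarrow$(b), then close the loop through (a)$\Rightarrow$(c)$\Rightarrow$(b)), but the two substantive implications are executed by genuinely different mechanisms. For (b)$\Rightarrow$(a), the paper first invokes \cite[Proposition~3.4]{DonsigPittsCoSyBoIs} to replace $v_1,v_2$ by intertwiners, after which $(v_2v_2^*)v_1=v_1a$ for some $a\in\D$ and the factorization drops out; you bypass the intertwiner reduction by inverting $v_2v_2^*$ locally near $\rho=\beta_{v_1}(\sigma)$ and moving the resulting elements of $\overline{\D v_iv_i^*}$ across $v_i$ via $cv_i=v_i\theta_{v_i}(c)$. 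The bookkeeping is sound: $p$ and $b$ supported in $\ran\beta_{v_1}\cap\ran\beta_{v_2}$ do lie in the relevant ideals, $\sigma\circ\theta_{v_i}=\beta_{v_i}(\sigma)=\rho$ gives $\sigma(h)=\sigma(d)\ne0$ and $\sigma(k)=\rho(b)\sigma(e)$, and $|\sigma(e)|^2=|\sigma(d)|^2\sigma(w^*w)>0$ because $\sigma\in\dom\beta_w$. For (c)$\Rightarrow$(b), the paper routes through Lemma~\ref{v*E(v)}, while you show directly that $\supp(\widehat{E(w)})\subseteq(\fix\beta_w)^\circ$ by comparing the characters $\sigma'$ and $\beta_{w^*}(\sigma')$ on the ideal $\overline{\D w^*w}$; this is more self-contained and, as a bonus, makes visible that (c)$\Rightarrow$(b) uses neither the MASA hypothesis nor a detour through (a).

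The one place to tighten is the hypothesis under which you prove (b)$\Rightarrow$(a). You conclude $e=wd\in\D$ from $wd=dw\in\D^c$ by assuming $\D^c=\D$, i.e., that $\D$ is a MASA in $\C$. The lemma assumes only that $\M$ is a MASA skeleton, which (as the paper's own step $wd=dw\in\D^c\cap\M_1=\D$ with $\M_1=\spn\M\cap\N(\C,\D)$ indicates) controls the relative commutant only inside $\spn\M$; one then needs the additional observation that $wd$ itself lies in $\spn\M\cap\N(\C,\D)$. For the intended application to regular MASA inclusions the distinction is vacuous, but as written your argument establishes the implication under an a priori stronger hypothesis.
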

    \begin{proof}
      Suppose (a) holds.   Consider the open neighborhood of $\sigma$, 
      \[H:=\{\rho\in\hat\D: |  \rho(v_1^*v_1 v_2^*v_2 hk)| >
|\sigma(v_1^*v_1 v_2^*v_2 hk)|/2\}.\]  Choose $\rho\in H$.  By
hypothesis, $\sigma(v_1^*v_1 v_2^*v_2 hk)\neq 0$, so 
$\rho\in \dom\beta_{v_1h}\cap \dom\beta_{v_2k}$.   Thus for $d\in\D$,
\[\beta_{v_1}(\rho)(d)=\beta_{v_1h}(\rho)(d)=\beta_{v_2k}(\rho)(d)=\beta_{v_2}(\rho)(d),\]
so $\beta_{v_1}$ and $\beta_{v_2}$ have the same germ at $\sigma$.

Suppose $\M$ is a MASA skeleton for $(\C,\D)$ and (b) holds.  Since
$\M$ is a skeleton, $\D\subseteq \spn \M$, so letting $\M_1:=\spn
\M\cap \N(\C,\D)$, we see that $\M_1$ is a skeleton containing $\D$.   Let
$H\subseteq \dom(\beta_{v_1})\cap \dom(\beta_{v_2})$ be open in
$\hat\D$ with $\sigma\in H$ and $\beta_{v_1}|_H=\beta_{v_2}|_H$.
Recall that an \textit{intertwiner} for $(\C,\D)$ is an element
$w\in\C$ such that $w\D=\D w$ (no closures).  The proof of
\cite[Proposition~3.4]{DonsigPittsCoSyBoIs} shows that we may choose
$h_1, k_1\in \D$ such that $\sigma(h_1)\sigma(k_1)\neq 0$ and both
$v_1h_1$ and $v_2k_2$ are intertwiners.   Since $\beta_{v_1},
\beta_{v_2}, \beta_{v_1h_1}$ and
$\beta_{v_2k_1}$ have the same germ at $\sigma$,  we may assume without
loss of generality that  $v_1$ and $v_2$ are intertwiners.

Let $w=v_2^*v_1$.  Then $\beta_w|_H=\id_H$, so
$\sigma\in(\fix\beta_w)^\circ$.  Therefore there exists $d\in\D$ such
that $\supp\hat d\subseteq H$ and $\sigma(d)\neq 0$.
Lemmas~\ref{fixedideal} and~\ref{fixptchar} give
$wd=dw\in\D^c\cap \M_1=\D$.   Put $k:=wd$.  Since $v_1$ is an intertwiner,
there exists $a\in\D$ such that $(v_2v_2^*)v_1=v_1a$.  Set $h=ad$.  Then
\[v_2k=v_2v_2^*v_1d=v_1ad=v_1h.\]  Next,
$|\sigma(k)|^2=\sigma(d^*w^*wd)=|\sigma(d)|^2\sigma(w^*w)\neq 0$.
Since
\[|\sigma(h)|^2\sigma(v_1^*v_1)=\sigma(h^*v_1^*v_1h)=\sigma(k^*v_2^*v_2k)\neq
0,\] we obtain $\sigma(h)\neq 0$, so (a) holds.

For the remainder of the proof, suppose $\M$ is a MASA skeleton for
$(\C,\D)$ and there exists a
conditional expectation $E:\C\rightarrow \D$.  If (a) holds, then
\[\sigma(k^*)\sigma(h)
\sigma(E(v_2^*v_1))=\sigma(E(k^*v_2^*v_1h))=\sigma(E(h^*v_1^*v_1h))\neq
0.\] Thus (c) holds.

Finally, suppose (c) holds and again put $w=v_2^*v_1$.   By Lemma~\ref{v*E(v)},
$w^*E(w)\in \D$, so $\beta_{v_2}^{-1}\beta_{v_1}=\beta_w$ and $\id$ have the same germ at
$\sigma$.  Therefore, $\beta_{v_1}$ and $\beta_{v_2}$ have the same
germ at $\sigma$. 
\end{proof}

\newcommand{\tres}{r}

Let $(\C,\D)$ be a regular MASA inclusion, let 
$(I(\D),\iota)$ be an injective envelope for $\D$ and let $E:
\C\rightarrow I(\D)$ be the pseudo-expectation.   Recall that a state
on $\C$ is a
\textit{strongly compatible state} if it belongs to the set 
\begin{equation}\label{defscs}
  \fS_s(\C,\D):=\{\rho\circ E: \rho\in\widehat{I(\D)}\},
\end{equation}
and that $\fS_s(\C,\D)\subseteq \fS(\C,\D)$
(\cite[Proposition~4.6]{PittsStReInI} or Theorem~\ref{upse=>cov} below).
Let
$\tres:\fS_s(\C,\D)\rightarrow \hat\D$ be the restriction map,
$\tres(\tau)=\tau|_\D$.   Since $\iota=E|_\D$, $\tres$ is a continuous
surjection.

\begin{theorem} \label{CET2} For a regular MASA inclusion $(\C,\D)$
  the following statements are equivalent.
\begin{enumerate}
\item The restriction map $\tres$ is one-to-one.
\item There is a conditional expectation of
  $\C$ onto $\D$.
\item The Weyl groupoid $G$ associated to $(\C,\D)$  is Hausdorff.
\end{enumerate}
\end{theorem}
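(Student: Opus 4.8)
The plan is to prove the cycle (1)$\Rightarrow$(2)$\Rightarrow$(3)$\Rightarrow$(1). For (1)$\Rightarrow$(2), note first that $\fS_s(\C,\D)$ is weak-$*$ compact, being the continuous image of the compact space $\widehat{I(\D)}$ under $\rho\mapsto\rho\circ E$; hence under hypothesis~(1) the continuous surjection $\tres$ is a continuous bijection of compact Hausdorff spaces, so a homeomorphism. Writing $\tau_\sigma:=\tres^{-1}(\sigma)$, I would define $E_0\colon\C\to\D=C(\hat\D)$ by $E_0(x)(\sigma):=\tau_\sigma(x)$; weak-$*$ continuity of $\sigma\mapsto\tau_\sigma$ makes each $E_0(x)$ continuous. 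Then $E_0$ is unital and positive, $E_0|_\D=\id$ since $\tau_\sigma|_\D=\sigma$, and because each $\tau_\sigma\in\fS_s(\C,\D)\subseteq\Mod(\C,\D)$ the module identity~\eqref{CSMod} yields $E_0(d_1xd_2)=d_1E_0(x)d_2$. A positive unital idempotent onto $\D$ is a conditional expectation, giving~(2).

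For (2)$\Rightarrow$(3) I would reprove Renault's implication via Lemma~\ref{chareqrel}. Fix $v_1,v_2\in\N(\C,\D)$, set $w=v_2^*v_1$ (so $w^*w\in\D$ and $E(w^*w)=w^*w$), and let $D=\dom\beta_{v_1}\cap\dom\beta_{v_2}$. By Lemma~\ref{chareqrel}(c) the germ-agreement set $A=\{\sigma\in D:\beta_{v_1},\beta_{v_2}\text{ have the same germ at }\sigma\}$ equals $\{\sigma\in D:\sigma(E(w))\neq0\}$. Since $E(w)\in\D$, Kadison--Schwarz forces $\supp(\widehat{E(w)})\subseteq\dom\beta_w$, and a short computation with the bimodule property on the fixed-point ideal gives $|\widehat{E(w)}|^2=\widehat{w^*w}$ on $\supp(\widehat{E(w)})$. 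As $\widehat{E(w)}$ is continuous and $\widehat{w^*w}>0$ on $\dom\beta_w$, the support $\supp(\widehat{E(w)})$ is clopen in $\dom\beta_w$. Finally, any point of $D$ adherent to $A$ satisfies $\beta_{v_1}=\beta_{v_2}$ there and so lies in $\dom\beta_w$, hence in $\supp(\widehat{E(w)})$; thus $A$ is closed in $D$. Closedness of every germ-agreement set is precisely the condition that the groupoid of germs $G$ be Hausdorff.

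For (3)$\Rightarrow$(1) I would argue the contrapositive. If $\tres$ is not injective there exist $\tau_1\neq\tau_2$ in $\fS_s(\C,\D)$ with $\tau_1|_\D=\tau_2|_\D=\sigma$, and by regularity some $v\in\N(\C,\D)$ has $\tau_1(v)\neq\tau_2(v)$; relabel so that $\tau_1(v)\neq0$. Compatibility gives $|\tau_1(v)|^2=\sigma(v^*v)>0$, so the cyclic vector in the GNS representation of $\tau_1$ is an eigenvector for $v$; consequently $\tilde\beta_v$ fixes $\tau_1$ and $\sigma\in\fix\beta_v$. Moreover $\sigma\notin(\fix\beta_v)^\circ$, for on the interior Lemmas~\ref{fixedideal} and~\ref{fixptchar} provide $d\in K_0$ with $\sigma(d)\neq0$ and $vd\in\D$, which would force $\tau_i(v)=\sigma(vd)/\sigma(d)$ to depend only on $\sigma$. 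I then claim $\sigma\in\overline{(\fix\beta_v)^\circ}$; granting this, $\sigma$ witnesses that $(\fix\beta_v)^\circ$ fails to be closed in $\dom\beta_v$, so $G$ is not Hausdorff.

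The claim $\sigma\in\overline{(\fix\beta_v)^\circ}$ is the heart of the proof and the step I expect to be the main obstacle. Were it false I could pick $d_0\in\D$ with $\sigma(d_0)=1$ and $\supp(d_0)\subseteq\dom\beta_v$ disjoint from $(\fix\beta_v)^\circ$; then $w:=vd_0$ is a fixed-point-free normalizer ($K_0^{(w)}=(0)$) while $\tau_1(w)=\tau_1(v)\sigma(d_0)\neq0$. The contradiction would come from the lemma that the unique pseudo-expectation annihilates every fixed-point-free normalizer. To prove that lemma I would invoke the Frol\'ik family $\{K_i\}_{i=0}^4$ for $w$: since $E$ is a unital completely positive $\D$-bimodule map (so $\D$ lies in its multiplicative domain), for $d$ in a wandering ideal $K_i$ ($1\le i\le3$) the relation $dw=w\theta_w(d)$ together with $d\,\theta_w(d)=0$ gives $dwd=0$ and hence $\iota(d)E(w)\iota(d)=0$, while $K_4=(ww^*)^\dperp$ and the empty $K_0$ are handled directly. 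Evaluating at a character $\rho\in\widehat{I(\D)}$, which is multiplicative because $I(\D)$ is abelian, gives $\rho(E(w))=0$ whenever $\tres(\rho)\in\bigcup_i\supp(K_i)$, a dense subset of $\hat\D$ whose preimage is dense in $\widehat{I(\D)}$ by irreducibility of the covering map; therefore $E(w)=0$. The delicate points are transporting the Frol\'ik ``wandering'' computation from an honest conditional expectation to the $I(\D)$-valued pseudo-expectation, and confirming that strongly compatible states are determined by this fixed-point data.
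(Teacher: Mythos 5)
Your proof is correct and follows the same cycle (a)$\Rightarrow$(b)$\Rightarrow$(c)$\Rightarrow$(a) as the paper, with (1)$\Rightarrow$(2) essentially identical to the paper's argument; the other two implications are right but organized differently, so a comparison is worthwhile. For (2)$\Rightarrow$(3) the paper builds a continuous injection $[\sigma_2,v,\sigma_1]_\bbT\mapsto |[v,\sigma_1]|$ of $G$ into a Hausdorff space of functions on $\C$ and deduces Hausdorffness from that; you instead verify the germ-closure criterion directly, identifying the germ-agreement set as $\{\sigma\in D:\sigma(E(v_2^*v_1))\neq 0\}$ via Lemma~\ref{chareqrel}(c) and showing it is relatively closed because $|\widehat{E(w)}|^2=\widehat{w^*w}$ on $\supp(\widehat{E(w)})$ (which does follow from compatibility of the states $\sigma\circ E$, i.e.\ \cite{PittsStReInI}*{Proposition~4.6}). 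Both arguments turn on the same two facts (Lemma~\ref{chareqrel}(c) and the modulus identity); yours is arguably the more transparent packaging. For (3)$\Rightarrow$(1) the paper shows directly that $\sigma$ annihilates both $K_0$ and $K_0^\perp$ --- the latter via the support-projection identity $E(v)=E(v)R_0$ from \cite{PittsStReInI}*{Theorem~3.5} --- and then reads off distinctness and inseparability of $[\sigma,I,\sigma]_\bbT$ and $[\sigma,v,\sigma]_\bbT$ from $\sigma\in\overline{\supp(K_0)}\cap\overline{\supp(K_0^\perp)}$ together with Lemma~\ref{fixptchar}. You reach the same two points but obtain $\sigma\in\overline{(\fix\beta_v)^\circ}$ by contradiction, cutting $v$ down to $w=vd_0$ with $(\fix\beta_w)^\circ=\emptyset$ (hence $K_0^{(w)}=(0)$ by Lemma~\ref{fixptchar}) and invoking the lemma that the pseudo-expectation annihilates a normalizer with trivial fixed-point ideal. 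That lemma is genuinely true: it is exactly $E(w)=E(w)R_0$ with $R_0=0$, and it also falls out of the ``claim'' in the paper's proof of Theorem~\ref{upse=>cov}; your Fro\v{l}ik-family derivation of it is the argument already used to prove \cite{PittsStReInI}*{Theorem~3.5}, so the step you flag as delicate is available off the shelf rather than needing new work. The trade-off is that the paper's direct route avoids the auxiliary normalizer and the proof by contradiction, while yours isolates a reusable annihilation principle; both rest on the same underlying support fact about $E(v)$.
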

\begin{proof}
Suppose $\tres$ is one-to-one.  Then $\tres$ is a homeomorphism with inverse
$\tres^{-1}(\sigma)=\rho\circ E$ where $\rho\in\widehat{I(\D)}$ is any
choice so that $\sigma=\rho\circ E|_\D$, equivalently, $\rho\circ\iota
=\sigma$.  For
$x\in\C$, the function $\hat\D\ni\sigma\mapsto \tres^{-1}(\sigma)(x)$ is continuous
and hence determines an element $\Delta(x)\in \D$ whose Gelfand transform
is $\widehat{\Delta(x)}(\sigma):=
\tres^{-1}(\sigma)(x)$.  Then $\Delta$ is completely positive, unital, and
for every $d\in \D$ and $\sigma\in\hat\D$,
\[\sigma(\Delta(d))=\rho(E(d))=\rho(\iota(d))=\sigma(d),\] so $\Delta|_\D=\text{Id}_\D$.
Thus, $\Delta$ is a conditional expectation and (b) holds.

As noted earlier, the implication (b)$\Rightarrow$(c) was established
by Renault, see~\cite[Proposition~5.7]{RenaultCaSuC*Al}.  Here is a sketch
of an argument somewhat different from Renault's.  Let
$\X_1:=\{(v,\sigma_1): (\sigma_2,v,\sigma_1)\in\fX\}$.
For each $(v,\sigma)\in \X_1$, consider the function on $\C$ given by 
\begin{equation}\label{aGdef}
  |[v,\sigma]|(x)=\frac{|\sigma(E(v^*x))|}{\sigma(v^*v)^{1/2}}
  \quad\text{($x\in \C$)},
\end{equation}
and
  let $\fG:=\{|[v,\sigma]|: (v,\sigma)\in \X_1\}$.  Put the topology
  of pointwise convergence on $\fG$.  Then $\fG$ is a Hausdorff space.
   
It follows from Lemma~\ref{chareqrel} that the map $U$ given by
$[\sigma_2,v,\sigma_1]_\bbT\mapsto |[v,\sigma_1]|$ is a well-defined
map of the Weyl groupoid $G$ into $\fG$.  We shall show $U$
is continuous and one-to-one.  Indeed if
$[\rho_\lambda,v_\lambda,\sigma_\lambda]_\bbT$ is a net in $G$
converging to $[\rho,v,\sigma]_\bbT$, continuity of the source map
gives $\sigma_\lambda\rightarrow \sigma$.  Let $d\in\D$ with
$\sigma(d)\neq 0$.  Then $N(vd)$ is a basic neighborhood of
$[\rho,v,\sigma]$, so for large $\lambda$,
$[\rho_\lambda,v_\lambda, \sigma_\lambda]\in N(vd)$.  Thus for large
enough $\lambda$, $\beta_{v_\lambda}$ and $\beta_v$ have the same germ
at $\sigma_\lambda$.  Therefore there exist
$h_\lambda, k_\lambda\in\D$ such that
$\sigma_\lambda(k_\lambda)\sigma_\lambda(h_\lambda)\neq 0$ and
\[v_\lambda k_\lambda=vh_\lambda.\]   Then for $x\in\C$ computations yield,
\[
  | [v_\lambda,\sigma_\lambda]|=|[v_\lambda k_\lambda,\sigma_\lambda]|
  =|[vh_\lambda, \sigma_\lambda]|=|[v,\sigma_\lambda]|\rightarrow
  |[v,\sigma]|.\]  Thus $U$ is continuous.   

Suppose now that $g=[\sigma_2,v,\sigma_1]_\bbT$ and
$g'=[\sigma_2',v',\sigma_1']_\bbT$ are elements of $G$ with
$U(g)=U(g')$, that is, $|[v,\sigma_1]|=|[v',\sigma_1']|$.  Then for
any $h\in\D$, taking $x=vh$ in~\eqref{aGdef} above 
shows
\[\sigma_1(E(v^*vh))=\sigma_1(E(v^*v))\sigma_1(h)=0 \Leftrightarrow
\sigma_1'(E(v'^*vh))=\sigma_1'(E(v'^*v))\sigma_1'(h)=0.\]
It follows that for any $h\in \D$, $\sigma_1(h)=0$ if and only if
$\sigma_1'(h)=0$, so $\sigma_1=\sigma_1'$.    
Taking $h=I$ gives $\sigma_1(E(v'^*v))\neq 0$.  Thus  $\beta_v$ and $\beta_{v'}$ have the
same germ at $\sigma_1$ by Lemma~\ref{chareqrel}.  Therefore $g=g'$,
so $U$ is one-to-one.

Since $U$ is a one-to-one and continuous mapping of $G$ into
the Hausdorff space $\fG$, it follows $G$ is also Hausdorff.

We prove the contrapositive of (c)$\Rightarrow$(a).  Suppose $\tres$ is
not one-to-one.  We may then find $\rho_1, \rho_2\in \widehat{I(\D)}$
such that $\rho_1\circ E\neq \rho_2\circ E$ yet $\rho_1\circ
\iota=\rho_2\circ \iota$.  Put $\sigma:=\rho_i\circ \iota$.  

Since $\rho_1\circ E\neq \rho_2\circ E$, regularity yields the existence of $v\in
\N(\C,\D)$ such that $\rho_1(E(v))\neq \rho_2(E(v))$.  Let
$K_0\subseteq \D$ be the fixed point ideal for $v$.   

The Cauchy-Schwartz inequality gives
\[|\rho_i(E(v))|^2\leq\rho(\iota(v^*v))=\sigma(v^*v).\]  Since
$\rho_i(E(v))$ cannot both vanish, 
$\sigma(v^*v)\neq 0$.

We claim that $\sigma|_{K_0}=0$.  For $d\in K_0$, $vd=dv\in\D^c=\D$, so 
\[\rho_1(E(v))\sigma(d)=\rho_1(E(vd))=\sigma(vd)=\rho_2(E(vd))=\rho_2(E(v))\sigma(d).\]
Since $\rho_1(E(v))\neq \rho_2(E(v))$, $\sigma(d)=0$, as desired.

Next we show that $\sigma|_{K_0^\perp}=0$.  Choose $d\in K_0^\perp$
and let $R_0\in I(\D)$ be the support projection of $K_0$ in $I(\D)$
(see~\cite[Lemma~1.9]{PittsStReInI}).  Then $\iota(d)R_0=0$, and
by~\cite[Theorem~3.5]{PittsStReInI}, 
$E(v)=E(v)R_0$.  Therefore,
\[\rho_1(E(v))\sigma(d)= \rho_1(E(v))\rho_1(\iota(d))=\rho_1(
  E(v)R_0\,\iota(d))=0.\] Similarly, $\rho_2(E(v))\sigma(d)=0$,  so that
$\sigma(d)=0$.

By construction, $K_0$ is a regular ideal in $\D$.  Thus,
\[\supp(K_0)^\perp=\supp(K_0^\perp)\in \ropen(\hat\D).\]  Also, since $K_0\vee
K_0^\perp=\D$,
\[\supp(K_0)\cup \supp(K_0^\perp)\] is a dense open set in $\hat\D$.
Since $\sigma$ annihilates $K_0$ and $K_0^\perp$, we have
\[\sigma\in\overline{\supp(K_0)}\cap \overline{\supp(K_0^\perp)}.\]

Let $J:=\overline{v^*v\D}\cap K_0$.  By Lemma~\ref{fixptchar},
$J^\dperp =K_0$.  Therefore
$\overline{\supp(J)}=\overline{\supp(K_0)}$.
But $\supp(J)=(\fix\beta_v)^\circ$, so
$\sigma\in \overline{(\fix\beta_v)^\circ}$.
Note  $\sigma\in \fix(\beta_v)$ because $\fix\beta_v$ is relatively
closed in $\dom\beta_v$.

Consider the ideal $\L:=K_0^\perp\cap \overline{v^*v\D}$.  Fix
$\tau\in \supp(\L)$.  We claim that whenever $H\subseteq\supp(\L)$ is
an open neighborhood of $\tau$, then there exists $\tau_1\in H$ such
that $\beta_v(\tau_1)\neq \tau_1$.  Indeed, if otherwise, then there
exists an open neighborhood $H\subseteq\supp(\L)$ of $\tau$ such that
$\beta_v(\tau_1)=\tau_1$ for every $\tau_1\in H$.  But then
$\tau\in (\fix\beta_v)^\circ=\supp(J)\subseteq \supp(K_0)$.  But
$\supp(K_0)$ and $\supp(K_0^\perp)$ are disjoint, and
$\supp(\L)\subseteq \supp(K_0^\perp)$.  This contradiction establishes
the claim.

Since $\sigma\in\overline{\supp(K_0^\perp)}$ and
$\supp(\L)=\supp(K_0^\perp)\cap \dom\beta_v$, every neighborhood of
$\sigma$ contains an element of $\supp(\L)$.  Thus, the preceding
discussion shows that every open neighborhood 
of $\sigma$ has non-empty intersection with both $(\fix\beta_v)^\circ$
and the set $\{\tau\in\dom\beta_v: \beta_v(\tau)\neq\tau\}$.  Therefore
\[[\sigma,I,\sigma]_\bbT\neq [\sigma,v,\sigma]_\bbT.\]

Suppose now that $V_1$ and $V_2$ are open neighborhoods of
$[\sigma, I, \sigma]_\bbT$ and $[\sigma,v, \sigma]_\bbT$ respectively.  We
may choose $d_1, d_2\in \overline{v^*v\D}$ such that
$\sigma(d_i)=1$ so that $N_\bbT(d_1)\subseteq V_1$ and
$N_\bbT(vd_2)\subseteq V_2$; recall $N_\bbT(d_1)$ and $N_\bbT(vd_2)$
are basic open neighborhoods of $[\sigma, I, \sigma]_\bbT$ and
$[\sigma, v, \sigma]_\bbT$ respectively.  We shall show that
$N_\bbT(d_1)\cap N_\bbT(vd_2)\neq \emptyset$.  Let $d=d_1d_2$.  Since
$\sigma\in\overline{\supp(J)}$ and $\sigma(d)=1$, we may
find $\tau\in\supp(J)$ such that $|\tau(d)|>1/2$.  Then
$\tau((vd)^*(vd))\neq 0$, and $\tau\in \fix(\beta_{vd})^\circ$.  Hence 
$[\tau, vd, \tau]_\bbT=[\tau, d, \tau]_\bbT\in N_\bbT(d_1)\cap
N_\bbT(vd_2)\subseteq V_1\cap V_2$.
Therefore, $G$ is not Hausdorff.
  
\end{proof}

\section{Cartan Envelopes}\label{ccce}
It follows from~\cite[Theorem~5.7]{PittsStReInI} that a regular
inclusion $(\C,\D)$ regularly embeds into a Cartan pair $(\C_1,\D_1)$
precisely when the ideal
$\rad(\C,\D)=\{x\in \C: \rho(x^*x)=0 \,\, \forall \, \rho\in
\fS(\C,\D)\}$ vanishes.  In general, the construction given in the
proof of~\cite[Theorem~5.7]{PittsStReInI} produces a Cartan pair
$(\C_1,\D_1)$ having little connection with the original pair
$(\C,\D)$.   An example of this behavior is  the inclusion $(C[0,1],
\bbC I)$, where the Cartan pair  into which $(C[0,1],
\bbC I)$ embeds is $(C[0,1], C[0,1])$.
However in some cases, the image of $\C$ under the embedding generates
$\C_1$ as a $\D_1$-bimodule and $(\C_1,\D_1)$ is minimal in a sense
made precise below.  When this occurs, we call such a minimal pair
$(\C_1,\D_1)$ a Cartan envelope for $(\C,\D)$ (in analogy with the
$C^*$-envelope of an operator system).

The purpose of this section is
to establish a main result, Theorem~\ref{ccequiv}, which characterizes
the existence and uniqueness of the Cartan envelope for a regular
inclusion in terms of the ideal intersection property and also in
terms of the unique faithful pseudo-expectation property.  We begin
with definitions.

\begin{definition} \label{Cenvdef} Let $(\C,\D)$ be a regular inclusion.
    \begin{enumerate}
    \item An \textit{extension} of $(\C,\D)$ is a triple
      $(\C_1,\D_1,\alpha)$ consisting of the regular inclusion
      $(\C_1,\D_1)$ and a regular $*$-monomorphism
      $\alpha:(\C,\D)\rightarrow (\C_1,\D_1)$.  In addition, if $(\C_1,\D_1)$ is a
      Cartan pair, we say $(\C_1,\D_1,\alpha)$ is a \textit{Cartan
        extension} of $(\C,\D)$.
    \item A \textit{\cover}
      for $(\C,\D)$ is an extension
  $(\C_1,\D_1,\alpha)$ such that there exists a faithful conditional
  expectation $\bbE_1: \C_1\rightarrow \D_1$ and the image of
  $(\C,\D)$ under $\alpha$ generates $(\C_1,\D_1)$ in the sense that 
  \[  \C_1=C^*(\alpha(\C)\cup
        \bbE_1(\alpha(\C)))\dstext{and}
        \D_1=C^*(\bbE_1(\C)).\]
      \item The \cover\  $(\C_1,\D_1,\alpha)$ is a \textit{\Ccover} when
    $(\C_1,\D_1)$ is a Cartan pair (the additional restriction is that
    $\D_1$ is a MASA in $\C_1$). 
    \item An \textit{\envelope} for $(\C,\D)$ is a \cover\ 
      $(\C_1,\D_1,\alpha)$ for 
      $(\C,\D)$ such that $(\D_1,\alpha|_\D)$ is an essential
      extension of $\D$.   An \envelope\ $(\C_1,\D_1,\alpha)$ is a
      \textit{Cartan envelope} when $(\C_1,\D_1)$ is a Cartan pair.
    \end{enumerate}
    Two extensions   $(\C_i,\D_i,\alpha_i)$  ($i=1,2$) of
$(\C,\D)$ are 
 \textit{equivalent} if there
  is a regular $*$-isomorphism $\psi:\C_1\rightarrow \C_2$ such that
  $\psi\circ\alpha_1=\alpha_2$. 
\end{definition}

Not every regular inclusion $(\C,\D)$ has a Cartan extension.
    Indeed, if $\D^c$ is not abelian,~\cite[Theorem~5.4]{PittsStReInI}
    shows that $(\C,\D)$ cannot have a Cartan extension.
 However, as noted above, 
    \cite[Theorem~5.7]{PittsStReInI} characterizes when $(\C,\D)$ has a Cartan
    extension.

We can now state the main result of this section.
\begin{theorem}\label{ccequiv}  Let $(\C,\D)$ be a regular inclusion
  and let $\D^c$ be the relative commutant of $\D$ in $\C$.
  The following statements are equivalent:
  \begin{enumerate}
  \item $(\C,\D)$ has a Cartan envelope;
    \item $(\C,\D)$ has the faithful unique pseudo-expectation
      property; 
      \item $(\D^c,\D)$ and $(\C,\D^c)$ are essential inclusions and
        $\D^c$ is abelian.
      \end{enumerate} 
When $(\C,\D)$ satisfies any of  conditions (a)--(c), the following
statements hold.
\begin{description}
\setcounter{enumi}{3}
\item[\rm Uniqueness] If for $j=1,2$, $(\C_j,\D_j,\alpha_j)$ are Cartan envelopes for
    $(\C,\D)$, there exists a unique regular $*$-isomorphism
    $\psi: \C_1\rightarrow \C_2$ such that $\psi\circ\alpha_1=\alpha_2$.
   
\item[\rm Minimality] If $(\C_1,\D_1,\alpha)$ is a Cartan  
      \cover\ for $(\C,\D)$, there is an ideal $\fJ\subseteq \C_1$ such
      that $\fJ\cap \alpha(\C)=(0)$ and, letting
      $q:\C_1\rightarrow \C_1/\fJ$ denote the quotient map, 
      $(\C_1/\fJ, \D_1/(\fJ\cap \D_1), q\circ\alpha)$ is a Cartan
      envelope for $(\C,\D)$.
    \end{description}
  \end{theorem}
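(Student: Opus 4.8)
\emph{Overview.} The plan is to prove the cycle $(a)\Rightarrow(b)\Rightarrow(c)\Rightarrow(a)$ and then to treat Uniqueness and Minimality, fixing throughout an injective envelope $(I(\D),\iota)$. For $(a)\Rightarrow(b)$: given a Cartan envelope $(\C_1,\D_1,\alpha)$ with faithful conditional expectation $\bbE_1\colon\C_1\to\D_1$, the essentiality of $(\D_1,\alpha|_\D)$ together with maximality of injective envelopes among essential extensions identifies $I(\D_1)$ with $I(\D)$ and embeds $\D\subseteq\D_1\subseteq I(\D)$. Then $E:=\bbE_1\circ\alpha$, followed by the inclusion $\D_1\hookrightarrow I(\D)$, is a pseudo-expectation for $(\C,\D)$, and it is faithful because $\alpha$, $\bbE_1$, and the inclusion all are. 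For \emph{uniqueness} I would take any pseudo-expectation $F\colon\C\to I(\D)$, extend $F\circ\alpha^{-1}$ to a u.c.p.\ map $\hat F\colon\C_1\to I(\D)$ by injectivity, use rigidity of the injective envelope to force $\hat F|_{\D_1}$ to be the inclusion (so that $\hat F$ is a pseudo-expectation for the Cartan pair $(\C_1,\D_1)$), and then invoke Proposition~\ref{!fps->vc} to get $\hat F=\bbE_1$, whence $F=E$.

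For $(b)\Rightarrow(c)$, let $E\colon\C\to I(\D)$ be the unique faithful pseudo-expectation. For each $u\in\U(\D^c)$ the map $x\mapsto E(u^*xu)$ is again a pseudo-expectation (it fixes $\D$ since $u$ is unitary and commutes with $\D$), so uniqueness gives $E(u^*xu)=E(x)$ for all $x\in\C$; thus $E$ is invariant under $\U(\D^c)$-conjugation. Each strongly compatible state $\rho\circ E$ ($\rho\in\widehat{I(\D)}$) is compatible, so $|\rho(E(u))|\in\{0,1\}$ for $u\in\U(\D^c)$; combining compatibility (which forces such $u$ to act as a scalar in the GNS representation) with the conjugation-invariance, I would show each $\rho\circ E$ restricts to a character on $\D^c$. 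Faithfulness of $E$ makes this family of characters jointly faithful, so $\D^c$ embeds into a commutative algebra and is abelian. Essentiality of $(\D^c,\D)$ then follows because $E|_{\D^c}$ is a faithful $*$-homomorphism into $I(\D)$ extending $\iota$ (Lemma~\ref{ess<->ess}), and essentiality of $(\C,\D^c)$ follows from faithfulness of $E$ via $E(\C)\subseteq I(\D)=I(\D^c)$: any nonzero ideal contains a positive element not annihilated by $E$, forcing a nontrivial intersection with $\D^c$. Establishing the character property is the delicate point of this step.

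For $(c)\Rightarrow(a)$ --- \textbf{the main obstacle} --- first note that $\D^c$ abelian gives $\D^c=(\D^c)^c$, so $\D^c$ is a MASA in $\C$, and the standard fact $\N(\C,\D)\subseteq\N(\C,\D^c)$ makes $(\C,\D^c)$ a regular MASA inclusion; since $(\D^c,\D)$ is essential, $I(\D^c)=I(\D)$, and the unique pseudo-expectation $E$ of $(\C,\D^c)$ is faithful because $(\C,\D^c)$ is essential, yielding $(b)$. To produce the envelope I would pass to the strongly compatible states $\fS_s(\C,\D^c)$, which form a compatible cover of $\widehat{\D^c}$, and apply the twist construction of Theorem~\ref{inctoid} to get a Hausdorff twist $(\Sigma,G)$; Proposition~\ref{trivrad4twists} then makes $(C^*_r(\Sigma,G),C(\unit{G}))$ a Cartan pair, while Corollary~\ref{inctoidCor3} supplies the canonical regular embedding $\alpha$. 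Essentiality of $(\D_1,\alpha|_\D)$ comes from the cover being essential via Lemma~\ref{ess<->ess}, and the generation identities $\C_1=C^*(\alpha(\C)\cup\bbE_1(\alpha(\C)))$ and $\D_1=C^*(\bbE_1(\C))$ are built into the construction. The real work is verifying all the envelope axioms simultaneously from this twist.

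Finally, for \textbf{Uniqueness}, given two Cartan envelopes $(\C_j,\D_j,\alpha_j)$, each restricts via its conditional expectation to the \emph{same} unique pseudo-expectation of $(\C,\D)$; since each $\C_j$ is generated by $\alpha_j(\C)$ and $\bbE_j(\alpha_j(\C))$, I would prescribe $\psi(\alpha_1(x))=\alpha_2(x)$ and $\psi(\bbE_1(\alpha_1(x)))=\bbE_2(\alpha_2(x))$ on generators and check this extends to a regular $*$-isomorphism with $\psi\circ\alpha_1=\alpha_2$, with uniqueness of $\psi$ automatic since its values are fixed on a generating set. For \textbf{Minimality}, starting from a Cartan \cover\ $(\C_1,\D_1,\alpha)$, I would choose an ideal $\fJ\subseteq\C_1$ maximal with $\fJ\cap\alpha(\C)=(0)$; quotienting removes exactly the part of $\C_1$ invisible to $\alpha(\C)$, and using Lemma~\ref{isolattice} to track regular ideals I would verify that $(\D_1/(\fJ\cap\D_1),q\circ\alpha|_\D)$ becomes an essential extension of $\D$ while the quotient stays Cartan, so that $(\C_1/\fJ,\D_1/(\fJ\cap\D_1),q\circ\alpha)$ is a Cartan envelope. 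Confirming that the quotient preserves both the faithful expectation and the MASA property, and that $\fJ\cap\D_1$ behaves correctly, is the subtle part here.
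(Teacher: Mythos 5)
Your outline matches the paper's architecture in places, but several of the steps you defer to "checking" are exactly where the substance of the proof lies, and two of them do not go through as sketched. First, in $(b)\Rightarrow(c)$ your argument for essentiality of $(\C,\D^c)$ is a non sequitur: faithfulness of $E$ tells you that a nonzero ideal $J\subseteq\C$ contains a positive $x$ with $E(x)\neq 0$, but that does not by itself force $J\cap\D^c\neq(0)$. What is needed is that the left kernel $\L(\C,\D^c)$ of the pseudo-expectation is a two-sided ideal that is \emph{maximal} among ideals meeting $\D^c$ trivially (the content of Theorem~\ref{Eideal}, resting on the $\tilde\theta_v$-covariance of $E$ in Proposition~\ref{ENinv}); only then does $J\cap\D^c=(0)$ imply $J\subseteq\L(\C,\D^c)=(0)$. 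Second, your route for $(c)\Rightarrow(a)$ via Theorem~\ref{inctoid} does not produce a Cartan pair as stated: Proposition~\ref{trivrad4twists} yields a Cartan inclusion only \emph{when} $C(\unit{G})$ is a MASA in $C^*_r(\Sigma,G)$, and nothing in the twist construction gives you that for free; invoking Corollary~\ref{inctoidCor3} here is close to circular, since that corollary is a \emph{description} of the Cartan envelope whose proof leans on the Section~\ref{ccce} machinery (in particular Proposition~\ref{CEenv=Cenv}, which derives the MASA property from uniqueness of the pseudo-expectation for the package). The paper's existence proof instead embeds $(\C,\D^c)$ into a \cstardiag\ via the radical criterion, cuts down to a package, and quotients by the explicitly constructed ideal $\fJ=\{x:\bbE_1(x^*x)\in\ker u\}$, the key technical point being the normalizer-invariance $\alpha(v)^*(\ker u)\alpha(v)\subseteq\ker u$.

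The same issue of deferred substance affects your last two items. For Uniqueness, prescribing $\psi$ on the generators $\alpha_j(\C)\cup\bbE_j(\alpha_j(\C))$ is the easy part; the proof must show the prescription is well defined on $\spn\M_1$ (done in the paper via the $\bbE_i$-computation) and, crucially, \emph{isometric}, which the paper obtains from the fact that for a Cartan pair the ambient norm is the minimal \cstar-norm on the skeleton ({\cite[Theorem~7.4]{PittsStReInI}}). Without that, "check this extends" has no content. For Minimality, an ideal merely maximal with respect to $\fJ\cap\alpha(\C)=(0)$ need not satisfy $\bbE_1(\fJ)\subseteq\fJ$, so the conditional expectation need not descend to the quotient, and there is no reason the quotient diagonal is an essential extension of $\D$; the paper's specific $\fJ$ is built from $\ker u$ precisely so that both properties hold. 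Your $(a)\Rightarrow(b)$ argument, by contrast, is essentially sound and close to the paper's (with a reasonable variant for uniqueness using rigidity plus Proposition~\ref{!fps->vc}(b)), and your observation that $E(u^*\cdot u)=E(\cdot)$ for $u\in\U(\D^c)$ is a legitimate starting point for $(b)\Rightarrow(c)$, though turning it into multiplicativity of $\rho\circ E$ on $\D^c$ is the hard step the paper outsources to \cite{PittsZarikianUnPsExC*In}.
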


 \begin{remark}{Remark}  It is possible to construct a regular MASA
   inclusion whose pseudo-expectation is not faithful, so the
   condition in part (c) that $(\C,\D^c)$ is essential is needed.

We shall give a groupoid description of the Cartan envelope for a
regular inclusion with the faithful unique pseudo-expectation
property in 
Section~\ref{GpReIn} (see Theorem~\ref{inctoid}).

 \end{remark}

 The proof of Theorem~\ref{ccequiv} will be accomplished in
          several steps.  We begin with a lemma on essential
          inclusions for abelian \cstaralg s.  It is possible to give
          a proof of the lemma from the definitions, but we prefer to use
          properties of pseudo-expectations.

          \begin{lemma}\label{abeless}  For $i=1,2,3$, let $\D_i$ be abelian
  \cstaralg s with $\D_1\subseteq \D_2\subseteq \D_3$.  Then
  $(\D_3,\D_1)$ is an essential inclusion if and only if both
  $(\D_3,\D_2)$ and $(\D_2,\D_1)$ are essential inclusions.
\end{lemma}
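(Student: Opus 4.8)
The plan is to reduce the statement to the rigidity of injective envelopes, which keeps the argument within the pseudo-expectation framework preferred here. The one external fact I would isolate is the standard consequence of Hamana's theory for abelian C*-algebras: an inclusion $(\B,\A)$ of abelian C*-algebras is essential if and only if the inclusion $\A\subseteq\B$ extends to a $*$-monomorphism $\B\hookrightarrow I(\A)$ fixing $\A$ (that is, a $*$-monomorphic pseudo-expectation), in which case $I(\A)$ simultaneously serves as an injective envelope of $\B$, so $I(\B)=I(\A)$ compatibly with the inclusions. Granting this, both implications become nearly formal.

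For the reverse direction, assume $(\D_2,\D_1)$ and $(\D_3,\D_2)$ are essential. The first yields a $*$-monomorphism $\D_2\hookrightarrow I(\D_1)$ over $\D_1$ with $I(\D_2)=I(\D_1)$, and the second yields $\D_3\hookrightarrow I(\D_2)=I(\D_1)$ over $\D_2$. Composing, $\D_3$ embeds into $I(\D_1)$ as a $*$-subalgebra that contains $\D_1$ and restricts to $\iota$ there, whence $(\D_3,\D_1)$ is essential. For the forward direction, assume $(\D_3,\D_1)$ is essential, so that we may regard $\D_1\subseteq\D_2\subseteq\D_3\subseteq I(\D_1)$ with $I(\D_3)=I(\D_1)$. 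Since $\D_1\subseteq\D_2\subseteq I(\D_1)$, rigidity forces $I(\D_2)=I(\D_1)$ and makes $(\D_2,\D_1)$ essential; then $\D_2\subseteq\D_3\subseteq I(\D_1)=I(\D_2)$ shows $(\D_3,\D_2)$ essential as well.

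The main obstacle, and really the only content, is the intermediate-subalgebra step: any C*-subalgebra sandwiched between $\A$ and $I(\A)$ must be an essential extension of $\A$ with the same injective envelope. I would supply this either by citing Hamana rigidity directly, or, to stay inside the tools of the paper, by Gelfand duality. By Lemma~\ref{ess<->ess}, essentiality of $(\B,\A)$ is exactly irreducibility of the dual cover $\hat\B\twoheadrightarrow\hat\A$, and the dual of the chain $\D_1\subseteq\D_2\subseteq\D_3$ is a factorization $\hat\D_3\twoheadrightarrow\hat\D_2\twoheadrightarrow\hat\D_1$ of the restriction maps with $h_{31}=h_{21}\circ h_{32}$. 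Thus the whole lemma is equivalent to the elementary fact that a composition $r=q\circ p$ of continuous surjections of compact Hausdorff spaces is irreducible if and only if both factors $p$ and $q$ are irreducible.

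Closing that fact is a short closed-set chase, which I would carry out as follows. If $p,q$ are irreducible and $F$ is closed with $r(F)=X$, then $p(F)$ is closed (compact image in a Hausdorff space) and $q(p(F))=X$ forces $p(F)=Y$, whence $F=Y$'s preimage level, i.e. $F=Z$; conversely, if $r$ is irreducible, then for closed $F\subseteq Y$ with $q(F)=X$ one applies irreducibility of $r$ to the closed set $p^{-1}(F)$ (using $p(p^{-1}(F))=F$ since $p$ is onto) to get $p^{-1}(F)=Z$ and hence $F=Y$, while for closed $F\subseteq Z$ with $p(F)=Y$ one has $r(F)=q(Y)=X$ and concludes $F=Z$ directly. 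Lemma~\ref{isolattice} may be invoked if one prefers to phrase the bookkeeping in terms of regular ideals rather than closed subsets of the spectra. This composition step is the single place where a genuine (if brief) verification is unavoidable; everything else is packaging via the injective envelope.
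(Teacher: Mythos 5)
Your proof is correct, but it takes a genuinely different route from the paper's. The paper handles the forward direction by observing that $(\D_3,\D_2)$ essential is immediate from the definition, and then gets $(\D_2,\D_1)$ essential by passing through the equivalence (for abelian inclusions) between essentiality and the faithful unique pseudo-expectation property together with the fact that this property is inherited by intermediate algebras, citing results of Pitts--Zarikian; the converse is explicitly left to the reader. Indeed the paper remarks before the lemma that a proof ``from the definitions'' is possible but that it prefers to use pseudo-expectations. You take exactly that elementary route: via Lemma~\ref{ess<->ess} you dualize to the statement that a composition $r=q\circ p$ of continuous surjections of compact Hausdorff spaces is irreducible if and only if both factors are, and your closed-set chase for both implications is complete and correct (the phrase ``$F=Y$'s preimage level'' is garbled, but the intended step --- $p(F)=Y$ plus irreducibility of $p$ forces $F=Z$ --- is right). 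What your approach buys is self-containedness and uniform treatment of both directions, including the converse the paper omits; what the paper's approach buys is brevity (given the cited machinery) and consistency with the pseudo-expectation framework it is building. Your alternative packaging via Hamana rigidity is looser --- the equivalence of essentiality with a $*$-monomorphic extension into $I(\A)$, and the intermediate-subalgebra step, are precisely the nontrivial content --- but you correctly identify that gap and close it with the Gelfand-duality argument, so nothing is missing.
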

\begin{proof}
  Suppose $(\D_3,\D_1)$ is an essential inclusion.  That $(\D_3,\D_2)$
  is an essential inclusion follows readily from the definition of
  essential inclusion.
  By~\cite[Corollary~3.22]{PittsZarikianUnPsExC*In}, $(\D_3,\D_1)$ has
  the faithful unique pseudo-expectation property,
  so~\cite[Proposition~2.6]{PittsZarikianUnPsExC*In}, shows $(\D_2,\D_1)$
 also  has the faithful unique pseudo-expectation property.  Then
  $(\D_2,\D_1)$ is an essential inclusion by
  \cite[Corollary~3.22]{PittsZarikianUnPsExC*In}.

  The converse is left to the reader.
\end{proof}

         The  following  gives  the equivalence  of parts (b) and (c) of
          Theorem~\ref{ccequiv}.  

\begin{proposition}\label{!fps->vc}  Suppose $(\C,\D)$ is a regular
  inclusion and $(I(\D),\iota)$ is an injective envelope for $\D$.
  The following statements hold.
  \begin{enumerate}
\item   $(\C,\D)$ has the faithful unique pseudo-expectation
      property if and only if   the
  the following conditions hold:
  \begin{enumerate}
\item[(i)]    the relative commutant of $\D$ in $\C$ is abelian; and 
\item[(ii)] both  $(\D^c,\D)$ and $(\C,\D^c)$ are essential inclusions.
\end{enumerate}
\item $(\C,\D)$ is a Cartan inclusion
  if and only if there is a faithful conditional expectation
  $E:\C\rightarrow \D$ and $\iota\circ E$ is the only
  pseudo-expectation for $(\C,\D)$.
\end{enumerate}
\end{proposition}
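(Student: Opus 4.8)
The plan is to prove statement (a) in both directions and then deduce (b) from it. Throughout I would rely on three tools. First, injectivity of $I(\D)$: any unital completely positive (ucp) map into $I(\D)$ defined on an intermediate subalgebra $\D\subseteq\B\subseteq\C$ and extending $\iota$ extends to a ucp map on all of $\C$; combined with the assumed uniqueness for $(\C,\D)$, this forces pseudo-expectations of intermediate inclusions to be restrictions of the pseudo-expectation of $(\C,\D)$. Second, the multiplicative domain $\M_E$ of a ucp map $E$, which is a $*$-subalgebra on which $E$ is multiplicative. Third, for an essential extension $(\B,\D)$ of abelian $\cstaralg$s one has $\B\hookrightarrow I(\D)$ over $\iota$, so by \cite[Corollary~3.22]{PittsZarikianUnPsExC*In} the inclusion has the faithful unique pseudo-expectation property and that pseudo-expectation is exactly the embedding (hence multiplicative). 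I would also use that when $\D^c$ is abelian, $(\C,\D^c)$ is automatically a regular MASA inclusion (one checks $(\D^c)^c=\D^c$ and $\N(\C,\D)\subseteq\N(\C,\D^c)$), so \cite[Theorem~3.5]{PittsStReInI} applies to it, and that for a regular MASA inclusion faithfulness of the pseudo-expectation is equivalent to essentiality (from the pseudo-expectation machinery of \cite{PittsStReInI}).

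For (a), $(\Leftarrow)$, assume $\D^c$ is abelian and that $(\D^c,\D)$ and $(\C,\D^c)$ are essential. Essentiality of $(\D^c,\D)$ gives $I(\D^c)=I(\D)$ and identifies the unique pseudo-expectation of $(\D^c,\D)$ with the embedding $j\colon\D^c\hookrightarrow I(\D)$. Since $(\C,\D^c)$ is a regular MASA inclusion, \cite[Theorem~3.5]{PittsStReInI} provides a unique pseudo-expectation $E\colon\C\to I(\D)$, and essentiality of $(\C,\D^c)$ makes $E$ faithful. As $E|_{\D^c}=j$ restricts to $\iota$ on $\D$, $E$ is a pseudo-expectation for $(\C,\D)$. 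For uniqueness, given any pseudo-expectation $E'$ of $(\C,\D)$, the restriction $E'|_{\D^c}$ is a pseudo-expectation of $(\D^c,\D)$, hence equals $j$; then $E'$ is a pseudo-expectation of $(\C,\D^c)$, so $E'=E$. Thus $(\C,\D)$ has the faithful unique pseudo-expectation property.

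For (a), $(\Rightarrow)$, let $E$ be the unique, faithful pseudo-expectation. The heart of the matter — and the step I expect to be the main obstacle — is showing $\D^c$ is abelian, since uniqueness alone does not force this (Example~\ref{narelco} has a unique but non-faithful pseudo-expectation with $\D^c$ non-abelian). The device is to fix a self-adjoint $a\in\D^c$ and set $\D_a:=C^*(\D\cup\{a\})$, an abelian inclusion over $\D$. Every pseudo-expectation of $(\D_a,\D)$ extends by injectivity of $I(\D)$ to one of $(\C,\D)$, which must be $E$; hence $E|_{\D_a}$ is the unique pseudo-expectation of $(\D_a,\D)$ and is faithful, so by \cite[Corollary~3.22]{PittsZarikianUnPsExC*In} the inclusion $(\D_a,\D)$ is essential and $E|_{\D_a}$ is the embedding into $I(\D)$, hence multiplicative. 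Therefore $a\in\M_E$; as the self-adjoint elements generate $\D^c$ and $\M_E$ is a $*$-subalgebra, $\D^c\subseteq\M_E$, so $E|_{\D^c}$ is a $*$-homomorphism, injective by faithfulness, with range in the abelian algebra $I(\D)$ — whence $\D^c$ is abelian. Now $E|_{\D^c}$ is the unique pseudo-expectation of $(\D^c,\D)$ and is faithful, so \cite[Corollary~3.22]{PittsZarikianUnPsExC*In} gives that $(\D^c,\D)$ is essential; and $E$ is the unique pseudo-expectation of the regular MASA inclusion $(\C,\D^c)$ and is faithful, so $(\C,\D^c)$ is essential. This yields (i) and (ii).

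Finally, (b) follows from (a). If $(\C,\D)$ is Cartan with faithful conditional expectation $\bbE$, then $\iota\circ\bbE$ is a pseudo-expectation, necessarily the unique one by \cite[Theorem~3.5]{PittsStReInI}, and it is faithful. Conversely, suppose there is a faithful conditional expectation $E\colon\C\to\D$ with $\iota\circ E$ the only pseudo-expectation; then $\iota\circ E$ is faithful, so by (a) $\D^c$ is abelian and $(\D^c,\D)$ is essential. The restriction $E|_{\D^c}\colon\D^c\to\D$ is a conditional expectation, so $\iota\circ(E|_{\D^c})$ is a pseudo-expectation of $(\D^c,\D)$ and hence equals the embedding $j\colon\D^c\hookrightarrow I(\D)$; for $a\in\D^c$ this reads $j(a)=\iota(E(a))\in j(\D)$, and injectivity of $j$ forces $a\in\D$. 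Thus $\D^c=\D$, so $\D$ is a MASA, and $(\C,\D)$ is a regular MASA inclusion carrying a faithful conditional expectation, i.e.\ a Cartan inclusion.
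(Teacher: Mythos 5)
Your argument is correct, and its overall architecture coincides with the paper's: identify $I(\D^c)$ with $I(\D)$ via essentiality of $(\D^c,\D)$ together with Lemma~\ref{abeless}, pass to the regular MASA inclusion $(\C,\D^c)$, and use the fact that the left kernel of its pseudo-expectation is the unique maximal $\D^c$-disjoint ideal to translate between faithfulness of $E$ and essentiality of $(\C,\D^c)$; part~(b) is then deduced from part~(a) exactly as in the paper. Where you genuinely depart from the printed proof is in the two places the paper outsources to external citations. For the abelianness of $\D^c$ in the forward direction of (a), the paper simply invokes \cite[Corollary~3.14]{PittsZarikianUnPsExC*In}, whereas you reprove it: for each self-adjoint $a\in\D^c$ you apply the abelian dichotomy of \cite[Corollary~3.22]{PittsZarikianUnPsExC*In} to $(C^*(\D\cup\{a\}),\D)$ (whose pseudo-expectation is forced to be $E|_{C^*(\D\cup\{a\})}$ by injectivity plus uniqueness) to place $a$ in the multiplicative domain of $E$, concluding that $E|_{\D^c}$ is an injective $*$-homomorphism into the abelian algebra $I(\D)$. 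For uniqueness in the converse direction, the paper cites \cite[Corollary~3.7]{PittsStReInI}, whereas you argue through the restriction map $\PsExp(\C,\D)\to\PsExp(\D^c,\D)$, which is the idea behind Proposition~\ref{bipseudo} later in the paper, specialized to the case where both $(\D^c,\D)$ and $(\C,\D^c)$ have unique pseudo-expectations. Both substitutions are sound; what they buy is self-containment (your multiplicative-domain argument is essentially the proof of the cited corollary) at the cost of some length, and neither changes the logical skeleton of the proof.
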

\begin{remark}{Remark}  An inclusion with a unique conditional
  expectation need not have the unique pseudo-expectation property; an
  example of this behavior is given by Zarikian in~\cite{ZarikianUnCoExAbC*In}.
\end{remark}
\begin{proof}  (a)  Suppose $(\C,\D)$ has a unique pseudo expectation
  $E:\C\rightarrow I(\D)$ which is faithful.
  By~\cite[Corollary~3.14]{PittsZarikianUnPsExC*In}, $\D^c$ is
  abelian, and~\cite[Proposition~2.6]{PittsZarikianUnPsExC*In} shows
  that $E|_{\D^c}$ is the unique pseudo-expectation for $(\D^c,\D)$.
  Then \cite[Corollary~3.22]{PittsZarikianUnPsExC*In} shows that
  $(\D^c,\D)$ is essential and $E|_{\D^c}$ is a $*$-monomorphism which
  is the unique pseudo-expectation for $(\D^c,\D)$.  As
  $(I(\D), \iota(\D))$ is an essential inclusion and
  $\iota(\D)\subseteq E(\D^c)\subseteq I(\D)$, Lemma~\ref{abeless}
  shows  the inclusion $(I(\D), E(\D^c))$ is
  also essential.  It follows from the ``Moreover'' portion
  of~\cite[Theorem~2.16]{HadwinPaulsenInPrAnTo} that $(I(\D), E|_{\D^c})$ is
  an injective envelope for $\D^c$.

By~\cite[Lemma~2.10]{PittsStReInI}, the identity mapping on $\C$ is a
regular $*$-monomorphism of $(\C,\D)$ into $(\C,\D^c)$, whence $(\C,\D^c)$
is a regular MASA inclusion.   
Note that $E$ is a  pseudo-expectation for $(\C,\D^c)$ (relative to
$(I(\D), E|_{\D^c})$).  By~\cite[Theorem~3.5 and
  Theorem~3.15]{PittsStReInI},  $\L(\C, \D^c)$ is the left kernel of
  $E$ and is the unique ideal of $\C$ maximal with respect to having
  trivial intersection with $\D^c$.   
Since $E$ is faithful, $\L(\C,\D^c)=(0)$, and it follows that
$(\C,\D^c)$ is an essential inclusion.

For the converse, suppose $(\C,\D)$ is a regular inclusion satisfying
conditions (i) and (ii).  By~\cite[Corollary~3.7]{PittsStReInI},
$(\C,\D)$ has a unique pseudo-expectation $E:\C\rightarrow I(\D)$.  As
$(\D^c,\D)$ is an essential
inclusion,~\cite[Corollary~3.22]{PittsZarikianUnPsExC*In} shows
$E|_{\D^c}$ is multiplicative and is the unique pseudo-expectation for
$(\D^c,\D)$.  It follows that $(I(\D), E|_{\D^c})$ is an injective
envelope for $\D^c$, whence  $E$ is also the pseudo-expectation for
$(\C,\D^c)$.  As $(\C,\D^c)$ is essential and
$\L(\C,\D^c)\cap \D^c=(0)$, we obtain $\L(\C,\D^c) =(0)$.   Thus,  $E$ is
faithful.

(b) If $(\C,\D)$ is a Cartan inclusion, then by definition, it is a regular MASA
inclusion with a faithful conditional expectation $E:\C\rightarrow
\D$.  By~\cite[Theorem~3.5]{PittsStReInI}, $\iota\circ E$ is
the unique pseudo-expectation for $(\C,\D)$.
 
Suppose now that $(\C,\D)$ has a faithful conditional expectation
$E:\C\rightarrow \D$ and $\iota\circ E$ is the only
pseudo-expectation.  As $\iota\circ E$ is faithful, part
(a) shows that $\D^c$ is abelian and $(\D^c,\D)$ is an essential
extension.  An application
of~\cite[Corollary~3.22]{PittsZarikianUnPsExC*In} shows that
there exists a unique and faithful pseudo-expectation
$u:\D^c\rightarrow I(\D)$ which is multiplicative.   Then $\iota\circ
E|_{\D^c}=u$, so $E|_{\D^c}$ is a homomorphism of $\D^c$ onto $\D$.
As $E$ is faithful, $E|_{\D^c}$ is an isomorphism, whence $\D^c=\D$.
So $(\C,\D)$ is a regular MASA inclusion with a faithful conditional
expectation, that is, $(\C,\D)$ is a Cartan inclusion.

\end{proof}

 Next we prove the implication (a)$\Rightarrow$(b) of
 Theorem~\ref{ccequiv}.  

\begin{proposition}\label{CE-->!Fpse}  Suppose $(\C,\D)$ is a regular
  inclusion and $(\C_1,\D_1,\alpha)$ is a Cartan envelope for
  $(\C,\D)$.  Then $(\C,\D)$ has the faithful unique pseudo-expectation property.
\end{proposition}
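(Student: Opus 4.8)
The plan is to produce the faithful pseudo-expectation explicitly from the envelope data and then to force uniqueness by transporting the (already known) unique pseudo-expectation property of the Cartan pair $(\C_1,\D_1)$ back along $\alpha$. To set up, fix an injective envelope $(I(\D_1),\iota_1)$ of $\D_1$ and treat $\iota_1$ as the inclusion $\D_1\subseteq I(\D_1)$. Since $(\C_1,\D_1)$ is a Cartan pair, it is a regular MASA inclusion with faithful conditional expectation $\bbE_1$, and as recorded before Definition~\ref{CartanDef} it has the faithful unique pseudo-expectation property, its unique pseudo-expectation being $\iota_1\circ\bbE_1$. The envelope hypothesis gives that $(\D_1,\alpha|_\D)$ is an essential extension of $\D$; composing with the essential extension $\D_1\subseteq I(\D_1)$ and applying Lemma~\ref{abeless} to the abelian chain $\D\subseteq\D_1\subseteq I(\D_1)$ shows $\D\subseteq I(\D_1)$ is essential. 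As $I(\D_1)$ is injective, the ``essential $+$ injective $=$ injective envelope'' principle (the Moreover part of \cite[Theorem~2.16]{HadwinPaulsenInPrAnTo}) lets me take $(I(\D),\iota):=(I(\D_1),\iota_1\circ\alpha|_\D)$ as an injective envelope for $\D$.

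With this identification, set $E:=\iota_1\circ\bbE_1\circ\alpha\colon\C\to I(\D)$. It is unital completely positive; and for $d\in\D$ we have $\alpha(d)\in\D_1$, so $\bbE_1(\alpha(d))=\alpha(d)$ and $E(d)=\iota_1(\alpha(d))=\iota(d)$, showing $E$ is a pseudo-expectation. Faithfulness is immediate: if $E(x^*x)=0$ then $\bbE_1(\alpha(x)^*\alpha(x))=0$ because $\iota_1$ is isometric, so $\alpha(x)=0$ by faithfulness of $\bbE_1$, hence $x=0$ by injectivity of $\alpha$.

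The substance of the proof is uniqueness. Let $\Phi\colon\C\to I(\D)=I(\D_1)$ be any pseudo-expectation. Since $\alpha$ is a $*$-monomorphism, $\Phi$ determines a unital completely positive map on the unital $C^*$-subalgebra $\alpha(\C)\subseteq\C_1$, which by injectivity of $I(\D_1)$ extends to a unital completely positive $\psi\colon\C_1\to I(\D_1)$ with $\psi\circ\alpha=\Phi$. The crucial point is to show $\psi|_{\D_1}=\iota_1$; a priori $\psi$ fixes only the smaller algebra $\alpha(\D)$, since $\Phi|_\D=\iota$. To upgrade this, I would extend $\psi|_{\D_1}\colon\D_1\to I(\D_1)$ to a unital completely positive map $\Theta\colon I(\D_1)\to I(\D_1)$; as $\Theta$ fixes $\alpha(\D)$ and $(I(\D_1),\iota)$ is an injective envelope of $\D$, rigidity of the injective envelope forces $\Theta=\id$, whence $\psi|_{\D_1}=\iota_1$. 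Thus $\psi$ is a pseudo-expectation for the Cartan inclusion $(\C_1,\D_1)$, so by its uniqueness $\psi=\iota_1\circ\bbE_1$. Restricting along $\alpha$ gives $\Phi=\psi\circ\alpha=\iota_1\circ\bbE_1\circ\alpha=E$, completing the argument.

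I expect the main obstacle to be exactly this promotion of ``$\psi$ fixes $\alpha(\D)$'' to ``$\psi$ fixes all of $\D_1$'': in general $\D_1=C^*(\bbE_1(\alpha(\C)))$ is strictly larger than $\alpha(\D)$, so the required equality on $\D_1$ cannot be read off the construction and must be forced through the rigidity of the injective envelope. (An alternative would be to instead verify hypotheses (i)--(ii) of Proposition~\ref{!fps->vc}(a) directly: $\D^c$ is abelian by \cite[Theorem~5.4]{PittsStReInI} since a Cartan extension exists, and one would then have to extract the two essentiality conditions from the envelope structure; the rigidity route above appears more economical, and notably does not even use the generating conditions of the \cover.)
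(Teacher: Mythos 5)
Your proof is correct, but the uniqueness half follows a genuinely different route from the paper's. The paper first invokes \cite[Proposition~5.3(ii)]{PittsStReInI} to see that $\D^c$ is abelian with $\alpha(\D^c)\subseteq\D_1$, and then gets uniqueness of the pseudo-expectation for $(\C,\D)$ directly from the external result \cite[Corollary~3.7]{PittsStReInI}; faithfulness is then obtained exactly as you do, by exhibiting the pseudo-expectation as $u\circ\bbE_1\circ\alpha$ with $u:\D_1\to I(\D)$ the (faithful, by essentiality) homomorphism supplied by injectivity, and noting it is a composition of faithful maps. You instead identify $I(\D)$ with $I(\D_1)$ via Lemma~\ref{abeless} and Hadwin--Paulsen, and then \emph{derive} uniqueness from the already-known unique pseudo-expectation property of the Cartan pair $(\C_1,\D_1)$: extend an arbitrary pseudo-expectation $\Phi$ of $(\C,\D)$ through $\alpha$ to a ucp map $\psi$ on $\C_1$, use rigidity of the injective envelope to promote ``$\psi$ fixes $\alpha(\D)$'' to ``$\psi$ fixes $\D_1$,'' and conclude $\psi=\iota_1\circ\bbE_1$. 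This is a clean and self-contained ``transfer uniqueness down the embedding'' argument; it trades the paper's reliance on Corollary~3.7 of the prequel for reliance on Hamana rigidity plus Theorem~3.5 of the prequel (uniqueness for regular MASA inclusions), and, as you note, it does not need the generating conditions in the definition of a \cover. Your identification of the two steps (construction of the faithful candidate; forcing any competitor to agree on $\D_1$) is exactly where the content lies, and the rigidity step correctly closes the gap you flagged.
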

\begin{proof}
 By~\cite[Proposition~5.3(ii)]{PittsStReInI}, 
the relative commutant, $\D^c$, of $\D$ in $\C$ is abelian and
$\alpha(\D^c)\subseteq \D_1$.    Since $(\D_1,\alpha|_\D)$ is an essential
extension of $\D$, Lemma~\ref{abeless} implies
$(\alpha(\D^c),\alpha|_\D)$ is also an essential extension.

Let $(I(\D),\iota)$ be an injective envelope for $\D$ and use $\bbE_1$
to denote the (faithful) conditional expectation of $\C_1$ onto
$\D_1$. 
As $\alpha$ is one-to-one, an application
of~\cite[Corollary~3.7]{PittsStReInI} shows that $(\C,\D)$ has a
unique pseudo-expectation $E$.     Injectivity shows there is a
$*$-homomorphism $u: \D_1\rightarrow I(\D)$ such that $\iota= u\circ
\alpha|_\D$ (see~diagram~\ref{!CD}).  Since $(\D_1,\alpha|_\D)$ is an
essential extension of $\D$, $u$ is faithful.   As $u\circ
\bbE_1\circ \alpha:\C\rightarrow I(\D)$ satisfies
$\iota=u\circ\bbE_1\circ \alpha|_\D$, it is a pseudo-expectation.  Then
$E=u\circ\bbE_1\circ \alpha$ is a composition of faithful completely
positive maps.  Thus $E$ is faithful. 
\end{proof}

The converse of Proposition~\ref{CE-->!Fpse} will require more
effort.  To begin, we observe that the faithful unique
pseudo-expectation property implies the existence of Cartan
extensions.
\begin{lemma} \label{fusp-->VC}  Suppose $(\C,\D)$ is a regular
  inclusion with the faithful unique pseudo-expectation property.   Then $(\C,\D)$ has a Cartan extension.
\end{lemma}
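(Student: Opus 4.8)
The plan is to reduce to the radical characterization recalled at the opening of this section and then to kill the radical using faithfulness. By~\cite[Theorem~5.7]{PittsStReInI}, a regular inclusion regularly embeds into a Cartan pair --- equivalently, has a Cartan extension in the sense of Definition~\ref{Cenvdef} --- precisely when
\[\rad(\C,\D)=\{x\in\C:\rho(x^*x)=0\text{ for all }\rho\in\fS(\C,\D)\}=(0).\]
So I would reduce the lemma to showing that the faithful unique pseudo-expectation property forces $\rad(\C,\D)=(0)$. (Note that Proposition~\ref{!fps->vc} already guarantees the necessary condition that $\D^c$ is abelian, so there is no clash with the obstruction coming from a non-abelian relative commutant.)

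To carry this out, let $(I(\D),\iota)$ be an injective envelope for $\D$ and let $E:\C\rightarrow I(\D)$ be the unique pseudo-expectation, which is faithful by hypothesis. First I would invoke~\cite[Proposition~4.6]{PittsStReInI} to obtain the containment $\fS_s(\C,\D)=\{\tau\circ E:\tau\in\widehat{I(\D)}\}\subseteq\fS(\C,\D)$. Then, given $x\in\rad(\C,\D)$, for every character $\tau\in\widehat{I(\D)}$ the state $\tau\circ E$ lies in $\fS(\C,\D)$, so $\tau(E(x^*x))=(\tau\circ E)(x^*x)=0$. Since $I(\D)$ is abelian, its characters separate points, whence $E(x^*x)=0$; faithfulness of $E$ then yields $x=0$. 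Thus $\rad(\C,\D)=(0)$ and the cited theorem supplies a Cartan extension.

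The proof is short because the substantive work lives in the cited results. The step I would be most careful about is the containment $\fS_s(\C,\D)\subseteq\fS(\C,\D)$: here $\fS_s$ is built from the \emph{unique} pseudo-expectation, and I want this containment for a general regular inclusion rather than only for a MASA inclusion. This is exactly~\cite[Proposition~4.6]{PittsStReInI}, and it is precisely what allows the separating family $\widehat{I(\D)}$, together with faithfulness of $E$, to detect the radical. No genuine obstacle arises beyond confirming the scope of these references.
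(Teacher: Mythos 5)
Your proof is correct, but it takes a more direct route than the paper's. Both arguments ultimately rest on \cite[Theorem~5.7]{PittsStReInI}, but the paper does not verify $\rad(\C,\D)=(0)$ directly: it first passes to the relative commutant, using Proposition~\ref{!fps->vc} to see that $(\C,\D^c)$ is a regular MASA inclusion with $(\D^c,\D)$ essential, checks that $E$ is also the pseudo-expectation for $(\C,\D^c)$ (relative to $(I(\D),E|_{\D^c})$), deduces $\L(\C,\D^c)=(0)$ from faithfulness, and then uses $\rad(\C,\D^c)\subseteq\L(\C,\D^c)$ together with \cite[Lemma~2.10]{PittsStReInI} to transport the embedding back to $(\C,\D)$. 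You instead apply the radical criterion to $(\C,\D)$ itself and kill $\rad(\C,\D)$ by testing against the strongly compatible states; this is cleaner and avoids the composition of regular maps. The one point to tighten is exactly the one you flagged: the containment $\fS_s(\C,\D)\subseteq\fS(\C,\D)$ for a regular inclusion that is not a MASA inclusion is not quite what \cite[Proposition~4.6]{PittsStReInI} gives off the shelf --- in this paper it is established in that generality only later, as Theorem~\ref{upse=>cov} (whose proof is independent of this lemma, so there is no circularity, only a forward reference). Alternatively you can recover the containment within the scope of the cited result by the same detour the paper uses: since $\N(\C,\D)\subseteq\N(\C,\D^c)$ and $E$ is the pseudo-expectation for the regular MASA inclusion $(\C,\D^c)$, Proposition~4.6 applies to $(\C,\D^c)$ and yields $\fS_s(\C,\D)\subseteq\fS(\C,\D^c)\subseteq\fS(\C,\D)$. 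With that adjustment your argument is complete.
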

\begin{proof}
  Proposition~\ref{!fps->vc} implies $(\C,\D^c)$ is a regular MASA
  inclusion and $(\D^c,\D)$ is an essential inclusion.  Let
  $(I(\D),\iota)$ be an injective envelope for $\D$ and let
  $E:\C\rightarrow I(\D)$ be the pseudo-expectation.  As $(\D^c,\D)$
  is essential, $E|_{\D^c}$ is a faithful $*$-monomorphism of $\D^c$
  into $I(\D)$ (see~\cite[Corollary~3.22]{PittsZarikianUnPsExC*In}).
  In particular, $(I(\D),E|_{\D^c})$ is an injective envelope for
  $\D^c$.  It follows that $E$ is also the unique pseudo-expectation for
  $(\C,\D^c)$ (relative to $(I(\D),E|_{\D^c})$).  Since $E$ is
  faithful, the ideal  $\L(\C,\D^c)$ is trivial.  As
  $\rad(\C,\D^c)\subseteq \L(\C,\D^c)$,
  \cite[Theorem~5.7]{PittsStReInI} shows  $(\C,\D^c)$
  regularly embeds into a \cstardiag.   
  By~\cite[Lemma~2.10]{PittsStReInI}, the identity map on $\C$ is
  regular when viewed as a map of $(\C,\D)$ into $(\C,\D^c)$.
As the composition of regular maps is again regular, we conclude that
$(\C,\D)$ regularly embeds into a \cstardiag.
But every \cstardiag\ is a
  Cartan inclusion, so we are done.
\end{proof}

\begin{lemma}\label{!D1Dpe}  Suppose $(\C,\D)$ is a regular inclusion
  with the unique pseudo-expectation property, let 
  $(I(\D),\iota)$ be an injective envelope for $\D$, and let
  $E:\C\rightarrow I(\D)$ be the pseudo-expectation.
  Suppose $(\C_1,\D_1,\alpha)$ is a \cover. Then there exists a
  unique $*$-homomorphism $u: \D_1\rightarrow I(\D)$ such that $u\circ
  \alpha|_\D=\iota$.
\end{lemma}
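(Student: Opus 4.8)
The plan is to prove existence and uniqueness of $u$ by separate arguments: existence will come from projectivity of the spectrum of the injective envelope, and uniqueness from the unique pseudo-expectation property together with the generation hypothesis $\D_1=C^*(\bbE_1(\alpha(\C)))$ built into the notion of a \cover.

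For existence I would pass to Gelfand duals. Since $\alpha|_\D:\D\to\D_1$ and $\iota:\D\to I(\D)$ are injective unital $*$-homomorphisms of abelian \cstaralg s, their duals are continuous surjections $p:\widehat{\D_1}\twoheadrightarrow\hat\D$ and $q:\widehat{I(\D)}\twoheadrightarrow\hat\D$ (surjectivity because $\alpha|_\D$ and $\iota$ are injective). A unital $*$-homomorphism $u:\D_1\to I(\D)$ with $u\circ\alpha|_\D=\iota$ corresponds precisely to a continuous map $\tilde u:\widehat{I(\D)}\to\widehat{\D_1}$ with $p\circ\tilde u=q$, that is, to a lift of $q$ through $p$. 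Because $I(\D)$ is an injective abelian \cstaralg, its spectrum $\widehat{I(\D)}$ is extremally disconnected, hence projective in the category of compact Hausdorff spaces by Gleason's theorem. Applying projectivity to the surjection $p$ and the map $q$ produces the lift $\tilde u$, and dualizing yields the desired $u$. Note that this half uses neither $\bbE_1$ nor the uniqueness of the pseudo-expectation.

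For uniqueness, let $u:\D_1\to I(\D)$ be \emph{any} $*$-homomorphism with $u\circ\alpha|_\D=\iota$. Since $\alpha(\D)\subseteq\D_1$ and $\bbE_1$ restricts to the identity on $\D_1$, we have $\bbE_1\circ\alpha|_\D=\alpha|_\D$, so $u\circ\bbE_1\circ\alpha|_\D=u\circ\alpha|_\D=\iota$. As $\alpha$, $\bbE_1$, and $u$ are each unital and completely positive, the composite $u\circ\bbE_1\circ\alpha:\C\to I(\D)$ is a pseudo-expectation for $(\C,\D)$, so by the unique pseudo-expectation property it equals $E$. Hence $u(\bbE_1(\alpha(x)))=E(x)$ for every $x\in\C$. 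Since $\D_1=C^*(\bbE_1(\alpha(\C)))$ and a $*$-homomorphism is determined by its values on a generating set, $u$ is uniquely determined (and in particular the $u$ constructed above is the only such map).

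The main obstacle is the existence half: injectivity of $I(\D)$ by itself only produces a unital completely positive extension of $\iota$ along $\alpha|_\D$, and a generic u.c.p.\ map between abelian \cstaralg s is far from multiplicative, so it need not be a $*$-homomorphism. The device that resolves this is the extremal disconnectedness (Stonean property) of $\widehat{I(\D)}$ and Gleason's projective-lifting theorem, which upgrade the u.c.p.\ extension to an honest $*$-homomorphism. I would therefore be careful to record explicitly that $\widehat{I(\D)}$ is Stonean because $I(\D)$ is an injective abelian \cstaralg, and that $p$ is a surjection because $\alpha|_\D$ is injective, since these are exactly the two inputs the projectivity argument consumes.
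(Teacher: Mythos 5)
Your proof is correct and follows essentially the same route as the paper's: the uniqueness half (composing $u\circ\bbE_1\circ\alpha$, invoking the unique pseudo-expectation property to identify it with $E$, and using that $\bbE_1(\alpha(\C))$ generates $\D_1$) is identical to the paper's argument, while your Gleason/projectivity argument is the standard justification of the existence step that the paper compresses into the single sentence ``Injectivity gives the existence of a $*$-homomorphism $u$.'' You are right to note that operator-system injectivity alone only yields a u.c.p.\ map and that the Stonean property of $\widehat{I(\D)}$ is what upgrades it to a $*$-homomorphism.
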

\begin{proof}  Let $\bbE_1:\C_1\rightarrow \D_1$ be a faithful
  conditional expectation such that $\D_1=C^*(\bbE_1(\alpha(\C))$ and $\C_1$ is generated by
  $\alpha(\C)$ and $\D_1$.  
Injectivity gives the existence of a $*$-homomorphism
$u:\D_1\rightarrow I(\D)$ such that $u\circ\alpha|_\D=\iota$.  
  We thus have the following diagram
(the vertical upward-pointing arrows are
the inclusion maps).
\begin{equation}\label{!CD}\xymatrix{
    \C\ar[rr]^\alpha\ar[dr]^E& &\C_1\ar@/^2pc/[dd]^{\bbE_1}\\
    &I(\D)& \\
    \D\ar[uu]^\subseteq\ar[rr]_{\alpha|_\D}\ar[ur]^\iota&&
    \D_1\ar[uu]_{\subseteq}\ar[ul]_u}
\end{equation}  
Observe that $u\circ\bbE_1\circ \alpha$ is a pseudo-expectation for
$(\C,\D)$ because $u\circ\alpha|_\D=\iota$.   The uniqueness
hypothesis on $E$ then yields, \begin{equation}\label{!CDE}
  E=u\circ \bbE_1\circ
  \alpha.
\end{equation}
This
equality also holds for any other 
 $*$-homomorphism $u':\D_1\rightarrow I(\D)$ satisfying
$u'\circ\alpha|_\D=\iota$.  Thus
$u|_{\bbE_1(\alpha(\C))}=u'|_{\bbE_1(\alpha(\C))}$.  Since 
$\D_1$ is generated by $\bbE_1(\alpha(\C))$, we obtain $u=u'$, as desired.
\end{proof}

In the presence of the unique pseudo-expectation property, every
\envelope\ is a Cartan envelope.

\begin{proposition}\label{CEenv=Cenv}  Suppose $(\C,\D)$ is a regular
  inclusion with the unique pseudo-expectation property and suppose  
  $(\C_1,\D_1,\alpha)$ is a \cover.  The following are equivalent.
  \begin{enumerate}
  \item $(\C_1,\D_1,\alpha)$ is an \envelope.
  \item $(\D_1,\alpha|_\D)$ is an essential extension of $\D$.
  \item $(\C,\D)$ has the faithful unique pseudo-expectation property.
  \item $(\C_1,\D_1,\alpha)$ is a Cartan envelope.
  \end{enumerate}
\end{proposition}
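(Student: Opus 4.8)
The plan is to prove the four implications forming the cycle $(d)\Rightarrow(a)\Leftrightarrow(b)\Rightarrow(c)\Rightarrow(b)\Rightarrow(d)$, in which the two definitional links and the step $(b)\Rightarrow(c)$ are short, while the genuine content is the assertion that the faithful unique pseudo-expectation property of $(\C,\D)$ propagates to essentiality of $\D_1$ over $\D$.

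First, $(a)\Leftrightarrow(b)$ is immediate from Definition~\ref{Cenvdef}: an \envelope\ is precisely a \cover\ for which $(\D_1,\alpha|_\D)$ is an essential extension of $\D$, and we are given that $(\C_1,\D_1,\alpha)$ is a \cover. Likewise $(d)\Rightarrow(a)$ is immediate, since a Cartan envelope is by definition an \envelope. For $(b)\Rightarrow(c)$, apply Lemma~\ref{!D1Dpe} to obtain the unique $*$-homomorphism $u:\D_1\rightarrow I(\D)$ with $u\circ\alpha|_\D=\iota$ and, as in \eqref{!CDE}, $E=u\circ\bbE_1\circ\alpha$, where $E$ is the unique pseudo-expectation of $(\C,\D)$. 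Since $\iota$ is faithful, $\ker u\cap\alpha(\D)=(0)$; as $\ker u$ is an ideal of $\D_1$ and $(\D_1,\alpha|_\D)$ is essential, $\ker u=(0)$, so $u$ is faithful. Then $E=u\circ\bbE_1\circ\alpha$ is a composition of faithful completely positive maps, hence faithful, which is $(c)$.

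The heart of the argument is $(c)\Rightarrow(b)$, i.e. showing that the faithful unique pseudo-expectation property forces $u$ (equivalently $\dual u(\widehat{I(\D)})=\hat\D_1$) to be faithful. Here I would exploit the generation condition $\D_1=C^*(\bbE_1(\alpha(\C)))$. Set $\Phi:=u\circ\bbE_1:\C_1\rightarrow I(\D)$; this is a unital completely positive map with $\Phi|_{\D_1}=u$, $\Phi\circ\alpha=E$, and $\Phi(\alpha(d))=\iota(d)$ for $d\in\D$, so $\Phi$ is a pseudo-expectation for the inclusion $(\C_1,\alpha(\D))$; faithfulness of $E$ shows its left kernel meets $\alpha(\C)$ trivially, and one checks that this left kernel meets $\D_1$ exactly in $\ker u$. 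Passing to the Gelfand picture, every character $\psi\in\hat\D_1$ yields the state $\psi\circ\bbE_1\circ\alpha$ on $\C$, and distinct characters yield distinct states because $\bbE_1(\alpha(\C))$ generates $\D_1$; on the other hand the description $\fS_s(\C,\D)=\{\phi\circ E:\phi\in\widehat{I(\D)}\}$ of the strongly compatible states shows that $\phi\circ E=(\dual u(\phi))\circ\bbE_1\circ\alpha$, so the characters in $\dual u(\widehat{I(\D)})$ are exactly those producing strongly compatible states. The goal is to show no ``extra'' characters survive, forcing $\ker u=(0)$. I expect this reconciliation — ruling out characters $\psi\in\supp(\ker u)$ without circularity — to be the main obstacle; it is where the maximal-ideal characterization of the left kernel of a pseudo-expectation from \cite[Theorem~3.5 and Theorem~3.15]{PittsStReInI}, together with the fact that $(\C,\D^c)$ is essential (Proposition~\ref{!fps->vc}(a)), will have to be brought in.

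Finally, $(b)\Rightarrow(d)$ closes the cycle. Once $u$ is faithful, the chain $\D\subseteq u(\D_1)\subseteq I(\D)$ with the outer inclusion essential shows, via Lemma~\ref{abeless}, that $(u(\D_1),\D)$ is essential; since $I(\D)$ is injective, the ``Moreover'' part of \cite[Theorem~2.16]{HadwinPaulsenInPrAnTo} gives that $(I(\D),u)$ is an injective envelope for $\D_1$, so in particular $(\C_1,\D_1,\alpha)$ is an \envelope. It remains to see $(\C_1,\D_1)$ is a Cartan pair, which I would obtain by showing it has the unique pseudo-expectation property with pseudo-expectation $\Phi=u\circ\bbE_1$: any pseudo-expectation $\Psi:\C_1\rightarrow I(\D)$ has $\Psi|_{\D_1}=u$, so $\Psi\circ\alpha$ is a pseudo-expectation for $(\C,\D)$ and equals $E=\Phi\circ\alpha$ by uniqueness; since $u$ is a $*$-homomorphism, $\D_1$ lies in the multiplicative domain of both $\Psi$ and $\Phi$, and the regularity-based factorization $\C_1=\overline{\spn}\,\{d\,\alpha(v): d\in\D_1,\ v\in\N(\C,\D)\}$ gives $\Psi(d\,\alpha(v))=u(d)E(v)=\Phi(d\,\alpha(v))$, whence $\Psi=\Phi$. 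As $\bbE_1$ is a faithful conditional expectation and $u\circ\bbE_1$ is the unique pseudo-expectation, Proposition~\ref{!fps->vc}(b) shows $(\C_1,\D_1)$ is a Cartan inclusion; being both a Cartan pair and an \envelope, $(\C_1,\D_1,\alpha)$ is a Cartan envelope, which is $(d)$.
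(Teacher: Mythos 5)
Your implications (a)$\Leftrightarrow$(b), (d)$\Rightarrow$(a), (b)$\Rightarrow$(c) and (b)$\Rightarrow$(d) are correct and essentially identical to the paper's. In particular your closing argument for (b)$\Rightarrow$(d) --- pass to the injective envelope $(I(\D),u)$ of $\D_1$ via Lemma~\ref{abeless}, show that $u\circ\bbE_1$ is the only pseudo-expectation for $(\C_1,\D_1)$ by pulling any competitor back to a pseudo-expectation for $(\C,\D)$ and using the multiplicative-domain/generation argument, then invoke Proposition~\ref{!fps->vc}(b) --- is exactly the paper's route to the Cartan conclusion; the paper merely compresses the final identification into ``both maps are $\D_1$-bimodule maps and $(\C_1,\D_1,\alpha)$ is a \cover.''

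The genuine gap is (c)$\Rightarrow$(b), which you correctly single out as the crux and then leave open (``I expect this reconciliation \dots to be the main obstacle''). Without it your cycle never returns from (c) to the other three conditions, so the proposal does not prove the proposition. The paper closes this step quite differently from what you sketch: it does not work with characters in the support of $\ker u$, with $\fS_s(\C,\D)$, or with essentiality of $(\C,\D^c)$. It simply asserts, from diagram~\eqref{!CD} and the identity $E=u\circ\bbE_1\circ\alpha$ of~\eqref{!CDE} (itself forced by the uniqueness of the pseudo-expectation), that $E$ is faithful \emph{if and only if} $u$ is faithful; since $u\circ\alpha|_\D=\iota$ identifies $(\D_1,\alpha(\D))$ with $(u(\D_1),\iota(\D))$ and $\iota(\D)\subseteq u(\D_1)\subseteq I(\D)$ with the outer inclusion essential, Lemma~\ref{abeless} makes ``$u$ faithful'' equivalent to statement (b). Thus the whole proposition rests on the single implication ``$E$ faithful $\Rightarrow\ \ker u=(0)$,'' which is precisely the point your write-up records as unresolved (note that by \cite[Corollary~3.21]{PittsZarikianUnPsExC*In}, $\ker u$ is the unique maximal ideal of $\D_1$ meeting $\alpha(\D)$ trivially, so (b) is literally the statement $\ker u=(0)$). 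To complete your proof you must either establish that implication or cite the paper's assertion of it; as written, your text names the difficulty rather than resolving it.
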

\begin{proof} Let $(I(\D),\iota)$ be an injective envelope for
  $(\C,\D)$, let $E:\C\rightarrow \D$ be the pseudo-expectation and
  let $u: \D_1\rightarrow I(\D)$ be the $*$-homomorphism obtained from
  Lemma~\ref{!D1Dpe}.  Also, let $\bbE_1:\C_1\rightarrow \D_1$ be a
  faithful conditional expectation.  Diagram~\eqref{!CD}
  and equation~\eqref{!CDE} show $E$ is faithful if and only if $u$ is
  faithful.

The equivalence of (a) and (b) is the definition of \envelope.

Assume (b) holds.   Then $u$ faithful, so $E$ is faithful, which is  (c).   

Next suppose $E$ is faithful.  Then $u$ is faithful, so
Lemma~\ref{abeless} implies that  $(I(\D), u)$ 
is an essential extension of $\D_1$.
Therefore, $(I(\D),u)$ is an injective envelope for $\D_1$.  Let
$\Delta: \C_1\rightarrow I(\D)$ be a pseudo-expectation for
$(\C_1,\D_1)$ relative to $(I(\D),u)$.  Then \[\Delta|_{\D_1}=u=(u\circ
\bbE_1)|_{\D_1}.\]   Note that $(\Delta\circ \alpha)|_\D= (u\circ
\alpha)|_\D=\iota$, so $\Delta\circ \alpha$ is a pseudo-expectation
for $(\C,\D)$.    Since $(\C,\D)$ has the unique pseudo-expectation
property,
\[\Delta\circ \alpha=E=u\circ\bbE_1\circ \alpha.\]
By Choi's Lemma (see~\cite[Corollary~3.19]{PaulsenCoBoHoOpAl}), both
$\Delta$ and $u\circ \bbE_1$ are $\D_1$-bimodule maps.  As
$(\C_1,\D_1,\alpha)$ is a \cover, we conclude 
\[\Delta=u\circ\bbE_1.\]   Lemma~\ref{!fps->vc}(b) shows
$(\C_1,\D_1)$ is a Cartan inclusion.  Thus $(\C_1,\D_1,\alpha)$ is a
Cartan envelope.

As every Cartan envelope is an \envelope, the proof is complete.\end{proof}

Next we construct a \cover\ from a Cartan extension. 

\begin{lemma}\label{littleCartan}  Suppose $(\C,\D)$ is a regular
  inclusion.  If $(\C,\D)$ has a Cartan extension, then it has a
  \cover.  More specifically, if  $(\C_1,\D_1,\alpha)$ is a Cartan extension for
  $(\C,\D)$ with conditional expectation $\bbE_1:\C_1\rightarrow
  \D_1$, put 
  \[\C_\alpha:=C^*(\alpha(\C)\cup \bbE_1(\alpha(\C))\dstext{and}
    \D_\alpha:=C^*(\bbE_1(\alpha(\C))).\]
  Then $(\C_\alpha,\D_\alpha,\alpha)$ is a \cover\ for $(\C,\D)$.
\end{lemma}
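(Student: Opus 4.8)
The natural candidate for the conditional expectation is $\bbE:=\bbE_1|_{\C_\alpha}$, and almost the whole proof reduces to a single invariance statement. I would first record the routine facts. Since $\alpha$ is unital and $\alpha(\D)\subseteq\D_1$ with $\bbE_1|_{\D_1}=\operatorname{id}$, we have $\alpha(\D)\subseteq\bbE_1(\alpha(\C))\subseteq\D_\alpha$; and for $n\in\N(\C,\D)$, writing $v:=\alpha(n)\in\N(\C_1,\D_1)$, both $v^*v=\alpha(n^*n)$ and $vv^*=\alpha(nn^*)$ lie in $\alpha(\D)\subseteq\D_\alpha$ (as $n^*n,nn^*\in\D$). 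As $\D_\alpha\subseteq\D_1$ is abelian and unital and $\D_\alpha\subseteq\C_\alpha$, the pair $(\C_\alpha,\D_\alpha)$ is an inclusion, and the two generating identities demanded of a \cover\ hold verbatim from the definitions of $\C_\alpha$ and $\D_\alpha$ once $\bbE=\bbE_1|_{\C_\alpha}$.

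The crux is: for every $n\in\N(\C,\D)$, with $v=\alpha(n)$, the partial automorphism $\theta_v$ of $\D_1$ from \cite[Lemma~2.1]{PittsStReInI} satisfies $\theta_v(\D_\alpha\cap\dom\theta_v)\subseteq\D_\alpha$; equivalently $v^*\D_\alpha v\cup v\D_\alpha v^*\subseteq\D_\alpha$. Granting this, everything is formal. Using $dv=v\theta_v(d)$ (and approximating a general $u\in\D_\alpha$ by $u(vv^*)^{1/k}\in\D_\alpha\cap\dom\theta_v$) one gets $u\,v\in\overline{v\,\D_\alpha}$, so $\overline{\spn}\big(\alpha(\C)\,\D_\alpha\big)$ is a $C^*$-algebra; it contains $\alpha(\C)$ and $\D_\alpha$, hence equals $\C_\alpha$. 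Each $\alpha(n)$ then normalizes $\D_\alpha$, so every product $\alpha(n)u$ with $u\in\D_\alpha$ is a normalizer, showing $(\C_\alpha,\D_\alpha)$ is regular and $\alpha$ is a regular embedding. Finally, for $c\in\C$ and $u\in\D_\alpha$, $\bbE_1(\alpha(c)u)=\bbE_1(\alpha(c))\,u\in\D_\alpha$ by the $\D_1$-bimodule property, so $\bbE_1(\C_\alpha)\subseteq\D_\alpha$ and $\bbE$ is a faithful conditional expectation of $\C_\alpha$ onto $\D_\alpha$.

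To prove the invariance I would combine three ingredients. First, $\alpha(\C)$ is invariant under conjugation by $v$: $v^*\alpha(\C)v=\alpha(n^*\C n)\subseteq\alpha(\C)$. Second, I use the intertwining identity $\bbE_1(v^*xv)=v^*\bbE_1(x)v$ for all $x\in\C_1$, a standard property of the Cartan conditional expectation (cf.\ \cite{RenaultCaSuC*Al}); one may see it by evaluating both sides against $\sigma\in\hat\D_1$ and using that the strongly compatible states satisfy $\tilde\beta_v(\sigma\circ\bbE_1)=\beta_v(\sigma)\circ\bbE_1$ on $\dom\beta_v$ (both sides lying in $\overline{\D_1 v^*v}$, so they vanish off $\dom\beta_v$). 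Applying this with $x=\alpha(c)$ gives $v^*\bbE_1(\alpha(c))v=\bbE_1(\alpha(n^*cn))\in\D_\alpha$. Third, since $v^*dv=(v^*v)\,\theta_v(d)$ for $d\in\dom\theta_v$ and $v^*v\in\D_\alpha$, I recover $\theta_v$ of the generators by cancelling $v^*v$: as $\theta_v(d)$ is continuous, supported in $\dom\beta_v=\supp(v^*v)$, and hence vanishes on the (closed) complement, the elements $\bbE_1(\alpha(n^*cn))(v^*v+\varepsilon)^{-1}\in\D_\alpha$ converge in norm to $\theta_v(d)$ as $\varepsilon\downarrow0$ (a short compactness argument). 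Because $\theta_v$ is multiplicative and $\D_\alpha\cap\dom\theta_v=\overline{\D_\alpha\,vv^*}$ is generated by such elements, $\theta_v(\D_\alpha\cap\dom\theta_v)\subseteq\D_\alpha$.

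The main obstacle is exactly this last package---establishing the intertwining identity and then stripping off the factor $v^*v$ by the functional-calculus limit; once the invariance is in hand, the remaining assertions are routine manipulations with normalizers and the relation $dv=v\theta_v(d)$.
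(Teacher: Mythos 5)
Your strategy is the same as the paper's: everything reduces to the invariance $v^*\D_\alpha v\cup v\D_\alpha v^*\subseteq \D_\alpha$ for $v=\alpha(n)$, $n\in\N(\C,\D)$, and the key input is the intertwining identity $\bbE_1(v^*xv)=v^*\bbE_1(x)v$ (the paper gets this from \cite[Proposition~3.14]{PittsStReInI} by viewing $\bbE_1$ as the pseudo-expectation for $(\C_1,\D_1)$), which gives $v^*\bbE_1(\alpha(c))v=\bbE_1(\alpha(n^*cn))\in\D_\alpha$ on the generators of $\D_\alpha$. The routine parts of your write-up are fine. The gap is in the passage from generators to all of $\D_\alpha$, which is the crux because $p\mapsto v^*pv$ is not multiplicative. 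Two things go wrong as written. First, the displayed limit is incorrect: $\bbE_1(\alpha(n^*cn))=v^*\bbE_1(\alpha(c))v=\theta_v\bigl(vv^*\bbE_1(\alpha(c))\bigr)$ already lies in $\D_\alpha$ with no limit needed, while $\bbE_1(\alpha(n^*cn))(v^*v+\eps)^{-1}$ need not converge in norm at all --- the pointwise limit on $\hat\D_1$ is $\widehat{\bbE_1(\alpha(c))}\circ\beta_v$ on $\dom\beta_v$ extended by $0$, which can be discontinuous on $\partial(\dom\beta_v)$; ``vanishes on the closed complement'' does not give continuity at the boundary. Second, the concluding sentence ``because $\theta_v$ is multiplicative and $\overline{\D_\alpha vv^*}$ is generated by such elements'' does not close the argument: a product of $k$ of the elements $vv^*\bbE_1(\alpha(c_i))$ equals $(vv^*)^k\prod_i\bbE_1(\alpha(c_i))$, so it is not clear that these elements generate $\overline{\D_\alpha vv^*}$ as a \cstaralg\ --- recovering $vv^*\prod_i\bbE_1(\alpha(c_i))$ from $(vv^*)^k\prod_i\bbE_1(\alpha(c_i))$ is precisely the same ``divide by a power of $vv^*$'' problem you set out to solve.

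The step is repairable, and the paper's proof shows one way: an induction on the number of factors in $\prod_j\bbE_1(\alpha(x_j))$, inserting $\alpha(v^*dv)$, regrouping into a product of terms already known to lie in $\D_\alpha$, and then removing the inserted factor via $h=(v^*v)^{1/k}$ together with the legitimate norm limit $\alpha(v)(\alpha(v^*v))^{1/k}\to\alpha(v)$. Your $(v^*v+\eps)^{-1}$ device also works if applied at the right place: for $p_1,p_2\in\D_\alpha$ with $v^*p_iv\in\D_\alpha$ one has $(v^*p_1v)(v^*p_2v)=(v^*p_1p_2v)(v^*v)$, and since $|\widehat{v^*p_1p_2v}|\le\norm{p_1p_2}\,\widehat{v^*v}$ the element $v^*p_1p_2v$ lies in the closed ideal of $\D_1$ generated by $v^*v$, on which $(v^*v)(v^*v+\eps)^{-1}$ is an approximate unit as $\eps\downarrow0$; hence $v^*p_1p_2v=\lim_{\eps\downarrow0}(v^*p_1v)(v^*p_2v)(v^*v+\eps)^{-1}\in\D_\alpha$. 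Then $\{p\in\D_\alpha: v^*pv\in\D_\alpha\text{ and }vpv^*\in\D_\alpha\}$ is a closed self-adjoint subalgebra containing $\bbE_1(\alpha(\C))$, hence equals $\D_\alpha$, and the rest of your argument goes through.
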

\begin{proof}
Thinking of $\D_1$ as a subalgebra of $I(\D_1)$, we may view 
 $\bbE_1$ as the
pseudo-expectation for $(\C_1,\D_1)$.  Then \cite[Proposition~3.14]{PittsStReInI}
implies that for any $w\in \N(\C_1,\D_1)$ and $z\in \C_1$,
\[w^*\bbE_1(z)w=\bbE_1(w^*zw).\]

  Fix $v\in
\N(\C,\D)$.   We claim that for $n\in\bbN$ and any $x_1,\dots, x_n\in \C$,
\begin{equation}\label{IC0}
  \alpha(v)\left(\prod_{j=1}^n \bbE_1(\alpha(x_j))\right) \alpha(v^*)\in \D_\alpha.
\end{equation}
When $n=1$,  as $\alpha(v)\in \N(\C_1,\D_1)$, we
find
\begin{equation*}\label{lC1}
  \alpha(v)\bbE_1(\alpha(x_1))\alpha(v)^*=\bbE_1(\alpha(vx_1v^*))\in\D_\alpha.
\end{equation*}
Now suppose \eqref{IC0} holds for some $n$, let
$\{x_j\}_{j=1}^{n+1}\subseteq \C$ and put
\[y=\prod_{j=1}^n \bbE_1(\alpha(x_j)).\]
For $d\in \D$, 
\[\alpha(v)\alpha(v^*dv)
  y\bbE_1(\alpha(x_{n+1}))\alpha(v)^*
  =\alpha(v)y\alpha(v)^*\,\, \alpha(d)\,\,\alpha(v)\bbE_1(\alpha(x_{n+1}))\alpha(v)^*\in
  \D_\alpha. \]   Noting that $\overline{v^*\D v}=\overline{v^*v\D}$,
continuity shows  that for any $h\in
\overline{v^*v\D}$,
  \[\alpha(v)\alpha(h)y\bbE_1(\alpha(x_{n+1}))\alpha(v)^*\in
    \D_\alpha.\] Replacing $h$ with  $(v^*v)^{1/n}$ and then taking
  the limit as $n\rightarrow \infty$ yields
  \begin{equation}\label{lC2}
    \alpha(v)y\bbE_1(\alpha(x_{n+1}))\alpha(v)^*\in
    \D_\alpha.
  \end{equation}  Induction completes the proof of~\eqref{IC0}. 
  Since $\D_\alpha$ is generated by $\bbE_1(\alpha(\C))$,~\eqref{IC0} yields $\alpha(v)\in
\N(\C_\alpha,\D_\alpha)$.  It follows that $\alpha(\N(\C,\D)\subseteq
\N(\C_\alpha, \D_\alpha)$.   

 Since $\alpha(\N(\C,\D))\cup\D_\alpha$
generates $\C_\alpha$, we conclude that $(\C_\alpha,\D_\alpha)$ is a
regular inclusion and $\alpha:(\C,\D)\rightarrow
(\C_\alpha,\D_\alpha)$ is a regular homomorphism.  

As $\bbE_1|_{\C_\alpha}$ is a faithful conditional
expectation, $(\C_\alpha,\D_\alpha, \alpha)$ is a \cover\ 
for $(\C,\D)$.
\end{proof}

\begin{remark}{Remark} \label{?MASA} It would simply our arguments
  if we could show that  $(\C_\alpha,\D_\alpha)$ is a
  MASA inclusion, for it would then follow that $(\C_\alpha,\D_\alpha,\alpha)$ is a
  \Ccover.   We have been unable to do this is because
  we do not know whether $I(\D)$ and
  $I(\D_\alpha)$ agree, so   we do not know that $u\circ \bbE_\alpha$ is the unique
  faithful pseudo-expectation for $(\C_\alpha,\D_\alpha)$.  Thus we cannot apply
  Proposition~\ref{!fps->vc}.     
\end{remark}

\begin{proposition}\label{CE-->CC}  Suppose $(\C,\D)$ is a regular
  inclusion with the faithful unique pseudo-expectation property.
  Then $(\C,\D)$ has a Cartan envelope.
\end{proposition}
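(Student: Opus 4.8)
The plan is to assemble the three preceding results into a short chaining argument. First I would invoke Lemma~\ref{fusp-->VC}: since $(\C,\D)$ has the faithful unique pseudo-expectation property, it has a Cartan extension $(\C_1,\D_1,\alpha)$. The genuine work here---passing to the MASA inclusion $(\C,\D^c)$ and embedding it into a \cstardiag\ via \cite[Theorem~5.7]{PittsStReInI}---has already been carried out in the proof of Lemma~\ref{fusp-->VC}, so this step is a citation.

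Next I would apply Lemma~\ref{littleCartan} to this Cartan extension. Letting $\bbE_1:\C_1\rightarrow\D_1$ be the faithful conditional expectation and setting
\[\C_\alpha:=C^*(\alpha(\C)\cup \bbE_1(\alpha(\C)))\dstext{and}
  \D_\alpha:=C^*(\bbE_1(\alpha(\C))),\]
Lemma~\ref{littleCartan} guarantees that $(\C_\alpha,\D_\alpha,\alpha)$ is a \cover\ for $(\C,\D)$. Note that the defining equalities of a \cover\ hold by construction, and $\bbE_1|_{\C_\alpha}$ is the requisite faithful conditional expectation onto $\D_\alpha$.

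Finally I would invoke Proposition~\ref{CEenv=Cenv}. The faithful unique pseudo-expectation property implies in particular the (plain) unique pseudo-expectation property, so the hypotheses of that proposition are met by the \cover\ $(\C_\alpha,\D_\alpha,\alpha)$. Since the faithful unique pseudo-expectation property is exactly condition~(c) of Proposition~\ref{CEenv=Cenv}, the equivalence (c)$\Leftrightarrow$(d) forces $(\C_\alpha,\D_\alpha,\alpha)$ to be a Cartan envelope for $(\C,\D)$, which is the desired conclusion.

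The only point requiring care is that the object produced by Lemma~\ref{littleCartan} is precisely the kind of \cover\ to which Proposition~\ref{CEenv=Cenv} applies; these match by construction, so no new argument is needed. The conceptual obstacle for this converse---manufacturing a \cover\ whose diagonal is an \emph{essential} extension of $\D$ (as required of an \envelope)---has already been dissolved in Proposition~\ref{CEenv=Cenv}, which shows that under the faithful unique pseudo-expectation property \emph{every} \cover\ is automatically an envelope, and hence a Cartan envelope. Thus the present proof is purely an assembly of Lemmas~\ref{fusp-->VC} and~\ref{littleCartan} with Proposition~\ref{CEenv=Cenv}.
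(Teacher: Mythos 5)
Your argument is a formally valid chain of citations, but it takes a genuinely different route from the paper's, and the difference is not cosmetic. The paper does \emph{not} deduce Proposition~\ref{CE-->CC} from Proposition~\ref{CEenv=Cenv}. After producing the \cover\ $(\C_1,\D_1,\alpha)$ via Lemmas~\ref{fusp-->VC} and~\ref{littleCartan}, it takes the homomorphism $u:\D_1\rightarrow I(\D)$ of Lemma~\ref{!D1Dpe}, forms the ideal $\fJ=\{x\in\C_1:\bbE_1(x^*x)\in\ker u\}$, and then does the real work of the proof: showing $\ker u$ is $\N(\C,\D)$-invariant, that $\fJ$ is a two-sided ideal with $\alpha(\C)\cap\fJ=(0)$ and $\fJ\cap\D_1=\ker u$, and that the \emph{quotient} $(\C_1/\fJ,\D_1/\ker u,\tilde\alpha)$ is a Cartan envelope. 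That is, the paper never claims the \cover\ from Lemma~\ref{littleCartan} is itself an \envelope; it manufactures one by killing $\ker u$.

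The step you are outsourcing is exactly (c)$\Rightarrow$(d) of Proposition~\ref{CEenv=Cenv}, and the load-bearing assertion there is that faithfulness of $E$ forces faithfulness of $u$, i.e.\ $\ker u=(0)$ --- which is precisely the essentiality of $(\D_1,\alpha|_\D)$, condition (b), the defining property of an \envelope. But $E=u\circ\bbE_1\circ\alpha$ only tests $u$ on the elements $\bbE_1(\alpha(x^*x))$ reached through $\alpha(\C)$, whereas $\D_1=C^*(\bbE_1(\alpha(\C)))$ is in general strictly larger; a priori $\ker u$ is only known to meet $\alpha(\D)$ trivially, not to vanish. This is why the paper's proof quotients by $\fJ$ rather than arguing $\fJ=(0)$, why the Minimality clause of Theorem~\ref{ccequiv} must allow a nonzero $\fJ$, and why Remark~\ref{?MASA} expresses uncertainty about the \cover\ produced by Lemma~\ref{littleCartan}. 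So as written your proof does not close the gap but inherits it: you either need an independent argument that $\ker u=(0)$ for the specific \cover\ you construct (I do not see one), or you need to follow the paper and pass to the quotient $\C_1/\fJ$.
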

\begin{proof} Lemma~\ref{fusp-->VC} shows the existence of a
  Cartan extension $(\C_2,\D_2,\alpha)$ for $(\C,\D)$.  Let
  $\bbE_2:\C_2\rightarrow \D_2$ be the conditional expectation.
  Put
\[\D_1:=C^*(\bbE_2(\alpha(\C))), \quad \C_1:=C^*(\alpha(\C),\,
  \D_1),\dstext{and} \bbE_1:=\bbE_2|_{\C_1}.\] Then
$\bbE_1:\C_1\rightarrow \D_1$ is a faithful conditional expectation
and Lemma~\ref{littleCartan} shows $(\C_1,\D_1,\alpha)$ is a \cover\
for $(\C,\D)$.  Furthermore, the regularity of $\alpha$ and the
definition of $\bbE_1$ show that for any $x\in \C_1$ and
$v\in\N(\C,\D)$,
  \begin{equation}\label{bbE1inv}
    \alpha(v)\bbE_1(x)\alpha(v)^*=\bbE_1(\alpha(v)x\alpha(v)^*).
  \end{equation}
  
  Let
  $u:\D_1\rightarrow I(\D)$ be the (unique) $*$-homomorphism with
  $\iota=u\circ\alpha|_\D$ obtained from Lemma~\ref{!D1Dpe}.  Set
  \[\fJ:=\{x\in \C_1: \bbE_1(x^*x)\in\ker u\}.\]   We shall show that the following
  statements hold.
\begin{enumerate}
  \item $\fJ$ is a
  closed, two-sided ideal of $\C_1$ such that $\alpha(\C)\cap
  \fJ=(0)$ and $\D_1\cap \fJ=\ker u$.
  \item If $\tilde\alpha:\C\rightarrow \C_1/\fJ$ is the map $x\mapsto
    \alpha(x)+\fJ$, then $(\C_1/\fJ, \D_1/\ker u, \tilde\alpha)$ is a
    Cartan envelope for $(\C,\D)$.
  \end{enumerate}

To this end, first note that $\fJ$ is a closed left ideal of $\C_1$:  indeed,
for
$x\in \fJ$ and $y\in \C_1$, $\bbE_1(x^*y^*yx)\leq
\norm{y}^2\bbE(x^*x)=0$.  To show that $\fJ$ is a right ideal, we will
require  the following invariance property of $\ker u$:
  for every $v\in \N(\C,\D)$,
  \begin{equation}\label{ninv} \alpha(v)^*(\ker u)\alpha(v)\subseteq
    \ker u.
  \end{equation}
For this, we first establish some properties of $\ker u$.

Consider the inclusion $(\D_1,\alpha(\D))$.  The uniqueness of $u$
allows us to
apply~\cite[Corollary~3.21]{PittsZarikianUnPsExC*In} to conclude that 
  $\ker u$ is the unique ideal of $\D_1$ maximal  with respect to having
  trivial intersection with $\alpha(\D)$.  

Let $S:=\{\sigma_1\in \hat\D_1: \sigma_1\text{ annhiliates } \ker u\}$.
Then $S$ is closed and 
\begin{equation}\label{keruc}
  \ker u=\{h\in \D_1: \hat h \text{ vanishes on } S\}.
\end{equation}
Using~\cite[Corollary~3.21]{PittsZarikianUnPsExC*In} again, $S$ is the
unique minimal closed subset of $\hat \D_1$ such that
$\dual{\alpha}(S)=\hat\D$.  Since
$\dual{\iota}=\dual{\alpha}\circ \dual{u}$, it follows that
$\dual{\alpha}(\dual{u}(\widehat{I(\D)}))=\hat\D$, so
$S\subseteq \dual{u}(\widehat{I(\D)})$.  On the other hand, if
$\sigma_1=\rho\circ u$ for some $\rho\in \widehat{I(\D)}$, then
$\sigma_1$ annihilates $\ker u$, whence $S\supseteq
\dual{u}(\widehat{I(\D)})$.  Thus,
\begin{equation}\label{CECC1} S= \dual{u}(\widehat{I(\D)}).
\end{equation}

Next, fix $v\in \N(\C,\D)$ and suppose $\sigma_1\in S$ satisfies
$\sigma_1(\alpha(v^*v))\neq 0$.   We will show
\begin{equation}\label{Sinv}
  \beta_{\alpha(v)}(\sigma_1)\in S.
\end{equation}
Recall
from~\cite[Proposition~1.11]{PittsStReInI} that the partial
automorphism $\theta_v$ of $\D$ (see~\cite[Lemma~2.1]{PittsStReInI})
extends uniquely to a partial automorphism $\tilde\theta_v$ of $I(\D)$
such that $\tilde\theta_v\circ\iota=\iota\circ\theta_v$.
By~\eqref{CECC1}, there exists $\rho\in \widehat{I(\D)}$ such that
\[\sigma_1=\rho\circ u.\]  Since
$\theta_v(vv^*)=v^*v$,
\[(\rho\circ\tilde\theta)(\iota(vv^*))=
  \rho(\iota(v^*v))=\rho(u(\alpha(v^*v)))=\sigma_1(\alpha(v^*v))\neq
  0.\] Thus, setting $\rho':=\rho\circ\tilde\theta_v$ we find
$\rho'\in \widehat{I(\D)}$.  Let $\sigma_1'=\rho'\circ u\in\hat\D_1$.
By construction, $\sigma_1'\in S$.  We shall show
that \begin{equation}\label{CECC2}
  \sigma_1'=\beta_{\alpha(v)}(\sigma_1).
\end{equation}
For $x\in\C$,
\begin{align*}
  \beta_{\alpha(v)}(\sigma_1)(\bbE_1(\alpha(x)))&=\frac{\sigma_1(\alpha(v)^*\bbE_1(\alpha(x))\alpha(v))}{\sigma_1(\alpha(v^*v))}
                                          =\frac{\sigma_1(\bbE_1(\alpha(v^*xv)))}{\sigma_1(\alpha(v^*v))}\\
                                        &=\frac{\rho(u(\bbE_1(\alpha(v^*xv))))}{\rho(u(\alpha(v^*v)))}
                                          =\frac{\rho(E(v^*xv))}{\rho(\iota(v^*v))},\\
  \intertext{and applying~\cite[Proposition~3.14]{PittsStReInI} to the
  numerator,}
  &=\frac{\rho(\tilde\theta_v(E(vv^*x))}{\rho'(\iota(vv^*))}=
    \frac{\rho'(\iota(vv^*)E(x))}{\rho'(\iota(vv^*))}=\rho'(E(x))\\
  &=\rho'(u(\bbE_1(\alpha(x))))=\sigma_1'(\bbE_1(\alpha(x))).
\end{align*}
As $\beta_{\alpha(v)}(\sigma_1)$ and $\sigma_1'$ belong to $\hat\D_1$
and $\D_1$ is generated by $\bbE_1(\alpha(\C))$,~\eqref{CECC2} holds.
This establishes~\eqref{Sinv}.

We are now prepared to establish~\eqref{ninv}.
Choose  $h\in \ker u$.
When  $\sigma_1\in S$ satisfies
$\sigma_1(\alpha(v^*v))\neq 0$,
\[\sigma_1(\alpha(v)^*h\alpha(v))=\beta_{\alpha(v)}(\sigma_1)(h)\,
\sigma_1(\alpha(v^*v))=\sigma'(h)\, \sigma_1(\alpha(v^*v))=0.\] 
On the other hand, when $\sigma_1(\alpha(v^*v))=0$, $\sigma_1(\alpha(v)^*h\alpha(v))=0$ because 
\[|\sigma_1(\alpha(v)^*h
  \alpha(v))|^2=\sigma_1(\alpha(v)^*h^*\alpha(vv^*)h\alpha(v))\leq
  \norm{\alpha(v)^*h}^2 \sigma_1(\alpha(v^*v))=0.\]   We conclude that
whenever $\sigma_1\in S$,  $\sigma_1(\alpha(v)^*h\alpha(v))=0$.
By~\eqref{keruc}, $\alpha(v)^*h\alpha(v)\in \ker u$, so~\eqref{ninv} holds.

Next we show $\fJ$ is a right ideal.  If $x\in \fJ$, $v\in \N(\C,\D)$,
and $h\in\D_1$, then $x\alpha(v)h\in \fJ$ because
\[\bbE_1(h^*\alpha(v)^*x^*x\alpha(v)h)=h^*\alpha(v^*)\bbE_1(x^*x)\alpha(v)h\in
  \ker u.\] As $\alpha(\N(\C,\D))\subseteq \N(\C_1,\D_1)$,
$\{\alpha(v)h: v\in \N(\C,\D), h\in \D_1\}$ is a $*$-semigroup whose
span is dense in $\C_1$.  It follows that $\fJ$ is a right ideal.  So
$\fJ$ is a closed, two-sided ideal in $\C_1$.

If $y\in \alpha(\C)\cap \fJ$, then there is some $x\in \C$ so that
$y=\alpha(x)$.  Then
$0=u(\bbE_1(\alpha(x^*x)))=E(x^*x)$, so $x=0$ because the
pseudo-expectation for $(\C,\D)$ is faithful.   Thus $\alpha(\C)\cap
\fJ=(0)$.   The fact that $\fJ\cap \D_1=\ker u$ is clear.  This
completes the proof of assertion (a).

We turn now to assertion (b).
Let \[\tilde\C_1:=\C_1/\fJ, \quad \tilde\D_1:=\D_1/\ker u, \dstext{and}  
\tilde u:\tilde\D_1\rightarrow I(\D)\] be the map
$\tilde\D_1\ni h+\ker u\mapsto u(h)$.  Then if $\fA$ is the
\cstar-subalgebra of $I(\D)$ generated by $E(\C)$, $\tilde u$ is a
$*$-isomorphism of $\tilde \D_1$ onto $\fA$.  Since
$\iota=\tilde u\circ \tilde \alpha|_\D$,
$(\tilde\D_1,\tilde\alpha|_\D)$ is an essential extension of $\D$ and
$(I(\D), \tilde u)$ is an injective envelope for $\tilde \D_1$.

Clearly, $\tilde\alpha$ is a
regular $*$-monomorphism of $(\C,\D)$ into $(\tilde\C_1, \tilde\D_1)$.
If $x\in \fJ$, then the operator inequality,
$\bbE_1(x)^*\bbE_1(x)\leq \bbE_1(x^*x)$, shows that
$\bbE_1(\fJ)\subseteq \fJ$.  Thus the map
$\tilde\bbE_1: \tilde\C_1\rightarrow \tilde \D_1$ given by
$x+\fJ\mapsto \bbE_1(x)+\ker u$ is well-defined and is a conditional
expectation.  If $\tilde\bbE_1(x^*x+\fJ)=0$, then
$\bbE_1(x^*x)\in \ker u$, so $x\in \fJ$.  It follows that
$\tilde\bbE_1$ is faithful.  Note also that $\tilde u\circ
\tilde\bbE_1$ is then a faithful pseudo-expectation for $(\tilde\C_1,\tilde\D_1)$.

Suppose $\Delta: \tilde \C_1\rightarrow I(\D)$ satisfies
$\Delta|_{\tilde\D_1}=\tilde u$.   Then
$\Delta\circ\tilde\alpha=\iota$, so $\Delta\circ\tilde\alpha$ is a
pseudo-expectation for $(\C,\D)$.  As 
 $(\C,\D)$ has the unique pseudo-expectation property, 
\[\Delta\circ\tilde\alpha=E=\tilde u\circ \tilde\bbE_1\circ
  \tilde\alpha.\] 
Since $\Delta|_{\tilde\D_1}=(\tilde u\circ\tilde
\bbE_1)|_{\tilde\D_1}$, and $\tilde\C_1$ is generated by
$\tilde\D_1\cup\tilde\alpha(\N(\C,\D))$, $\Delta=\tilde u\circ
\tilde\bbE_1$.  Thus
$\tilde u\circ \tilde \bbE_1$ is the unique pseudo-expectation for
$(\tilde\C_1,\tilde\D_1)$.   By Proposition~\ref{!fps->vc}(b),
$(\tilde\C_1,\tilde\D_1)$ is a Cartan inclusion.

Finally, as $\tilde\C_1$ is regular, it  is generated by $\tilde\alpha(\C)\cup
\tilde D_1$,  and by construction, $\tilde\D_1$ is generated by
$\tilde\bbE_1(\tilde\alpha(\C))$.  Thus 
$(\tilde\C_1,\tilde\D_1,\tilde\alpha)$ is a \Ccover\ for
$(\C,\D)$.  As $\tilde u$ is a faithful $*$-homomorphism of
$\tilde\D_1$ into $I(\D)$, it follows that $(\tilde\D_1,\tilde\alpha)$
is an essential extension for $\D$, so the proof of assertion (b), and
hence the proposition, is complete.

\end{proof}

Next we show uniqueness (up to equivalence) of  Cartan envelopes. 
\begin{proposition}\label{!CE}  
  Let $(\C,\D)$ be a regular inclusion, and assume that for $i=1,2$,
  $(\C_i,\D_i,\alpha_i)$ are Cartan envelopes for $(\C,\D)$.  Then
  there is a unique regular $*$-isomorphism $\psi: \C_1\rightarrow \C_2$ such
  that $\psi\circ\alpha_1=\alpha_2$.
  Furthermore, if $\bbE_i:\C_i\rightarrow \D_i$ are the conditional
  expectations, then for every $x\in\C$ and $v\in \N(\C,\D)$,
  \begin{equation}\label{g2g}
    \psi(\alpha_1(v) \bbE_1(\alpha_1(x)))=\alpha_2(v)
    \bbE_2(\alpha_2(x)).
  \end{equation}
\end{proposition}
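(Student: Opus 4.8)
The plan is to first match the abelian cores $\D_1$ and $\D_2$ inside a common injective envelope, then bootstrap to all of $\C_i$ using the faithful conditional expectations $\bbE_i\colon\C_i\to\D_i$ as $\D_i$-valued inner products. Throughout, by Proposition~\ref{CE-->!Fpse} the inclusion $(\C,\D)$ has the faithful unique pseudo-expectation $E\colon\C\to I(\D)$, so I may freely use uniqueness of $E$.

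First I would construct the isomorphism on the diagonals. Applying Lemma~\ref{!D1Dpe} to each package $(\C_i,\D_i,\alpha_i)$ yields $*$-homomorphisms $u_i\colon\D_i\to I(\D)$ with $u_i\circ\alpha_i|_\D=\iota$ and, by \eqref{!CDE}, $E=u_i\circ\bbE_i\circ\alpha_i$. Since each $(\D_i,\alpha_i|_\D)$ is an essential extension of $\D$, each $u_i$ is faithful (as in the proof of Proposition~\ref{CE-->!Fpse}, $\ker u_i\cap\alpha_i(\D)=(0)$ forces $\ker u_i=(0)$); and since $\D_i=C^*(\bbE_i(\alpha_i(\C)))$, the identity $E=u_i\circ\bbE_i\circ\alpha_i$ gives $u_i(\D_i)=C^*(E(\C))=:\fA$. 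Thus each $u_i\colon\D_i\to\fA$ is a $*$-isomorphism, and $\phi:=u_2^{-1}\circ u_1\colon\D_1\to\D_2$ is a $*$-isomorphism satisfying $\phi\circ\alpha_1|_\D=\alpha_2|_\D$ and, crucially, $\phi\circ\bbE_1\circ\alpha_1=\bbE_2\circ\alpha_2$.

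The central step is to show that $\phi$ intertwines the partial automorphisms induced by normalizers: for $v\in\N(\C,\D)$ and $h\in\D_1$, $\phi(\alpha_1(v)^*h\alpha_1(v))=\alpha_2(v)^*\phi(h)\alpha_2(v)$. I would prove this by dualizing to the Gelfand spectra and reusing the computation behind \eqref{CECC2}: writing a character $\sigma_2\in\hat\D_2$ with $\sigma_2(\alpha_2(v^*v))\neq0$ as $\rho\circ u_2$ for some $\rho\in\widehat{I(\D)}$, one has $\dual\phi(\sigma_2)=\rho\circ u_1$, and the computation establishing \eqref{CECC2}—which expresses $\beta_{\alpha_i(v)}(\rho\circ u_i)$ as $(\rho\circ\tilde\theta_v)\circ u_i$ using $E$ and \cite[Proposition~3.14]{PittsStReInI}, independently of $i$—shows $\dual\phi(\beta_{\alpha_2(v)}(\sigma_2))=\beta_{\alpha_1(v)}(\dual\phi(\sigma_2))$. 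Because characters are multiplicative, this agreement on a generating set propagates to all of $\hat\D_2$ and dualizes to the asserted algebraic identity. This is the step I expect to be the main obstacle, since it is exactly where the two envelopes are forced to have the same local geometry over $\hat\D$.

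Finally I would build $\psi$. Writing $P_i:=\{\alpha_i(v)d:v\in\N(\C,\D),\,d\in\D_i\}$, the relations $v^*v,vv^*\in\D$ and the fact that $d\,\alpha_i(w)\in\alpha_i(w)\D_i$ (governed by $\theta_{\alpha_i(w)}$) show that $\spn P_i$ is a dense $*$-subalgebra of $\C_i$. Define $\psi_0(\alpha_1(v)d):=\alpha_2(v)\phi(d)$ and extend linearly. Because $\bbE_i$ is a $\D_i$-bimodule map and $\alpha_i(v_1)^*\alpha_i(v_2)=\alpha_i(v_1^*v_2)$, a direct computation gives $\bbE_2(\psi_0(a)^*\psi_0(b))=\phi(\bbE_1(a^*b))$ for $a,b\in\spn P_1$; faithfulness of $\bbE_2$ then makes $\psi_0$ well-defined, while the intertwining of the previous paragraph makes $\psi_0$ multiplicative and $*$-preserving. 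Representing each $\C_i$ faithfully—hence isometrically—on the Hilbert module $X_i=L^2(\C_i,\bbE_i)$ by left multiplication $\lambda_i$, the inner-product identity produces a $\phi$-compatible unitary $U\colon X_1\to X_2$ with $\lambda_2(\psi_0(a))U=U\lambda_1(a)$, so $\psi_0$ is isometric and extends to a $*$-isomorphism $\psi\colon\C_1\to\C_2$ (surjective by running the construction symmetrically with $\phi^{-1}$). Taking $d$ to be the unit shows $\psi\circ\alpha_1=\alpha_2$ on $\N(\C,\D)$ and hence, by density, on $\C$; $\psi$ is regular since $\psi(\alpha_1(v))=\alpha_2(v)$ and $\psi(\D_1)=\D_2$; and \eqref{g2g} is precisely the case $d=\bbE_1(\alpha_1(x))$. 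For uniqueness, any regular $*$-isomorphism $\psi'$ with $\psi'\circ\alpha_1=\alpha_2$ carries $\D_1$ onto $\D_2$ and therefore intertwines the conditional expectations (unique on a Cartan inclusion by Proposition~\ref{!fps->vc}(b)), forcing $\psi'(\bbE_1(\alpha_1(x)))=\bbE_2(\alpha_2(x))$; since $\C_1=C^*(\alpha_1(\C)\cup\bbE_1(\alpha_1(\C)))$, we conclude $\psi'=\psi$.
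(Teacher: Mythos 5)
Your proposal is correct and follows the same skeleton as the paper's proof -- obtain the diagonal isomorphism $u_2^{-1}\circ u_1$ from Lemma~\ref{!D1Dpe} and the identity $E=u_i\circ\bbE_i\circ\alpha_i$, extend over the dense span of $\{\alpha_i(v)h\}$, prove isometry, then run the same uniqueness argument -- but it diverges from the paper at the two technical pressure points, and in each case your route is defensible. First, you isolate and prove the covariance relation $\phi(\alpha_1(v)^*h\alpha_1(v))=\alpha_2(v)^*\phi(h)\alpha_2(v)$ by dualizing to $\hat\D_i$ and rerunning the computation behind~\eqref{CECC2}; the paper never states this relation, instead establishing well-definedness and injectivity of $\psi$ on $\spn\M_1$ through the chain of equivalences $\sum\alpha_1(v_k)h_k=0\Leftrightarrow\bbE_1(\sum h_k^*\alpha_1(v_k^*v_\ell)h_\ell)=0\Leftrightarrow\cdots$, and then asserts that $\psi$ is a $*$-isomorphism because the $\M_i$ are $*$-semigroups. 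Since multiplicativity of $\psi$ on products $\alpha_1(v)h\cdot\alpha_1(w)k$ really does hinge on $\psi$ intertwining $\theta_{\alpha_1(w)}$ with $\theta_{\alpha_2(w)}$, your explicit covariance step fills in what the paper leaves implicit; note that the paper's definition of $\M_i$ with $h\in\overline{\alpha_i(v^*v)\D_i}$ keeps products exactly inside the semigroup, which you should adopt to avoid the closure issues lurking in your $P_i$. Second, for isometry the paper defines two \cstar-norms on $\spn\M_1$ and invokes the minimality of the reduced norm (\cite[Theorem~7.4]{PittsStReInI}, using $\L(\C_1,\D_1)=(0)$), plus a symmetric argument for the reverse inequality; you instead conjugate the left regular representations on the Hilbert modules $L^2(\C_i,\bbE_i)$ by the $\phi$-compatible unitary $U$ coming from $\bbE_2(\psi_0(a)^*\psi_0(b))=\phi(\bbE_1(a^*b))$, and use that faithfulness of $\bbE_i$ makes $\lambda_i$ injective hence isometric. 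This gives both inequalities at once and avoids the external citation, at the modest cost of setting up the module machinery. Both approaches are sound; yours trades one black box for another but makes the covariance mechanism visible.
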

\begin{proof}
  Let $(I(\D),\iota)$ be an
  injective envelope for $\D$.  Lemma~\ref{CE-->!Fpse} gives a 
  unique (and faithful)
  pseudo-expectation $E:\C\rightarrow I(\D)$ for $(\C,\D)$.
For $i=1,2$, let $\bbE_i: \C_i\rightarrow \D_i$ be the (necessarily
  unique) faithful conditional expectation. 

  By Lemma~\ref{!D1Dpe}, there exist unique $*$-homomorphisms
  $u_i: \D_i\rightarrow I(\D)$ such that
  $u_i\circ\alpha_i|_{\D}=\iota$.  By the definition of Cartan
  envelope, $(\D_i,\alpha_i|_\D)$ are essential extensions of $\D$, so
  $u_i$ are actually $*$-monomorphisms.

Examining a variant of the diagram~\eqref{!CD} and using the fact that
$E$ is the unique pseudo-expectation, observe that for every $x\in \C$,
\[u_i(\bbE_i(\alpha_i(x)))=E(x).\] Since $\D_i$ is generated by
$\{\bbE_i(\alpha_i(x)): x\in \C\}$, we conclude that the range of
$u_i$ is the \cstar-subalgebra $\fA\subseteq I(\D)$ generated by
$E(\C)$.  Thus $\psi:=u_2^{-1}\circ u_1$ is a $*$-isomorphism of
$\D_1$ onto $\D_2$ such that for every $x\in\C$,
\[\psi(\bbE_1(\alpha_1(x)))=\bbE_2(\alpha_2(x)).\]

Let
\[\M_i:=\{\alpha_i(v) h: v\in \N(\C,\D), \, h\in
  \overline{\alpha_i(v^*v)\D_i}
  \}. \] It follows from~\cite[Lemma~2.1]{PittsStReInI} that $\M_i$ is
a $*$-semigroup and, as $\C_i$ is generated by
$\alpha_i(\C)\cup \bbE_i(\alpha_i(\C))$, $\M_i$ is a MASA skeleton for
$(\C_i,\D_i)$ (see~\cite[Definitions~1.7 and~3.1]{PittsStReInI}).  Let
  $n\in\bbN$ and for $1\leq k\leq n$, suppose $v_k\in \N(\C,\D)$ and
  $h_k\in \overline{\alpha(v_k^*v_k)\D_1}$.  Then
\begin{align*} 0=\sum_{k=1}^n \alpha_1(v_k)h_k&
   \Leftrightarrow \bbE_1\left(\sum_{k, \ell=1}^n
                    h_k^*\alpha_1(v_k^*v_\ell) h_\ell\right)=0\\
                                          &       \Leftrightarrow \psi\left(\sum_{k, \ell=1}^n
                    h_k^*\bbE_1(\alpha_1(v_k^*v_\ell)) h_\ell\right)=0\\
                  &\Leftrightarrow \sum_{k, \ell=1}^n
                    \psi(h_k^*)\bbE_2(\alpha_2(v_k^*v_\ell)) \psi(h_\ell)=0\\
                  &\Leftrightarrow \bbE_2\left(\sum_{k, \ell=1}^n
                    \psi(h_k^*)\alpha_2(v_k^*v_\ell)
                    \psi(h_\ell)\right)=0\\
  &\Leftrightarrow \sum_{k=1}^n \alpha_2(v_k)\psi(h_k) =0.
\end{align*} Thus, we may extend $\psi$ uniquely to a linear
mapping, again called $\psi$, of $\spn \M_1$ onto $\spn \M_2$
determined by
\[\sum \alpha_1(v_k)h_k\mapsto \sum \alpha_2(v_k)\psi(h_k).\]  Since
$\M_i$ are $*$-semigroups, it follows that $\spn\M_i$ are $*$-algebras
and $\psi$ is a $*$-isomorphism of $\spn\M_1$ onto $\spn \M_2$.

Now define two \cstar-norms on $\spn\M_1$:
\[\nu_1(x)=\norm{x}_{\C_1}\dstext{and}\nu_2(x)=\norm{\psi(x)}_{\C_2}.\]
Since $(\C_1,\D_1)$ is a Cartan pair, $\L(\C_1,\D_1)=(0)$, 
so~\cite[Theorem~7.4]{PittsStReInI} implies that $\nu_1$ is the
minimal \cstar-norm on $\spn\M_1$.  Thus, for every $x\in \spn\M_1$,
$\norm{x}_{\C_1}\leq
\norm{\psi(x)}_{\C_2}$.  A symmetric argument yields
$\norm{\psi(x)}_{\C_2}\leq \norm{x}_{\C_1}$, so  $\psi$ is
an isometric $*$-isomorphism of  $\spn\M_1$ onto $\spn\M_2$.
Therefore, 
$\psi$ extends to a $*$-isomorphism  (once again
called $\psi$) of $\C_1$ onto
$\C_2$.   As $\psi(\D_1)=\D_2$, $\psi$ is regular, and by
construction, $\psi\circ\alpha_1=\alpha_2$.  Thus
$(\C_1,\D_1,\alpha_1)$ is equivalent to $(\C_2,\D_2,\alpha_2)$ via a
$*$-isomorphism satisfying~\eqref{g2g}.

Turning to the uniqueness statement, suppose
$\psi':(\C_1,\D_1)\rightarrow (\C_2,\D_2)$ is a regular
$*$-isomorphism such that $\psi'\circ\alpha_1=\alpha_2$.  The
regularity of $\psi'$ yields $\psi'(\D_1)\subseteq \D_2$.  We now show
equality.  If $y\in \C_2$ commutes with $\psi'(\D_1)$, then
$\psi'^{-1}(y)$ commutes with $\D_1$, whence $\psi'^{-1}(y)\in \D_1$.
Thus, $y\in \psi'(\D_1)$, so $\psi'(\D_1)$ is a MASA in $\C_2$.  This
gives $\psi'(\D_1)=\D_2$.

Note that $\psi'^{-1}\circ\bbE_2\circ \psi'$ is a faithful
conditional expectation of $\C_1$ onto $\D_1$.  By uniqueness of
$\bbE_1$, we obtain $\bbE_2\circ\psi'=\psi'\circ\bbE_1$.  As this
relation also holds for $\psi$, 
\[\psi\circ \bbE_1\circ \alpha_1=\bbE_2\circ\psi \circ \alpha_1 = \bbE_2\circ \alpha_2=\bbE_2\circ \psi'\circ
  \alpha_1=\psi'\circ\bbE_1\circ \alpha_1.\]  Therefore, 
$\psi|_{\D_1}=\psi'|_{\D_1}$ because $\D_1$ is generated
by $\bbE_1(\alpha_1(\C))$.   Since $\C_1$ is the closed
$\D_1$-bimodule generated by $\alpha_1(\C)$, we obtain $\psi'=\psi$,
completing the proof.
\end{proof}

We have now established most of Theorem~\ref{ccequiv}, and we now
finish its proof.
\begin{proof}[Proof of Theorem~\ref{ccequiv}]
  Proposition~\ref{!fps->vc} gives (b)$\Leftrightarrow$(c) and
  (a)$\Leftrightarrow$(b) is Proposition~\ref{CE-->!Fpse} combined
  with Proposition~\ref{CE-->CC}.  Uniqueness of the Cartan envelope
  is Proposition~\ref{!CE}.

Finally, suppose $(\C,\D)$ has the faithful unique pseudo-expectation
property and $(\C_1,\D_1,\alpha)$ is a \Ccover\ for
$(\C,\D)$.  The proof of Proposition~\ref{CE-->CC} establishes the existence
of an ideal $\fJ$ in $\C_1$ with the requisite properties.
          \end{proof}

Suppose $(\C,\D)$ has the unique faithful
unique pseudo-expectation property.    Then $(\C,\D^c)$ is a virtual
Cartan inclusion.  Our next goal is Proposition~\ref{sub} below, which shows that
$(\C,\D)$ and $(\C,\D^c)$ have the same Cartan envelope.   

  \begin{lemma}\label{esub}  Suppose $(\C,\D)$ is an inclusion with
    the faithful unique pseudo-expectation property, let $\D^c$ be the
    relative commutant of $\D$ in $\C$, and let $\D_0\subseteq \D$ be a
    \cstar-subalgebra such that $(\D,\D_0)$ is an essential inclusion.
    If $\C_0$ is a \cstar-subalgebra of $\C$ containing $\D_0$, then
    the following statements hold.
    \begin{enumerate}
   \item   The inclusions $(\C,\D_0)$ and
    $(\C_0,\D_0)$ both have the
    faithful unique pseudo-expectation property.
    \item The inclusion mapping is a regular
      homomorphism of $(\C_0,\D_0)$ into $(\C,\D^c)$; in other words,
      \[\N(\C_0,\D_0)\subseteq \N(\C,\D^c).\]
    \end{enumerate}
  \end{lemma}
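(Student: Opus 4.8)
The plan is to establish the substantive half of (a) first---that $(\C,\D_0)$ has the faithful unique pseudo-expectation property---and then to extract (a) for $(\C_0,\D_0)$ and all of (b) from it. To begin, I would fix an injective envelope $(I(\D_0),\iota_0)$ for $\D_0$. Since $(\D,\D_0)$ is an essential inclusion of abelian \cstaralg s, \cite[Corollary~3.22]{PittsZarikianUnPsExC*In} shows it has a faithful unique pseudo-expectation $w\colon \D\to I(\D_0)$ that is a $*$-monomorphism with $w|_{\D_0}=\iota_0$. Regarding $\iota_0(\D_0)\subseteq w(\D)\subseteq I(\D_0)$ and feeding this chain into Lemma~\ref{abeless}, the essentiality of the injective-envelope inclusion $(I(\D_0),\iota_0(\D_0))$ forces $(I(\D_0),w(\D))$ to be essential. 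Hence $I(\D_0)$ is an injective, essential extension of $w(\D)\cong\D$, and I may identify $(I(\D_0),w)$ with an injective envelope $(I(\D),\iota)$ for $\D$ satisfying $\iota|_{\D_0}=\iota_0$.

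With the envelopes identified, I would match the two families of pseudo-expectations. Any pseudo-expectation for $(\C,\D)$ restricts to $\iota_0$ on $\D_0$, hence is a pseudo-expectation for $(\C,\D_0)$. Conversely, if $G\colon\C\to I(\D)$ is a pseudo-expectation for $(\C,\D_0)$, then $G|_{\D}$ is a pseudo-expectation for $(\D,\D_0)$, so uniqueness for $(\D,\D_0)$ gives $G|_\D=\iota$ and thus $G$ is a pseudo-expectation for $(\C,\D)$. Since $(\C,\D)$ has a unique, faithful pseudo-expectation $E$, the same $E$ is the unique pseudo-expectation for $(\C,\D_0)$, which is therefore faithful; so $(\C,\D_0)$ has the faithful unique pseudo-expectation property. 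As $\D_0\subseteq\C_0\subseteq\C$, \cite[Proposition~2.6]{PittsZarikianUnPsExC*In} (which descends the property to intermediate subalgebras, exactly as in the proof of Lemma~\ref{abeless}) then yields the property for $(\C_0,\D_0)$, finishing (a).

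For (b), write $\D_0^c$ for the relative commutant of $\D_0$ in $\C$. Because $\D_0\subseteq\D$ with $\D$ abelian, I have $\D\subseteq\D_0^c$, and since commuting with $\D$ is stronger than commuting with $\D_0$, also $\D^c\subseteq\D_0^c$. By part (a), $(\C,\D_0)$ has the faithful unique pseudo-expectation property, so Proposition~\ref{!fps->vc}(a) shows $\D_0^c$ is abelian. Now take $v\in\N(\C_0,\D_0)$; since $\C_0\subseteq\C$, also $v\in\N(\C,\D_0)$. By \cite[Lemma~2.10]{PittsStReInI} the identity map is a regular $*$-monomorphism of $(\C,\D_0)$ into $(\C,\D_0^c)$, so $v\in\N(\C,\D_0^c)$; in particular $v^*\D^c v\cup v\D^c v^*\subseteq\D_0^c$. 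Finally, for $d\in\D^c\subseteq\D_0^c$ the element $v^*dv$ lies in the abelian algebra $\D_0^c$, which contains $\D$; hence $v^*dv$ commutes with all of $\D$, giving $v^*dv\in\D^c$, and symmetrically $vdv^*\in\D^c$. Thus $v\in\N(\C,\D^c)$, proving (b).

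The crux is the first paragraph: identifying $I(\D_0)$ with $I(\D)$ and then checking that pseudo-expectations for $(\C,\D_0)$ and $(\C,\D)$ literally coincide. Once $(\C,\D_0)$ is known to have the faithful unique pseudo-expectation property, both the passage to $(\C_0,\D_0)$ and the normalizer statement are short; the pleasant point in (b) is that abelianness of $\D_0^c$ automatically upgrades ``$v$ normalizes $\D_0^c$'' to ``$v$ normalizes $\D^c$.''
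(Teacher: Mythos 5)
Your proof of (a) is essentially the paper's proof: identify $(I(\D_0),\iota)$ as an injective envelope for $\D$ via \cite[Corollary~3.22]{PittsZarikianUnPsExC*In} and Lemma~\ref{abeless}, observe that restriction to $\D$ forces every pseudo-expectation for $(\C,\D_0)$ to agree with $\iota$ on $\D$ and hence to be a pseudo-expectation for $(\C,\D)$, and descend to $(\C_0,\D_0)$ by \cite[Proposition~2.6]{PittsZarikianUnPsExC*In}. For (b) you arrive at the same key point the paper does, namely that $\D_0^c$ is abelian and therefore equals $\D^c$; the only difference is that where the paper verifies $v^*hv\in\D_0^c$ by a direct computation with $\theta_{v^*}$ and an approximate unit for $\overline{v^*v\D_0}$, you outsource exactly that step to \cite[Lemma~2.10]{PittsStReInI}, which is legitimate and a bit shorter. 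One small caveat: you invoke Proposition~\ref{!fps->vc}(a) to conclude that $\D_0^c$ is abelian, but that proposition is stated for \emph{regular} inclusions, and $(\C,\D_0)$ need not be regular here; the correct reference is \cite[Corollary~3.14]{PittsZarikianUnPsExC*In}, which needs only the faithful unique pseudo-expectation property and is what both the paper's proof of this lemma and the proof of Proposition~\ref{!fps->vc}(a) actually use.
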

  \begin{proof}
    We begin with some preliminary observations.  Let
    $(I(\D_0),\iota_0)$ be an injective envelope for $\D_0$.
    By~\cite[Corollary~3.22]{PittsZarikianUnPsExC*In}, there exists a
    unique pseudo-expectation $\iota:\D\rightarrow I(\D_0)$ for
    $(\D,\D_0)$; furthermore, $\iota$ is multiplicative and faithful.
    We claim that $(I(\D_0),\iota)$ is an injective envelope for $\D$.
    As $\iota|_{\D_0}=\iota_0$, \[I(\D_0)\supseteq\iota(\D)\supseteq \iota_0(\D_0).\]
Lemma~\ref{abeless} implies $(I(\D_0),\iota)$ is an essential extension of
    $\D$.  Thus as $I(\D_0)$ is injective, $(I(\D_0),\iota)$ is an
    injective envelope for $\D$.  Let $E:\C\rightarrow I(\D_0)$ be the
    pseudo-expectation for $(\C,\D)$ with respect to
    $(I(\D_0),\iota)$.

a) Let $\Delta: \C\rightarrow
    I(\D_0)$ be a pseudo-expectation for $(\C,\D_0)$ relative to
    $(I(\D_0),\iota_0)$.    Then $\Delta|_{\D}$ is a
    pseudo-expectation for $(\D,\D_0)$, so $\Delta|_\D=\iota$.
    Therefore, $\Delta$ is a pseudo-expectation for $(\C,\D)$ relative
    to $(I(\D_0),\iota)$.  Since $(\C,\D)$ has the unique
    pseudo-expectation property, $\Delta=E$.  As $E$ is faithful, $(\C,\D_0)$ has the
    faithful unique pseudo-expectation property.   That $(\C_0,\D_0)$
    also has the faithful unique pseudo-expectation property follows
    from~\cite[Proposition~2.6]{PittsZarikianUnPsExC*In}.

  b) Let $\D_0^c$ be the relative commutant of $\D_0$ in $\C$.
  By~\cite[Corollary~3.14]{PittsZarikianUnPsExC*In}, $\D^c$ and
 $\D_0^c$  are abelian.
  Therefore, $(\C,\D^c)$ is a MASA inclusion.  Since $\D^c\subseteq
  \D_0^c$, we obtain \begin{equation}\label{esub0}
    \D^c= \D_0^c.
  \end{equation}
  
  Now suppose that $v\in \N(\C_0,\D_0)$ and let $h\in\D^c$.   Let $\theta_{v^*}$ be the $*$-isomorphism
  of $\overline{v^*v\D_0}$ onto $\overline{vv^*\D_0}$ determined by
  $v^*vd\mapsto vdv^*$.  Then for every $k\in \overline{v^*v\D_0}$,
  $vk=\theta_{v^*}(k)v$.  Let $d\in\D_0$.  For every $u\in
  \overline{v^*v\D_0}$,
  \begin{equation}\label{esub1}
    v^*hv ud= v^*h\theta_{v^*}(du)v=v^*\theta_{v^*}(du)hv=du v^*hv.
  \end{equation}
  Taking $(u_\lambda)$ to be an approximate unit for
  $\overline{v^*v\D_0}$, we have $v=\lim vu_\lambda$ and
  $v^*=\lim u_\lambda v^*$.  Then~\eqref{esub1} gives
  $v^*hv \in \D_0^c$, so 
  $v^*hv\in\D^c$ by~\eqref{esub0}.    A similar argument shows that for every
  $h\in \D^c$, $vhv^*\in \D^c$.  Thus, $v\in \N(\C,\D^c)$.

\end{proof}

We now give the relationships between 
Cartan envelopes and Cartan extensions for $(\C,\D)$ and $(\C,\D^c)$.

\begin{proposition}\label{sub}  Suppose $(\C,\D)$ is a regular
  inclusion with the unique faithful pseudo-expectation property.  The
  following statements hold.
  \begin{enumerate}
    \item 
  $(\C_1,\D_1,\alpha)$ is a Cartan extension for $(\C,\D)$ if and only
  if $(\C_1,\D_1,\alpha)$ is a Cartan extension for $(\C,\D^c)$.
  \item 
 $(\C_1,\D_1,\alpha)$ is a  Cartan envelope for $(\C,\D)$ if and
 only if $(\C_1,\D_1,\alpha)$ is a Cartan envelope for
 $(\C,\D^c)$.
\end{enumerate}
\end{proposition}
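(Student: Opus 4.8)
The plan is to prove part (1) first and then deduce part (2) from it together with Lemma~\ref{abeless}. By Proposition~\ref{!fps->vc}(a), the faithful unique pseudo-expectation hypothesis forces $\D^c$ abelian and makes both $(\D^c,\D)$ and $(\C,\D^c)$ essential; in particular $(\C,\D^c)$ is a regular MASA inclusion which, exactly as in the proof of Lemma~\ref{fusp-->VC}, again has the faithful unique pseudo-expectation property, with pseudo-expectation the \emph{same} map $E$. Two facts will be used repeatedly. First, $\N(\C,\D)\subseteq\N(\C,\D^c)$: this is the case $\D_0=\D$, $\C_0=\C$ of Lemma~\ref{esub}(b) (equivalently, the identity is a regular map $(\C,\D)\to(\C,\D^c)$). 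Second, if $(\C_1,\D_1,\alpha)$ is any Cartan extension of $(\C,\D)$ then $\alpha(\D^c)\subseteq\D_1$, by \cite[Proposition~5.3]{PittsStReInI}. The easy direction of (1) then follows at once: if $(\C_1,\D_1,\alpha)$ is a Cartan extension of $(\C,\D^c)$, then $\alpha(\D)\subseteq\alpha(\D^c)\subseteq\D_1$ and, since $\N(\C,\D)\subseteq\N(\C,\D^c)$, we get $\alpha(\N(\C,\D))\subseteq\alpha(\N(\C,\D^c))\subseteq\N(\C_1,\D_1)$, so $\alpha$ is a regular $*$-monomorphism of $(\C,\D)$ as well.

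The substantive direction of (1) is the converse. Assuming $(\C_1,\D_1,\alpha)$ is a Cartan extension of $(\C,\D)$, I must show $\alpha(\N(\C,\D^c))\subseteq\N(\C_1,\D_1)$. Fix $v\in\N(\C,\D^c)$ and set $w=\alpha(v)$. Since $1\in\D^c$, both $w^*w=\alpha(v^*v)$ and $ww^*=\alpha(vv^*)$ lie in $\alpha(\D^c)\subseteq\D_1$, and transporting the partial automorphism $\theta_v$ of $\D^c$ through $\alpha$ yields the intertwining relation $w\,\alpha(e)=\alpha(\theta_v(e))\,w$ for $e$ in the relevant ideal of $\D^c$. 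A direct computation from this relation shows that for every $d\in\D_1$ the element $w^*dw$ commutes with $\alpha(\D^c)$; that is, $w^*dw$ (and likewise $wdw^*$) lies in the relative commutant of $\alpha(\D^c)$ in $\C_1$.

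The main obstacle is to upgrade ``$w^*dw$ commutes with $\alpha(\D^c)$'' to the required ``$w^*dw\in\D_1$'', since for a general (non-minimal) Cartan extension $\alpha(\D^c)$ need be neither a MASA in $\C_1$ nor essential in $\D_1$. The mechanism I would exploit is that, although $\D_1$ may be far larger than $\alpha(\D^c)$, the source and range projections of $w$ lie in $\alpha(\D^c)''$, so $w^*dw$ depends only on the part of $d$ supported where $w^*w$ is nonzero, a region over which $\alpha(\D^c)$ controls $\D_1$ fibrewise over $\widehat{\alpha(\D^c)}$; making this precise with the faithful conditional expectation $\bbE_1$ (to conclude $w^*dw=\bbE_1(w^*dw)\in\D_1$) gives $w\in\N(\C_1,\D_1)$. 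In the case of a Cartan \emph{envelope}, which is all part (2) requires, there is a clean shortcut: then $(\D_1,\alpha(\D^c))$ is essential, so Lemma~\ref{esub}(b), applied to the Cartan pair $(\C_1,\D_1)$ with $\D_0=\alpha(\D^c)$ and $\C_0=\alpha(\C)$, gives $\alpha(\N(\C,\D^c))=\N(\alpha(\C),\alpha(\D^c))\subseteq\N(\C_1,\D_1)$ directly; I expect the general-extension case to reduce to this essentiality-based route or to \cite[Proposition~5.3]{PittsStReInI}.

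Part (2) then follows formally from part (1). A Cartan envelope is a Cartan extension that is in addition a \cover\ with $(\D_1,\alpha|_\D)$ essential. The \cover\ (generation) conditions $\C_1=C^*(\alpha(\C)\cup\bbE_1(\alpha(\C)))$ and $\D_1=C^*(\bbE_1(\alpha(\C)))$ refer only to $\alpha(\C)$ and $\bbE_1$, hence are literally identical for $(\C,\D)$ and $(\C,\D^c)$; and by part (1) the Cartan-extension (regularity) requirements for the two inclusions are equivalent. It remains only to match the essentiality clauses. Here $\alpha(\D)\subseteq\alpha(\D^c)\subseteq\D_1$ are abelian, and since $\alpha|_{\D^c}$ is a $*$-isomorphism carrying the essential inclusion $(\D^c,\D)$ onto $(\alpha(\D^c),\alpha(\D))$, the latter is always essential; so Lemma~\ref{abeless} applied to this chain shows that $(\D_1,\alpha|_\D)$ is essential if and only if $(\D_1,\alpha|_{\D^c})$ is essential. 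Combining these equivalences, $(\C_1,\D_1,\alpha)$ is a Cartan envelope for $(\C,\D)$ precisely when it is a Cartan envelope for $(\C,\D^c)$, which completes the proof.
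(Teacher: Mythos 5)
Your part (b) and the easy direction of part (a) are correct and follow the paper's own route: the paper likewise gets $\N(\C,\D)\subseteq\N(\C,\D^c)$ from the regularity of the identity map $(\C,\D)\to(\C,\D^c)$, handles the essentiality clauses by combining Lemma~\ref{abeless} with the fact that $(\alpha(\D^c),\alpha(\D))\simeq(\D^c,\D)$ is essential, observes that the generation conditions in the definition of \cover\ involve only $\alpha(\C)$ and $\bbE_1$, and obtains $\alpha(\N(\C,\D^c))\subseteq\N(\C_1,\D_1)$ in the envelope case exactly by your ``clean shortcut,'' i.e.\ by applying Lemma~\ref{esub}(b) to $(\alpha(\C),\alpha(\D^c))$ inside the Cartan pair $(\C_1,\D_1)$, using that $(\D_1,\alpha(\D^c))$ is essential there.

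The genuine gap is the forward direction of part (a) for a \emph{general} Cartan extension. Your direct argument only gets you to ``$w^*dw$ lies in the relative commutant of $\alpha(\D^c)$ in $\C_1$,'' and you correctly identify that this cannot be upgraded to $w^*dw\in\D_1$ by the essentiality route, because for a non-minimal Cartan extension $(\D_1,\alpha(\D^c))$ need not be essential and $\alpha(\D^c)$ need not be close to a MASA; the sketch involving ``fibrewise control over $\widehat{\alpha(\D^c)}$'' and $\bbE_1$ is not an argument, and the sentence ``I expect the general-extension case to reduce to\dots'' is where the proof stops. The paper closes this step by invoking \cite[Proposition~5.3(b)]{PittsStReInI} outright: that proposition says precisely that a regular $*$-monomorphism of $(\C,\D)$ into a Cartan pair $(\C_1,\D_1)$ carries $\D^c$ into $\D_1$ and remains regular as a map of $(\C,\D^c)$ into $(\C_1,\D_1)$. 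You already cite the containment half of this result, so the repair is simply to commit to that citation for the regularity half as well (or to reproduce its proof), rather than attempting the ad hoc argument; as written, part (a) is not established, even though part (b) survives independently via your shortcut.
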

\begin{proof}

If $(\C_1,\D_1,\alpha)$ is a Cartan extension for
$(\C,\D^c)$, it clearly is a Cartan extension for $(\C,\D)$.  On the other hand, if
$(\C_1,\D_1,\alpha)$ is a Cartan extension for $(\C,\D)$,~\cite[Proposition~5.3(b)]{PittsStReInI} shows that it is
also a Cartan extension for $(\C,\D^c)$.

Turning to part (b), suppose $(\C_1,\D_1,\alpha)$ is a Cartan envelope
for $(\C,\D)$.  Let $\bbE_1$ be the conditional expectation for
$(\C_1,\D_1)$. We have already observed that
$\alpha(\D^c)\subseteq \D_1$
(\cite[Proposition~5.3(b)]{PittsStReInI}).

  By definition, $\D_1$ is
  generated by $\bbE_1(\alpha(\C)))$ and $\C_1$ is generated by
  $\alpha(\C)\cup \D_1$.  As $(\D_1,\alpha(\D))$ is an essential
  extension, so is $(\D_1,\alpha(\D^c))$.   Applying
  Lemma~\ref{esub}(b) to $(\alpha(\C),\alpha(\D^c))$ yields 
  $\alpha(\N(\C,\D^c))\subseteq \N(\C_1,\D_1)$.  Thus,
  $\alpha:(\C,\D^c)\rightarrow (\C_1,\D_1)$ is a regular
  $*$-monomorphism.  We conclude that $(\C_1,\D_1,\alpha)$
  is Cartan envelope for $(\C,\D^c)$.

  For the converse, suppose $(\C_1,\D_1,\alpha)$ is a Cartan envelope
  for $(\C,\D^c)$.   Then $\alpha(\D)\subseteq \alpha(\D^c)\subseteq
  \D_1$.   As $(\D_1,\alpha(\D^c))$ and $(\D^c, \D)$ are both
  essential inclusions, Lemma~\ref{abeless} shows $(\D_1, \alpha|_\D)$
  is an essential extension of $\D$.    As $\D_1$ is generated by
  $\bbE_1(\alpha(\C))$, the proposition follows. 
\end{proof}

  Theorem~\ref{ccequiv} characterizes which inclusions have a Cartan
  envelope.  Of course, the process of passing from an inclusion to its
  Cartan envelope loses information:   for example, if $\D$ is an abelian
  \cstaralg, and $\D_0$ is any essential \cstar-subalgebra of $\D$, then
  $(\D,\D)$ is the Cartan envelope for $(\D,\D_0)$.   Here is a more
  interesting example of very different inclusions with the same
 Cartan envelope.

\begin{example}\label{noce1}
Let $A$ and $B$ be self-adjoint unitaries in $M_2(\bbC)$ such that 
\[AB=-BA;\] we have in mind the concrete examples, 
$A=\begin{bmatrix} 0&1\\ 1&0\end{bmatrix}$ and
 $B=\begin{bmatrix} 1&0\\ 0&-1\end{bmatrix}$.  The spectra of $A$ and
$B$ are both $\{-1,1\}$, and the set $\{A, B\}$ generates 
$M_2(\bbC)$.  For $k=0,1$, let $P_k$ be the projection onto the
$(-1)^k$-eigenspace for $A$, likewise let $Q_k$ be the projection onto the
$(-1)^k$-eigenspace for $B$.   
Write $C^*(A)$ and $C^*(B)$ for the \cstar-subalgebras of $M_2(\bbC)$
generated by $A$ and $B$.  The inclusions $(M_2(\bbC), C^*(A))$ and
$(M_2(\bbC), C^*(B))$ are \cstar-diagonals,  and we let $\bbE_A$ and $\bbE_B$
denote the conditional expectations of $M_2(\bbC)$ onto $C^*(A)$ and
$C^*(B)$ respectively.  

Let
\[\bbX:=[-2,-1]\cup [1,2] \dstext{and}
  \C:=C(\bbX)\otimes M_2(\bbC);\] regard $\C$ as continuous
$M_2(\bbC)$-valued functions on $\bbX$.  Set
\[\D:=\left\{f\in \C: f(t)\in C^*(A) \text{ if  $t<0$; } \, \, f(t)\in C^*(B)
    \text{ if $t>0$} \right\}.\]
Then $(\C,\D)$ is a Cartan inclusion.

We now define two inclusions whose Cartan envelope is $(\C,\D)$.
Let \[\D_0:=\{f\in \D: f(-1)=f(1)\}\dstext{and}
      \C_0:=\{f\in\C: f(-1)=f(1)\}.\]  Then $(\D,\D_0)$ is an
    essential inclusion and a computation shows  $(\C_0,\D_0)$ is a  MASA
    inclusion.
Also, \[\{f\otimes A: f\in
C(\bbX)\}\cup\{f\otimes B: f\in C(\bbX)\}\subseteq
\N(\C_0,\D_0).\]  It now follows readily that both $(\C_0,\D_0)$ and
$(\C,\D_0)$ are regular inclusions with the unique pseudo-expectation
property, but in both cases, the pseudo-expectation is not a
conditional expectation.

Note that unlike $(\C_0,\D_0)$,  $(\C,\D_0)$ is not    a
MASA inclusion.   Nevertheless, the Cartan
    envelopes of $(\C_0,\D_0)$ and $(\C,\D_0)$ are both $(\C,\D)$.

\end{example}

  Thus, the question
  of when two inclusions have isomorphic Cartan envelopes arises.   
The following result gives a method for constructing regular inclusions with
the faithful unique pseudo-expectation property as
sub-inclusions of a given  Cartan inclusion.  
This result also provides a 
  a partial answer to the question.

\begin{proposition}\label{subsCart}   Suppose the following:
  \begin{itemize}
    \item $(\C,\D)$ is a Cartan inclusion
      with conditional expectation $\bbE:\C\rightarrow\D$;
      \item
  $\D_0\subseteq \D$ is a \cstar-subalgebra such that $(\D,\D_0)$ is an
  essential inclusion; and
  \item $\M\subseteq \N(\C,\D_0)$ is a 
    $*$-monoid such that $\D_0\subseteq \overline{\spn}\,\M$.
  \end{itemize}
If $\C_0:=\overline{\spn}\,\M$, then
   $(\C_0,\D_0)$ is a regular inclusion with the faithful unique
  pseudo-expectation property.

  Furthermore, if
  \[\D_1=C^*(\bbE(\C_0)), \quad \C_1=C^*(\C_0 \cup \D_1),\] and $\alpha:\C_0\rightarrow \C_1$
  is the inclusion map, then:
  \begin{enumerate}
    \item $(\C_1,\D_1,\alpha)$ is the Cartan envelope
      for $(\C_0,\D_0)$; and 
      \item $(\C_0, \C_0\cap \D)$ is a virtual
        Cartan inclusion.
      \end{enumerate}
    \end{proposition}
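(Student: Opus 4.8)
The plan is to read off nearly everything from the three earlier results Lemma~\ref{esub}, Lemma~\ref{littleCartan}, and Proposition~\ref{CEenv=Cenv}, after first recording that $(\C,\D)$ being Cartan forces $\D^c=\D$ and gives $(\C,\D)$ the faithful unique pseudo-expectation property (Proposition~\ref{!fps->vc}(b)). First I would verify the standing claim that $(\C_0,\D_0)$ is a regular inclusion with the faithful unique pseudo-expectation property. Since $\M$ is a $*$-monoid, $\C_0=\overline{\spn}\,\M$ is a unital $*$-subalgebra, hence a \cstaralg, and it contains $\D_0$ by hypothesis. Each $v\in\M$ satisfies $v^*\D_0v\cup v\D_0v^*\subseteq\D_0$, so $\M\subseteq\N(\C_0,\D_0)$ and $(\C_0,\D_0)$ is regular. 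Because $(\D,\D_0)$ is essential and $\C_0$ is a \cstar-subalgebra of $\C$ containing $\D_0$, Lemma~\ref{esub}(a) applies and gives that $(\C_0,\D_0)$ has the faithful unique pseudo-expectation property.

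For (a), the key observation is that the ambient Cartan pair is itself a Cartan extension of $(\C_0,\D_0)$. Indeed $\D^c=\D$, so Lemma~\ref{esub}(b) yields $\N(\C_0,\D_0)\subseteq\N(\C,\D)$; thus the inclusion $\jmath\colon\C_0\hookrightarrow\C$ is a regular $*$-monomorphism and $(\C,\D,\jmath)$ is a Cartan extension of $(\C_0,\D_0)$. Applying Lemma~\ref{littleCartan} to this extension with the conditional expectation $\bbE$ produces the package $(\C_\alpha,\D_\alpha,\alpha)$ with $\C_\alpha=C^*(\C_0\cup\bbE(\C_0))=\C_1$ and $\D_\alpha=C^*(\bbE(\C_0))=\D_1$, which is precisely $(\C_1,\D_1,\alpha)$. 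Since $(\C_0,\D_0)$ has the faithful unique pseudo-expectation property, the implication (c)$\Rightarrow$(d) of Proposition~\ref{CEenv=Cenv} upgrades this package to a Cartan envelope, and Proposition~\ref{!CE} shows it is the (unique) Cartan envelope.

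For (b) I must show $(\C_0,\C_0\cap\D)$ is a virtual Cartan inclusion, that is, a regular MASA inclusion with the faithful unique pseudo-expectation property, and the crux is to identify $\C_0\cap\D$ with the relative commutant $\D_0^c$ of $\D_0$ in $\C_0$. Applying Lemma~\ref{esub}(a) with the full algebra (i.e.\ taking $\C_0=\C$) shows $(\C,\D_0)$ also has the faithful unique pseudo-expectation property, so by the argument establishing \eqref{esub0} the relative commutant of $\D_0$ in $\C$ is abelian and contains the MASA $\D=\D^c$, hence equals $\D$; intersecting with $\C_0$ gives $\D_0^c=\C_0\cap\D$. Now Proposition~\ref{!fps->vc}(a), applied to $(\C_0,\D_0)$, shows $\D_0^c=\C_0\cap\D$ is abelian and that $(\C_0,\C_0\cap\D)$ is an essential, regular MASA inclusion (regularity and the MASA property coming as in the proof of Proposition~\ref{!fps->vc}); a second application of Proposition~\ref{!fps->vc}(a) to the inclusion $(\C_0,\C_0\cap\D)$ itself then gives it the faithful unique pseudo-expectation property, i.e.\ it is a virtual Cartan inclusion.

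The routine points — that $\C_0$ is a \cstaralg\ and that the $\C_\alpha,\D_\alpha$ of Lemma~\ref{littleCartan} coincide with $\C_1,\D_1$ — cause no trouble. The two steps requiring care are recognizing in (a) that no new Cartan extension need be built, since the given $(\C,\D)$ already serves once Lemma~\ref{esub}(b) is invoked, and in (b) the collapse of the relative commutant of $\D_0$ in $\C$ down to $\D$, which is exactly what lets $\C_0\cap\D$ be identified with $\D_0^c$ and places us in the MASA situation of Proposition~\ref{!fps->vc}(a). I expect (b) to be the main obstacle.
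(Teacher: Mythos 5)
Your proposal is correct, and its skeleton matches the paper's (both rest the whole argument on Lemma~\ref{esub}), but at the two decisive steps you invoke different machinery. For part (a), the paper does not route through Lemma~\ref{littleCartan} and Proposition~\ref{CEenv=Cenv}: it notes $\D_0\subseteq\D_1\subseteq\D$, gets $(\D,\D_1)$ essential from Lemma~\ref{abeless}, applies Lemma~\ref{esub}(a) a second time to conclude $(\C_1,\D_1)$ has the faithful unique pseudo-expectation property, and then uses Proposition~\ref{!fps->vc}(b) together with the faithful expectation $\bbE|_{\C_1}$ to see $(\C_1,\D_1)$ is Cartan. Your route — recognizing $(\C,\D,\jmath)$ as a Cartan extension of $(\C_0,\D_0)$, feeding it to Lemma~\ref{littleCartan} to produce exactly the package $(\C_1,\D_1,\alpha)$, and then upgrading via (c)$\Rightarrow$(d) of Proposition~\ref{CEenv=Cenv} — is equally valid and arguably cleaner, since it outsources the verification that $\alpha(v)\in\N(\C_1,\D_1)$ and the Cartan envelope property to results already proved. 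For part (b), the paper obtains the MASA property by letting $\A$ be the relative commutant of $\C_0\cap\D$ in $\C_0$ and citing \cite[Proposition~5.3(b)]{PittsStReInI} to get $\A\subseteq\D$, and then gets the faithful unique pseudo-expectation property by asserting it is ``hereditary from above''; you instead identify $\C_0\cap\D$ with the relative commutant of $\D_0$ in $\C_0$ (which is indeed what~\eqref{esub0} gives, since the relative commutant of $\D_0$ in $\C$ is abelian and contains the MASA $\D$, hence equals $\D$) and then apply Proposition~\ref{!fps->vc}(a) in both directions. Your version has the advantage of making the ``hereditary from above'' step explicit rather than asserted, at the cost of a slightly longer chain of reductions; both are sound.
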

\begin{proof}
By construction, $(\C_0,\D_0)$ is a regular inclusion and
Lemma~\ref{esub}(a) shows it has the faithful unique
pseudo-expectation property.

We next show $(\C_1,\D_1)$ is a Cartan inclusion.  Part (b) of
Lemma~\ref{esub} shows $\N(\C,\D_0)\subseteq \N(\C,\D)$, so since
$\C_1$ is generated by $\D_1\cup\N(\C,\D_0)$, $(\C_1,\D_1)$ is a
regular inclusion.  As $(\D,\D_0)$ is an essential inclusion, so is
$(\D,\D_1)$.  Therefore,
Lemma~\ref{esub}(a) shows $(\C_1,\D_1)$ has the faithful unique
pseudo-expectation property.  As $\bbE|_{\C_1}$ is a faithful
conditional expectation of $\C_1$ onto $\D_1$,
Proposition~\ref{!fps->vc}(b) shows $(\C_1,\D_1)$ is a Cartan
inclusion.

 Lemma~\ref{esub}(b) shows that $\N(\C_0,\D_0)\subseteq
 \N(\C_1,\D_1)$, so the inclusion map $\alpha:\C_0\rightarrow \C_1$ is a
regular homomorphism.   Thus, $(\C_1,\D_1,\alpha)$ is
the Cartan envelope for $(\C_0,\D_0)$.   This completes the proof of
(a).

We turn to (b).  Since 
$(\D,\D_0)$ is an essential inclusion, both
$(\C_0\cap \D,\D_0)$ and $(\D,\C_0\cap \D)$ are essential inclusions.
Lemma~\ref{esub} shows $\N(\C_0,\D_0)\subseteq \N(\C,\D)$, whence
$\N(\C_0,\D_0)\subseteq \N(\C_0,\C_0\cap \D)$.    It follows that
$(\C_0,\C_0\cap \D)$ is a regular inclusion.  Also, Lemma~\ref{esub}
shows $\alpha$ is a regular homomorphism of $(\C_0,\C_0\cap \D)$ into
$(\C,\D)$.   Let $\A$ be the
relative commutant of $\C_0\cap \D$ in $\C_0$.  
By~\cite[Proposition~5.3(b)]{PittsStReInI}, $\A\subseteq \D$.  Thus,
\[\C_0\cap \D\subseteq \A\subseteq \C_0\cap \D,\] so $(\C_0,\C_0\cap
\D)$ is a regular MASA inclusion.  Finally, as the faithful unique
pseudo-expectation property is hereditary from above and $(\C_0,\D_0)$
has that property, so does $(\C_0,\C_0\cap \D)$.  Thus $(\C_0,\C_0\cap
\D)$ is a virtual Cartan inclusion. 

\end{proof}

\section{Some Consequences of the Unique Pseudo-Expectation
  Property} \label{UPSEC}

This section has two main purposes.  One goal is to show that regular
inclusions with the unique pseudo-expectation property are covering
inclusions.  This gives a class of regular inclusions for which the
results of Section~\ref{GpReIn} can be used for descriptions of
\cover s via twists.
The second goal is to  record some consequences of the
unique pseudo-expectation property for regular inclusion with the hope
that they may be useful in obtaining a characterization of the unique
pseudo-expectation property.

Several of
the results of this section extend results of~\cite{PittsStReInI} from the setting of
regular MASA inclusions (or skeletal MASA inclusions) considered there
to simply assuming the unique pseudo-expectation property.  While
some of these are routine adaptations of proofs
in~\cite{PittsStReInI}, others are more complicated.

    When $(\C,\D)$ has the unique pseudo-expectation property and
$E:\C\rightarrow I(\D)$ is the pseudo-expectation, recall
from~\eqref{defscs} that
\[\fS_s(\C,\D)=\{\rho\circ E: \rho\in \widehat{I(\D)}\}.\]
We already have observed that if $(\C,\D)$ is a regular inclusion with
the \textit{faithful} unique pseudo-expectation property, then
$\fS_s(\C,\D)$ is a compatible cover for $\hat\D$.  We shall see that
this holds when the faithfulness hypothesis on the unique
pseudo-expectation is removed: Theorem~\ref{upse=>cov} shows that in
the presence of the unique pseudo-expectation property, $(\C,\D)$ is a
covering inclusion and that $\fS_s(\C,\D)$ is a compatible cover for
$\hat\D$ which is contained in all compatible covers.

The proof of the following statements are straightforward adaptations
of the proofs of corresponding results found in~\cite{PittsStReInI}. 
\begin{theorem} \label{2fP} Let $(\C,\D)$ be a regular inclusion with the unique pseudo-expectation property, let $(I(\D),\iota)$ be an injective envelope for $\D$ and let $E$ be the pseudo-expectation.
  The following statements hold.
  \begin{enumerate}
  \item The map $\dual{E}: \widehat{I(\D)}\rightarrow\Mod(\C,\D)$ is the unique continuous map of $\widehat{I(\D)}$ into $\Mod(\C,\D)$ such that for every $\rho\in \widehat{I(\D)}$, $\dual{E}(\rho)|_\D=\rho\circ\iota$.
  \item Suppose $F$ is a closed subset of $\Mod(\C,\D)$ such that $\hat\D=\{\rho|_\D: \rho\in  F\}$.  Then $\fS_s(\C,\D)\subseteq F$.
  \end{enumerate}
\end{theorem}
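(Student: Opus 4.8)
The plan is to prove statement (b) by showing directly that every strongly compatible state $\rho\circ E$ (with $\rho\in\widehat{I(\D)}$) lies in $F$. Since $\fS_s(\C,\D)=\dual E(\widehat{I(\D)})$ where $\dual E(\rho)=\rho\circ E$, it suffices to manufacture, out of the given closed covering set $F$, a single pseudo-expectation and then invoke the unique pseudo-expectation hypothesis. Write $p:=\dual\iota:\widehat{I(\D)}\to\hat\D$ for the continuous surjection $\rho\mapsto\rho\circ\iota$, and $r:\Mod(\C,\D)\to\hat\D$ for the restriction map $\phi\mapsto\phi|_\D$. Because $\Mod(\C,\D)$ is compact and $F$ is closed, $F$ is compact, so the hypothesis $\{\phi|_\D:\phi\in F\}=\hat\D$ says precisely that $r|_F:F\twoheadrightarrow\hat\D$ is a continuous surjection of compact Hausdorff spaces.

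The central tool I would use is that $\widehat{I(\D)}$ is extremally disconnected: $I(\D)$ is an injective abelian \cstaralg, hence equals $C(\widehat{I(\D)})$ for a Stonean space, and consequently $\widehat{I(\D)}$ is a projective object in the category of compact Hausdorff spaces (Gleason). Applying projectivity to the surjection $r|_F$ and the map $p$ produces a continuous lift $s:\widehat{I(\D)}\to F$ with $r\circ s=p$; that is, $s(\rho)\in F\subseteq\Mod(\C,\D)$ and $s(\rho)|_\D=\rho\circ\iota$ for every $\rho$. Next I would convert $s$ into a map $E_s:\C\to I(\D)$ by letting $E_s(x)$ be the element of $I(\D)=C(\widehat{I(\D)})$ whose Gelfand transform is $\rho\mapsto s(\rho)(x)$; this function is continuous because $s$ is continuous and $\phi\mapsto\phi(x)$ is weak-$*$ continuous on $\Mod(\C,\D)$. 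Evaluating at each $\rho$ shows $E_s$ is positive and unital, hence completely positive as its range is abelian, and $E_s|_\D=\iota$ since $s(\rho)|_\D=\rho\circ\iota$. Thus $E_s$ is a pseudo-expectation for $(\C,\D)$.

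Finally, the unique pseudo-expectation property forces $E_s=E$, so for every $\rho\in\widehat{I(\D)}$ and $x\in\C$ one gets $s(\rho)(x)=\rho(E_s(x))=\rho(E(x))=\dual E(\rho)(x)$, i.e. $\dual E(\rho)=s(\rho)\in F$. Hence $\fS_s(\C,\D)=\dual E(\widehat{I(\D)})\subseteq F$. The step I expect to be the main obstacle is the construction of the continuous lift $s$: this is the one place where injectivity of $I(\D)$ genuinely enters, through the Stonean/projectivity property of $\widehat{I(\D)}$, and where more than formal manipulation is needed. The remaining ingredients---compactness of $F$, the verification that $E_s$ is unital completely positive with $E_s|_\D=\iota$, and the appeal to uniqueness---are routine once $s$ has been produced.
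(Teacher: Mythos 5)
Your argument for part (b) is correct, and it is essentially the proof the paper has in mind: the paper simply defers to the first part of \cite[Theorem~3.12]{PittsStReInI}, whose proof runs exactly through the two ingredients you isolate --- the Gleason projectivity of the Stonean space $\widehat{I(\D)}$ applied to the continuous surjection $r|_F:F\twoheadrightarrow\hat\D$ to produce a continuous lift $s$ with $r\circ s=\dual\iota$, followed by the Gelfand-transform construction of a pseudo-expectation $E_s$ from $s$ and the appeal to uniqueness to force $s(\rho)=\rho\circ E$. All the small verifications you list (compactness of $F$, positivity and unitality of $E_s$, complete positivity from the abelian range, $E_s|_\D=\iota$) check out.

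The one omission is part (a), which your proposal does not address at all: one still needs that $\dual E$ actually maps $\widehat{I(\D)}$ into $\Mod(\C,\D)$ continuously with $\dual E(\rho)|_\D=\rho\circ\iota$ (routine, since $\rho\circ E$ is a state restricting to the pure state $\rho\circ\iota$ on $\D$, and the adjoint of a bounded map is weak-$*$ continuous), and the uniqueness assertion. The latter follows from the very same device you use for (b): given any continuous $T:\widehat{I(\D)}\rightarrow\Mod(\C,\D)$ with $T(\rho)|_\D=\rho\circ\iota$, the formula $\widehat{E_T(x)}(\rho):=T(\rho)(x)$ defines a pseudo-expectation, so $E_T=E$ and hence $T=\dual E$. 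It would be worth stating this explicitly so that both halves of the theorem are covered.
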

\begin{proof}
Part (a) follows as in the proof of~\cite[Theorem~3.9]{PittsStReInI}, and part (b) follows as in the proof of the first part of~\cite[Theorem~3.12]{PittsStReInI}.
\end{proof}

When $(\C,\D)$ is a regular MASA inclusion, it has the unique
pseudo-expectation property and the left kernel of the
pseudo-expectation is a two-sided ideal of $\C$ maximal with respect
to having trivial intersection with $\D$
(\cite[Theorem~3.15]{PittsStReInI}).  Our next goal is to show that
this result holds when the hypothesis that $\D$ is a MASA in $\C$ is
removed, that is, we extend this result to any inclusion with the
unique pseudo-expectation property.

To do this, we require the following result, which is the version
of~\cite[Proposition~3.14]{PittsStReInI} with the skeletal MASA hypothesis
removed.  For $v\in \N(\C,\D)$, we have already defined the partial automorphism, $\theta_v: \overline{vv^*\D}\rightarrow\overline{v^*v\D}$.  In the following, $\tilde\theta_v$ is the unique extension of $\theta_v$ to a partial automorphism of the regular ideals in $I(\D)$ generated by $vv^*$ and $v^*v$ respectively, see~\cite[Definition~2.13]{PittsStReInI}.

\begin{proposition}\label{ENinv}  Suppose $(\C,\D)$ is a regular inclusion with the unique pseudo-expectation property, let $(I(\D),\iota)$ be an injective envelope for $\D$ and let $E:\C\rightarrow I(\D)$ be the pseudo-expectation.  Then for every $v\in \N(\C,\D)$ and $x\in \C$,
  \begin{equation}\label{ENinv1} E(v^*xv)=\tilde\theta_v(E(vv^*x)).
  \end{equation}
\end{proposition}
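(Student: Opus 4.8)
The plan is to exploit the uniqueness of the pseudo-expectation by constructing an explicit unital completely positive map $G\colon\C\to I(\D)$ with $G|_\D=\iota$, and then arguing that $G=E$ forces~\eqref{ENinv1}. First I would record the relevant reductions and preliminaries. Since $v\in\N(\C,\D)$ we have $v^*v,vv^*\in\D$, and since $\theta_{cv}=\theta_v$ for every scalar $c>0$ while both sides of~\eqref{ENinv1} are homogeneous of degree two in $v$, I may assume $\norm{v}\le 1$, so that $0\le\iota(vv^*)\le 1$. Because $E|_\D=\iota$, Choi's theorem on multiplicative domains places $\D$ in the multiplicative domain of $E$; hence $E$ is a $\D$-bimodule map and, in particular, $E(vv^*x)=\iota(vv^*)E(x)$. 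Let $J_{v^*v}$ and $J_{vv^*}$ denote the regular ideals of $I(\D)$ generated by $v^*v$ and $vv^*$, with $\tilde\theta_{v^*}\colon J_{v^*v}\to J_{vv^*}$ the $*$-isomorphism inverse to $\tilde\theta_v$. A key preliminary step is to check that $E(v^*xv)\in J_{v^*v}$ for every $x\in\C$: for any $\rho\in\widehat{I(\D)}$ with $\rho(\iota(v^*v))=0$, the state $\rho\circ E\in\Mod(\C,\D)$ satisfies $(\rho\circ E)(v^*v)=0$, so the Cauchy--Schwarz inequality gives $(\rho\circ E)(v^*xv)=0$; as $I(\D)$ is abelian this shows the open support of $E(v^*xv)$ is contained in that of $\iota(v^*v)$, so $E(v^*xv)\in J_{v^*v}$ and $\tilde\theta_{v^*}(E(v^*xv))$ is defined.

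Next I would define
\[G(x):=\tilde\theta_{v^*}\big(E(v^*xv)\big)+\big(1-\iota(vv^*)\big)E(x)\qquad(x\in\C)\]
and verify it is a pseudo-expectation. Complete positivity is immediate: $x\mapsto v^*xv$ is completely positive, $E$ is unital completely positive, $\tilde\theta_{v^*}$ is a $*$-homomorphism on $J_{v^*v}$, and multiplication by the positive element $1-\iota(vv^*)$ of the abelian algebra $I(\D)$ is completely positive, so $G$ is a sum of completely positive maps. To see $G|_\D=\iota$, fix $d\in\D$. Since $v^*dv=\theta_v(vv^*d)\in\overline{v^*v\D}\subseteq J_{v^*v}$ and $\tilde\theta_{v^*}$ extends $\theta_v^{-1}$, one gets $\tilde\theta_{v^*}(\iota(v^*dv))=\iota(vv^*d)$, whence
\[G(d)=\iota(vv^*d)+\big(1-\iota(vv^*)\big)\iota(d)=\iota(d).\]
In particular $G(1)=1$, so $G$ is unital. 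Thus $G$ is a pseudo-expectation relative to $(I(\D),\iota)$, and the unique pseudo-expectation property yields $G=E$.

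Finally, the equality $G=E$ reads $\tilde\theta_{v^*}(E(v^*xv))+(1-\iota(vv^*))E(x)=E(x)$, that is, $\tilde\theta_{v^*}(E(v^*xv))=\iota(vv^*)E(x)=E(vv^*x)$. Applying the $*$-isomorphism $\tilde\theta_v$ and using $\tilde\theta_v\tilde\theta_{v^*}=\operatorname{id}$ on $J_{v^*v}$ together with $E(v^*xv)\in J_{v^*v}$ gives $E(v^*xv)=\tilde\theta_v(E(vv^*x))$, which is~\eqref{ENinv1}.

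I expect the main obstacle to be arranging the normalization so that $G$ is genuinely unital: because $vv^*$ need not be a projection, a naive ``twist one corner and leave the rest fixed'' construction fails to restrict to $\iota$ on $\D$. The device that makes everything fit is the complementary summand $(1-\iota(vv^*))E(\cdot)$ together with the identity $\tilde\theta_{v^*}(\iota(v^*v))=\iota(vv^*)$, which is exactly what is needed for the two pieces of $G(d)$ to recombine to $\iota(d)$. The only other point requiring care is the domain claim $E(v^*xv)\in J_{v^*v}$, handled by the Cauchy--Schwarz argument above; note that this argument uses only the bimodule property and the abelianness of $I(\D)$, not any MASA or skeletal hypothesis, which is precisely what lets the skeletal assumption be dropped.
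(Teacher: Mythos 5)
Your argument is correct, and it takes a genuinely different route from the paper's. The paper proceeds in two stages: it first proves~\eqref{ENinv1} for $x\in\D^c$ by exploiting the order-theoretic formula $E(x)=\inf_{I(\D)_{s.a.}}\iota(\{d\in\D_{s.a.}: d\geq x\})$ (from \cite[Theorem~3.16]{PittsZarikianUnPsExC*In}) and pushing infima through $\tilde\theta_v$; it then handles a general normalizer $w$ by choosing a Frol\'ik family of ideals for $w$, showing the ``agreement ideal'' $H=\{d\in\D:(E(v^*wv)-\tilde\theta_v(E(vv^*w)))\iota(d)=0\}$ is essential, and invoking \cite[Lemma~3.3]{PittsStReInI}. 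Your single-step device --- exhibiting the competing pseudo-expectation $G=\tilde\theta_{v^*}\bigl(E(v^*\cdot v)\bigr)+(1-\iota(vv^*))E(\cdot)$ and forcing $G=E$ by uniqueness --- bypasses both stages. All the delicate points check out: the Cauchy--Schwarz argument does place $E(v^*xv)$ in the regular ideal $QI(\D)$ (its Gelfand transform vanishes off the open support of $\widehat{\iota(v^*v)}$, hence is supported in the clopen set corresponding to the support projection $Q$), so $\tilde\theta_{v^*}$ applies; the normalization $\norm{v}\le 1$ is legitimate since $\tilde\theta_{cv}=\tilde\theta_v$ and both sides scale by $c^2$, and it is exactly what makes $1-\iota(vv^*)\geq 0$ so that the second summand is completely positive; and the identity $\tilde\theta_{v^*}\circ\iota=\iota\circ\theta_{v^*}$ on $\overline{v^*v\D}$ gives $G|_\D=\iota$ and $G(1)=1$. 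What your approach buys is considerable: it avoids the Hamana-style infimum machinery and the Frol\'ik family entirely, and it nowhere uses regularity of $(\C,\D)$ --- only the presence of the single normalizer $v$ and the unique pseudo-expectation property --- so it in fact proves the slightly more general statement for arbitrary inclusions with that property. What the paper's route offers in exchange is mainly continuity with the earlier MASA-case proof of \cite[Proposition~3.14]{PittsStReInI}, of which it is the direct adaptation.
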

\begin{proof}
  We begin by establishing~\eqref{ENinv1} for $x\in \D^c$. First suppose $x^*=x\in\D^c$ and put
  \[\S:=\{d\in \D_{s.a.}: d\geq x\}.\]  We claim that
  \begin{equation}\label{ENinv2}
    \tilde{\theta_v}(E(vv^*x))=\inf_{I(\D)_{s.a.}} \iota(v^*\S v) \geq E(v^*xv).
  \end{equation}
  Recall that for any $f\in \overline{vv^*\D}$,  $\iota(\theta_v(f))=
  \tilde\theta_v(\iota(f))$.  So
  \begin{align*}
    d\in \S &\Rightarrow  \iota(d)\geq E(x) \Rightarrow \iota(dvv^*)\geq E(xvv^*)\\
            &\Rightarrow
              \iota(\theta_v(dvv^*))=\tilde\theta_v(\iota(dvv^*))\geq
              \tilde\theta_v(E(xvv^*))\\
            &\Rightarrow \iota(v^*dv)\geq \tilde\theta_v(E(xvv^*)) \\
            &\Rightarrow \inf_{I(\D)_{s.a.}} v^*\S v\geq \tilde\theta_v(E(xvv^*)).
  \end{align*}

  Next, let $Q\in I(\D)$ be the support projection for
  $\overline{v^*v\D}$, that is,
  $Q=\sup_{I(\D)_{s.a.}} \iota(u_\lambda)$ where $(u_\lambda)$ is an
  approximate unit for $\overline{v^*v\D}$.  Suppose
  $y\in I(\D)_{s.a.}$ satisfies \[y\leq \iota(v^*dv)\] for every
  $d\in\S$.  Then $Qy\leq Q\iota(v^*dv)=\iota(v^*dv)$ for $d\in\S$.  Thus, for
  every $d\in \S$,
  \[\tilde\theta_v^{-1}(Qy)\leq \tilde\theta_v^{-1}(\iota(v^*dv))
                                       =\tilde\theta_v^{-1}(\tilde\theta_v(\iota(vv^*d)))=\iota(vv^*d).
                                     \] Therefore,
                                     \[\tilde\theta_v^{-1}(Qy)\leq \inf_{I(\D)_{s.a.}} (vv^*\S)\overset{(1)}{=}\iota(vv^*)\inf_{I(\D)_{s.a.}} \S\overset{(2)}{=} \iota(vv^*)E(x)=E(vv^*x),\] with equality~(1) following from~\cite[Lemma~1.9]{HamanaReEmCStAlMoCoCStAl} and equality~(2) by \cite[Theorem~3.16]{PittsZarikianUnPsExC*In}.   
Thus,  \[Qy\leq\tilde\theta_v(E(xvv^*)).\]
Observe that $Q^\perp y\leq 0$ because 
$Q\iota(v^*v)=\iota(v^*v)$ and $Q^\perp y$ is a lower bound for
$Q^\perp\iota(v^*\S v) = \{0\}$.   Therefore, $y\leq Qy$, so
$y\leq \tilde\theta_v(E(xvv^*))$.  Thus, $\inf_{I(\D)_{s.a.}}
\iota(v^*\S v)\leq \tilde\theta_v(E(xvv^*))$, which completes the proof  
of the equality in~\eqref{ENinv2}.

If $d\in \S$, then $v^*dv\geq v^*xv$, so $\iota(v^*dv)\geq E(v^*xv)$.  Thus
$E(v^*xv)$ is a lower bound for $\iota(v^*\S v)$, which gives the inequality in~\eqref{ENinv2}.
  
Replacing $x$ with $-x$ in~\eqref{ENinv2} yields
$\tilde\theta_v(E(vv^*x))\leq E(v^*xv)$, whence~\eqref{ENinv1} holds
for any $x\in (\D^c)_{s.a.}$.  It follows from linearity that~\eqref{ENinv1} holds for all $x\in\D^c$.

  The remainder of the proof is a modification of the proof of~\cite[Propostion~3.14]{PittsStReInI}.   Since $(\C,\D)$ is a regular inclusion, it suffices to show~\eqref{ENinv1} holds for any $w\in \N(\C,\D)$.

By Lemma~\cite[Lemma~3.3]{PittsStReInI}, it suffices to show that the ideal
\[H:=\{d\in\D: (E(v^*wv)-\tilde\theta_v(E(vv^*w))\iota(d)=0\}\] is an
essential ideal of $\D$. 

So let $w\in\N(\C,\D)$.  Let $\{K_{i}\}_{i=0}^4$ be a left Frol\' ik family
of ideals for $w$, let $J=\overline{vv^*\D}$ and let $P$ be the
support projection in $I(\D)$ for $J$.   Let 
\[A:=\theta_v(J)^\perp \cup \bigcup_{i=0}^4 \theta_v(J\cap K_i).\]
Then $A^\perp=\{0\}$, so 
$A$ generates an essential ideal of $\D$.  To show $H$ is an essential ideal, we show 
$A\subseteq H$.  

The following facts follow as in the corresponding facts in the proof of~\cite[Theorem~3.14]{PittsStReInI}:
\begin{itemize}
\item $\theta_v(J)^\perp\subseteq H$; and
\item for $1\leq i\leq 4$, $\theta_v(J\cap K_i)\subseteq H$.
\end{itemize}
We now show that $\theta_v(J\cap K_0)\subseteq H$.  Let $d\in J\cap K_0$.
Lemma~\ref{fixedideal} gives $wd=dw\in \D^c$.  Thus,
\begin{align*}
E(v^*wv)\iota(\theta_v(d))&=
                            E(v^*wdv)
                            =\tilde\theta_v(E(vv^*wd))=\tilde\theta_v(E(vv^*w))\iota(\theta_v(d)).
\end{align*}
Therefore, $\theta_v(J\cap K_0)\subseteq H$ as
well. 

We conclude that $A\subseteq H$, which completes the proof.
\end{proof}

With the previous proposition in hand, we have the following result which  extends~\cite[Theorem~3.15]{PittsStReInI}.  

\begin{theorem}\label{Eideal}  Let $(\C,\D)$ be a regular inclusion with the unique pseudo-expectation property.  Let $E$ be the pseudo-expectation and set $\L(\C,\D):=\{x\in \C: E(x^*x)=0\}$. Then 
$\L(\C,\D)$ is an ideal of $\C$ such that 
$\L(\C,\D)\cap \D=(0).$  

Moreover,  if $\K\subseteq \C$ is an ideal such
that $\K\cap \D=(0)$, then $\K\subseteq \L(\C,\D)$.

\end{theorem}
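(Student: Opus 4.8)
The plan is to prove the listed assertions in order: that $\L(\C,\D)$ is a closed left ideal, that it is in fact two-sided, that it meets $\D$ trivially, and finally the maximality statement. The only genuinely delicate point is two-sidedness, since that is where the structure of the inclusion enters through Proposition~\ref{ENinv}; the other parts are formal. First I would record that $\L(\C,\D)$ is a closed left ideal, closedness being immediate from continuity of $E$. Since $E$ is completely positive, hence $2$-positive, the matrix $\left(\begin{smallmatrix} E(x^*x) & E(x^*b)\\ E(b^*x) & E(b^*b)\end{smallmatrix}\right)$ is positive in $M_2(I(\D))$ for all $x,b\in\C$. When $x\in\L(\C,\D)$ the $(1,1)$ entry vanishes, and a positive $2\times 2$ operator matrix with a zero diagonal entry necessarily has the corresponding off-diagonal entries zero; hence
\[E(x^*b)=0\dstext{and}E(b^*x)=0\qquad\text{for every }b\in\C.\]
Letting $b$ range over $\C$ and using $*$-linearity of $E$, this says precisely that $E(cx)=0$ for all $c\in\C$ whenever $x\in\L(\C,\D)$. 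In particular $\L(\C,\D)$ is closed under addition, and since $E(x^*y^*yx)\le\norm{y}^2E(x^*x)=0$ it is a left ideal.

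The crux is that $\L(\C,\D)$ is a right ideal. Because $\spn\N(\C,\D)$ is dense in $\C$ and $\L(\C,\D)$ is a closed linear subspace, it suffices to show $xv\in\L(\C,\D)$ for every $x\in\L(\C,\D)$ and $v\in\N(\C,\D)$, that is, that $E(v^*x^*xv)=0$. Here I would apply Proposition~\ref{ENinv} to the element $x^*x\in\C$, obtaining $E(v^*(x^*x)v)=\tilde\theta_v(E(vv^*x^*x))$. The inner term is $E\big((vv^*x^*)x\big)$, which has the form $E(cx)$ and so vanishes by the consequence of $2$-positivity established above; as $\tilde\theta_v$ is a partial isomorphism it carries $0$ to $0$, so $E(v^*x^*xv)=0$ and $xv\in\L(\C,\D)$. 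Approximating an arbitrary $y\in\C$ by linear combinations of normalizers and using linearity and closedness of $\L(\C,\D)$ then yields $xy\in\L(\C,\D)$, so $\L(\C,\D)$ is two-sided. This step is where I expect the real work to be, since it is the only place the hypotheses beyond ``$E$ is unital completely positive'' are used, relying on Proposition~\ref{ENinv} and on regularity to pass from normalizers to all of $\C$.

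For $\L(\C,\D)\cap\D=(0)$, the argument is short: if $d\in\D$ lies in $\L(\C,\D)$, then $0=E(d^*d)=\iota(d^*d)$, and since $\iota$ is a $*$-monomorphism this forces $d^*d=0$, hence $d=0$.

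Finally, for the maximality statement, let $\K\subseteq\C$ be an ideal with $\K\cap\D=(0)$ and let $q:\C\rightarrow\C/\K$ be the quotient map. Since $\K\cap\D=(0)$, the restriction $q|_\D$ is a $*$-monomorphism, so $q(d)\mapsto\iota(d)$ defines a unital $*$-homomorphism of $q(\D)\subseteq\C/\K$ into $I(\D)$; by injectivity of $I(\D)$ it extends to a unital completely positive map $\psi:\C/\K\rightarrow I(\D)$. Then $\psi\circ q$ is a unital completely positive map with $(\psi\circ q)|_\D=\iota$, i.e.\ a pseudo-expectation for $(\C,\D)$, so by the unique pseudo-expectation property $\psi\circ q=E$. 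Since $q$ annihilates $\K$, so does $E$; and because $\K$ is a $*$-closed two-sided ideal, $x\in\K$ implies $x^*x\in\K$, whence $E(x^*x)=0$ and $x\in\L(\C,\D)$. Thus $\K\subseteq\L(\C,\D)$, completing the proof.
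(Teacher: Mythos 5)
Your proof is correct and follows essentially the same route as the paper, which simply cites the argument of \cite[Theorem~3.15]{PittsStReInI} with Proposition~\ref{ENinv} substituted for the MASA-case covariance identity; your use of $2$-positivity to get $E(cx)=0$ for $x\in\L(\C,\D)$, of Proposition~\ref{ENinv} plus regularity for the right-ideal property, and of injectivity of $I(\D)$ for the maximality statement is exactly that argument written out in full.
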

\begin{proof}   The proof is the same as the proof of~\cite[Theorem~3.15]{PittsStReInI}, except that one uses Proposition~\ref{ENinv} instead of~\cite[Proposition~3.14]{PittsStReInI}.
\end{proof}

\begin{remark}{Remark}  Without the regularity assumption on
  $(\C,\D)$, Theorem~\ref{Eideal} is false,
  see~\cite[Remark~3.11]{PittsZarikianUnPsExC*In}.
\end{remark}

By~\cite[Proposition~3.6]{PittsZarikianUnPsExC*In}, the quotient of an
inclusion with the unique pseudo-expectation property by a
$\D$-disjoint ideal also has the unique pseudo-expectation property.
The maximality of $\L(\C,\D)$ allows us to conclude that the quotient
by $\L(\C,\D)$ has the faithful unique pseudo-expectation property.

\begin{corollary}\label{qupse}  Suppose $(\C,\D)$ is a regular
  inclusion with the unique pseudo-expectation property.   Let
  $\C_q:=\C/\L(\C,\D)$, let 
  $q:\C\rightarrow \C_q$ be the quotient map, and $\D_q=q(\D)$.  Then
  $q|_{\D}$ is an isomorphism of $\D$ onto $\D_q$ and 
  $(\C_q,\D_q)$ has the faithful unique pseudo-expectation property.
\end{corollary}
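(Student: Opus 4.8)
The plan is to establish the three assertions in sequence: that $q|_\D$ is a $*$-isomorphism onto $\D_q$, that $(\C_q,\D_q)$ has the unique pseudo-expectation property, and that the resulting pseudo-expectation is faithful. The first two are quick consequences of results already at hand. By Theorem~\ref{Eideal}, $\L(\C,\D)\cap\D=(0)$, so $q|_\D$ has trivial kernel; since an injective $*$-homomorphism of \cstaralg s is isometric, $q|_\D$ is a $*$-isomorphism of $\D$ onto $\D_q$. As $q$ is a surjective $*$-homomorphism mapping $\D$ onto $\D_q$, it carries $\N(\C,\D)$ into $\N(\C_q,\D_q)$ and has dense range, so $(\C_q,\D_q)$ is again a regular inclusion. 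Because $\L(\C,\D)$ is a $\D$-disjoint ideal (Theorem~\ref{Eideal}), the remark preceding this corollary, which rests on \cite[Proposition~3.6]{PittsZarikianUnPsExC*In}, gives that $(\C_q,\D_q)$ has the unique pseudo-expectation property.

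The substantive step is faithfulness, and the key will be a pullback identity for the two pseudo-expectations. Using the isomorphism $q|_\D$, I would take $(I(\D),\iota\circ(q|_\D)^{-1})$ as an injective envelope for $\D_q$; writing $\iota_q:=\iota\circ(q|_\D)^{-1}$, let $E_q:\C_q\rightarrow I(\D)$ be the unique pseudo-expectation for $(\C_q,\D_q)$ relative to $(I(\D),\iota_q)$. Then $E_q\circ q$ is a unital completely positive map $\C\rightarrow I(\D)$ whose restriction to $\D$ is $\iota_q\circ(q|_\D)=\iota$, so $E_q\circ q$ is a pseudo-expectation for $(\C,\D)$; uniqueness of $E$ forces
\[E_q\circ q=E.\]

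Faithfulness of $E_q$ then follows at once. If $y\in\C_q$ satisfies $E_q(y^*y)=0$, choose $x\in\C$ with $q(x)=y$; then $E(x^*x)=E_q(q(x^*x))=E_q(y^*y)=0$, so $x\in\L(\C,\D)$ and hence $y=q(x)=0$. Thus $E_q$ is faithful, and $(\C_q,\D_q)$ has the faithful unique pseudo-expectation property. The only real obstacle is the identification $E_q\circ q=E$, which is precisely what legitimizes transferring the vanishing of $E_q$ on positive elements back to membership in $\L(\C,\D)$; everything else is bookkeeping with the isomorphism $q|_\D$ and the cited hereditary property of the unique pseudo-expectation property under $\D$-disjoint quotients.
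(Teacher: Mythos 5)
Your proof is correct, but the faithfulness step takes a genuinely different route from the paper's. The paper never writes down the identity $E_q\circ q=E$; instead it considers the pulled-back ideal $J:=q^{-1}(\L(\C_q,\D_q))$, checks $J\cap\D=(0)$ using $\L(\C_q,\D_q)\cap\D_q=(0)$, and then invokes the \emph{maximality} clause of Theorem~\ref{Eideal} (every $\D$-disjoint ideal of $\C$ is contained in $\L(\C,\D)$) to conclude $J\subseteq\L(\C,\D)=\ker q$, whence $\L(\C_q,\D_q)=(0)$. You instead observe that $E_q\circ q$ restricts to $\iota$ on $\D$ (after transporting the injective envelope along $q|_\D$), so uniqueness of the pseudo-expectation for $(\C,\D)$ forces $E_q\circ q=E$, and faithfulness of $E_q$ drops out because $\ker q=\L(\C,\D)$ is by definition the left kernel of $E$. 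Your argument is arguably more economical: it uses only the uniqueness hypothesis and the definition of $\L(\C,\D)$, and does not need the maximality part of Theorem~\ref{Eideal} (though you still need that theorem to know $\L(\C,\D)$ is an ideal disjoint from $\D$, so that $\C_q$ and $\D_q$ make sense). The paper's route, by contrast, applies Theorem~\ref{Eideal} a second time to the quotient pair and so implicitly uses that $(\C_q,\D_q)$ is again a regular inclusion with the unique pseudo-expectation property --- the same facts you record --- but trades the explicit intertwining identity for the ideal-theoretic maximality statement. Both are complete; the identity $E_q\circ q=E$ that you isolate is also the cleaner statement to have on hand for later use.
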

\begin{proof}
  Since $\L(\C,\D)\cap \D=(0)$, $q|_\D$ is one-to-one, so $q|_\D$ is
  an isomorphism of $\D$ onto $\D_q$, and as noted above,
  $(\C_q,\D_q)$ has the unique
  pseudo-expectation property.
Consider
  the ideal $J:=q^{-1}(\L(\C_q,\D_q))$.  If $d\in \D\cap J$, then
  $q(d)\in \L(\C_q,\D_q)\cap \D_q=(0)$, hence $d=0$.  Therefore
  $J\subseteq \L(\C,\D)$, whence $\L(\C_q,\D_q)=(0)$.  It follows that
  $E_q$ is faithful, as desired.
  \end{proof}

Now we turn to showing that inclusions with the unique
pseudo-expectation property are covering inclusions.  We begin with a
special case. 
  \begin{lemma}\label{cenpse1}  Suppose $(\C,\D)$ is an inclusion with
    the unique pseudo-expectation property.  If $\D$ is contained in
    the center of $\C$, then the unique pseudo-expectation is
    multiplicative on $\C$ and the ideal $\L(\C,\D)$ contains the
    commutator ideal of $\C$.
  \end{lemma}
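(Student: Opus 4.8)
The plan is to reduce the whole statement to the single assertion that $E$ is multiplicative, since the claim about the commutator ideal follows at once from that. First I would record that since $\D$ lies in the center of $\C$ we have $\D^c=\C$, so \cite[Corollary~3.14]{PittsZarikianUnPsExC*In} (the unique pseudo-expectation property forces $\D^c$ to be abelian) yields that $\C$ itself is abelian. I can therefore write $\C=C(Y)$ and $\D=C(X)$ for compact Hausdorff spaces $Y$ and $X$, so that the inclusion $\D\subseteq\C$ is dual to a continuous surjection $r:Y\twoheadrightarrow X$ (surjectivity because $\D\hookrightarrow\C$ is a unital embedding of commutative \cstaralg s). Since $\D$ is abelian, Hamana's description of its injective envelope gives $I(\D)=C(\tilde X)$ for an extremally disconnected (Stonean) compact Hausdorff space $\tilde X$, with $\iota$ dual to a continuous map $p:\tilde X\to X$.

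The heart of the argument is to manufacture a multiplicative pseudo-expectation and then invoke uniqueness. Here I would use Gleason's theorem, that an extremally disconnected compact Hausdorff space is projective in the category of compact Hausdorff spaces. Applying projectivity of $\tilde X$ to the surjection $r:Y\to X$ and the map $p:\tilde X\to X$ produces a continuous lift $s:\tilde X\to Y$ with $r\circ s=p$. The induced unital $*$-homomorphism $E_s:\C\to I(\D)$, given by $E_s(f)=f\circ s$, then satisfies $E_s|_\D=\iota$ precisely because $r\circ s=p$, so $E_s$ is a pseudo-expectation. Since $(\C,\D)$ has the unique pseudo-expectation property, $E=E_s$; in particular $E$ is multiplicative.

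Finally, with $E$ a $*$-homomorphism one has $\L(\C,\D)=\{x\in\C:E(x)^*E(x)=0\}=\ker E$, which is a closed two-sided ideal, and $\C/\L(\C,\D)\cong E(\C)\subseteq I(\D)$ is abelian. Hence $\L(\C,\D)$ contains the commutator ideal of $\C$ (which is in fact trivial here, as $\C$ is already abelian, but the stated inclusion holds regardless).

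The only nontrivial step — and therefore the main obstacle — is the construction of the multiplicative pseudo-expectation, i.e.\ the continuous section $s$; this is exactly where the projectivity of the Stonean spectrum of the injective envelope enters, and once the section exists uniqueness does the rest. The remaining points I would need to check are routine: that $r$ is genuinely surjective, and that $\tilde X$ is extremally disconnected (so that Gleason projectivity applies), both of which are standard facts about commutative \cstaralg s and their injective envelopes.
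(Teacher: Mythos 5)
There is a genuine error at the very first step. You invoke \cite[Corollary~3.14]{PittsZarikianUnPsExC*In} in the form ``the unique pseudo-expectation property forces $\D^c$ to be abelian,'' but that corollary requires the unique pseudo-expectation to be \emph{faithful}; under the bare unique pseudo-expectation property the conclusion is false. Example~\ref{narelco} of this paper is a direct counterexample to your claim: there $\D$ is contained in the center of $\C$, the inclusion $(\C,\D)$ has the unique pseudo-expectation property, and yet $\D^c=\C\supseteq M_2(J)$ is non-abelian. So $\C$ need not be abelian under the hypotheses of the lemma, your reduction to $\C=C(Y)$ collapses, and everything downstream (the surjection $r:Y\to X$, the Gleason lift, the remark that the commutator ideal is ``in fact trivial here'') rests on a false premise. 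Note also that if $\C$ were always abelian the assertion about the commutator ideal would be vacuous; the whole point of the lemma is that this ideal can be nonzero but is swallowed by $\L(\C,\D)$.

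The repair is to first pass to the quotient: set $\C_q:=\C/\L(\C,\D)$, $\D_q:=q(\D)$. By Corollary~\ref{qupse}, $q|_\D$ is injective and $(\C_q,\D_q)$ has the \emph{faithful} unique pseudo-expectation property, and $\D_q$ is still central in $\C_q$. Now Corollary~3.14 legitimately applies and gives that $\C_q$ is abelian, which is exactly the statement that the commutator ideal of $\C$ is contained in $\L(\C,\D)$. Multiplicativity of the pseudo-expectation $E_q$ for $(\C_q,\D_q)$ then comes from \cite[Corollary~3.21]{PittsZarikianUnPsExC*In} (this is where an argument of the flavor you propose --- producing a multiplicative pseudo-expectation for an essential abelian inclusion and invoking uniqueness --- genuinely lives), and $E=E_q\circ q$ is multiplicative. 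Your Gleason-projectivity construction is a reasonable way to see the abelian case, but it must be applied to $(\C_q,\D_q)$, not to $(\C,\D)$; moreover, for $E_s:=(\cdot)\circ s$ to be a pseudo-expectation one also needs $s(\tilde X)$ to meet every fiber appropriately only through the condition $r\circ s=p$, which is fine, but the essential missing idea in your write-up is the quotient by $\L(\C,\D)$ that restores faithfulness.
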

\begin{proof}
Put $J:=\L(\C,\D)$, 
  let $q: \C\rightarrow \C/J$ be the quotient map, $\C_q:=\C/J$,
  and $\D_q:=q(\D)$.  Corollary~\ref{qupse} shows that $\D_q\simeq \D$
  and $(\C_q,\D_q)$ has the faithful unique pseudo-expectation
  property.  Let $E_q$ be the unique pseudo-expectation for $(\C_q,\D_q)$.

Note that $\D_q$ is contained in the center of $\C_q$, so the relative commutant of $\D_q$ in $\C_q$ is all of $\C_q$.
By~\cite[Corollary~3.14]{PittsZarikianUnPsExC*In}, $\C_q$ is abelian,
so $\L(\C,\D)$ contains the commutator ideal of $\C$.
By~\cite[Corollary~3.21]{PittsZarikianUnPsExC*In}, $E_q$ is
multiplicative.  But $E_q\circ q$ is multiplicative and is the
pseudo-expectation for $(\C,\D)$.  
\end{proof}

We now are equipped to show that $\fS_s(\C,\D)$ is the minimal compatible cover for a regular inclusion with the unique pseudo-expectation property.

\begin{theorem}\label{upse=>cov} Suppose $(\C,\D)$ is a regular
  inclusion with the unique pseudo-expectation property.  Then
  $(\C,\D)$ is a covering inclusion and $\fS_s(\C,\D)$ is a compatible
  cover for $\hat\D$.  Furthermore, if $F$ is any closed subset of
  $\Mod(\C,\D)$ which covers $\hat\D$, then $\fS_s(\C,\D)\subseteq F$.
  \end{theorem}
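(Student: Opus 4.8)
The plan is to realize $\fS_s(\C,\D)$ as the image of $\widehat{I(\D)}$ under the map $\dual E\colon\rho\mapsto\rho\circ E$, where $E\colon\C\to I(\D)$ is the (unique) pseudo-expectation, and then to verify the three defining properties of a compatible cover together with the minimality clause. First I would record the two easy properties. Since $\dual E$ is weak-$*$ continuous on the compact space $\widehat{I(\D)}$, its image $\fS_s(\C,\D)$ is compact, hence weak-$*$ closed. For covering, note that for $\rho\in\widehat{I(\D)}$ the restriction is $(\rho\circ E)|_\D=\rho\circ\iota=\dual\iota(\rho)$, and $\dual\iota\colon\widehat{I(\D)}\to\hat\D$ is onto because every character of $\D$ extends to a character of $I(\D)$; hence $r(\fS_s(\C,\D))=\dual\iota(\widehat{I(\D)})=\hat\D$. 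Once the containment $\fS_s(\C,\D)\subseteq\fS(\C,\D)$ is in hand, these two facts already show that $\fS(\C,\D)$ covers $\hat\D$, i.e.\ that $(\C,\D)$ is a covering inclusion.

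Next I would prove $\N(\C,\D)$-invariance by an explicit computation resting on Proposition~\ref{ENinv}. Fix $v\in\N(\C,\D)$ and $\tau=\rho_0\circ E$ with $\tau(v^*v)=\rho_0(\iota(v^*v))>0$. Using $E(v^*xv)=\tilde\theta_v(E(vv^*x))$ together with the module identity $E(vv^*x)=\iota(vv^*)E(x)$ (valid since $vv^*\in\D$ lies in the multiplicative domain of $E$), one gets $E(v^*xv)=\tilde\theta_v(\iota(vv^*)E(x))$, an element of the regular ideal of $I(\D)$ generated by $\iota(vv^*)$. Let $\rho_1\in\widehat{I(\D)}$ be the unique character extending the nonzero character $\rho_0\circ\tilde\theta_v$ on that ideal. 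Since $\tilde\theta_v(\iota(vv^*))=\iota(\theta_v(vv^*))=\iota(v^*v)$, one computes
\[
\tilde\beta_v(\tau)(x)=\frac{\rho_0(\tilde\theta_v(\iota(vv^*)E(x)))}{\rho_0(\iota(v^*v))}
=\frac{\rho_1(\iota(vv^*))\,\rho_1(E(x))}{\rho_0(\iota(v^*v))}=\rho_1(E(x)),
\]
so $\tilde\beta_v(\tau)=\rho_1\circ E\in\fS_s(\C,\D)$, which gives invariance.

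The main obstacle is the containment $\fS_s(\C,\D)\subseteq\fS(\C,\D)$ in this non-MASA generality (in the regular MASA case this is \cite[Proposition~4.6]{PittsStReInI}). For $\tau=\rho_0\circ E$ and $v\in\N(\C,\D)$ I must show $|\tau(v)|^2\in\{0,\tau(v^*v)\}$. When $\tau(v^*v)=0$ this is immediate from Cauchy-Schwarz. The substantive case requires the structural identity $E(v)^*E(v)=R_0\,\iota(v^*v)$, where $R_0\in I(\D)$ is the support projection of the fixed-point ideal $K_0$ of $v$; this is the generalization of \cite[Theorem~3.5]{PittsStReInI} to inclusions with the unique pseudo-expectation property, and I would establish it by adapting that proof, invoking Proposition~\ref{ENinv} and the fixed-point descriptions in Lemmas~\ref{fixedideal} and~\ref{fixptchar} in place of their MASA analogues (showing $E(v)=E(v)R_0$ via $E(vd)=0$ for $d$ supported off $(\fix\beta_v)^\circ$, and that $E(v)$ is ``unitary'' on the range of $R_0$). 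Granting the identity, evaluation at the character $\rho_0$ gives $|\tau(v)|^2=\rho_0(E(v)^*E(v))=\rho_0(R_0)\,\rho_0(\iota(v^*v))$, and since $R_0$ is a projection in the abelian algebra $I(\D)$ we have $\rho_0(R_0)\in\{0,1\}$, yielding the dichotomy.

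Finally, with $\fS_s(\C,\D)$ shown to be a weak-$*$ closed, $\N(\C,\D)$-invariant subset of $\fS(\C,\D)$ that covers $\hat\D$, it is a compatible cover for $\hat\D$. The last assertion---that any closed $F\subseteq\Mod(\C,\D)$ covering $\hat\D$ satisfies $\fS_s(\C,\D)\subseteq F$---is exactly Theorem~\ref{2fP}(b) applied to such an $F$, whose hypothesis $\hat\D=\{\rho|_\D:\rho\in F\}$ is precisely the covering condition; so no further work is needed there.
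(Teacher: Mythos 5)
Most of your outline tracks the paper's proof closely: the closedness and covering of $\fS_s(\C,\D)$ are handled exactly as you say, your invariance computation via Proposition~\ref{ENinv} and the character $\rho_1$ extending $\rho_0\circ\tilde\theta_v$ is essentially the paper's argument verbatim (the paper writes $\tau'(h)=\rho(\tilde\theta_v(Ph))$ with $P$ the support projection of $\overline{vv^*\D}$), and the minimality clause is indeed just Theorem~\ref{2fP}(b). You also correctly identify the containment $\fS_s(\C,\D)\subseteq\fS(\C,\D)$ as the one substantive issue in the non-MASA setting.

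That is precisely where your proposal has a gap. You propose to establish the global identity $E(v)^*E(v)=R_0\,\iota(v^*v)$ by ``adapting'' the MASA-case proof of \cite[Theorem~3.5]{PittsStReInI}, using only Proposition~\ref{ENinv} and Lemmas~\ref{fixedideal} and~\ref{fixptchar}. But the adaptation is not routine, and the tools you list do not suffice for the ``unitarity of $E(v)$ on the range of $R_0$'' half of your identity. The reason: for $d$ in the fixed-point ideal $K_0$ (intersected with $\overline{v^*v\D}$), Lemma~\ref{fixedideal} gives $vd=dv\in\D^c$, not $vd\in\D$. In the MASA case $\D^c=\D$, so $E(vd)=\iota(vd)$ and the computation $|E(vd)|^2=\iota(d^*v^*vd)$ is trivial; in the general case one needs to know that the unique pseudo-expectation is \emph{multiplicative on $\D^c$} in order to pass from $E(vd)^*E(vd)$ to $E(d^*v^*vd)=\iota(d^*v^*vd)$. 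This is a genuine extra ingredient: the paper obtains it from Lemma~\ref{cenpse1} (the unique pseudo-expectation of an inclusion with $\D$ central is multiplicative) applied to $(\D^c,\D)$ via Proposition~\ref{bipseudo}, and it is not a consequence of Proposition~\ref{ENinv} alone. Note also that the paper sidesteps your global identity entirely and proves only the pointwise dichotomy: it first shows (via a Stonean/regular-open-set argument using Lemma~\ref{isolattice} and \cite[Lemma~2.5]{PittsStReInI}) that $\rho_0(E(v))\neq 0$ forces $\rho_0\circ\iota\in(\fix\beta_v)^\circ$, then chooses $d$ supported in $(\fix\beta_v)^\circ$ with $\rho_0(\iota(d))=1$, gets $vd=dv\in\D^c$, and uses the multiplicativity of $E$ on $\D^c$ to compute $|\tau(v)|^2=|\tau(vd)|^2=\tau(v^*v)$. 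If you add the multiplicativity-on-$\D^c$ lemma to your toolkit, your route (and indeed your stronger identity) goes through; without it, the step fails.
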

  \begin{proof}
We claim that whenever $v\in\N(\C,\D)$ and $\rho_0\in
\widehat{I(\D)}$ satisfies $\rho_0(E(v))\neq 0$, then $\rho_0\circ
\iota\in (\fix\beta_v)^\circ$.
Denote by $r$ the ``restriction'' map, $\widehat{I(\D)} \ni
\rho\mapsto \rho\circ\iota\in \hat\D$ and 
  let 
 \[X:= \{\rho'\in \widehat{I(\D)} : |\rho'(E(v))|>|\rho_0(E(v))|/2\}.\] 
As $\widehat{I(\D)}$ is Stonean, $\overline X$ is clopen, so  $\overline X\in \ropen(\widehat{I(\D)})$.   Let $G:=(r(\overline X))^\circ$.  Lemma~\ref{isolattice} shows that $r^{-1}(G)=\overline X$.   In particular, $\rho_0\circ \iota \in G$.

 If $\sigma\in G$, then $\sigma=\rho'\circ\iota$ for some $\rho'\in\overline X$, and, by definition of $X$, $|\rho'(E(v))|\neq 0$.  By ~\cite[Lemma~2.5]{PittsStReInI}, $(\rho'\circ E)|_\D\in \fix\beta_v$.  But $(\rho'\circ E)_\D=\rho'\circ\iota$.  Thus,
 \[\rho_0\circ\iota\in G \subseteq (\fix\beta_v)^\circ.\] 
Therefore, the claim holds. 
 
 Let $\tau\in \fS_s(\C,\D)$ and suppose $\tau(v)\neq 0$ for some $v\in
 \N(\C,\D)$.   Write $\tau=\rho\circ E$ for some $\rho\in
 \widehat{I(\D)}$.   Set $\sigma=\tau|_\D$; note that
 $\sigma\in\hat\D$ and $\sigma(v^*v)>0$.  By the claim,  $\sigma\in (\fix\beta_v)^\circ$.  By Lemmas~\ref{fixptchar} and~\ref{fixedideal}, there exists $d\in \D$ such that $\sigma(d) =1$ and $\supp d\subseteq (\fix\beta_v)^\circ$.   Then $dv=vd\in \D^c$.    By Lemma~\ref{cenpse1}, $E$ is multiplicative on $\D^c$, so
\[ |\tau(v)|^2=|\tau(vd)|^2=\rho(E(d^*v^*))\rho(E(vd))=\rho(E(d^*v^*vd))=\tau(v^*v).\]
It follows that $\tau$ is a compatible state.

Let us show the invariance of $\fS_s(\C,\D)$.  Choose $\tau\in
\fS_s(\C,\D)$ and write $\tau=\rho\circ
E$ for some $\rho\in \widehat{I(\D)}$.  Suppose $v\in \N(\C,\D)$ is
such that $\rho(\iota(v^*v))\neq 0$ and let $P$ and $Q$ be the support
projections in $I(\D)$ for $\overline{vv^*\D}$ and $\overline{v^*v\D}$
respectively.    Then $\tilde\theta_v$ is a partial automorphism with
domain $PI(\D)$ and range $QI(\D)$.   Define $\tau'\in
\widehat{I(\D)}$ by \[\tau'(h)=\rho(\tilde\theta_v(Ph)), \qquad h\in
  I(\D).\]
  For $x\in \C$,
Proposition~\ref{ENinv} gives,
\begin{align*}
  \rho(E(v^*xv))&= \rho(\tilde\theta_v(E(vv^*x)))=
                  \rho(\tilde\theta_v(\iota(vv^*) PE(x)))\\
  &=\rho(\iota(v^*v))\rho(\tilde\theta_v(PE(x)))=
    \rho(\iota(v^*v))(\tau'\circ E)(x).
\end{align*}
Thus $\fS_s(\C,\D)$ is invariant.

If $\sigma\in\hat\D$, choose any $\rho\in \widehat{I(\D)}$ such that $\rho\circ\iota=\sigma$.  Then $\sigma=(\rho\circ E)|_\D$, so $\fS_s(\C,\D)$ covers $\hat\D$.  Thus, $\fS_s(\C,\D)$ is a compatible cover for $\hat\D$ and $(\C,\D)$ is a covering inclusion.

Finally, if $F\subset \Mod(\C,\D)$ is closed and covers $\hat\D$, then $\fS_s(\C,\D)\subseteq F $ by Theorem~\ref{2fP}(b).

\end{proof}

\begin{example}\label{narelco}
  As noted previously, when $(\C,\D)$ has the faithful unique
  pseudo-expectation property, $\D^c$ is abelian, but when $(\C,\D)$
  merely has the unique pseudo-expectation property, $\D^c$ need not
  be abelian.  We now outline a method for constructing examples of
  this behavior.  This also provides a negative answer
  to~\cite[Question~5]{PittsZarikianUnPsExC*In}.

Suppose $(\A, \B)$ is an inclusion with the unique pseudo-expectation
property and suppose $\A$ is abelian.   Let $J:=\L(\A,\B)$ and assume
$J\neq (0)$.   (See~\cite[Corollary~3.21]{PittsZarikianUnPsExC*In} for
a characterization of such inclusions.)   Define
\[\C:=\left\{\begin{bmatrix}b+j_{11}&j_{12}\\ j_{21}&
      b+j_{22}\end{bmatrix}: b\in \B, j_{mn}\in
    J\right\}\dstext{and} \D:=\left\{\begin{bmatrix}
    b&0\\0&b\end{bmatrix} : b\in \B\right\}\simeq \B.\]
Then $\D$ is contained in the center of $\C$, so $(\C,\D)$ is a
regular inclusion and  $\D^c=\C\supseteq M_2(J)$.  Thus $\D^c$ is not
abelian.   Let $E$ be the pseudo-expectation for $(\A,\B)$ and suppose
$\Delta$ is a pseudo-expectation for $(\C,\D)$.   Then $a\in \A
\mapsto \Delta(a\oplus a)$ is a pseudo-expectation for $(\A,\B)$.  So
for $j\in J=\ker E$, the
fact that $(\A,\B)$ has the unique-pseudo expectation property gives
$\Delta(j\oplus j)=0$.  Also, for $j_1, j_2\in J$, applying $\Delta$
to each operator in  the inequality,
\[ -(|j_1|+|j_2|)\oplus -(|j_1|+|j_2|) \leq
  j_1\oplus j_2\leq (|j_1|+|j_2|)\oplus
  (|j_1|+|j_2|)\] gives $\Delta(j_1\oplus j_2)=0$.  Note that
$\Delta\left(\begin{bmatrix} 0 &j_1\\
  j_2&0\end{bmatrix}\right)=0$ because
\[\Delta\left(\begin{bmatrix} 0& j_1\\ j_2&0\end{bmatrix}\right)^*
  \Delta\left(\begin{bmatrix} 0& j_1\\ j_2&0\end{bmatrix}\right) \leq \Delta\left( \begin{bmatrix} 0& j_1\\ j_2&0\end{bmatrix}^*
  \begin{bmatrix} 0& j_1\\ j_2&0\end{bmatrix}\right)=\Delta( |j_2|^2\oplus
  |j_1|^2)=0.\]   Thus $M_2(J)\subseteq \ker\Delta$. If $x\in
\C$, write $x=(b\oplus b) + y$ where $b\in \B$ and $y\in M_2(J)$.
Then  $\Delta(x)=E(b)$, and it follows that $(\C,\D)$ has the unique
pseudo-expectation property. 

\end{example}

Given an inclusion $(\C,\D)$ and an injective envelope $(I(\D),\iota)$
for $\D$, we use $\PsExp(\C,\D)$ for the collection
of all pseudo-expectations for $(\C,\D)$ relative to $(I(\D),\iota)$.

A consequence of the following result is that $(\C,\D)$ has the unique
pseudo-expectation property if and only if $(\D^c,\D)$ does. 
\begin{proposition}\label{bipseudo}  Let $(\C,\D)$ be a regular
  inclusion.  The map $\Phi: \PsExp(\C,\D)\rightarrow \PsExp(\D^c,\D)$
  given by $\Delta\mapsto \Delta|_{\D^c}$ is a bijection.
\end{proposition}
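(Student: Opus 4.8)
The plan is to establish the three requirements---well-definedness, surjectivity, and injectivity---with surjectivity being immediate from injectivity of $I(\D)$ and the injectivity of $\Phi$ carrying all the content. First I would check $\Phi$ is well-defined: if $\Delta\in\PsExp(\C,\D)$, then $\Delta|_{\D^c}$ is unital completely positive and restricts to $\iota$ on $\D$, so $\Delta|_{\D^c}\in\PsExp(\D^c,\D)$, both sets being taken relative to the same envelope $(I(\D),\iota)$ of $\D$. For surjectivity, given $\phi\in\PsExp(\D^c,\D)$, the injectivity of $I(\D)$ extends the ucp map $\phi\colon\D^c\to I(\D)$ to a ucp map $\Delta\colon\C\to I(\D)$; since $\Delta|_\D=\phi|_\D=\iota$, we have $\Delta\in\PsExp(\C,\D)$ and $\Phi(\Delta)=\phi$.

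The heart of the matter is injectivity: I must show $\Delta$ is determined by $\Delta|_{\D^c}$. Because $\Delta|_\D=\iota$ is a $*$-homomorphism, $\D$ lies in the multiplicative domain of $\Delta$, so $\Delta(vd)=\Delta(v)\iota(d)$ for $v\in\C$, $d\in\D$; by regularity it suffices to determine $\Delta(v)$ for each $v\in\N(\C,\D)$. Suppose $\Delta_1,\Delta_2$ agree on $\D^c$, fix $v\in\N(\C,\D)$, and set $\delta:=\Delta_1(v)-\Delta_2(v)\in I(\D)$, so that $\delta\,\iota(d)=\Delta_1(vd)-\Delta_2(vd)$ for every $d\in\D$. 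My claim is that $H:=\{d\in\D:\delta\,\iota(d)=0\}$ contains an essential ideal of $\D$. I would verify this on three kinds of $d$. If $d\in(v^*v)^\perp$ then $vd=0$, so $d\in H$. If $d$ lies in the fixed-point ideal $K_0$ of $v$, then Lemma~\ref{fixedideal} gives $vd=dv\in\D^c$, whence $\Delta_1(vd)=\Delta_2(vd)$ and $d\in H$. Finally, if $d$ is supported in an open set $W$ on which $\beta_v$ is fixed-point free with $\beta_v(W)\cap W=\varnothing$, then the domain and range support projections of the normalizer $vd$ in $I(\D)$ are orthogonal; since $I(\D)$ is \emph{abelian}, the relation $\Delta_k(vd)=P_{vd}\,\Delta_k(vd)\,Q_{vd}=P_{vd}Q_{vd}\,\Delta_k(vd)=0$ forces $d\in H$.

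To see that these generate an essential ideal I would use the dynamical decomposition of $\beta_v$ on $\dom\beta_v=\supp(\overline{v^*v\D})$: by Lemma~\ref{fixptchar} the fixed part is $(\fix\beta_v)^\circ=\supp(K_0\cap\overline{v^*v\D})$, and by Frol\'ik's theorem (the three wandering ideals of a Frol\'ik family for $v$, \cite[Definition~2.13]{PittsStReInI}) the complementary moving part is covered by finitely many open sets $W_1,W_2,W_3$ with $\beta_v(W_j)\cap W_j=\varnothing$. Together with $\supp((v^*v)^\perp)$, the sets $(\fix\beta_v)^\circ,W_1,W_2,W_3$ cover a dense open subset of $\hat\D$, so $H$ contains an essential ideal $K$. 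Since $(I(\D),\iota)$ is an essential extension of $\D$, the ideal of $I(\D)$ generated by $\iota(K)$ is essential, hence has dense open support in the Stonean space $\widehat{I(\D)}$; as $\widehat{\delta}$ vanishes there and is continuous, $\delta=0$. Thus $\Delta_1(v)=\Delta_2(v)$ for all normalizers, so $\Delta_1=\Delta_2$. The main obstacle is the middle step: organizing the Frol\'ik/wandering decomposition so that the three families of $d$ above genuinely exhaust an essential ideal, and checking that the orthogonality of supports in $\hat\D$ lifts (via the $*$-monomorphism $\iota$ and Lemma~\ref{isolattice}) to orthogonality of the support projections $P_{vd},Q_{vd}$ in $I(\D)$; the approximate-unit limits and the passage from dense support to $\delta=0$ are then routine.
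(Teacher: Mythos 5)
Your proposal is correct and follows essentially the same route as the paper: well-definedness is immediate, surjectivity comes from the injectivity of $I(\D)$, and injectivity of $\Phi$ is reduced to showing that $\{d\in\D:(\Delta_1(v)-\Delta_2(v))\iota(d)=0\}$ is essential via the Frol\'ik decomposition (fixed-point ideal handled by Lemma~\ref{fixedideal}, wandering ideals by orthogonality of support projections in the abelian $I(\D)$, plus the annihilator of $v$). The paper simply defers the wandering-ideal step and the final ``essential ideal implies $\delta=0$'' step to \cite[Proposition~3.4 and Lemma~3.3]{PittsStReInI}, which is exactly what you fill in.
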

\begin{proof} Clearly when $\Delta\in \PsExp(\C,\D)$,
  $\Delta|_{\D^c}\in \PsExp(\D^c,\D)$.  The fact that $\Phi$ is onto
  follows from injectivity.  To show $\Psi$ is one-to-one, suppose
  $\Delta_1, \Delta_2\in \PsExp(\C,\D)$ and
  $\Delta_1|_{\D^c}=\Delta_2|_{\D^c}$.  We now argue as in the proof of~\cite[Proposition~3.4]{PittsStReInI}.  Here
  is an outline.  Let $v\in \N(\C,\D)$ and let
\[J:=\{d\in \D: (\Delta_1(v)-\Delta_2(v))\iota(d)=0\}.\]  To obtain
$\Delta_1=\Delta_2$, it suffices to show $J$ is an essential ideal of $\D$.
  Let $\{K_i\}_{i=0}^4$ be a left Fr\'olik family of
  ideals for $v$.  For $d\in K_0$, $vd=dv\in \D^c$, so
\[\Delta_1(v)\iota(d)=\Delta_1(vd)=\Delta_2(vd)=\Delta_2(v)\iota(d).\]
Thus, $K_0\subseteq J$.  Establishing $K_i\subseteq J$ for $1\leq i\leq
4$ is done exactly as in the proof
of~\cite[Proposition~3.4]{PittsStReInI}.  Thus $\cup_{i=0}^4
K_i\subseteq J$.   As $\cup_{i=0}^4 K_i$
generates an essential ideal of $\D$,  $J$ is essential.
By~\cite[Lemma~3.3]{PittsStReInI}, $\Delta_1(v)=\Delta_2(v)$.   As
$\spn\N(\C,\D)$ is dense in $\C$, $\Delta_1=\Delta_2$.  Thus $\Phi$
is one-to-one.   
\end{proof}

The following result extends~\cite[Theorem~3.5]{PittsStReInI} to the
setting of regular inclusions $(\C,\D)$ for which $\D^c$ is abelian.
\begin{corollary} \label{upsedc}  Let $(\C,\D)$ be a regular inclusion such that
  $\D^c$ is abelian.  Let $X:=\hat\D$, $Y:=\widehat{\D^c}$, and let
  $r:Y\twoheadrightarrow X$ be the restriction mapping.  The following
  statements are equivalent.
  \begin{enumerate}
  \item $(\C,\D)$ has the unique pseudo-expectation property.
    \item There exists a unique minimal closed set $F\subseteq Y$ such
      that $r(F)=X$.
      \item There exists a unique maximal $\D$-disjoint ideal of $\D^c$.
      \end{enumerate}
    \end{corollary}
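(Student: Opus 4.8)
The plan is to strip away all the non-commutative content by passing to the relative commutant, and then to invoke the known abelian characterization from \cite{PittsZarikianUnPsExC*In}. The bridge is already available: Proposition~\ref{bipseudo} shows that $\Delta\mapsto \Delta|_{\D^c}$ is a bijection of $\PsExp(\C,\D)$ onto $\PsExp(\D^c,\D)$, so $(\C,\D)$ has the unique pseudo-expectation property if and only if $(\D^c,\D)$ does. Since $\D^c$ is abelian by hypothesis, $(\D^c,\D)$ is an inclusion of abelian \cstaralg s whose Gelfand picture is exactly the continuous surjection $r\colon Y=\widehat{\D^c}\twoheadrightarrow X=\hat\D$. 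Thus I would first record this reduction, which reduces statement (a) for $(\C,\D)$ to statement (a) for the abelian inclusion $(\D^c,\D)$.

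Next I would apply \cite[Corollary~3.21]{PittsZarikianUnPsExC*In} to $(\D^c,\D)$. As used elsewhere in this paper (for instance in the proof of Proposition~\ref{CE-->CC}, applied to the abelian inclusion $(\D_1,\alpha(\D))$), that corollary characterizes the unique pseudo-expectation property for an abelian inclusion in precisely the two forms wanted here: it asserts the existence of a unique maximal $\D$-disjoint ideal of $\D^c$ (indeed the kernel of the pseudo-expectation) and, dually, the existence of a unique minimal closed subset of $Y$ whose image under $r$ is $X$. Feeding the reduction of the previous paragraph into this corollary yields (a)$\Leftrightarrow$(b) and (a)$\Leftrightarrow$(c) simultaneously.

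To see that (b) and (c) are literally the same condition (and so match the corollary's two outputs), I would spell out the elementary Gelfand dictionary. A closed ideal $J\subseteq\D^c$ corresponds to its open support $\supp(J)\subseteq Y$, equivalently to the closed set $F_J:=Y\setminus\supp(J)$; this correspondence is inclusion-reversing. Moreover $J$ is $\D$-disjoint, i.e. $J\cap\D=(0)$, exactly when $r(F_J)=X$: if $g\in C(X)$ vanishes on the closed set $r(F_J)$ then $g\circ r$ is supported in $\supp(J)$ and lies in $J\cap\D$, forcing $g=0$ by surjectivity of $r$, while conversely $r(F_J)$ is closed by compactness of $F_J$. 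Hence maximal $\D$-disjoint ideals correspond to minimal closed $F$ with $r(F)=X$, identifying (b) with (c).

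The only real work is bookkeeping: confirming that the hypotheses of \cite[Corollary~3.21]{PittsZarikianUnPsExC*In} are met by $(\D^c,\D)$ (the abelian relative commutant hypothesis is exactly $\D^c$ abelian, and that corollary does not require regularity, as its use on $(\D_1,\alpha(\D))$ shows) and that its conclusion is phrased in terms of $\D$-disjoint ideals and minimal closed preimages rather than, say, the support projection in $I(\D)$. Once the statements are aligned there is no analytic obstacle; everything beyond the cited corollary and Proposition~\ref{bipseudo} is formal.
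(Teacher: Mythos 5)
Your proposal is correct and is essentially the paper's own argument: the stated proof is precisely ``combine Proposition~\ref{bipseudo} with \cite[Corollary~3.21]{PittsZarikianUnPsExC*In},'' i.e.\ reduce to the abelian inclusion $(\D^c,\D)$ via the bijection of pseudo-expectations and then quote the abelian characterization. Your additional paragraph translating between $\D$-disjoint ideals of $\D^c$ and closed subsets $F\subseteq Y$ with $r(F)=X$ is just the bookkeeping the paper leaves implicit.
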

\begin{proof}  Combine Proposition~\ref{bipseudo}
  with~\cite[Corollary~3.21]{PittsZarikianUnPsExC*In}.
\end{proof}

We conclude with the following conjecture regarding  characterizations
of the unique pseudo-expectation property.  
\begin{conjecture}\label{upseconj}  Let $(\C,\D)$ be a regular
  inclusion and let $\fM$ be the set of all multiplicative linear
  functionals on $\D^c$.   The following statements are equivalent.
  \begin{enumerate}
    \item $(\C,\D)$ has the unique pseudo-expectation
    property.
    \item $(\C,\D)$ is a covering inclusion and there exists a
      $\D$-disjoint ideal $J\subseteq \C$ with the following property:
      if $I\subseteq \C$ is a $\D$-disjoint ideal of $\C$, then
      $I\subseteq J$.
      \item $(\C,\D)$ is a covering inclusion and there exists a
        compatible cover $F$ for $\hat\D$ with the following property:
        if 
        $C\subseteq \Mod(\C,\D)$ is closed and covers $\hat\D$, then 
        $F\subseteq C$.
      \item $(\D^c,\D)$ has the unique pseudo-expectation property.
        \item Every multiplicative linear functional on $\D$ extends
          to an element of $\fM$ and there exists a 
          $\D$-disjoint ideal $J\subseteq \D^c$ with the following property:
      if $I\subseteq \D^c$ is a $\D$-disjoint ideal of $\D^c$, then
      $I\subseteq J$.
      \item Every multiplicative linear functional on $\D$ extends
          to an element of $\fM$ and there exists a closed subset
          $F\subseteq \fM$ with the following properties:  $F$ covers
          $\hat\D$ and if $C\subseteq \Mod(\D^c,\D)$ is closed and covers $\hat
          \D$, then $F\subseteq C$.
      \end{enumerate}
    \end{conjecture}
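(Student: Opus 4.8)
The plan is to make (a)$\Leftrightarrow$(d) the hub and obtain the remaining conditions as elaborations of it. That equivalence is immediate from Proposition~\ref{bipseudo}: the restriction map $\Phi:\PsExp(\C,\D)\to\PsExp(\D^c,\D)$ is a bijection, so one side is a singleton exactly when the other is. The value of routing through $\D^c$ is that $\D$ lies in the \emph{center} of $\D^c$ (every element of $\D^c$ commutes with $\D$, and $\D\subseteq\D^c$), so Lemma~\ref{cenpse1} applies to $(\D^c,\D)$: whenever (d) holds, the unique pseudo-expectation $E:\D^c\to I(\D)$ is multiplicative, i.e.\ a $*$-homomorphism extending $\iota$, and $\L(\D^c,\D)=\ker E$ contains the commutator ideal of $\D^c$. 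This multiplicativity is what lets the (possibly non-abelian) algebra $\D^c$ interact with its character space $\fM$.

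Next I would record the forward implications from the hub. Applying Theorem~\ref{upse=>cov} to $(\C,\D)$ gives that it is a covering inclusion with $\fS_s(\C,\D)$ a compatible cover contained in every closed cover, which is (c); Theorem~\ref{Eideal} produces the maximal $\D$-disjoint ideal $\L(\C,\D)$ of (b). For (e) and (f) I would work with the $*$-homomorphism $E$ directly rather than invoke Theorem~\ref{upse=>cov} for $(\D^c,\D)$, since $(\D^c,\D)$ need not be regular. Because $E$ is multiplicative, each $\rho\circ E$ with $\rho\in\widehat{I(\D)}$ is a character on $\D^c$, hence an element of $\fM$, and $(\rho\circ E)|_\D=\rho\circ\iota$; as every $\sigma\in\hat\D$ arises as $\rho\circ\iota$ for some $\rho$, this already yields the clause that every multiplicative functional on $\D$ extends into $\fM$. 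The set $\{\rho\circ E:\rho\in\widehat{I(\D)}\}\subseteq\fM$ is then the candidate minimal cover of (f) and $\ker E$ the candidate maximal $\D$-disjoint ideal of (e), their extremal properties following from the uniqueness of $E$ together with the analysis underlying Corollary~\ref{upsedc}.

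The genuine content, and the reason the statement is a conjecture, lies in the converses (b)$\Rightarrow$(a), (c)$\Rightarrow$(a), (e)$\Rightarrow$(d), and (f)$\Rightarrow$(d). Here I would argue by contraposition: from two distinct pseudo-expectations $\Delta_1\neq\Delta_2$ one manufactures two families $\{\rho\circ\Delta_i\}$ and two $\D$-disjoint kernels, and one wants to show these force either incomparable minimal closed covers or incomparable maximal $\D$-disjoint ideals, contradicting the uniqueness clauses. The obstruction is that Theorems~\ref{2fP}(b) and~\ref{Eideal}, which canonically attach a \emph{single} minimal cover and a \emph{single} maximal ideal, are consequences of the unique pseudo-expectation property and are not available while one is still trying to prove it; thus there is no a priori reason a lone minimal cover or maximal ideal should pin down a unique pseudo-expectation. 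This is sharpened in (e)/(f) by the non-abelian case: when $\D^c$ is non-abelian, as in Example~\ref{narelco}, the test space $\fM$ is properly smaller than the full pure-state or Gelfand space, so the clean dictionary among closed subsets, $\D$-disjoint ideals, and pseudo-expectations supplied by \cite[Corollary~3.21]{PittsZarikianUnPsExC*In}---valid only for abelian $\D^c$, cf.\ Corollary~\ref{upsedc}---must be rebuilt over $\fM$. Closing precisely this gap, extending the abelian characterization to a relative commutant whose character space may be far smaller than its state space, is the main obstacle.
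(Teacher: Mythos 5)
This statement is a \emph{conjecture} in the paper: there is no proof to compare against, only a remark immediately following it that records the known partial implications. Your proposal reproduces those partial results by essentially the same route --- (a)$\Leftrightarrow$(d) via Proposition~\ref{bipseudo}, (a)$\Rightarrow$(b) and (a)$\Rightarrow$(c) via Theorems~\ref{Eideal} and~\ref{upse=>cov} --- and you correctly identify that the substance of the conjecture lies in the converses, for exactly the reason the paper gives (one cannot show a priori that every pseudo-expectation annihilates $\K_F$, since the tools that would do so presuppose the unique pseudo-expectation property).

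The one place you go beyond the paper's remark is the sketch of (a)$\Rightarrow$(e),(f) via Lemma~\ref{cenpse1}. The multiplicativity of $E$ on $\D^c$ and the extension of characters of $\D$ into $\fM$ are fine, but the maximality of $\ker E$ among $\D$-disjoint ideals of $\D^c$ is not yet justified: Corollary~\ref{upsedc} and \cite[Corollary~3.21]{PittsZarikianUnPsExC*In} require $\D^c$ abelian, and Theorem~\ref{Eideal} requires regularity, neither of which is available for $(\D^c,\D)$ in general. So even the forward implications to (e) and (f) contain a gap you should flag as such rather than attribute to ``the analysis underlying Corollary~\ref{upsedc}.'' With that caveat, your assessment of what is proved and what remains open agrees with the paper's.
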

\begin{remark}{Remark}   Here are some comments regarding
  this conjecture.
    \begin{itemize}
\item (a)$\Rightarrow$(b) and (a)$\Rightarrow$(c) by Theorems~\ref{Eideal}
  and~\ref{upse=>cov}.
\item  Suppose (c) holds.  Then $\K_F:=\{x\in \C: \rho(x^*x)=0 \text{
    for all } \rho\in F\}$ is an ideal of $\C$ such that $\K_F\cap
  \D=(0)$.  We expect this ideal to have the property in (b).  Furthermore,
  $(F,r)$ is an essential cover for
  $\hat\D$, and if $(\C_q,\D_q):=(\C/\K_F, \D/(\K_F\cap \D))$, it
  seems reasonable to 
  expect $(\C_q,\D_q)$ has the faithful unique pseudo-expectation
  property (see Proposition~\ref{twistprop} below).  Because we have been unable to
  establish that every
  pseudo-expectation annihilates $\K_F$, it is not clear
 this implies $(\C,\D)$ has the unique
  pseudo-expectation property, 
\item (a)$\Leftrightarrow$(d) by Proposition~\ref{bipseudo}.
  \item The sets $\fS(\D^c,\D)$ and $\fM$ coincide
    by~\cite[Theorem~4.13]{PittsStReInI}.  In particular $(\D^c,\D)$ is a
    covering inclusion if and only if $\fM$ covers $\hat\D$.
  \end{itemize}
\end{remark}

\section{Twists Associated to a Regular Covering Inclusion}\label{GpReIn}

Let $(\C,\D)$ be an inclusion for which there exists a compatible
cover $F\subseteq \fS(\C,\D)$ for $\hat\D$.  The main result of this
section, Theorem~\ref{inctoid}, shows that given this data, there
exists a twist $(\Sigma, G)$ and a regular $*$-homomorphism $\theta$
of $(\C,\D)$ into the the inclusion $(C^*_r(\Sigma,G), C(\unit{G}))$
such that $F$ is identified with the unit space of $G$ and
$\theta(\C)$ is dense in $C^*_r(\Sigma)$ with respect to a pointwise
topology.  The kernel of $\theta$ is the ideal
$\K_F=\{x\in\C: \rho(x^*x)=0 \text{ for all } \rho\in F\}$, and when
this ideal vanishes, $(C^*_r(\Sigma,G), C(\unit{G}), \theta)$ is a
\cover\ for $(\C,\D)$.  The fact that elements of $F$ are
compatible states is the key ingredient in our construction of our
groupoids and $\theta$. 

Also, when $(\C,\D)$ has the faithful unique pseudo-expectation
property, and $F$ is taken to be the family of strongly compatible
states, this construction produces the Cartan envelope for $(\C,\D)$.
Theorem~\ref{inctoid} is a refinement of the embedding results of
Section~5 of \cite{PittsStReInI}.

Our construction is inspired by the constructions by Kumjian and
Renault, but the construction here differs from theirs in significant
ways.  Most significantly, in the Renault and Kumjian contexts, a conditional
expectation $E:\C\rightarrow \D$ is present, and since
$\{\rho\circ E: \rho\in\hat{\D}\}$ is homeomorphic to $\hat{\D}$,
Renault and Kumjian use $\hat{\D}$ as the unit space for the twists
they construct.  In our context, we need not have a conditional
expectation, so instead of using $\hat\D$ as the unit space, we use
the set $F$ instead.

In order that the coordinates determined by the twist $(\Sigma_F,G_F)$
reflect as many of the properties of the original covering inclusion
$(\C,\D)$ as
possible, it is desirable that the choice of 
$F\subseteq \fS(\C,\D)$ be made as small as  possible.
If $F_1\subseteq F_2$ are two $\N(\C,\D)$-invariant, closed and
$\D$-covering subsets of $\fS(\C,\D)$, 
$\fJ_{F_2}\subseteq \fJ_{F_1}$.  However, it is not clear whether the
triviality of the ideal $\fJ_{F_2}$ implies $\fJ_{F_1}$ is also
trivial.   Thus, it may be that $\C$ regularly embeds into
$C^*_r(\Sigma_{F_2},G_{F_2})$  but does not regularly embed into
$C^*_r(\Sigma_{F_1},G_{F_1})$.   This leads to the following
question.
\begin{question}\label{radJF}  Suppose $(\C,\D)$ is a regular inclusion and
  $F\subseteq \fS(\C,\D)$ is closed, invariant and covers $\hat\D$.
  Let $r:F\rightarrow \hat\D$ be the restriction map.  If
  $(\fS(\C,\D), r)$ is an essential cover for $\hat\D$, must
  $\rad(\C,\D)=\fJ_F$?  If $\rad(\C,\D)=(0)$, is $\fJ_F=(0)$ also?
\end{question}

\begin{example} \label{counradJF}  This example illustrates the need
  for the hypothesis that $(\fS(\C,\D), r)$ is an essential cover for
  $\hat\D$ in Question~\ref{radJF}.  Take $\C=C[0,1]$ and $\D=\bbC I$.  Then
  $\N(\C,\D)=\{\lambda U: \lambda\in \bbC, U\in \U(\C)\}$.
  By~\cite[Theorem~4.13]{PittsStReInI}, $\fS(\C,\D)$ is the set of all
  multiplicative linear functionals on $\C$.  Thus $\rad(\C,\D)=(0)$
  However, every non-empty closed subset $F\subseteq \fS(\C,\D)$
 is invariant and covers the singleton set $\hat\D$.  In particular,
 it is possible for $\fJ_F$ to be a maximal ideal.

 However, when $\C$ is abelian and $(\C,\D)$ is an essential
 inclusion, Question~\ref{radJF} has an affirmative answer.   As
 before, $\fS(\C,\D)=\hat\C$, and a closed set $F\subseteq \fS(\C,\D)$ is a
 compatible cover for $\hat\D$ when $\fJ_F\cap \D=(0)$.  As $(\C,\D)$ is essential,
 $\fJ_F=(0)=\rad(\C,\D)$.     
\end{example}

Recall (see~\cite{DonsigPittsCoSyBoIs}) that an \textit{eigenfunctional}
is a non-zero element $\phi\in\dual{\C}$ which is an eigenvector for both the
left and right actions of $\D$ on $\dual{\C}$; when this occurs, there
exist unique elements $\rho,\sigma\in\hat{\D}$ so that whenever
$d_1,d_2\in\D$ and $x\in \C$, we have
\begin{equation}\label{srei}
  \phi(d_1xd_2)=\rho(d_1)\phi(x)\sigma(d_2).
\end{equation}

As a simple example, consider the inclusion, $(M_n(\bbC), D_n)$, where
$D_n$ is  the set of  $n\times n$ diagonal matrices.  The eigenfunctionals for
this inclusion are non-zero scalar multiples of
functionals  of the form $\phi_{ij}: A\mapsto
\innerprod{Ae_j, e_i}$; here $\{e_i\}$ is the usual orthonormal  basis for
$\bbC^n$.
For $\phi_{ij}$ and $d\in D_n$,
$\rho(d)=\innerprod{de_i, e_i}$ and $\sigma(d)=\innerprod{de_j,e_j}$. 

For a general regular inclusion and eigenfunctional $\phi$, the
elements $\rho, \sigma\in \hat\D$ appearing in~\eqref{srei} are to be
regarded as range and source maps for $\phi$, and we write
$$s(\phi):=\sigma\dstext{and} r(\phi):=\rho.$$  It will be clear
from the context whether $r$ refers to the range of an
eigenfunctional or a restriction mapping (as
used earlier).
\begin{remark}{Remark} Notice that when $\phi$ is an eigenfunctional
  and $v\in \N(\C,\D)$ is such that $\phi(v)\neq 0$, then for every
  $d\in\D$, 
 \begin{equation}\label{sreig}
\frac{s(\phi)(v^*dv)}{s(\phi)(v^*v)}=r(\phi)(d).
\end{equation} Indeed,
$\phi(v)\, s(\phi)(v^*dv)
=\phi(vv^*dv)=\phi(dvv^*v)=r(\phi)(d)\,\phi(v)\, s(\phi)(v^*v)$. 
\end{remark}

\begin{definition} A \textit{compatible eigenfunctional} is a
  eigenfunctional $\phi$ such that for every $v\in
 \N(\C,\D)$,
\begin{equation}\label{cedef}
|\phi(v)|^2\in\{0,s(\phi)(v^*v)\}\dstext{(equivalently, $|\phi(v)|^2\in\{0,r(\phi)(vv^*)\}$).}
\end{equation}
\end{definition}

\begin{remark}{Notation}\label{eignotation} 
We use the following notation.
\begin{enumerate}
\item $\E(\C,\D)$ (respectively $\ce(\C,\D)$)  will denote the set consisting of the
  zero functional together with the set of all 
  eigenfunctionals (resp. the set of all compatible eigenfunctionals
  together with the zero functional).
\item  Let $\Eigone(\C,\D)$ be the set of all
  eigenfunctionals with unit norm.  Likewise 
  $\ceo(\C,\D)$ will denote the compatible eigenfunctionals of unit
  norm. 
\end{enumerate}
  Equip $\E(\C,\D)$, $\Eigone(\C,\D)$, $\ce(\C,\D)$
  and $\ceo(\C,\D)$ with the relative
  $\sigma(\dual{\C},\C)$ topology.
\begin{enumerate}
\setcounter{enumi}{2}
\item For $v\in\N(\C,\D)$ and $f\in \Mod(\C,\D)$
  such that $f(v^*v)>0$, let $[v,f]\in\dual{\C}$ be defined by
  \[[v,f](x):=\frac{f(v^*x)}{f(v^*v)^{1/2}}= 
\innerprod{x+L_f,\frac{v+L_f}{\norm{v+L_f}_{\H_f}}}_{\H_f}.\]
  (This notation is borrowed from Kumjian~\cite{KumjianOnC*Di}.
  There, Kumjian works in the context of \cstar-diagonals and uses
  states on $\C$ of the form $\sigma\circ E$ with $\sigma\in\hat{\D}$.)
\end{enumerate}
\end{remark}

A calculation shows that if $v\in\N(\C,\D)$ and $f\in\Mod(\C,\D)$ satisfy
$f(v^*v)\neq 0$, then 
\[\phi:=[v,f]\text{ belongs to } \Eigone(\C,\D), \,\, \, 
s(\phi)=f|_\D\dstext{and} r(\phi)=\beta_v(s(\phi)).\]   Our goal is to show
that in fact, all elements of $\Eigone(\C,\D)$ arise in this way, and
furthermore,  all elements of $\ceo(\C,\D)$ have the form $[v,f]$, where $f\in
\fS(\C,\D)$.  
We first show that associated with each $\phi\in \Eigone(\C,\D)$ is a  pair
$f,g\in \Mod(\C,\D)$ which extend $r(\phi)$ and $s(\phi)$ and describe
some properties of these preferred extensions.
Note that regularity of the inclusion
$(\C,\D)$ ensures the existence of $v\in \N(\C,\D)$ such that $\phi(v)>0$.

\begin{theorem}\label{cefform}  Let $(\C,\D)$ be a regular inclusion
  and let $\phi\in\Eigone(\C,\D)$.   
The following statements hold.
\begin{enumerate}
\item  There are unique elements 
 $\fs(\phi), \, \fr(\phi)\in \Mod(\C,\D)$ such that whenever $v\in\N(\C,\D)$ satisfies
$\phi(v)\neq 0$ and $x\in\C$,
\[\phi(vx)=\phi(v)\, \fs(\phi)(x), \quad \phi(xv)=\fr(\phi)(x)\,
\phi(v).\]
The functionals $\fs(\phi)$ and $\fr(\phi)$ satisfy, 
\begin{enumerate}
\item[(i)] $s(\phi)=\fs(\phi)|_\D$ and  $r(\phi)=\fr(\phi)|_\D;$
\item[(ii)]  if  
$v\in\N(\C,\D)$ and  $\phi(v)\neq 0$,  then for every $x\in \C$,
  \[\fs(\phi)(v^*xv)=\fs(\phi)(v^*v)\fr(\phi)(x)\dstext{and}
\fr(\phi)(vxv^*)=\fr(vv^*)\fs(\phi)(x).\]
\end{enumerate}
\item If $v\in \N(\C,\D)$ satisfies $\phi(v)>0$, then 
$\phi=[v,\fs(\phi)].$
\item If $\phi$ is represented in two ways, $\phi=[v,f]=[w,g]$
  ($f, g\in\Mod(\C,\D)$, $v, w\in \N(\C,\D)$), then $f=g=\fs(\phi)$
  and $\fs(\phi)(v^*w)>0$.
\item If $\fs(\phi)\in\fS(\C,\D)$, then $\phi\in \ceo(\C,\D)$.
\end{enumerate}
In addition, when 
$\phi\in\ceo(\C,\D)$ the following hold. 
\begin{enumerate}
\setcounter{enumi}{4}
\item Both $\fs(\phi)$ and $\fr(\phi)$ belong to $\fS(\C,\D)$.
\item Suppose $v\in\N(\C,\D)$, satisfies $\phi(v)>0$.  If
  $w\in\N(\C,\D)$ and $\fs(\phi)(v^*w)>0$, then $\phi=[w,\fs(\phi)]$.
\end{enumerate}
\end{theorem}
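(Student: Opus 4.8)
The plan is to construct the two distinguished extensions $\fs(\phi)$ and $\fr(\phi)$ by dividing out a normalizer on which $\phi$ is nonzero, and then to deduce (b)--(f) from the representation $\phi=[v,\fs(\phi)]$. First I would establish (a). Using regularity, fix $v\in\N(\C,\D)$ with $\phi(v)>0$ and set
\[\fs(\phi)(x):=\frac{\phi(vx)}{\phi(v)},\qquad \fr(\phi)(x):=\frac{\phi(xv)}{\phi(v)}.\]
Taking $d_1=1$ and $d_2=1$ in \eqref{srei} immediately gives $\fs(\phi)|_\D=s(\phi)$ and $\fr(\phi)|_\D=r(\phi)$, both points of $\hat\D$; hence, once I know these functionals are states, they automatically belong to $\Mod(\C,\D)$, and (a)(i) holds.

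The crux of (a) is positivity. Since $\fs(\phi)(1)=1$, it suffices to show $\fs(\phi)$ has norm one, i.e.\ $|\phi(vx)|\le\phi(v)\,\norm{x}$ for all $x$; this is where $\norm{\phi}=1$ enters, through the sharp relation $\phi(v)=s(\phi)(v^*v)^{1/2}$ tying the value of a norm-one eigenfunctional on a normalizer to its source (the reverse inequality $|\phi(v)|^2\le s(\phi)(v^*v)$ being the Cauchy--Schwarz bound), after which a unital norm-one functional is a state; $\fr(\phi)$ is symmetric via the range side. For independence of the choice of $v$, I would note that if $\phi(v)\ne0$ and $\phi(w)\ne0$ then \eqref{sreig} forces $\beta_v$ and $\beta_w$ to have the same germ at $s(\phi)$, so Lemma~\ref{chareqrel} yields $h,k\in\D$ with $s(\phi)(h)s(\phi)(k)\ne0$ and $vh=wk$; substituting into the two defining formulas shows they agree. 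The same substitution proves the identities $\phi(vx)=\phi(v)\fs(\phi)(x)$ and $\phi(xv)=\fr(\phi)(x)\phi(v)$ for every $v$ with $\phi(v)\ne0$, and the relations in (a)(ii) then drop out of \eqref{CSMod} applied to $\fs(\phi),\fr(\phi)\in\Mod(\C,\D)$.

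Parts (b)--(d) are largely formal. For (b), with $\phi(v)>0$ I would expand $[v,\fs(\phi)](x)=\fs(\phi)(v^*x)/\fs(\phi)(v^*v)^{1/2}$, rewrite $\fs(\phi)(v^*x)=\phi(vv^*x)/\phi(v)$ by the defining relation, and use \eqref{srei} together with $r(\phi)(vv^*)=s(\phi)(v^*v)$ (a consequence of \eqref{sreig}) to get $\phi(vv^*x)=s(\phi)(v^*v)\phi(x)$; the relation $\phi(v)=s(\phi)(v^*v)^{1/2}$ then collapses the expression to $\phi(x)$. For (c), any representation $\phi=[v,f]=[w,g]$ forces $f=g=\fs(\phi)$ on restricting to $\D$ and invoking \eqref{CSMod}, and evaluating $\phi=[w,g]$ at $v$ gives $\fs(\phi)(v^*w)>0$. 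For (d), writing $\phi(u)=\fs(\phi)(v^*u)/\fs(\phi)(v^*v)^{1/2}$ via (b) and applying the compatibility of the state $\fs(\phi)\in\fS(\C,\D)$ to the normalizer $v^*u$, the module identities force $|\phi(u)|^2\in\{0,s(\phi)(u^*u)\}$ for every $u\in\N(\C,\D)$, that is, $\phi\in\ceo(\C,\D)$.

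Finally, when $\phi\in\ceo(\C,\D)$, part (f) is quick: if $\fs(\phi)(v^*w)>0$ then $\phi(w)=\fs(\phi)(v^*w)/\fs(\phi)(v^*v)^{1/2}>0$, so $w$ satisfies the hypothesis of (b), which gives $\phi=[w,\fs(\phi)]$. For (e) I would transport the compatibility condition $|\phi(u)|^2\in\{0,s(\phi)(u^*u)\}$ through the defining relation and the identities of (a)(ii) to obtain the defining inequality of $\fS(\C,\D)$ for $\fs(\phi)$, and symmetrically for $\fr(\phi)$. I expect the main obstacle to be the positivity/normalization step in (a): proving the candidate functionals are genuine states, which rests on the sharp identity $\phi(v)=s(\phi)(v^*v)^{1/2}$ for norm-one eigenfunctionals and on the germ argument (via Lemma~\ref{chareqrel}) that makes the construction independent of the chosen normalizer.
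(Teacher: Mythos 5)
Your overall architecture matches the paper's: fix $v$ with $\phi(v)>0$, define $\fs(\phi)(x)=\phi(vx)/\phi(v)$ and $\fr(\phi)(x)=\phi(xv)/\phi(v)$, show these are states, and read (b)--(f) off the representation $\phi=[v,\fs(\phi)]$. But the step you yourself flag as the crux is exactly where your argument is circular. You prove that $\fs(\phi)$ is a state, and also part (b), by invoking the ``sharp identity'' $\phi(v)=s(\phi)(v^*v)^{1/2}$, which you never establish; the only inequality that follows from $\norm{\phi}=1$ together with Cauchy--Schwarz is $\phi(v)\le s(\phi)(v^*v)^{1/2}$. In the paper this identity is obtained only \emph{after} part (b), by evaluating $\phi=[v,\fs(\phi)]$ at $x=v$, and part (b) is in turn deduced from the norm-one property of $\fs(\phi)$ and $\fr(\phi)$, which is proved by a different mechanism: one replaces $v$ by $vd$ with $s(\phi)(d)=1$ (which changes neither $\fs(\phi)$ nor $\fr(\phi)$) and bounds $|\fr(\phi)(x)|$ by $\norm{x}\,\inf_d\norm{vd}/\phi(v)$, the infimum of $\norm{vd}$ being $s(\phi)(v^*v)^{1/2}$; then $[v,\fs(\phi)]$ is shown to be a \emph{positive multiple} of $\phi$, and equality follows because both have norm one. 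Without an independent argument of this kind, assuming $\phi(v)=s(\phi)(v^*v)^{1/2}$ at the outset is assuming essentially what (a) and (b) assert.

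Your well-definedness argument also does not go through as written. Relation \eqref{sreig} only tells you that $\beta_v$ and $\beta_w$ take the same \emph{value} at $s(\phi)$, namely $r(\phi)$; it does not give equality of germs, and in any case the implication (b)$\Rightarrow$(a) of Lemma~\ref{chareqrel} is proved only under a MASA skeleton hypothesis, which a general regular inclusion need not satisfy. The paper instead derives independence of the choice of $v$ from part (c), whose proof is a direct GNS/Cauchy--Schwarz computation (display \eqref{sames}) resting on \cite[Proposition~4.4]{PittsStReInI}, not on germs. Likewise, the identities in (a)(ii) do not ``drop out of \eqref{CSMod}'': that formula only permits multiplication by elements of $\D$, whereas (a)(ii) involves conjugation by a normalizer; the paper obtains (a)(ii) from $\phi=[v,\fs(\phi)]$ together with $\phi(v)=s(\phi)(v^*v)^{1/2}=r(\phi)(vv^*)^{1/2}$. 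Your treatments of (d), (e) and (f) are in line with the paper once (a)--(c) are available.
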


\begin{proof} 
Begin by fixing $v\in\N(\C,\D)$ such that $\phi(v)>0$.   Define linear
functionals on $\C$ by  
\[\fs(\phi)(x):=\frac{\phi(vx)}{\phi(v)}\dstext{and}\fr(\phi)(x):=\frac{\phi(xv)}{\phi(v)}.\]
  As $\phi\in\Eigone(\C,\D)$,  $\fr(\phi)|_\D=r(\phi)$ and $\fs(\phi)|_\D=s(\phi)$.  
  We next claim that $\norm{\fs(\phi)}=\norm{\fr(\phi)}=1.$
  For any $d\in\D$ with $s(\phi)(d)=1$, replacing $v$ by $vd$ in the
  definition of $\fr(\phi)$ does not
  change $\fr(\phi)$. Thus,  if $x\in\C$ and $\norm{x}\leq 1$, we have
  $|\fr(\phi)(x)|\leq \inf\left\{\frac{\norm{vd}}{\phi(v)}: d\in \D,
  s(\phi)(d)=1\right\}=1$ (because $d$ may be chosen so that
  $\norm{vd}=\norm{d^*v^*vd}^{1/2}$ is as close to
  $s(\phi)(v^*v)^{1/2}$ as desired).  This shows
  $\norm{\fr(\phi)}=1$. Likewise $\norm{\fs(\phi)}=1$.  As $\fr(\phi)(1)=\fs(\phi)(1)=1$, 
  both $\fr(\phi)$ and $\fs(\phi)$ are states
  on $\C$ and hence belong to $\Mod(\C,\D)$.  This gives the existence
  portion of (a) and also item (i) of part (a).

We continue with the choice of $v$ made above.  Note that
$s(\phi)(v^*v)>0$ because  \[0\neq \phi(v)=\lim
\phi(v(v^*v)^{1/n})=\lim \phi(v) (s(\phi)(v^*v))^{1/n}.\]  
 Thus $\psi:=[v,\fs(\phi)]$ is defined.
 A calculation shows that for any $x\in\C$,
\[\psi(x)=\frac{r(\phi)(vv^*)^{1/2}}{\phi(v)} \phi(x).\] Thus $\psi$
is a positive scalar multiple of $\phi$ and as
$\norm{\psi}=\norm{\phi}=1$, we obtain $\phi=\psi$.  This establishes
part (b).

To verify item (ii) of part (a), taking $x=v$ in the representation, $\phi=[v,\fs(\phi)]$, we find $r(\phi)(vv^*)^{1/2}=\phi(v)$.  Similarly,
$s(\phi)(v^*v)^{1/2}=\phi(v)$.  A calculation
now shows that for $x\in\C$,
 \[\fs(\phi)(v^*xv)=\fs(\phi)(v^*v)\fr(\phi)(x).\] A similar argument yields
$\fr(\phi)(vxv^*)=\fr(\phi)(vv^*)\fs(\phi)(x)$ for each $x\in\C$.

The uniqueness portion of part (a) will follow from part (c), so we
turn to part (c) now. 
  Suppose that
   $\phi=[v,f]=[w,g].$  Computations using
   Notation~\ref{eignotation}(c) show that for every $x\in\C$, 
  $$ f(x)=\frac{\phi(vx)}{\phi(v)}\dstext{and}
  g(x)=\frac{\phi(wx)}{\phi(w)}.$$ Since
  $\frac{g(w^*v)}{g(w^*w)^{1/2}}=\phi(v)=f(v^*v)^{1/2}$, we obtain
 $$ g(w^*v)=f(v^*v)^{1/2}g(w^*w)^{1/2}>0.$$  Likewise, $f(v^*w)>0.$   Also,
 \begin{equation}\label{sames}
   f(x)=\frac{\phi(vx)}{\phi(v)}=\frac{[w,g](vx)}{[v,f](v)}=
\frac{g(w^*vx)}{f(v^*v)^{1/2}g(w^*w)^{1/2}}=
\frac{g(w^*v)g(x)}{g(w^*v)}=g(x),
\end{equation}
where the fourth equality follows
from  \cite[Proposition~4.4]{PittsStReInI}.  Part (c) now follows.

For (d), suppose $\fs(\phi)\in\fS(\C,\D)$.  If $w\in \N(\C,\D)$ and $\phi(w)\neq 0$, then
  $$|\phi(w)|^2=\frac{|\fs(\phi)(v^*w)|^2}{\fs(\phi)(v^*v)}=\frac{\fs(\phi)(v^*ww^*v)}{\fs(\phi)(v^*v)}=
  \beta_v(s(\phi))(ww^*)=r(\phi)(ww^*),$$ so $\phi$ belongs to
  $\ceo(\C,\D)$ by~\eqref{cedef}.

Next we establish part (e).  Suppose $\phi\in\ceo(\C,\D)$.  
  If $w\in \N(\C,\D)$ and $\fr(\phi)(w)\neq 0$, we have (using~\eqref{sreig}) 
  $$|\fr(\phi)(w)|^2=
  \left|\frac{\phi(wv)^2}{\phi(v)}\right|^2=
  \frac{s(\phi)(v^*w^*wv)}{s(\phi)(v^*v)} =r(\phi)(w^*w)=f(w^*w),$$
  and it follows that $\fr(\phi)\in \fS(\C,\D)$.  Likewise, $\fs(\phi)\in
  \fS(\C,\D)$.

 Turning to (f), suppose
$v,w\in\N(\C,\D)$ satisfy $\fs(\phi)(v)>0$ and  
$\fs(\phi)(v^*w)>0$.  Since
 $\fs(\phi)\in\fS(\C,\D)$, \cite[Proposition~4.4]{PittsStReInI} shows that
$\fs(\phi)(v^*w)^2=\fs(\phi)(w^*w)\fs(\phi)(v^*v)$.  Thus in the GNS Hilbert space $\H_{\fs(\phi)}$,
we have $\innerprod{v+L_{\fs(\phi)},w+L_{\fs(\phi)}}=\norm{v+L_{\fs(\phi)}}\norm{w+L_{\fs(\phi)}}$.  By the
Cauchy-Schwartz inequality, there exists a positive real number $t$ so
that $v+L_{\fs(\phi)}=tw+L_{\fs(\phi)}$.  But then for any $x\in \C$,
\[[v,\fs(\phi)](x)=\frac{\innerprod{x+L_{\fs(\phi)},v+L_{\fs(\phi)}}}{\norm{v+L_{\fs(\phi)}}}=
\frac{\innerprod{x+L_{\fs(\phi)},tw+L_{\fs(\phi)}}}{\norm{tw+L_{\fs(\phi)}}}=[w,\fs(\phi)](x).\]

\end{proof}
The following is immediate.
\begin{corollary} \label{cefformcor} For a regular inclusion $(\C,\D)$,
\begin{align}\label{cefform1}\Eigone(\C,\D)&=\{[v,f]: v\in \N(\C,\D), f\in \Mod(\C,\D)
  \text{ and } f(v^*v)\neq 0\}\quad\text{and}\\
\label{cefformc1}
\ceo(\C,\D)&=\{[v,f]: v\in \N(\C,\D), f\in \fS(\C,\D)
  \text{ and } f(v^*v)\neq 0\}.
\end{align} 
\end{corollary}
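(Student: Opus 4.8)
The plan is to obtain both displayed set equalities by simply assembling the pieces already isolated in Theorem~\ref{cefform}, verifying each of the four inclusions in turn. Throughout I would use the observation, recorded just after the statement of Theorem~\ref{cefform}, that regularity of $(\C,\D)$ guarantees that for any $\phi\in\Eigone(\C,\D)$ there exists $v\in\N(\C,\D)$ with $\phi(v)>0$, and I would also use the computation in the proof of Theorem~\ref{cefform} showing $\fs(\phi)(v^*v)=\phi(v)^2$ for such $v$.

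For \eqref{cefform1}, the inclusion ``$\supseteq$'' is exactly the calculation stated immediately before Theorem~\ref{cefform}: whenever $v\in\N(\C,\D)$ and $f\in\Mod(\C,\D)$ satisfy $f(v^*v)\neq 0$, then $[v,f]\in\Eigone(\C,\D)$. For ``$\subseteq$'', I would take an arbitrary $\phi\in\Eigone(\C,\D)$, choose $v\in\N(\C,\D)$ with $\phi(v)>0$, and invoke part~(b) of Theorem~\ref{cefform} to write $\phi=[v,\fs(\phi)]$; part~(a)(i) places $\fs(\phi)\in\Mod(\C,\D)$, and $\fs(\phi)(v^*v)=\phi(v)^2>0$, so $\phi$ has the required form. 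For \eqref{cefformc1}, the direction ``$\subseteq$'' proceeds identically: given $\phi\in\ceo(\C,\D)\subseteq\Eigone(\C,\D)$, select $v$ with $\phi(v)>0$ so that $\phi=[v,\fs(\phi)]$, and then apply part~(e), which asserts $\fs(\phi)\in\fS(\C,\D)$. For ``$\supseteq$'', I would take $v\in\N(\C,\D)$ and $f\in\fS(\C,\D)$ with $f(v^*v)\neq 0$; by \eqref{cefform1} the functional $\phi:=[v,f]$ lies in $\Eigone(\C,\D)$, part~(c) identifies $f=\fs(\phi)$, and since this common value is compatible, part~(d) gives $\phi\in\ceo(\C,\D)$.

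Since every ingredient is already packaged in Theorem~\ref{cefform}, there is no substantive obstacle; the proof is genuinely ``immediate.'' The only point demanding care is the bookkeeping between a given representation $[v,f]$ and the canonical datum $\fs(\phi)$: in the ``$\supseteq$'' half of \eqref{cefformc1} one must first use the uniqueness content of part~(c) to transfer the hypothesis $f\in\fS(\C,\D)$ to $\fs(\phi)$ before part~(d) is applicable. This is the single place where the uniqueness assertions of Theorem~\ref{cefform} are actually invoked, and it is worth stating explicitly to keep the logical dependence clear.
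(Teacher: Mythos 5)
Your proof is correct and follows exactly the route the paper intends: the paper dismisses the corollary as ``immediate'' from Theorem~\ref{cefformcor}'s precursor, and your four inclusions simply unpack that, using the remark before Theorem~\ref{cefform} for the ``$\supseteq$'' of \eqref{cefform1} and parts (a)--(e) of that theorem for the rest. The one point you flag — using part (c) to identify a given representative $f$ with $\fs(\phi)$ before invoking part (d) — is indeed the only nontrivial bookkeeping step, and you handle it correctly.
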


\begin{remark}{Standing Assumption}  Unless stated otherwise, for the
  remainder of this section, $(\C,\D)$ will be a regular covering
  inclusion and $F\subseteq\fS(\C,\D)$ will be a compatible cover for
  $\hat\D$.
\end{remark}

By item(ii) of Theorem~\ref{cefform}(a), the $\N(\C,\D)$-invariance of
$F$ shows that for $\phi\in\ceo(\C,\D)$, we have $\fs(\phi)\in F$ if and
only if $\fr(\phi)\in F$.  
\begin{definition} 
  Let $\ceoF(\C,\D):=\{\phi\in\ceo(\C,\D): \fs(\phi)\in F\}$.  We
  shall call $\phi\in\ceoF(\C,\D)$ an \textit{$F$-compatible
    eigenfunctional}.  By Theorem~\ref{cefform},
$$\ceoF(\C,\D)=\{[v,f]: f\in F\text{ and } f(v^*v)\neq 0\}.$$
\end{definition}

The proof of the following fact is essentially the same 
as that of~\cite[Proposition~2.3]{DonsigPittsCoSyBoIs} (the continuity
of the range and source maps follows from their definition).
\begin{proposition}\label{loccmpt} The set 
  $\ceoF(\C,\D)\cup\{0\}$ is a weak-$*$ compact subset of
  $\dual{\C}$, and the maps $\fs,\fr:\ceoF(\C,\D)\rightarrow
  \fS(\C,\D)$ are weak-$*$--weak-$*$ continuous.
\end{proposition}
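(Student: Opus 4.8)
The plan is to deduce compactness from the Banach--Alaoglu theorem together with a closedness argument, and to get continuity of $\fs,\fr$ directly from the formula defining them. Since every element of $\ceoF(\C,\D)$ has unit norm, the set $\ceoF(\C,\D)\cup\{0\}$ lies in the weak-$*$ compact unit ball of $\dual\C$, so it suffices to show this set is weak-$*$ closed. Thus I would take a net $(\phi_\lambda)$ in $\ceoF(\C,\D)$ converging weak-$*$ to some $\phi\in\dual\C$ and prove $\phi\in\ceoF(\C,\D)\cup\{0\}$. If $\phi=0$ there is nothing to do, so assume $\phi\neq 0$.

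First I would identify $\phi$ as an eigenfunctional. Because $\hat\D$ is compact, after passing to a subnet I may assume $s(\phi_\lambda)\to\sigma$ and $r(\phi_\lambda)\to\rho$ in $\hat\D$. Passing to the limit in the eigenfunctional identity $\phi_\lambda(d_1xd_2)=r(\phi_\lambda)(d_1)\,\phi_\lambda(x)\,s(\phi_\lambda)(d_2)$ and using $\phi\neq 0$ shows $\phi$ is an eigenfunctional with $r(\phi)=\rho$ and $s(\phi)=\sigma$. Since $\spn\N(\C,\D)$ is dense in $\C$, I may fix $v\in\N(\C,\D)$ with $\phi(v)>0$ (scaling $v$ by a unimodular constant). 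Then $\phi_\lambda(v)\to\phi(v)>0$, so $|\phi_\lambda(v)|^2$ is eventually nonzero; the compatibility of each $\phi_\lambda$, namely $|\phi_\lambda(v)|^2\in\{0,s(\phi_\lambda)(v^*v)\}$, forces $|\phi_\lambda(v)|^2=s(\phi_\lambda)(v^*v)$ eventually, and taking limits yields the crucial normalization $\phi(v)^2=\sigma(v^*v)$.

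Next I would show $\phi=[v,g]$, where $g(x):=\phi(vx)/\phi(v)$. Using that $\phi$ is an eigenfunctional one computes $g(v^*v)=\sigma(v^*v)$ and, via~\eqref{sreig} applied with $d=vv^*$ (which gives $\rho(vv^*)=\sigma(v^*v)$), that $[v,g](x)=\rho(vv^*)\phi(x)/(\phi(v)\sigma(v^*v)^{1/2})=\phi(x)$, the last equality using $\phi(v)=\sigma(v^*v)^{1/2}$. On the other hand $g=\lim_\lambda \fs(\phi_\lambda)$, since $\fs(\phi_\lambda)(x)=\phi_\lambda(vx)/\phi_\lambda(v)\to\phi(vx)/\phi(v)=g(x)$ once $\phi_\lambda(v)\neq 0$; as each $\fs(\phi_\lambda)\in F$ and $F$ is weak-$*$ closed, $g\in F$. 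Because $g(v^*v)=\sigma(v^*v)=\phi(v)^2>0$, the description $\ceoF(\C,\D)=\{[v,f]:f\in F,\ f(v^*v)\neq 0\}$ gives $\phi=[v,g]\in\ceoF(\C,\D)$. This proves $\ceoF(\C,\D)\cup\{0\}$ is weak-$*$ closed, hence weak-$*$ compact.

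Finally, continuity of $\fs$ and $\fr$ follows from the same formula: given $\phi\in\ceoF(\C,\D)$ and a net $\phi_\lambda\to\phi$ in $\ceoF(\C,\D)$, fix $v$ with $\phi(v)\neq 0$; then $\phi_\lambda(v)\neq 0$ eventually and $\fs(\phi_\lambda)(x)=\phi_\lambda(vx)/\phi_\lambda(v)\to\phi(vx)/\phi(v)=\fs(\phi)(x)$ for every $x\in\C$, so $\fs(\phi_\lambda)\to\fs(\phi)$ weak-$*$; the argument for $\fr$ is symmetric, using $\fr(\phi)(x)=\phi(xv)/\phi(v)$. What makes this work is that $\fs(\phi)$ does not depend on the normalizer used to represent $\phi$ (Theorem~\ref{cefform}(a)), so a single $v$ adapted to the limit serves for all nearby $\phi_\lambda$. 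I expect the compactness step to be the main obstacle: one must pass to subnets to converge the source and range characters in $\hat\D$ and then verify that the limiting data assemble into a genuine unit-norm compatible eigenfunctional — in particular that the correct branch of the compatibility dichotomy survives in the limit (producing $\phi(v)^2=\sigma(v^*v)$) and that $g$ returns to the closed set $F$.
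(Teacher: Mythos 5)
Your proof is correct. The paper itself gives no argument here beyond citing \cite[Proposition~2.3]{DonsigPittsCoSyBoIs} and remarking that continuity of $\fs$ and $\fr$ is immediate from their defining formula $\fs(\phi)(x)=\phi(vx)/\phi(v)$; your Alaoglu-plus-closedness argument is a self-contained version of exactly that, and you correctly handle the two points that need real care in this setting --- that the nonzero branch of the compatibility dichotomy survives the limit to give $\phi(v)^2=\sigma(v^*v)$, and that $g=\lim_\lambda\fs(\phi_\lambda)$ lands back in the weak-$*$ closed set $F$, so that $\phi=[v,g]$ is again an $F$-compatible eigenfunctional.
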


We now show that $\ceoF(\C,\D)$ forms
a topological groupoid.  The topology has already been defined, so we
need to define the source and range maps, composition and inverses.
The hypothesis that $F\subseteq \fS(\C,\D)$ in the following
definition ensures that the product on $\ceoF(\C,\D)$ is well-defined. 
\begin{definition} Given $\phi\in\ceoF(\C,\D)$, let $v\in\N(\C,\D)$ be
  such that $\phi(v)> 0$.  We make the following definitions.
\begin{enumerate} 
 \item We say that $\fs(\phi)$ and $\fr(\phi)$
  are  the
  \textit{source} and \textit{range} of $\phi$ respectively.
  
\item Define the \textit{inverse}, $\phi^{-1}$ by the formula,
$$\phi^{-1}(x):=\overline{\phi(x^*)}.$$

If $\phi\in\ceo(\C,\D)$ and $v\in\N(\C,\D)$ is such that
  $\phi(v)>0$, (so that $\phi=[v,\fs(\phi)]$), then a calculation
  shows that
  $\phi^{-1}=[v^*,\fr(\phi)].$  The fact that $F$ is
  $\N(\C,\D)$-invariant 
ensures that $\phi^{-1}\in \ceoF(\C,\D)$.  Thus, our definition of $\phi^{-1}$ is
  consistent with the definition of inverse in the definition of the
  twist of a \cstardiag\ 
  arising in \cite{KumjianOnC*Di}
  and the twist of a Cartan MASA from \cite{RenaultCaSuC*Al}.

\item For $i=1,2$, let $\phi_i\in\ceoF(\C,\D)$.  We say
  that the pair $(\phi_1,\phi_2)$ is a \textit{composable pair} if
  $\fs(\phi_1)=\fr(\phi_2)$.  As is customary, we write
  $\ceoF(\C,\D)^{(2)}$ for the set of composable pairs.   
 To define the composition, choose $v_i\in
  \N(\C,\D)$ with $\phi_i(v_i)>0$, so that
  $\phi_i=[v_i,\fs(\phi_i)]$.   By item (ii) of Theorem~\ref{cefform}(a), we
  have
  $$\fs(\phi_2)(v_2^*v_1^*v_1v_2)=\fr(\phi_2)(v_1^*v_1)\fs(\phi_2)(v_2^*v_2)
  = \fs(\phi_1)(v_1^*v_1)\fs(\phi_2)(v_2^*v_2)> 0,$$ so that
  $[v_1v_2,\fs(\phi_2)]$ is defined.
  The product is then defined by  
   \[\phi_1\phi_2:=[v_1v_2,\fs(\phi_2)].\]

   We show now that this product is well defined.  Suppose that
   $(\phi_1,\phi_2)\in\ceoF(\C,\D)^{(2)}$, $f=\fs(\phi_2),$
   $\fr(\phi_2)=g=\fs(\phi_1),$ and that for $i=1,2$,
   $v_i,w_i\in\N(\C,\D)$ are such that $\phi_1=[v_1,g]=[w_1,g]$ and
   $[v_2,f]=[w_2,f].$ Then using parts (c) and (f) of Theorem~\ref{cefform}, we have
   $g(w_1^*v_1)>0$ and $f(v_2^*w_2)>0$, so, as $f\in\fS(\C,\D)$, there
   exists a positive scalar $t$ such that $v_2+L_f=tw_2+L_f$.  Hence,
\begin{align*}
 f((w_1w_2)^*(v_1v_2))&=\innerprod{\pi_f(v_1)(v_2+L_f),\pi_f(w_1)(w_2+L_f)}\\
&= 
t\innerprod{\pi_f(v_1)(w_2+L_f),\pi_f(w_1)(w_2+L_f)}\\
&=tf(w_2^*(w_1^*v_1)w_2)\\
&=tf(w_2^*w_2) \fr(\phi_2)(w_1^*v_1) \\ 
&=
tf(w_2^*w_2) \fs(\phi_1)(w_1^*v_1)\\
&=tf(w_2^*w_2) g(w_1^*v_1)>0.
\end{align*} By Theorem~\ref{cefform},   
   $[v_1v_2,f]=[w_1w_2,f]$, so that the product is well defined.

\item For $\phi\in\ceoF(\C,\D)$, denote the map $\C\ni x\mapsto
  |\phi(x)|$ by $|\phi|$.   Observe that for $\phi,
  \psi\in\ceoF(\C,\D)$, $|\phi|=|\psi|$ if and only if there exists 
  $z\in\bbT$ such that $\phi=z\psi$; clearly $z$ is unique.  Let
\begin{equation*}\fRF(\C,\D):=\{|\phi|: \phi\in\ceoF(\C,\D)\}
\end{equation*}
and define
    $\fq:\ceoF(\C,\D)\rightarrow \fRF(\C,\D)$ by \[ \fq(\phi):=|\phi|.\]
We now define inverse and product maps
in $\fRF(\C,\D)$, as well as source and range maps.

Since a state on $\C$ is
determined by its values on the positive elements of $\C$,  we may
identify $f\in F$ with $|f|\in\fRF(\C,\D)$.  Define
$\fs(|\phi|)=\fs(\phi)$ and $\fr(|\phi|)=\fr(\phi)$.  Next we define
inversion in $\fRF(\C,\D)$ by $|\phi|^{-1}=|\phi^{-1}|$, and composable
pairs by
$\fRF(\C,\D)^{(2)}:=\{(|\phi|,|\psi|):(\phi,\psi)\in\ceo(\C,\D)^{(2)}\}$,
and the product by $\fRF(\C,\D)^{(2)}\ni (|\phi|,|\psi|)\mapsto
|\phi\psi|.$
Topologize $\fRF(\C,\D)$ with the topology of point-wise convergence:
$|\phi_\lambda|\rightarrow |\phi|$ if and only if
$|\phi_\lambda|(x)\rightarrow |\phi|(x)$ for every $x\in\C$. 
This topology is the quotient topology arising from $\fq$.
We call  $\fRF(\C,\D)$ the \textit{spectral
     groupoid over $F$} for $(\C,\D)$.

 \item We have already identified $\unit{\fR_F(\C,\D)}$ with $F$.
   Define
   $\iota:\bbT\times F\rightarrow \ceoF(\C,\D)$ by
   \[\iota(z, f)=zf.\]
Then the action of $\bbT$ on $\ceoF(\C,\D)$ is 
   $(z\cdot\phi)(x)= \phi(zx)$.  Notice that if $\phi$ is written as
   $\phi=[v,f]$, where $v\in \N(\C,\D)$ and $f\in F$, then
   $z\cdot\phi=[\overline{z}v,f]$.

\end{enumerate}
\end{definition}

We now show  that $(\ceoF(\C,\D),\fRF(\C,\D),\iota,\fq)$ is a twist.

\begin{theorem}\label{cefgroupoid} Let $\ceoF(\C,\D)$ and $\fRF(\C,\D)$ be as above. 
  Then $\ceoF(\C,\D)$ and $\fRF(\C,\D)$ are locally compact Hausdorff
  topological groupoids and $\fRF(\C,\D)$ is an \'{e}tale groupoid.
  Their unit spaces are $\ceoF(\C,\D)^{(0)}
  =\fRF(\C,\D)^{(0)}=F$.  Moreover, 
\[\bbT\times F
  \overset{\iota}\hookrightarrow\ceoF(\C,\D)\overset{\fq}\twoheadrightarrow
  \fR_F(\C,\D)\] is a locally
  trivial Hausdorff topological twist.
\end{theorem}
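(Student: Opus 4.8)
The plan is to treat the algebraic, topological, and twist-theoretic content in turn, exploiting throughout that elements of $\ceoF(\C,\D)$ are honest functionals on $\C$ and elements of $\fRF(\C,\D)$ are honest nonnegative functions on $\C$; this is exactly what forces both spaces to be Hausdorff (distinct functionals, resp.\ distinct moduli, differ at some $x\in\C$), in contrast to the germ-based Weyl groupoid of Section~\ref{CEHWG}. Most of the groupoid axioms (well-definedness of the product, inverse, source and range) were already checked when these operations were introduced; what remains algebraically is associativity together with the unit and inverse identities, which I would verify by fixing normalizers representing each factor and reducing every identity to the corresponding identity for products of normalizers via $\phi_1\phi_2=[v_1v_2,\fs(\phi_2)]$. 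For the unit spaces I would note that $[I,f]=f$ for $f\in F$, that $f\in\ceoF(\C,\D)$ (it is a unit-norm compatible eigenfunctional by~\eqref{CSMod} and $f\in\fS(\C,\D)$), and that $f^{-1}=f$ and $f\cdot f=f$; conversely $\fs$ and $\fr$ take values in $F$, so $\ceoF(\C,\D)^{(0)}=F$, and applying $\fq$ gives $\fRF(\C,\D)^{(0)}=F$ under the identification of $f$ with $|f|$.

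For local compactness I would invoke Proposition~\ref{loccmpt}: since $\ceoF(\C,\D)\cup\{0\}$ is weak-$*$ compact and Hausdorff and $\{0\}$ is closed, $\ceoF(\C,\D)$ is an open subset of a compact Hausdorff space, hence locally compact Hausdorff. As $\fq$ is continuous from the weak-$*$ topology to the topology of pointwise convergence, $\fRF(\C,\D)\cup\{0\}=\fq(\ceoF(\C,\D)\cup\{0\})$ is compact, it is Hausdorff because its points are distinct functions on $\C$, and deleting the closed point $0$ shows $\fRF(\C,\D)$ is locally compact. Continuity of inversion on $\ceoF(\C,\D)$ is immediate from $\phi^{-1}(x)=\overline{\phi(x^*)}$, continuity of $\fs,\fr$ is Proposition~\ref{loccmpt}, and their well-defined continuous descent to $\fRF(\C,\D)$ follows from the universal property of the quotient topology since $\fs(z\phi)=\fs(\phi)$.

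The heart of the argument is the local structure. For $v\in\N(\C,\D)$ put $N(v):=\{[v,f]:f\in F,\ f(v^*v)>0\}$. Two computations drive everything: $\fs([v,f])=f$ and $\fr([v,f])=\tilde\beta_v(f)$. Using compatibility of $\phi$ together with the Cauchy--Schwartz equality case and Theorem~\ref{cefform}(c),(f), I would prove that $\fq(N(v))=\{|\phi|\in\fRF(\C,\D):|\phi|(v)>0\}$; this identification is the crucial and most delicate step, and it shows at once that $\fq(N(v))$ is open, being the preimage of $(0,\infty)$ under evaluation at $v$. On $\fq(N(v))$ the source map $|[v,f]|\mapsto f$ and the range map $|[v,f]|\mapsto\tilde\beta_v(f)$ are homeomorphisms onto the open subsets $\{f\in F:f(v^*v)>0\}$ and $\{g\in F:g(vv^*)>0\}$ of $F$; here I use that $F$ is weak-$*$ closed and $\N(\C,\D)$-invariant, so $\tilde\beta_v$ restricts to a partial homeomorphism of $F$. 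Thus the sets $\fq(N(v))$ are open bisections covering $\fRF(\C,\D)$, the range map is a local homeomorphism, and $\fRF(\C,\D)$ is \'etale.

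Finally I would verify the twist axioms of Definition~\ref{twistdef}. That $\iota(z,f)=zf$ is an injective continuous groupoid homomorphism, that $\fq$ is a continuous surjective homomorphism with $\fq^{-1}(\fRF(\C,\D)^{(0)})=\iota(\bbT\times F)$, and that $\iota|_{\{1\}\times F}$ and $\fq|_{\ceoF(\C,\D)^{(0)}}$ are homeomorphisms onto the respective unit spaces are all short checks using $[I,f]=f$. The centrality axiom reduces to $z\cdot\gamma=\gamma\cdot z$, and both sides equal the scalar multiple $z\gamma$: indeed $z\cdot[v,f]=[\overline z v,f]$, the product $\gamma\cdot z$ unwinds to the same functional, and $[\overline z v,f](x)=z\,[v,f](x)$; freeness of the $\bbT$-action is then immediate. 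For local triviality, given $g=|[v,f_0]|$ I would take $U=\fq(N(v))$ with section $j_U(|[v,f]|)=[v,f]$, which is continuous because $\fs$ is continuous on $\fRF(\C,\D)$, forcing $f_\lambda\to f$ and hence $[v,f_\lambda]\to[v,f]$ weak-$*$ (using $f(v^*v)>0$); the map $(z,|[v,f]|)\mapsto z\cdot[v,f]=[\overline z v,f]$ is then a continuous bijection of $\bbT\times U$ onto $\fq^{-1}(U)$ with continuous inverse $\phi\mapsto(z,\fq(\phi))$, yielding a Hausdorff, locally trivial twist. Continuity of multiplication, deferred above, follows from these trivializations: in the charts a composable product takes the form $(z[v_1,f])(z'[v_2,g])=zz'[v_1v_2,g]$, which depends continuously on $(z,z',g)$ because $g\mapsto[v_1v_2,g]$ is weak-$*$ continuous where the relevant denominator is nonzero. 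I expect the identification $\fq(N(v))=\{|\phi|:|\phi|(v)>0\}$ and the attendant bisection property to be the main obstacle; everything else is bookkeeping organized around the two formulas $\fs([v,f])=f$ and $\fr([v,f])=\tilde\beta_v(f)$.
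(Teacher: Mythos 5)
Your proposal is correct and follows essentially the same route as the paper: both rest on the representation $\phi=[v,\fs(\phi)]$ from Theorem~\ref{cefform}, the compactness statement of Proposition~\ref{loccmpt}, and the basic open sets $\{|\phi|:|\phi|(v)\neq 0\}$ with their continuous sections $f\mapsto[v,f]$. The only differences are organizational — the paper proves \'etaleness by checking local injectivity of the range map on a neighborhood $W=\{\alpha:\alpha(v)>|\phi(v)|/2\}$ and proves continuity of multiplication by a direct net argument extracting phases $\xi_\lambda,\eta_\lambda\to 1$, whereas you package the same computations into explicit open bisections and local trivializations — and these are interchangeable.
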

\begin{proof}

That inversion on $\ceoF(\C,\D)$ is continuous follows readily from the
definition of inverse map and the weak-$*$ topology.  Suppose
$(\phi_\lambda)_{\lambda\in\Lambda}$ and
$(\psi_\lambda)_{\lambda\in\Lambda}$ are nets in $\ceoF(\C,\D)$ 
converging to $\phi, \psi\in \ceoF(\C,\D)$ respectively, and such that
$(\phi_\lambda,\psi_\lambda)\in \ceoF(\C,\D)^{(2)}$ for all $\lambda$. Since the
 source and range maps are continuous, we find that
 $\fs(\phi)=\lim_\lambda
 \fs(\phi_\lambda)=\lim_\lambda\fr(\psi_\lambda)=\fr(\psi)$, so
 $(\phi,\psi)\in\ceoF(\C,\D)^{(2)}.$  Let
$v,w\in\N(\C,\D)$ be such that $\phi(v)>0$ and $\psi(w)>0$.  There exists
 $\lambda_0$, so that $\lambda\geq \lambda_0$ implies 
 $\phi_\lambda(v)$ and $\psi_\lambda(w)$ are non-zero.  For each
 $\lambda\geq \lambda_0$, there exists scalars
 $\xi_\lambda,\eta_\lambda\in\bbT$ such that
 $\phi_\lambda(v)=\xi_\lambda[v,\fs(\phi_\lambda)]$ and
 $\psi_\lambda=\eta_\lambda[v,\fs(\psi_\lambda)]$.  Since
 $$\lim_\lambda\phi_\lambda(v)=\phi(v)=\lim_\lambda
 [v,\fs(\phi_\lambda)](v)
\dstext{and} 
 \lim_\lambda\psi_\lambda(v)=\psi(v)=\lim_\lambda [v,\fs(\psi_\lambda)](v),$$ we
 conclude that $\lim\eta_\lambda=1=\lim\xi_\lambda$.  So for any $x\in\C$,
\begin{align*}
(\phi\psi)(x)&=\frac{\fs(\psi)((vw)^*x)}{(\fs(\psi)((vw)^*(vw)))^{1/2}}=
\lim_\lambda
\frac{\fs(\psi_\lambda)((vw)^*x)}{(\fs(\psi_\lambda)((vw)^*(vw)))^{1/2}}
=\lim_\lambda \left([v,\fs(\phi_\lambda)][w,\fs(\psi_\lambda)]\right)(x)\\
&=\lim_\lambda(\phi_\lambda\psi_\lambda)(x),
\end{align*} giving  continuity of multiplication.
Notice that for $\phi\in\ceoF(\C,\D)$, $\fs(\phi)=\phi^{-1}\phi$ and
  $\fr(\phi)=\phi\phi^{-1}$, and $F\subseteq \ceoF(\C,\D)$.
Thus, $\ceoF(\C,\D)$ is a
locally compact Hausdorff topological groupoid with unit space
$F$.

The definitions show that $\fRF(\C,\D)$ is a groupoid.  By
construction, the map $\fq$ is continuous and is a
surjective groupoid homomorphism.  The topology on $\fRF(\C,\D)$ is
clearly Hausdorff.  If $\phi\in \ceoF(\C,\D)$, and $v\in\N(\C,\D)$ is
such that $\phi(v)\neq 0$, then $W:=\{\alpha\in\fRF(\C,\D): \alpha(v)>
|\phi(v)|/2\}$ has compact closure so $\fRF(\C,\D)$ is locally compact.
Also, if $\alpha_1, \alpha_2\in W$ and $\fr(\alpha_1)=\fr(\alpha_2)=f$,
then writing $\alpha_i=|\psi_i|$ for $\psi_i\in \ceoF(\C,\D)$, we see
that $\psi_i(v)\neq 0$, so there exist $z_1, z_2\in\bbT$ so that for
$i=1,2$ and every $x\in\C$,
$\psi_i(x)= z_if(xv^*)f(v)^{-1}$.  Hence $\alpha_1=\alpha_2$ showing
that the range map is locally injective.  We already know that the
range map is continuous, so by local compactness, the range map is a
local homeomorphism.
 
Note that convergent nets in $\fRF(\C,\D)$ can be lifted to convergent
nets in $\ceoF(\C,\D)$.  Indeed, if $\fq(\phi_\lambda)\rightarrow
\fq(\phi)$ for some net $(\phi_\lambda)$ and $\phi$ in $\ceoF(\C,\D)$,
choose $v\in\N(\C,\D)$ so that $\phi(v)>0$.  Then for large enough
$\lambda$, $\phi_\lambda(v)\neq 0$.  Then $\fq([v,
\fs(\phi_\lambda)])=\fq(\phi_\lambda)$  and 
$[v,\fs(\phi_\lambda)]\rightarrow \phi$.  The fact that
the groupoid operations on $\fRF(\C,\D)$ are continuous now follows easily from the
continuity of the groupoid operations on $\ceoF(\C,\D)$.  Thus
$\fRF(\C,\D)$ is a locally compact Hausdorff \'{e}tale groupoid.

Finally, $\fq(\phi_1)=\fq(\phi_2)$ if and only if there exist $z\in\bbT$
so that $\phi_1=z\phi_2$.  Moreover, for each $v\in\N(\C,\D)$, let
$F_v:=\{\rho\in F: \rho(v^*v)>0\}$ and set
$\O_v:=\{[v,\rho]: \rho\in F_v\}$.  Then the map $f\mapsto [v,f]$,
where $f\in F_v$ is a continuous section for
$\fq|_{\O_v}$, so $\ceoF(\C,\D)$ is locally trivial.  Also, the action
of $\bbT$ on $\ceoF(\C,\D)$ given above makes $\ceoF(\C,\D)$ into a
$\bbT$-groupoid.  So $\ceoF(\C,\D)$ is a twist over $\fRF(\C,\D).$

\end{proof}

\begin{remark}{Remark}\label{WeylComp}
  Let $(\C,\D)$ be a Cartan pair.  Consider two twists
  associated with $(\C,\D)$: the Weyl twist, and the twist obtained
  from Theorem~\ref{cefgroupoid} applied with the family of strongly
  compatible states.  These twists  can be seen to be isomorphic as follows.
  Let $\bbE: \C\rightarrow \D$ be the conditional expectation, let
  $(\Sigma_W, G_W, \iota_W, \fq_W)$ be the Weyl twist associated to
  $(\C,\D)$, and let $F=\{\sigma\circ E: \sigma\in\hat\D\}$ be the
  family of strongly compatible states on $\C$.  Using direct
  arguments (or the results
  in~\cite[Section~4.2]{BrownFullerPittsReznikoffGrC*AlTwGpC*Al}), one
  can show that the maps
  $\Sigma_W\ni [\sigma_1,v,\sigma_2]_1\mapsto [v,\sigma_2]\in
  \Eigone_F(\C,\D)$ and
  $G_W\ni [\sigma_1,v,\sigma_2]_\bbT\mapsto |[v,\sigma_2]|\in
  \fR_F(\C,\D)$ are isomorphisms of topological groupoids.  Further,
  notice that for $z\in \bbT$,
  $[\sigma_1,\overline{z}v,\sigma_2]_1\mapsto z[v,\sigma_2]$.  Thus
  the twist $(\Eigone_F(\C,\D), \fR_F(\C,\D), \iota, \fq)$ is
  isomorphic to the conjugate Weyl twist,
  $(\overline{\Sigma}_W,G_W, \overline{\iota}_W,\fq_W)$.  In
  particular, by Renault's theorem and Proposition~\ref{kpm},
\begin{equation}\label{WeylComp1} (\C,\D)\simeq C^*_r(\Sigma_W, G_W, -1)\simeq
  C^*_r(\Eigone_F(\C,\D), \fR_F(\C,\D), 1).
\end{equation}
\end{remark}

\begin{remark}{Notation}
For the remainder of this section,  we  use the following notation.
  \begin{enumerate}
 \item  Write
$$\Sigma=\ceoF(\C,\D)\text{ and } G=\fRF(\C,\D), \dstext{so
  that}\unit{G}=F.$$ As in Section~\ref{twc*al}, for $\phi\in \Sigma$,
we will sometimes write $\dot\phi$ instead of $|\phi|$.

\item For $a\in\C$, define
$\fg(a):\ceoF(\C,\D)\rightarrow \bbC$ to be the `Gelfand' map:
for $\phi\in \ceoF(\C,\D)$, \[\fg(a)(\phi)=\phi(a).\] 
Then $\fg(a)$ is a continuous 1-equivariant
function on $\ceoF(\C,\D)$.
\item
  Because of~\eqref{WeylComp1}, we will write
\[C_c(\Sigma,G) \text{ (resp. $C^*_r(\Sigma, G)$) }\dstext{instead of} C_c(\Sigma,
  G,1) \text{ (resp. $C^*_r(\Sigma, G, 1)$)}\] unless there
is danger of confusion.
Also, instead of writing $L_1$ for the $1$-equivariant line bundle
associated to $(\Sigma,
G)$, we will write $L$.  Furthermore, when referring to
elements of $L$, we drop the
subscript and write $[\lambda, \phi]$ instead of $[\lambda,\phi]_1$ for the
equivalence class in $L$ associated to $(\lambda,\phi)\in \bbC\times
\Sigma$. 
\end{enumerate}
\end{remark}

We aim to show that $\fg$ determines a regular homomorphism of $\C$
into $C^*_r(\Sigma,G)$.  For this, it would be convenient if whenever
$v\in\N(\C,\D)$, $\fg(v)\in C_c(\Sigma, G, 1)$.  However, this need
not be the case.  (For an example, let $\H=\ell^2(\bbZ)$ with standard
orthonormal basis $\{e_n\}_{n\in\bbZ}$, let $\K$ be the compact
operators on $\H$, put $\C:=C^*(\K\cup\{I\})$ and
$\D=C^*(\{I\}\cup \{e_ne_n^*: n\in \bbZ\})$.  Then $(\C,\D)$ is a
regular MASA inclusion and
$\ds v:=\sum_{n\in\bbN}\frac{e_{-n}e_n^*}{n}$ is a normalizer whose
support is clopen and not compact.)  To circumvent this issue, we use
the following technical tool.

\begin{lemma}\label{wcmptsup}
  Let $w\in\N(\C,\D)$.  The following statements hold.
  \begin{enumerate}
    \item $\supp(\fg(w))=\{|[w,f]|: f(w^*w)>0\}$ and hence $\supp(\fg(w))$ is an open
      bisection of $G$.
      \item If $d\in\D$ satisfies
\begin{equation}\label{wcmptsup1}\overline{\{\sigma\in \hat\D: \sigma(d)\neq 0\}}\subseteq
  \{\sigma\in \hat\D: \sigma(w^*w)\neq 0\},
\end{equation}
then 
$\overline{\supp}(\fg(wd))$ is compact, that is, $\fg(wd)\in
C_c(\Sigma,G, 1)$.
\item If $\eps>0$, there exists $d\in \D$ satisfying~\eqref{wcmptsup1}
  such that $\norm{w-wd}<\eps$.  
\end{enumerate}
\end{lemma}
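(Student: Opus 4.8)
The plan is to prove (a)--(c) in turn, with the geometric content of~(b) as the hardest point. For part~(a), I would first note that $\fg(w)$ is a weak-$*$ continuous, $1$-equivariant function on $\Sigma$, so $\dot\phi\mapsto\varpi(\fg(w)(\dot\phi))=|\phi(w)|$ is a well-defined continuous function on $G$ whose open support $\supp(\fg(w))$ is open, and $|\phi|\in\supp(\fg(w))$ iff $\phi(w)\ne0$. Given $\phi\in\Sigma$ with $\phi(w)\ne0$, write $\phi(w)=z_0|\phi(w)|$ with $z_0\in\bbT$; then $(\overline{z_0}\cdot\phi)(w)=\overline{z_0}\,\phi(w)>0$, so Theorem~\ref{cefform}(b) gives $\overline{z_0}\cdot\phi=[w,f]$ with $f:=\fs(\overline{z_0}\cdot\phi)\in F$ and $f(w^*w)>0$. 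Since $|\phi|=|\overline{z_0}\cdot\phi|$, this proves $\supp(\fg(w))\subseteq\{|[w,f]|:f\in F,\ f(w^*w)>0\}$, while the reverse inclusion is immediate from $[w,f](w)=f(w^*w)^{1/2}>0$. To see this set is an open bisection, set $F_w:=\{f\in F:f(w^*w)>0\}$ (open in $F$) and observe that $\fs$ restricts to a continuous bijection $|[w,f]|\mapsto f$ of $\supp(\fg(w))$ onto $F_w$ whose inverse $f\mapsto|[w,f]|$ is continuous (it is $\fq$ applied to the continuous local section from the proof of Theorem~\ref{cefgroupoid}); hence $\fs$ is a homeomorphism there, and the analogue for $\fr$ follows since $\fr([w,f])=\tilde\beta_w(f)$ is recovered from $f$ via $\tilde\beta_{w^*}$.

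For part~(b), the key is that multiplying $w$ by $d$ does not move points of $G$, but only shrinks and bounds the source projection of the support. Since $1\in\D$ we have $w^*w\in\D$; writing $\sigma:=f|_\D$ for $f\in F$, the module identity~\eqref{CSMod} gives $f(d^*w^*x)=\overline{\sigma(d)}\,f(w^*x)$ for all $x\in\C$ and $f(d^*w^*wd)=|\sigma(d)|^2\sigma(w^*w)$. Hence, whenever $\sigma(d)\ne0$ (which by hypothesis forces $\sigma(w^*w)>0$), a direct computation yields $[wd,f]=\tfrac{\overline{\sigma(d)}}{|\sigma(d)|}[w,f]$, so that $|[wd,f]|=|[w,f]|$. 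Applying part~(a) to $wd\in\N(\C,\D)$, and writing $\pi:F\to\hat\D$, $\pi(f)=f|_\D$, for the restriction map and $\Phi:=(\fs|_{\supp(\fg(w))})^{-1}:F_w\to\supp(\fg(w))$ for the homeomorphism from part~(a), this identifies
\[\supp(\fg(wd))=\{|[w,f]|:f\in F,\ f|_\D(d)\ne0\}=\Phi\big(\pi^{-1}(\supp\hat d)\big).\]

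I would then finish by a compactness argument. Because $\pi$ is continuous and $\overline{\supp\hat d}\subseteq\supp\widehat{w^*w}$ by hypothesis,
\[\overline{\pi^{-1}(\supp\hat d)}\subseteq\pi^{-1}(\overline{\supp\hat d})\subseteq\pi^{-1}(\supp\widehat{w^*w})=F_w.\]
As $F$ is weak-$*$ compact, $\overline{\pi^{-1}(\supp\hat d)}$ is a compact subset of $F_w$, so $K:=\Phi(\overline{\pi^{-1}(\supp\hat d)})$ is compact, hence closed in the Hausdorff groupoid $G$, and contains $\supp(\fg(wd))$. Therefore $\overline{\supp}(\fg(wd))\subseteq K$ is compact, i.e.\ $\fg(wd)\in C_c(\Sigma,G,1)$.

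For part~(c), I would use continuous functional calculus on $a:=w^*w\in\D_+$. Given $\eps>0$, fix $0<\delta<\eps^2/2$ and set $d:=f_\delta(a)$, where $f_\delta:[0,\norm a]\to[0,1]$ is continuous with $f_\delta\equiv0$ on $[0,\delta]$ and $f_\delta\equiv1$ on $[2\delta,\norm a]$. Then $d=d^*\in\D$ and $\overline{\supp\hat d}\subseteq\{\hat a\ge\delta\}\subseteq\supp\widehat{w^*w}$, which is~\eqref{wcmptsup1}; and since $\D$ is abelian,
\[\norm{w-wd}^2=\norm{(1-d)^2w^*w}=\sup_{\sigma\in\hat\D}\big(1-f_\delta(\hat a(\sigma))\big)^2\hat a(\sigma)\le2\delta<\eps^2,\]
the bound holding because the supremand vanishes where $\hat a\ge2\delta$ and is at most $\hat a\le2\delta$ elsewhere. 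The main obstacle is the content of~(b): recognizing via~\eqref{CSMod} that $|[wd,f]|=|[w,f]|$, so that $\supp(\fg(wd))$ lies inside the single slice $\supp(\fg(w))$ and is carried by $\fs$ onto $\pi^{-1}(\supp\hat d)$, whose closure the hypothesis traps inside the open set $F_w$; the rest then reduces to compactness of $F$.
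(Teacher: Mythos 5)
Your proof is correct and follows essentially the same route as the paper's: identify $\supp(\fg(w))$ as $\{|[w,f]|: f(w^*w)>0\}$ via Theorem~\ref{cefform}, check injectivity of $\fs$ and $\fr$ on that set, and for (b) pull the compact set $\overline{\{f\in F: f(d)\neq 0\}}$, which the hypothesis traps inside $\{f\in F: f(w^*w)>0\}$, back through the source homeomorphism. The only divergence is in (c), where your uniform functional-calculus construction $d=f_\delta(w^*w)$ replaces the paper's case split on whether $0$ is isolated in the spectrum of $w^*w$ --- a minor streamlining, not a different argument.
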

\begin{proof}
(a) We have defined $\supp \fg(w)=\{|\phi|\in G:
  |\phi(w)|\neq 0\}$, so $\supp\fg(w)$ is open.
By definition, for any $f\in F$ with $f(w^*w)\neq 0$, $|[w,f]|\in
\supp\fg(w)$.  Conversely, 
  for
  $\phi=[v,f]\in\ceoF(\C,\D)$,
  \begin{align} |\phi|(w)\neq 0 & \Rightarrow 
                                   f(v^*w)\neq 0 \notag \\ 
    & \Rightarrow |[w,f]| =|[v,f]|
      \dstext{(Theorem~\ref{cefform}(f))} \notag\\
          & \Rightarrow f(w^*w)\neq 0.\notag 
  \end{align}  Thus, $\supp(\fg(w))=\{|[w,f]|: f(w^*w)>0\}$.

  We now show $\fr|_{\supp(\fg(w)}$ and $\fs|_{\supp(\fg(w)}$ are one-to-one.
  Let $|\phi_1|, |\phi_2|$ belong to $\supp(\fg(w))$, let
  $f_i:=\fs(|\phi_i|)$ and $g_i:=\fr(|\phi_i|)$.  As just observed, we
  may write $|\phi_i|=|[w,f_i]|$.  If $f_1=f_2$, then
  $|\phi_1|=|[w,f]|=|\phi_2|$, so $\fs|_{\supp \fg(w)}$ is one-to-one.
  On the other hand, if $g_1=g_2$, then
\[|\phi_1|^*=|\phi_1^*|=|[w^*,g_1]|=|[w^*,g_2]|=|\phi_2|^*.\]
Therefore, $|\phi_1|=|\phi_2|$ so $\fr|_{\supp\fg (w)}$ is one-to-one.    Thus 
$\supp(\fg(w))$ is an open bisection.

(b) Let $B:=\{f\in F: f(w^*w)\neq 0\}$.  Then $\fs|_{\supp \fg(w)}:
\supp\fg(w)\rightarrow B$ is a homeomorphism.

Suppose $d\in \D$ satisfies~\eqref{wcmptsup1}.
Let $A:=\{f\in F: f(d)\neq 0\}$ and observe that
$\fs(\supp(\fg(wd)))=A$.  For $g\in \overline A$, we may find
a net $(g_\lambda)$ in $A$ such that $g_\lambda\rightarrow g$.  Then
$(g_\lambda)|_\D\rightarrow g|_\D$, so $g|_\D\in \{\sigma\in \D:
\sigma(w^*w)\neq 0\}$.  Thus, $g(w^*w)\neq 0$, whence $\overline
A\subseteq B$.  Since $F$ is compact, so is $\overline A$.  But
$\fs^{-1}$ is a homeomorphism of $B$ onto $\supp (\fg(w))$ and therefore
$\fs^{-1}(\overline A)=\overline{\supp (\fg(wd))}$ is compact, as
desired.

(c)  If $w=0$, this is obvious, so assume $w\neq 0$.  Without loss of
generality, we may assume $\norm{w}=1$.   Let $S\subseteq [0,1]$ be
the spectrum of $w^*w$.   Suppose first that $0$ is
not an isolated point of $S\cup \{0\}$.
The sets  $X_\eps:=\{\sigma\in \hat\D: \sigma(w^*w)\geq \eps^2\}$ and
$Y_\eps:=\{\sigma\in \hat\D: \sigma(w^*w)\leq \eps^2/4\}$ are closed,
disjoint, and non-empty, so there exists an element $d\in \D$ with $0\leq d\leq I$
such that $\sigma(d)=1$ for  $\sigma \in X_\eps$ and $\sigma(d)=0$ for
$\sigma\in Y_\eps$.  Then for any $\sigma\in \hat\D$,
\[\sigma((I-d)w^*w(I-d))=\sigma(w^*w)\sigma(I-d)^2<\eps^2,\] so the
result holds in this case.

If $0$ is an isolated point of $S\cup\{0\}$, then there is a
projection $d\in \D$ such that $\hat d$ is the characteristic function of
$S\setminus\{0\}$.  Then $v=vd$ and $d$ satisfies \eqref{wcmptsup1}.

\end{proof}

\rm  Before stating the main result of this section, recall that
Proposition~\ref{invideal} shows that
$\K_F=\{x\in\C:
f(x^*x)=0\text{ for all } f\in F\}$ is an ideal of $\C$ whose
intersection with $\D$ is  trivial.  
\begin{theorem}\label{inctoid}  Let $(\C,\D)$ be a regular covering
  inclusion, $F$ a compatible cover for $\hat\D$,
  and let $G:=\fRF(\C,\D)$ and $\Sigma:=\ceoF(\C,\D)$.  
The map $\fg$ extends
uniquely to
a regular $*$-homomorphism $\theta_F:(\C,\D)\rightarrow
(C^*_r(\Sigma,G), C(\unit{G}))$ with $\ker\theta= \K_F$.
Furthermore, the following statements hold.
\begin{enumerate}
\item The \cstaralg\ generated by $\theta_F(\C)\cup C(\unit{G})$ is
  $C^*_r(\Sigma,G)$.
  \item 
  $\theta_F(\C)$ is dense in $C^*_r(\Sigma,G)$ in the
  $\Sigma$-pointwise topology.
\end{enumerate}
\end{theorem}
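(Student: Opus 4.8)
The plan is to set $\theta_F(a):=\fg(a)$ and to obtain everything from a single identification of Hilbert spaces carried out fibrewise over $\unit{G}=F$; uniqueness is then immediate since $\theta_F$ is already prescribed on all of $\C$. First I would record the purely algebraic $*$-facts at the level of functions: $\fg(a^*)=\fg(a)^*$ follows directly from $\phi^{-1}(x)=\overline{\phi(x^*)}$, and for $d\in\D$ the GNS vector $\xi_f$ is a $\D$-eigenvector, giving $\phi(d)=\fs(\phi)(d)\,\phi(I)$, while $\phi(I)\neq0$ precisely when $\dot\phi\in\unit G$; hence $\supp\fg(d)\subseteq\unit G$ and $\fg(d)=\hat d\circ\fr\in C(\unit G)$. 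Together with Lemma~\ref{wcmptsup}(a) (so $\fg(v)$ is supported in the open bisection $\supp\fg(v)$ for $v\in\N(\C,\D)$) this shows that, once $\theta_F$ is known to land in $C^*_r(\Sigma,G)$, it maps $\D$ into $C(\unit G)$ and normalizers to normalizers, i.e. is a regular homomorphism of inclusions.

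The technical core, and the step I expect to be the main obstacle, is to show $\fg(a)\in C^*_r(\Sigma,G)$ with $\|\fg(a)\|\le\|a\|$ in a way that simultaneously yields multiplicativity and the kernel. Fix $f\in F=\unit G$, regarded as a compatible state on $\C$ with GNS data $(\kappa_f,\H_f,\xi_f)$. For $\phi=[v,f]$ with $f(v^*v)=1$ put $e_\phi:=v+L_f$; Theorem~\ref{cefform}(c) shows $e_\phi$ depends only on $\dot\phi$, and saturation of Cauchy--Schwarz for compatible states (\cite[Proposition~4.4]{PittsStReInI}) shows $\{e_\phi:\dot\phi\in G_f\}$ is an orthonormal basis of $\H_f$. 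On the groupoid side the positive functional $\eps_f$ has GNS space identified with $\ell^2(G_f)$, and a matrix–coefficient computation (using $\fr([w,f])(x)=f(w^*xw)$ and \eqref{CSMod}) gives $\langle\pi_f(\fg(a))\delta_{\dot\psi},\delta_{\dot\phi}\rangle=(\phi\psi^{-1})(a)=f(v^*aw)=\langle\kappa_f(a)e_\psi,e_\phi\rangle$ for $\phi=[v,f]$, $\psi=[w,f]$. Thus the unitary $U_f\colon\H_f\to\ell^2(G_f)$, $e_\phi\mapsto\delta_{\dot\phi}$, satisfies $\pi_f(\theta_F(a))=U_f\kappa_f(a)U_f^{*}$; this is first proved for the dense subspace of $a$ with $\fg(a)\in C_c(\Sigma,G)$ (Lemma~\ref{wcmptsup}(b),(c)), the resulting bound $\|\fg(a)\|=\sup_f\|\kappa_f(a)\|\le\|a\|$ lets $\fg$ extend to a contraction $\theta_F$ on $\C$, and the intertwining then holds for all $a$ by continuity. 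Since the norm on $C^*_r(\Sigma,G)$ is $\sup_f\|\pi_f(\cdot)\|$ the family $\{\pi_f\}$ is jointly faithful, so the intertwining forces $\theta_F(ab)=\theta_F(a)\theta_F(b)$. Finally $a\in\ker\theta_F$ iff $\kappa_f(a)=0$ for all $f$; as $\K_F$ is an ideal (Proposition~\ref{invideal}), $a\in\K_F$ gives $\kappa_f(a)=0$, while conversely $\kappa_f(a)=0$ gives $f(a^*a)=0$, whence $\ker\theta_F=\K_F$.

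For (a) I would show that $C^*(\theta_F(\C)\cup C(\unit G))$ contains every continuous section supported in a slice, these having dense span in $C^*_r(\Sigma,G)$. For $v\in\N(\C,\D)$ with $\fg(v)\in C_c$, the slice $\supp\fg(v)$ carries $\fg(v)$ as a nonvanishing section, and left multiplication by $h\in C(\unit G)$ gives the section $\dot\gamma\mapsto h(r(\dot\gamma))\fg(v)(\gamma)$; since $\fr$ restricts to a homeomorphism on the bisection, letting $h$ vary these approximate, in the $C^*_r$-norm, an arbitrary $C_c$-section supported in a compact subset of $\supp\fg(v)$. As the slices $\supp\fg(v)$ cover $G$ and $C_c(\Sigma,G)$ is norm-dense in $C^*_r(\Sigma,G)$, (a) follows.

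Statement (b) I would reduce to a linear–independence lemma. Since the $\Sigma$-pointwise topology is the weak topology induced by the evaluations $\mathrm{ev}_\gamma\colon F\mapsto F(\gamma)$ ($\gamma\in\Sigma$), Hahn--Banach shows $\theta_F(\C)$ is dense iff every finite combination $\ell=\sum_i c_i\,\mathrm{ev}_{\gamma_i}$ vanishing on $\theta_F(\C)$ vanishes on $C^*_r(\Sigma,G)$; using $1$-equivariance of the elements of $C^*_r(\Sigma,G)$ and of the $\gamma_i$ to collapse each $\bbT$-orbit, this becomes the assertion that finitely many elements of $\Sigma$ with distinct images in $G$ are linearly independent as functionals on $\C$. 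That in turn follows from the separation statement that for distinct $\dot\phi_1,\dots,\dot\phi_n\in G$ and each $i$ there is $w\in\N(\C,\D)$ with $\phi_i(w)\neq0$ and $\phi_j(w)=0$ for $j\neq i$, for then evaluating $\sum_k c_k\phi_k=0$ at $w$ gives $c_i\phi_i(w)=0$, so $c_i=0$. I expect this separation to be the delicate point of (b); it rests on $G$ being Hausdorff (Theorem~\ref{cefgroupoid}) and on the slices $\{\supp\fg(w):w\in\N(\C,\D)\}$ forming a basis for its topology, so that the open set $G\setminus\{\dot\phi_j:j\neq i\}$ contains a basic slice neighbourhood of $\dot\phi_i$.
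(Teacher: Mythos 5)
Your construction of $\theta_F$ itself is correct but follows a genuinely different route from the paper's. The paper first proves $\fg(w_1w_2)=\fg(w_1)\fg(w_2)$ and $\fg(w)^*=\fg(w^*)$ by a direct convolution computation on sections supported in the bisections $\supp\fg(w_i)$, and only afterwards obtains a one-sided norm estimate from an \emph{isometry} $a+L_f\mapsto\fg(a)+\N_f$ of $\H_{\C,f}$ into $\H_{\Sigma,f}$ (whose very construction uses the multiplicativity just proved). You instead observe that for a compatible state $f$ the normalized classes $e_\phi=v+L_f$ ($\phi=[v,f]$, $f(v^*v)=1$) form an orthonormal basis of $\H_f$ --- this is right: orthogonality is the Cauchy--Schwarz dichotomy of \cite[Proposition~4.4]{PittsStReInI} combined with Theorem~\ref{cefform}(c),(f), and totality is regularity --- and you verify the matrix-coefficient identity $(\phi\psi^{-1})(a)=f(v^*aw)$ directly from \eqref{CSMod} and the definition of the product in $\Sigma$. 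The resulting unitary equivalence $\pi_f\circ\theta_F\cong\kappa_f$ then delivers contractivity, multiplicativity and $\ker\theta_F=\K_F$ in one stroke; this is a clean packaging the paper does not use. Your part (a) is also sound and close in spirit to the paper's (the paper multiplies on both sides and invokes Lemma~\ref{ceno} plus an approximate-unit argument; you use only left multiplication and the homeomorphism $\fr|_{\supp\fg(v)}$, which works equally well modulo the routine partition-of-unity step).

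Part (b), however, has a genuine gap. The reduction to linear independence of finitely many $\phi_k\in\Sigma$ with distinct images in $G$ matches the paper, but your proposed proof of that independence --- that the slices $\supp\fg(w)$, $w\in\N(\C,\D)$, form a basis for the topology of $G$, so that for each $i$ there is $w$ with $\phi_i(w)\neq0$ and $\phi_j(w)=0$ for $j\neq i$ --- is false in general. Take $\C=C(X)\otimes\bbC^2$, $\D=C(X)\otimes 1$, and $F=\hat\C=X\times\{\pm\}$, a compatible cover. Here $\N(\C,\D)=\{w=(w_+,w_-): |w_+|=|w_-|\}$, so $w_+(x)=0$ if and only if $w_-(x)=0$; consequently every set $\supp\fg(w)=\{\dot\alpha: |\alpha(w)|\neq0\}$ is saturated for the two-to-one map $F\rightarrow\hat\D$ and can never separate the distinct units $\mathrm{ev}_{(x,+)}$ and $\mathrm{ev}_{(x,-)}$ of $G$. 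Thus neither the basis claim nor the separation statement holds, and your argument collapses exactly at the point you flagged as delicate. (The independence itself survives in this example, since the $\phi_k$ are distinct characters of an abelian algebra; and, to be fair, the paper's own argument --- which separates the $\phi_k$ by elements $d,e\in\D$ acting only through $\fr(\phi_k)|_\D$ and $\fs(\phi_k)|_\D$ --- runs into the same difficulty whenever $r:F\rightarrow\hat\D$ fails to be injective. A complete proof of (b) needs an argument that distinguishes points of $F$ lying over the same point of $\hat\D$, not merely their restrictions to $\D$.)
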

\begin{proof}
  Throughout the proof, $\bbE$ will denote the (necessarily faithful)
  conditional expectation of $C^*_r(\Sigma,G)$ onto $C(\unit{G})$.
  Also, let $\N_0(\C,\D):=\{v\in \N(\C,\D): \fg(v)\in C_c(\Sigma,G)\}$
  and $\C_0:=\spn\N_0(\C,\D)$.  Finally, during the proof, we will
  write $\theta$ instead of $\theta_F$.
  
   Once again, recall $F=\unit{G}$.
   We regard $C_c(\Sigma, G)$ as a dense subalgebra of
$C^*_r(\Sigma, G)$.  
   A computation shows that any element of $C_c(\Sigma, G)$ supported in
   an open bisection of $G$ belongs to $\N(C^*_r(\Sigma, G), C(\unit{G}))$.
   Clearly 
$\fg: \C_0\rightarrow C_c(\Sigma,G)$.   A computation shows
that for $d\in\D$, $\fg(d)$ is supported on $\unit{G}$.  So 
$\fg(\D)\subseteq C(\unit{G})$.

Next we show that $\fg$ is a $*$-homomorphism of $\C_0$ into
$C_c(\Sigma, G)$.  To do this, it suffices to show that
for $w, w_1, w_2\in
\N_0(\C,\D)$,
\begin{equation}\label{nprod}
  \fg(w_1w_2)=\fg(w_1)\fg(w_2)
\end{equation}
and 
\begin{equation}\label{nprod*}
  \fg(w)^*=\fg(w^*).
\end{equation}
Indeed, these equalities imply $\N_0(\C,\D)$ is a $*$-semigroup, and as
$\fg$ is linear, it will follow that $\fg$ is a $*$-homomorphism.    We
now establish~\eqref{nprod} and~\eqref{nprod*}.

For $i=1,2$, view $\fg(w_i)$ as a continuous section of
the line bundle $L$, that is, $\fg(w_i)(\dot\phi)=[\phi(w_i),
\phi_i]_L$, where we have (temporarily) added a subscript to aid in
distinguishing elements of $L$ from  elements
$\phi=[v,\fs(\phi)]\in \Sigma$.

Suppose $\dot\phi\in\supp (\fg(w_1)\fg(w_2))$.
Lemma~\ref{wcmptsup} shows that for $i=1,2$, $\fg(w_i)$ are supported
on open bisections.  An examination of the definition of
multiplication in $C_c(\Sigma,G)$ (see~\eqref{Lkops}) shows that there
is exactly one composable pair $(\dot\phi_1,\dot\phi_2)\in G^{(2)}$
which contributes to the sum defining the product; that is, for
$i=1,2$, there there are unique $\dot \phi_i\in \supp(\fg(w_i))$
with
\[\fs(\dot\phi_2)=\fs(\dot\phi),\quad
  \fs(\dot\phi_1)=\fr(\dot\phi_2)\dstext{and}
  \dot\phi=\dot\phi_1\dot\phi_2.\]
As $\dot\phi\in\supp(\fg(w_1)\fg(w_2))$,
$\fg(w_i)(\dot\phi_i)\neq 0$, so   regardless of the choice of
$\phi_i\in \fq^{-1}(\dot\phi_i)$, we have $\phi_i(w_i)\neq 0$.  By
multiplying by appropriate elements of $\bbT$, we may choose  $\phi_i\in
\fq^{-1}(\dot\phi_i)$  so that $\phi_i(w_i)>0$.  With
these choices of $\phi_i$, we now
take $\phi:=\phi_1\phi_2\in \fq^{-1}(\phi)$.  In
particular, by
Theorem~\ref{cefform}, we may represent
\[\phi_i=[w_i, \fs(\phi_i)], \dstext{so that}\phi=[w_1w_2,\fs(\phi)].\] 
Then 
\begin{equation}\label{inctoidA}
  (\fg(w_1)\fg(w_2))(\dot\phi)=\fg(w_1)(\dot\phi_1)\fg(w_2)(\dot\phi_2)=[\phi_1(w_1),\phi_1]_L[\phi_2(w_2),\phi_2]_L=[\phi_2(w_1)\phi_2(w_2),\phi]_L.
\end{equation}
As
$\fs(\phi_2)=\fs(\phi)$, 
\begin{align*}
  \phi_1(w_1)\phi_2(w_2)&=\sqrt{\fs(\phi_1)(w_1^*w_1)\,  \fs(\phi_2)(w_2^*w_2)}\\
  &=\sqrt{\fr(\phi_2)(w_1^*w_1)\, \fs(\phi_2)(w_2^*w_2)}=
                         \left(\frac{\fs(\phi_2)(w_2^*w_1^*w_1w_2)}{\fs(\phi_2)(w_2^*w_2)}\right)^{1/2}(\fs(\phi_2)(w_2^*w_2))^{1/2}\\
  &=\sqrt{\fs(\phi_2)(w_2^*w_1^*w_1w_2)}=[w_1w_2,\fs(\phi)](w_1w_2)=\phi(w_1w_2).
\end{align*}
Therefore, when $\dot\phi\in \supp(\fg(w_1)\fg(w_2))$,  
\begin{equation}\label{inctoidD}
  (\fg(w_1)\fg(w_2))(\dot\phi) = 
  [\phi_1(w_1)\phi_2(w_2),\phi]_L
  =[\phi(w_1w_2),\phi]_L=\fg(w_1w_2)(\dot\phi),
\end{equation}
where the first equality uses~\eqref{inctoidA}.
It follows that $\supp(\fg(w_1)\fg(w_2))\subseteq \supp(\fg(w_1w_2))$.

Now suppose $\dot\phi\in \supp(\fg(w_1w_2))$.  Choose $\phi\in
\fq^{-1}(\dot\phi)$ so that $\phi(w_1w_2)>0$ and write
$\phi=[w_1w_2,\fs(\phi)]$, so that
\begin{equation}\label{inctoidB}
  0<  \phi(w_1w_2)=\sqrt{\fs(\phi)(w_2^*w_1^*w_1w_2)}.
\end{equation}
The
Cauchy-Schwartz inequality gives $\fs(\phi)(w_2^*w_2)\neq 0$, so
$\phi_2:=[w_2,\fs(\phi)]\in\Sigma$.  Furthermore,~\eqref{inctoidB}
shows $\fr(\phi_2)(w_1^*w_1)\neq 0$, so $\phi_1:=[w_1,\fr(\phi_2)]\in
\Sigma$.  This yields the factorization
$\dot\phi=\dot\phi_1\dot\phi_2$.  Also, for $i=1,2$, $\phi_i(w_i)\neq
0$, so $\dot\phi_i\in \supp(\fg(w_i))$.   But then 
\[0\neq
  \fg(w_1)(\dot\phi_1)\,\fg(w_2)(\dot\phi_2)=(\fg(w_1)\fg(w_2))(\dot\phi),\]
so $\dot\phi\in \supp(\fg(w_1)\fg(w_2))$.  We have now shown that
$\supp(\fg(w_1)\fg(w_2))= \supp(\fg(w_1w_2)$.  Then~\eqref{inctoidD}
gives~\eqref{nprod}, as desired.

For any $w\in \N_0(\C,\D)$ and $\phi=[v,\fs(\phi)]\in \Sigma$,
$\phi^{-1}=[v^*, \fr(\phi)]$, so 
\[\overline{\phi^{-1}(w)}=
  \frac{\overline{\fr(\phi)(vw)}}{\fr(\phi)(vv^*)^{1/2}}
  =\frac{\overline{\fs(\phi)(wv)}}{\fs(\phi)(v^*v)^{1/2}}
 =[v,\fs(\phi)](w^*)=\phi(w^*).\]  Therefore, 
\begin{align*}
  (\fg(w)^*)(\dot\phi)&=\overline{\fg(w)(\dot\phi^{-1})}=\overline{[\phi^{-1}(w),
                    \phi^{-1}]_L}\\
                  &=[\overline{\phi^{-1}(w)},\phi]_L=[\phi(w^*),\phi]_L=\fg(w^*)(\dot\phi).
\end{align*}
Thus~\eqref{nprod*} holds, and, as noted earlier, we conclude  $\fg$ is a
$*$-homomorphism.

We now turn to showing that $\fg$ is contractive.  The point is that the norms on
   $\C/\K_F$ and $C^*_r(\Sigma,G)$ both arise from the left regular
   representation on appropriate spaces.  Here are the details.

   Let $f\in F.$ Then $f$ can be regarded as either a state on $\C$ or as
determining a state on $C^*_r(\Sigma, G)$ via
evaluation at $f$.  We write $f_\C$ when viewing $f$ as a state on
$\C$, and $f_\Sigma$ when viewing $f$ as a state on
$C^*_r(\Sigma, G)$.

Let $(\pi_{\C,f},\H_{\C,f})$ be the GNS representation of $\C$ arising
from $f_\C$, and let $(\pi_{\Sigma,f},\H_{\Sigma,f})$ be the GNS
representation of $C^*_r(\Sigma,G)$ determined by $f_\Sigma$.

Now fix $f\in \unit{G}$.  For $a_1, a_2\in \C_0$,
\begin{align*}
  \innerprod{a_1+L_f,
    a_2+L_f}_{\H_\C}=f_\C(a_2^*a_1)&=
  \fg(a_2^*a_1)(f)=(\fg(a_2)^*\fg(a_1))(f)=f_\Sigma(\fg(a_2)^*\fg(a_1))\\
  &=\innerprod{\fg(a_1)+\N_f,
    \fg(a_2)+\N_f}_{\H_\Sigma}.
\end{align*}
It follows that the map $a+L_f\mapsto \fg(a)+\N_f$ extends to an
isometry $W_f:\H_{\C,f}\rightarrow \H_{\Sigma,f}$.  

Next, notice that for $a_1, a_2\in \C_0$,
\[\pi_{\Sigma,f}(\fg(a_1))W_f(a_2+L_f)=\fg(a_1a_2)+\N_f=W_f\pi_{\C,f}(a_1)
  (a_2+L_f).\]
Thus for every $a\in \C_0$,
\[\pi_{\C,f}(a)=W^*_f\pi_{\Sigma,f}(\fg(a))W_f, \dstext{so}
  \norm{a}_\C\geq \norm{\pi_{\C,f}(a)}\geq \norm{\pi_{\Sigma,f}(\fg(a))}.\]  We conclude
that for $a\in \C_0$,
\[\norm{a}_\C\geq \sup_{f\in
    F}\norm{\pi_{\Sigma,f}(\fg(a))}=\norm{\fg(a)}_{C^*_r(\Sigma,G)}.\]

Lemma~\ref{wcmptsup} implies  $\C_0$ is norm-dense in $\C$, so $\fg$
extends by continuity to a $*$-homomorphism $\theta:\C\rightarrow
C^*_r(\Sigma,G)$.   
For $v\in\N(\C,\D)$,
Lemma~\ref{wcmptsup} shows that $v\in \overline{\N_0(\C,\D)}$, so
$\theta(v)\in \overline{\fg(\N_0(\C,\D))}\subseteq \N(C^*_r(\Sigma,G),
C(\unit{G}))$.  Thus,
$\theta$ is a regular $*$-homomorphism. 

Let us show $\K_F=\ker\theta$.   Then for $f\in  F$,
$f_\Sigma\circ\bbE=f_\Sigma$.  Furthermore,  $f_\C$
and $f_\Sigma\circ \theta$ agree on the dense set $\C_0$, so
$f_\C=f_\Sigma\circ \theta=f_\Sigma\circ\bbE\circ \theta$.  
For $x\in \C$,
\[x\in \K_F\Leftrightarrow 
  \bbE(\theta(x^*x))=0\Leftrightarrow x\in \ker\theta.\]

We now  establish statement (a), that is,
the \cstaralg\ generated by $\theta(\C)\cup
C(\unit{G})$ is $C^*_r(\Sigma,G)$.  
Let
\[\M:=\{h\fg(v) k: v\in \N_0(\C,\D) \text{ and } h, k\in
  C(\unit{G})\}.\] Then $\M$ contains $C(\unit{G})$ and is a
$*$-semigroup of normalizers in $C^*_r(\Sigma,G)$.  We will show
$\spn \M$ is dense in $C^*_r(\Sigma,G)$.
We require the following fact. 

\begin{remark*}{Fact}
\textit{Suppose $U\subseteq G$ is an open bisection and $u\in
  C_c(\Sigma,G)$ satisfies $\overline{\supp(u)}\subseteq U$.  If
  $\dot\phi\in \supp(u)$, then there exists $k\in C(\unit{G})$ such
  that  $uk\in \M$ and \[\dot\phi\in \supp(uk)\subseteq
  \overline{\supp(uk)}\subseteq \supp(u).\]
}
\end{remark*}
\proof[Proof of the Fact.]
Note that if
$w\in C_c(\Sigma,G)$ satisfies $\supp w\subseteq U$, then
$\dot\tau\in \supp(w)
    \Leftrightarrow \fs(\dot\tau)\in \supp(w^*w)$.  We will repeatedly
    use this.

  Choose $\phi\in \fq^{-1}(\dot\phi)$ and write $\phi=[v,\fs(\phi)]$
  for some $v\in \N(\C,\D)$.  As $\phi(v)>0$, by Lemma~\ref{wcmptsup},
  we may assume without loss of generality that $v\in \N_0(\C,\D)$.
  Then $\dot\phi\in \supp(\fg(v))\cap \supp(u)$.  Therefore, there
  exists $h\in C(\unit{G})$ so that $h(\fs(\dot\phi))=1$ and
  $\overline{\supp h}\subseteq (\supp(\fg(v^*v))\cap \supp(u^*u))$.  Thus, 
\[\dot\phi\in \supp(\fg(v)h)\subseteq \supp(u).\]
By Lemma~\ref{ceno},  $\bbE(u h^*\fg(v^*)) \, \fg(v)h= u h^*\fg(v^*v) h$.
Take $k=h^*\fg(v^*v)h$.  Then $k\in C(\unit{G})$ and
$uk\in \M$.  Since $\fs(\dot\phi)\in\supp(\fg(v^*v))\cap
\supp(h)\subseteq\supp(\fg(v^*v)) \cap \supp(u^*u)$, we
have $\dot\phi\in \supp(uk)$.
This establishes the fact.
\hfill$\diamondsuit$

\vskip 6pt

Now fix $u\in C_c(\Sigma,G)$ with the property that its closed support
is contained in an open bisection $U\subseteq G$.  Let
\[J:=\{k\in C(\unit{G}): uk\in \overline{\spn}\M\}.\]
As $\M$
is a $*$-semigroup, $J$ is a closed ideal of $C(\unit{G})$.   The
fact shows that if $\dot\phi\in \supp(u)$, then there exists $k\in J$
such that $\dot\phi\in \supp(uk)$.   Thus $\fs(\dot\phi)$ does
not annihilate $J$ because $(uk)(\dot\phi)=u(\dot\phi)\,
k(\fs(\dot\phi))$.  Therefore, $C_0(\supp(u^*u))\subseteq J$.  
Let $(h_\lambda)_{\lambda\in \Lambda}$ be an approximate unit for
$C_0(\supp(u^*u))$.   Then
\[\norm{u-uh_\lambda}^2=\norm{u^*u-2u^*uh_\lambda+h_\lambda^2
    u^*u}\rightarrow 0.\]  Thus, $u\in \overline{\spn}\M$.

It follows from~\cite[Proposition~3.10]{ExelInSeCoC*Al} that $C_c(\Sigma,G)\subseteq \overline\spn\M$,
and therefore $\spn\M$ is dense in $C^*_r(\Sigma,G)$.  This
establishes part (a).

Finally, we turn to establishing part (b),  the $\Sigma$-pointwise  
density of $\theta(\C)$ in $C^*_r(\Sigma,G)$.  
Let $\X\subseteq\dual{C^*_r(\Sigma,G)}$ 
be the linear span of the evaluation functionals $\xi\mapsto
\xi(\phi)$ where $\xi\in C^*_r(\Sigma,G)$ and $\phi\in \Sigma$.  
Suppose $\mu\in \X$ annihilates $\theta(\C)$.  Then there exists
$n\in\bbN$, scalars $\lambda_1,\dots, \lambda_n$, elements $v_1,\dots,
v_n\in\N(\C,\D)$ and $f_1,\dots, f_n\in F$ such that  for any $\xi\in
C^*_r(\Sigma,G)$, 
$$\mu(\xi)=\sum_{k=1}^n \lambda_k\xi([v_k,f_k]).$$   Without loss of
generality, we may assume that $[v_i,f_i]\neq [v_j,f_j]$ if $i\neq j$.
 Since $\mu$ annihilates
$\theta(\C)$, for every $a\in \C_0$, 
$$0=\mu(\fg(a))=\sum_{k=1}^n \lambda_k[v_k,f_k](a).$$  Fix $1\leq
j\leq n$, and let $d,e\in \D$ be such that
$\beta_{v_j}(f_j)(d)=f_j(e)=1$.  For  $i\neq j$, since $[v_i,f_i]\neq
[v_j,f_j]$, either $f_j\neq f_i$ or $\beta_{v_i}(f_i)\neq
\beta_{v_j}(f_j)$.  Hence we assume that  $d$ and $e$ have been chosen
so
that if $i\neq j$, then $[v_i,f_i](dv_je)=0$.  Then 
\[\mu(\fg(v_j))=\lambda_jf_j(v_j^*v_j)^{1/2}=0.\]  As
$f_j(v_j^*v_j)\neq 0$, we obtain $\lambda_j=0$.  It follows that
$\mu=0$.  Since the dual of $C^*_r(\Sigma,G)$ equipped with the
$\Sigma$-pointwise topology is $\X$, we conclude that $\theta(\C)$
is dense in the $\Sigma$-pointwise topology on $C^*_r(\Sigma,G).$      
This completes the proof.

\end{proof}

Recalling that $\K_F\cap \D=(0)$, we will abuse notation and view $\D$
as a subalgebra of $\C$ or $\C/\K_F$ depending on context. 
The following is immediate.

\begin{corollary}\label{inctoidCor1} $(C^*_r(\Sigma,
  G), C(\unit{G}),\theta_F)$ is a \cover\ for $(\C/\K_F,\D)$.
\end{corollary}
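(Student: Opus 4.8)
The plan is to check the four defining conditions of a \cover\ (Definition~\ref{Cenvdef}(2)) for the triple $(C^*_r(\Sigma,G),C(\unit{G}),\theta_F)$ relative to the inclusion $(\C/\K_F,\D)$, where the remark preceding the corollary (together with Proposition~\ref{invideal}) gives $\K_F\cap\D=(0)$, so that $\D$ is legitimately regarded as a subalgebra of $\C/\K_F$. Three of the four conditions are already in hand. First, $(C^*_r(\Sigma,G),C(\unit{G}))$ is a regular inclusion carrying a faithful conditional expectation $\bbE$ onto $C(\unit{G})$, by Theorem~\ref{inctoid} and Proposition~\ref{trivrad4twists} (recall $\unit{G}=F$ is compact). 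Second, since $\ker\theta_F=\K_F$ by Theorem~\ref{inctoid}, $\theta_F$ factors through an injective $*$-homomorphism $\tilde\theta_F\colon\C/\K_F\to C^*_r(\Sigma,G)$ with $\theta_F=\tilde\theta_F\circ q$, where $q\colon\C\to\C/\K_F$ is the quotient map; this $\tilde\theta_F$ is the required $*$-monomorphism, and its regularity is inherited from that of $\theta_F$, since $(\C/\K_F,\D)$ is a regular inclusion whose normalizers are densely spanned by $q(\N(\C,\D))$ and since $\theta_F(\N(\C,\D))\subseteq\N(C^*_r(\Sigma,G),C(\unit{G}))$. Third, the generation identity $C^*_r(\Sigma,G)=C^*(\theta_F(\C)\cup C(\unit{G}))$ is exactly Theorem~\ref{inctoid}(a).

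The one point requiring a short argument is the generation of the diagonal, namely $C(\unit{G})=C^*(\bbE(\theta_F(\C)))$; granting this, the remaining identity $C^*_r(\Sigma,G)=C^*(\theta_F(\C)\cup\bbE(\theta_F(\C)))$ follows from Theorem~\ref{inctoid}(a) because $\bbE(\theta_F(\C))\subseteq C(\unit{G})$. The key computation is the evaluation formula $\bbE(\theta_F(x))(f)=f(x)$ for $x\in\C$ and $f\in F=\unit{G}$, which is precisely the content of the identity $f_\C=f_\Sigma\circ\bbE\circ\theta_F$ established in the proof of Theorem~\ref{inctoid}. Viewing $\bbE(\theta_F(\C))$ inside $C(F)$, it is therefore the self-adjoint family of functions $\{f\mapsto f(x):x\in\C\}$, which contains the constants (take $x=I$) and separates the points of $F$ because $F\subseteq\dual{\C}$. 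Stone--Weierstrass then gives $C^*(\bbE(\theta_F(\C)))=C(F)=C(\unit{G})$, completing the fourth condition.

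The main obstacle is this diagonal-generation step, since it is the only place one must go beyond quoting Theorem~\ref{inctoid}. The subtlety is that $\theta_F(\D)$ by itself does \emph{not} generate $C(\unit{G})$: as functions on $F$ its elements are constant on the fibres of the restriction map $F\to\hat\D$, which is typically non-injective, so $\theta_F(\D)$ fails to separate points of $F$. What restores point-separation is passing through $\bbE\circ\theta_F$ on \emph{all} of $\C$, and recognizing via the formula $\bbE(\theta_F(x))(f)=f(x)$ that this recovers the full state-evaluation family over $F$. Once that formula and Stone--Weierstrass are invoked, all four conditions of Definition~\ref{Cenvdef}(2) hold, so $(C^*_r(\Sigma,G),C(\unit{G}),\theta_F)$ is a \cover\ for $(\C/\K_F,\D)$.
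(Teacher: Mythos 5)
Your proposal is correct, and it is worth noting that the paper offers no argument at all here --- the corollary is introduced with ``The following is immediate'' --- so you are supplying the verification the author leaves to the reader. Your checklist against Definition~\ref{Cenvdef}(2) is the right one, and the three conditions you dispose of by citing Theorem~\ref{inctoid} and Proposition~\ref{trivrad4twists} really are immediate. You have also correctly isolated the one condition that is \emph{not} quite immediate, namely $C(\unit{G})=C^*(\bbE(\theta_F(\C)))$, and your treatment of it is exactly right: the identity $f_\C=f_\Sigma\circ\bbE\circ\theta_F$ from the proof of Theorem~\ref{inctoid} shows that $\bbE(\theta_F(\C))$, viewed in $C(F)$, is the self-adjoint, unital, point-separating family $\{f\mapsto f(x):x\in\C\}$, and Stone--Weierstrass finishes it. Your remark that $\theta_F(\D)$ alone does not separate the points of $F$ (being constant on the fibres of $r:F\to\hat\D$) is a genuinely useful observation that explains why the full algebra $\C$ must be pushed through $\bbE$. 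One small caveat: to certify that the induced map $\tilde\theta_F$ is a regular $*$-monomorphism in the strict sense of the paper's definition, one should check that \emph{every} element of $\N(\C/\K_F,\D)$ is sent into $\N(C^*_r(\Sigma,G),C(\unit{G}))$, whereas your argument only handles the (densely spanning) image of $\N(\C,\D)$ under the quotient map; this is the same looseness implicit in the paper's own ``immediate,'' and it does not affect any of the uses made of the corollary, but it deserves a sentence if you want the verification to be airtight.
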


Recall that when $(\C,\D)$ has the unique pseudo-expectation property,
then $\fS_s(\C,\D):=\dual{E}(\widehat{I(\D)})$, where $E$ is the pseudo-expectation.  

When $(\C,\D)$ has a Cartan envelope, the following gives a
description
of the Cartan envelope as the
\cstaralg\ of a twist. 

\begin{corollary}\label{inctoidCor3}
  Suppose $(\C,\D)$ has  the unique pseudo-expectation
  property, and let $F:=\fS_s(\C,\D)$.   Then 
  $(C^*(\Sigma_F,G_F), C(\unit{G_F}),\theta_F)$ is the Cartan envelope for $(\C/\K_F,\D)$.
\end{corollary}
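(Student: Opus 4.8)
The plan is to deduce the result directly from Proposition~\ref{CEenv=Cenv}, applied not to $(\C,\D)$ itself but to the quotient inclusion $(\C/\K_F,\D)$. The key preliminary observation is that, for $F=\fS_s(\C,\D)=\dual{E}(\widehat{I(\D)})$, the ideal $\K_F$ coincides with the left kernel $\L(\C,\D)=\{x\in\C:E(x^*x)=0\}$ of the pseudo-expectation. Indeed, $x\in\K_F$ means $\rho(E(x^*x))=0$ for every $\rho\in\widehat{I(\D)}$; since $E(x^*x)\geq 0$ in $I(\D)$ and the characters of the abelian \cstaralg\ $I(\D)$ separate its points, this is equivalent to $E(x^*x)=0$, i.e.\ to $x\in\L(\C,\D)$. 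Hence $\K_F=\L(\C,\D)$.

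With this identification in hand, Corollary~\ref{qupse} applies: regarding $\D$ as a subalgebra of $\C/\K_F$ (legitimate since $\K_F\cap\D=(0)$ by Proposition~\ref{invideal}), the inclusion $(\C/\K_F,\D)$ is regular and has the \emph{faithful} unique pseudo-expectation property. On the other hand, Theorem~\ref{inctoid} produces the regular $*$-homomorphism $\theta_F$ with $\ker\theta_F=\K_F$, and Corollary~\ref{inctoidCor1} records that the induced cover $(C^*_r(\Sigma_F,G_F),C(\unit{G_F}),\theta_F)$ is a \cover\ for $(\C/\K_F,\D)$.

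It then remains only to invoke Proposition~\ref{CEenv=Cenv} with $(\C/\K_F,\D)$ in the role of $(\C,\D)$. Its hypotheses hold: $(\C/\K_F,\D)$ is a regular inclusion with the unique pseudo-expectation property, and we have exhibited a \cover. Since $(\C/\K_F,\D)$ in fact enjoys the \emph{faithful} unique pseudo-expectation property---condition (c) of that proposition---the equivalence (c)$\Leftrightarrow$(d) yields that $(C^*_r(\Sigma_F,G_F),C(\unit{G_F}),\theta_F)$ is a Cartan envelope for $(\C/\K_F,\D)$, which is the assertion.

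I do not expect a genuine obstacle at this stage: the substantive work---building the twist $(\Sigma_F,G_F)$, the homomorphism $\theta_F$, the computation of its kernel, and the \cover\ and density properties---is already carried out in Theorem~\ref{inctoid} and Corollary~\ref{inctoidCor1}, while the implication from the faithful unique pseudo-expectation property to ``Cartan envelope'' is exactly Proposition~\ref{CEenv=Cenv}. The single point needing care is the identity $\K_F=\L(\C,\D)$, as it is this that licenses the use of Corollary~\ref{qupse} to see that the quotient carries a \emph{faithful} pseudo-expectation; everything else is assembly of results established above.
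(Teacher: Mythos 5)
Your proof is correct, and it reaches the conclusion by a slightly different route than the paper, though both ultimately rest on Proposition~\ref{CEenv=Cenv} applied to the \cover\ $(C^*_r(\Sigma_F,G_F),C(\unit{G_F}),\theta_F)$ of the quotient inclusion $(\C/\K_F,\D)$. The paper verifies condition (b) of that proposition: Theorem~\ref{2fP}(b) shows $F=\fS_s(\C,\D)$ is the minimal closed subset of $\Mod(\C,\D)$ covering $\hat\D$, hence $(F,r)$ is an essential cover and $(C(F),\theta_F|_\D)$ is an essential extension of $\D$. You instead verify condition (c): the identity $\K_F=\L(\C,\D)$ --- valid because $I(\D)$ is abelian, so its characters separate points and $\rho(E(x^*x))=0$ for all $\rho\in\widehat{I(\D)}$ if and only if $E(x^*x)=0$ --- licenses Corollary~\ref{qupse}, which says $(\C/\K_F,\D)$ has the faithful unique pseudo-expectation property. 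Both routes need the quotient inclusion to carry the unique pseudo-expectation property before Proposition~\ref{CEenv=Cenv} can be invoked at all; your argument makes this explicit where the paper leaves it implicit, which is a small gain in completeness. The one step you should state rather than assume is that $F=\fS_s(\C,\D)$ is a compatible cover for $\hat\D$ (Theorem~\ref{upse=>cov}); this is what puts you in the standing hypotheses under which Theorem~\ref{inctoid} and Corollary~\ref{inctoidCor1} were proved, and it is the first line of the paper's own proof.
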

\begin{proof}
Theorem~\ref{upse=>cov} shows $F=\unit{G}$ is a compatible cover for $\hat\D$
and Theorem~\ref{2fP}(b) implies that $(F,r)$ is an essential cover
for $\hat\D$.  Therefore, the map $\alpha: \D\rightarrow C(F)$ given
by $d\mapsto \hat d\circ r$ yields an essential extension $(C(F),
\alpha)$ for $\D$.   As  $\theta_F|_\D= \alpha$,
Theorem~\ref{inctoid} shows that
$(C^*_r(\Sigma_F,G_F), C(F), \theta)$ is a Cartan envelope for $(\C/\K_F,\D)$.   

  \end{proof}

Different compatible covers yield different twists, and hence
different reduced \cstaralg s.   We now describe the relationship between
these objects when given a (set-theoretic) inclusion of compatible covers.

\begin{proposition}\label{twistprop}  Let $(\C,\D)$ be a covering
  inclusion and for $i=1,2$, suppose
  $F_i\subseteq \fS(\C,\D)$ are compatible covers for $\hat\D$.  Put
  $\Sigma_i=\Eigone_{F_i}(\C,\D)$, $G_i=\fR_{F_i}(\C,\D)$ and let
  $\theta_i:\C\rightarrow C^*_r(\Sigma_i,G_i)$ be the homomorphism
  described in Theorem~\ref{inctoid}.  If $F_1\subseteq F_2$, then
  $\Sigma_1\subseteq \Sigma_2$ and there exists a $*$-epimorphism $q:
  C^*_r(\Sigma_2,G_2)\twoheadrightarrow C^*_r(\Sigma_1,G_1)$ such that the
  following diagram commutes.
\begin{equation*}\label{difcov}\xymatrix{
    &C^*_r(\Sigma_2,G_2)\ar@{>>}[d]^q\\
    \C\ar[ur]^{\theta_2} \ar[r]_-{\theta_1}&
    C^*_r(\Sigma_1,G_1) }
\end{equation*}

\end{proposition}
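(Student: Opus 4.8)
The plan is to exhibit $(\Sigma_1,G_1)$ as a subtwist of $(\Sigma_2,G_2)$ and then apply Lemma~\ref{twistquot} to produce $q$. Using the description $\Sigma_i=\{[v,f]:f\in F_i,\ f(v^*v)\neq 0\}$ from Theorem~\ref{cefform} together with $G_i=\fq(\Sigma_i)$, the hypothesis $F_1\subseteq F_2$ gives at once the set inclusions $\Sigma_1\subseteq\Sigma_2$ and $G_1\subseteq G_2$. Since each $\Sigma_i$ carries the relative $\sigma(\dual{\C},\C)$-topology and each $G_i$ the topology of pointwise convergence on $\C$, these intrinsic topologies agree with the subspace topologies inherited from $\Sigma_2$ and $G_2$. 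I would also record that $\fq_2^{-1}(G_1)=\Sigma_1$: if $\phi\in\Sigma_2$ has $|\phi|\in G_1$, then $|\phi|=|\psi|$ for some $\psi\in\Sigma_1$, so $\phi=z\psi$ for a unique $z\in\bbT$, and the $\bbT$-invariance of $\Sigma_1$ forces $\phi\in\Sigma_1$.

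Next I would verify that $G_1$ is a closed subgroupoid of $G_2$ satisfying the factorization property of Lemma~\ref{twistquot}. That $G_1$ is a subgroupoid is clear, since the source, range, inverse, and product of $G_2$ restrict to those of $G_1$ and $\unit{G_1}=F_1\subseteq F_2=\unit{G_2}$. The key observation is that, inside $G_2$, one has $G_1=\fs^{-1}(F_1)$: by the $\N(\C,\D)$-invariance of $F_1$ (Theorem~\ref{cefform}), $\fs(\phi)\in F_1$ if and only if $\fr(\phi)\in F_1$, so an element of $G_2$ lies in $G_1$ exactly when its source lies in $F_1$. As $F_1$ is weak-$*$ closed in $F_2$ and the source map $\fs\colon G_2\to F_2$ is continuous (Proposition~\ref{loccmpt} and Theorem~\ref{cefgroupoid}), $G_1=\fs^{-1}(F_1)=\fr^{-1}(F_1)$ is closed. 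For factorization, if $g=g_1g_2\in G_1$ with $g_1,g_2\in G_2$, then $\fs(g_2)=\fs(g)\in F_1$ and $\fr(g_1)=\fr(g)\in F_1$, so both $g_1$ and $g_2$ lie in $G_1$. Applying Lemma~\ref{twistquot} to the Hausdorff twist $(\Sigma_2,G_2)$ (Theorem~\ref{cefgroupoid}) and the closed subgroupoid $G_1$ then yields the subtwist $(\fq_2^{-1}(G_1),G_1)=(\Sigma_1,G_1)$ and a $*$-epimorphism $q\colon C^*_r(\Sigma_2,G_2)\twoheadrightarrow C^*_r(\Sigma_1,G_1)$, given on $C_c(\Sigma_2,G_2)$ by the restriction $\xi\mapsto\xi|_{G_1}$. (Since the reduced \cstaralg\ depends only on the twist, the algebra produced by the lemma is exactly the $C^*_r(\Sigma_1,G_1)$ of the statement.)

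Finally I would check the commutativity $q\circ\theta_2=\theta_1$. Both sides are $*$-homomorphisms of $\C$ into $C^*_r(\Sigma_1,G_1)$, so since the $\Sigma_1$-pointwise topology is Hausdorff it suffices to compare their point evaluations at each $\phi\in\Sigma_1$. Point evaluations are continuous on the reduced \cstaralg s, $q$ is contractive, and on the dense subalgebra $C_c(\Sigma_2,G_2)$ the map $q$ is restriction; a routine density argument then gives $q(\xi)(\phi)=\xi(\phi)$ for all $\xi\in C^*_r(\Sigma_2,G_2)$ and $\phi\in\Sigma_1$. On the other hand, $\theta_i$ extends the Gelfand map $\fg_i$ continuously for the point-evaluation topology, and because each $\phi$ is a norm-one functional, $\theta_i(a)(\phi)=\phi(a)$ for every $a\in\C$. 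Hence, for $\phi\in\Sigma_1$, $q(\theta_2(a))(\phi)=\theta_2(a)(\phi)=\phi(a)=\theta_1(a)(\phi)$, whence $q\circ\theta_2=\theta_1$. I expect the main obstacle to be the bookkeeping identifying the subtwist furnished by Lemma~\ref{twistquot} with the independently constructed $(\Sigma_1,G_1)$—that is, confirming the subspace and intrinsic topologies coincide so that the two reduced \cstaralg s literally agree—since once $(\Sigma_1,G_1)$ is recognized as a subtwist, the existence of $q$ is immediate.
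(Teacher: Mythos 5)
Your proposal is correct and follows essentially the same route as the paper: exhibit $G_1$ as a closed subgroupoid of $G_2$ with the factorization property (via closedness and $\N(\C,\D)$-invariance of $F_1$), invoke Lemma~\ref{twistquot} to obtain the restriction $*$-epimorphism $q$, and verify $q\circ\theta_2=\theta_1$ from the definitions of the $\theta_i$. The extra care you take identifying $\fq_2^{-1}(G_1)=\Sigma_1$, matching the subspace topologies, and checking commutativity by point evaluations is sound and merely fills in details the paper leaves implicit.
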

\begin{proof}
Let  $F_1$ and $F_2$ be compatible covers for $\hat\D$ with
$F_1\subseteq F_2$.     Then 
\[\{[v,\rho_1]: \rho_1\in F_1, v\in\N(\C,\D), \rho_1(v^*v)\neq
  0\}\subseteq \{[v,\rho_2]: \rho_2\in F_2, v\in\N(\C,\D), \rho_2(v^*v)\neq
  0\}.\]  The continuity of the source map and the fact that $F_1$ is
closed implies that 
$\Eigone_{F_1}(\C,\D)$ is a closed subgroupoid of $\Eigone_{F_2}(\C,\D)$.
Similarly, $\fR_{F_1}(\C,\D)$ is a closed subgroupoid of
$\fR_{F_2}(\C,\D)$.  In other words, $G_1$ is a closed subgroupoid of
$G_2$ and $\Sigma_1$ is a closed subgroupoid of $\Sigma_2$.

  Suppose $|[v,\rho_1]|\in G_1$ and that for some $|[w,\rho_2]|,
|[w',\rho_2']|\in G_2$, $|[v,\rho_1]|$ factors as 
\[|[v,\rho_1]|=|[w,\rho_2]| \, \, |[w',\rho_2']|.\]
Then $\rho_2'=\rho_1$ and $\tilde\beta_{w'}(\rho_2')=\rho_2$.  As
$\rho_1\in F_1$ and $w'\in \N(\C,\D)$, the invariance of $F_1$ gives
$\rho_2\in F_1$.  Thus $|[w,\rho_1]|$ and $|[w',\rho_2']|$ belong to
$G_1$.  An application of Lemma~\ref{twistquot} shows that the
restriction mapping extends to a $*$-epimorphism $q$ of 
$C_c(\Sigma_2,G_2)$ onto $C_c(\Sigma_1,G_1)$.  That $q\circ
\theta_2=\theta_1$ follows from the definition of $\theta_i$.
\end{proof}

\def\cprime{$'$}
\providecommand{\bysame}{\leavevmode\hbox to3em{\hrulefill}\thinspace}
\providecommand{\MR}{\relax\ifhmode\unskip\space\fi MR }
\providecommand{\MRhref}[2]{%
  \href{http://www.ams.org/mathscinet-getitem?mr=#1}{#2}
}
\providecommand{\href}[2]{#2}

\def\cprime{$'$}
\begin{bibdiv}
\begin{biblist}

\bib{AoiCoEqSuInSu}{article}{
      author={Aoi, Hisashi},
       title={A construction of equivalence subrelations for intermediate
  subalgebras},
        date={2003},
        ISSN={0025-5645},
     journal={J. Math. Soc. Japan},
      volume={55},
      number={3},
       pages={713\ndash 725},
      review={\MR{1978219 (2004c:46120)}},
}

\bib{BrownExelFullerPittsReznikoffInC*AlCaEm}{unpublished}{
      author={Brown, Jon~H.},
      author={Exel, Ruy},
      author={Fuller, Adam~H.},
      author={Pitts, David~R.},
      author={Reznikoff, Sarah~A.},
       title={Intermediate $c^*$-algebras of {C}artan embeddings},
        date={2019},
        note={arXiv:1912.03686},
}

\bib{BrownFullerPittsReznikoffGrC*AlTwGpC*Al}{unpublished}{
      author={Brown, Jon~H.},
      author={Fuller, Adam~H.},
      author={Pitts, David~R.},
      author={Reznikoff, Sarah~A.},
       title={Graded $c^*$-algebras and twisted groupoid $c^*$-algebras},
        date={2019},
        note={arXiv:1909.04710},
}

\bib{CameronPittsZarikianBiCaMASAvNAlNoAlMeTh}{article}{
      author={Cameron, Jan},
      author={Pitts, David~R.},
      author={Zarikian, Vrej},
       title={Bimodules over {C}artan {MASA}s in von {N}eumann algebras,
  norming algebras, and {M}ercer's theorem},
        date={2013},
        ISSN={1076-9803},
     journal={New York J. Math.},
      volume={19},
       pages={455\ndash 486},
         url={http://nyjm.albany.edu:8000/j/2013/19_455.html},
      review={\MR{3104558}},
}

\bib{DonsigPittsCoSyBoIs}{article}{
      author={Donsig, Allan~P.},
      author={Pitts, David~R.},
       title={Coordinate systems and bounded isomorphisms},
        date={2008},
     journal={J. Operator Theory},
      volume={59},
      number={2},
       pages={359\ndash 416},
}

\bib{ExelInSeCoC*Al}{article}{
      author={Exel, Ruy},
       title={Inverse semigroups and combinatorial {$C\sp \ast$}-algebras},
        date={2008},
        ISSN={1678-7544},
     journal={Bull. Braz. Math. Soc. (N.S.)},
      volume={39},
      number={2},
       pages={191\ndash 313},
      review={\MR{MR2419901 (2009b:46115)}},
}

\bib{ExelPittsChGrC*AlNoHaEtGr}{unpublished}{
      author={Exel, Ruy},
      author={Pitts, David~R.},
       title={Characterizing groupoid $c^*$-algebras of non-hausdorff \'etale
  groupoids},
        date={2019},
        note={arXiv:1901.09683},
}

\bib{FeldmanMooreErEqReII}{article}{
      author={Feldman, Jacob},
      author={Moore, Calvin~C.},
       title={Ergodic equivalence relations, cohomology, and von {N}eumann
  algebras. {II}},
        date={1977},
     journal={Trans. Amer. Math. Soc.},
      volume={234},
      number={2},
       pages={325\ndash 359},
      review={\MR{58 \#28261b}},
}

\bib{HadwinPaulsenInPrAnTo}{article}{
      author={Hadwin, Don},
      author={Paulsen, Vern~I.},
       title={Injectivity and projectivity in analysis and topology},
        date={2011},
     journal={Sci.\ China Math.},
      volume={54},
      number={11},
       pages={2347\ndash 2359},
}

\bib{HamanaReEmCStAlMoCoCStAl}{article}{
      author={Hamana, Masamichi},
       title={Regular embeddings of {$C\sp{\ast} $}-algebras in monotone
  complete {$C\sp{\ast} $}-algebras},
        date={1981},
     journal={J. Math. Soc. Japan},
      volume={33},
      number={1},
       pages={159\ndash 183},
      review={\MR{MR597486 (82i:46089)}},
}

\bib{KumjianFeBuOvGr}{article}{
      author={Kumjian, Alex},
       title={Fell bundles over groupoids},
        date={1998},
        ISSN={0002-9939},
     journal={Proc. Amer. Math. Soc.},
      volume={126},
      number={4},
       pages={1115\ndash 1125},
         url={https://doi.org/10.1090/S0002-9939-98-04240-3},
      review={\MR{1443836}},
}

\bib{KumjianOnC*Di}{article}{
      author={Kumjian, Alexander},
       title={On {$C\sp \ast$}-diagonals},
        date={1986},
        ISSN={0008-414X},
     journal={Canad. J. Math.},
      volume={38},
      number={4},
       pages={969\ndash 1008},
      review={\MR{88a:46060}},
}

\bib{PaulsenCoBoHoOpAl}{article}{
      author={Paulsen, Vern~I.},
       title={Completely bounded homomorphisms of operator algebras},
        date={1984},
        ISSN={0002-9939},
     journal={Proc. Amer. Math. Soc.},
      volume={92},
      number={2},
       pages={225\ndash 228},
      review={\MR{85m:47049}},
}

\bib{PittsStReIn}{unpublished}{
      author={Pitts, David~R.},
       title={Structure for regular inclusions},
        date={2012},
        note={\href{http://arxiv.org/abs/1202.6413v2} {arXiv:1202.6413v2
  [math.OA]}},
}

\bib{PittsStReInI}{article}{
      author={Pitts, David~R.},
       title={Structure for regular inclusions. {I}},
        date={2017},
     journal={J. Operator Theory},
      volume={78},
      number={2},
       pages={357\ndash 416},
        note={doi: 10.7900/jot.2016sep15.2128},
}

\bib{PittsZarikianUnPsExC*In}{article}{
      author={Pitts, David~R.},
      author={Zarikian, Vrej},
       title={Unique pseudo-expectations for {$C^{*}$}-inclusions},
        date={2015},
     journal={Illinois J. Math.},
      volume={59},
      number={2},
       pages={449\ndash 483},
}

\bib{RenaultGrApC*Al}{book}{
      author={Renault, J.},
       title={A groupoid approach to {$C^*$}-algebras},
      series={Lecture Notes in Mathematics},
   publisher={Springer-Verlag},
     address={New York},
        date={1980},
      number={793},
}

\bib{RenaultCaSuC*Al}{article}{
      author={Renault, Jean},
       title={Cartan subalgebras in {$C\sp *$}-algebras},
        date={2008},
        ISSN={0791-5578},
     journal={Irish Math. Soc. Bull.},
      number={61},
       pages={29\ndash 63},
      review={\MR{MR2460017}},
}

\bib{SimsHaEtGrThC*Al}{unpublished}{
      author={Sims, Aidan},
       title={Hausdorff \' etale groupoids and their {$C^*$-algebras}},
        date={2018},
        note={arXiv:1710.10897v2},
}

\bib{WillardGeTo}{book}{
      author={Willard, Stephen},
       title={General topology},
   publisher={Addison-Wesley Publishing Co., Reading, Mass.-London-Don Mills,
  Ont.},
        date={1970},
      review={\MR{MR0264581 (41 \#9173)}},
}

\bib{ZarikianUnCoExAbC*In}{article}{
      author={Zarikian, Vrej},
       title={Unique conditional expectations for abelian {$C^*$}-inclusions},
        date={2017},
        ISSN={0022-247X},
     journal={J. Math. Anal. Appl.},
      volume={447},
      number={1},
       pages={76\ndash 83},
         url={https://doi.org/10.1016/j.jmaa.2016.10.004},
      review={\MR{3566462}},
}

\end{biblist}
\end{bibdiv}

\end{document}